\documentclass[10pt]{smfart}

\RequirePackage[T1]{fontenc}
\RequirePackage{amsfonts,latexsym,amssymb}
\RequirePackage[frenchb]{babel}
\addto\extrasfrenchb{\bbl@nonfrenchitemize
\bbl@nonfrenchspacing}

\RequirePackage{mathrsfs}
\let\mathcal\mathscr

\theoremstyle{plain}
\numberwithin{equation}{section}
\newtheorem{prop}[equation]{\propname}
\newtheorem{theo}[equation]{\theoname}
\newtheorem{conj}[equation]{\conjname}
\newtheorem{coro}[equation]{\coroname}

\newtheorem{lemm}[equation]{\lemmname}
\theoremstyle{definition}
\theoremstyle{remark}

\newtheorem{rema}[equation]{\remaname}
\newtheorem{exem}[equation]{\exemname}

\newtheorem{ques}[equation]{Question}

\newcommand{\mar}[1]{\marginpar{\tiny #1}}

\usepackage[matrix,arrow]{xy}
\usepackage{url}
\usepackage{fge}
\newcommand{\moins}{\mathbin{\fgebackslash}}

\usepackage[pagebackref]{hyperref}

\let\cal\mathcal
\let\goth\mathfrak

\def\wotimes{\hskip.5mm\widehat\otimes\hskip.5mm}
\def\wGamma{\widehat\Gamma}
\def\oGamma{{\overline\Gamma}}
\def\rg{{\rm R}\Gamma}

\let\emptyset\varnothing
\def\GG{{\mathbb G}} \def\BB{{\mathbb B}} \def\PP{{\mathbb P}} \def\UU{{\mathbb U}}
\def\LL{{\mathbb L}}
\def\HH{{\mathbb H}}
\newcommand{\eet}{\operatorname{\acute{e}t} }

\def\demo{\noindent{\itshape D\'emonstration}.\ ---\ \ignorespaces}
\def\Qbar{\overline{\bf Q}}
\def\Q{{\bf Q}} \def\Z{{\bf Z}}
\def\C{{\bf C}}
\def\N{{\bf N}}
\def\R{{\bf R}}
\def\O{{\cal O}}
\def\dual{{\boldsymbol *}}
\def\bmu{{\boldsymbol\mu}}

\def\Q{{\bf Q}} \def\Z{{\bf Z}}
\def\C{{\bf C}}
\def\N{{\bf N}}
\def\O{{\cal O}} 

\def\Qbar{{\overline{{\bf Q}}}}

\def\epsilon{\varepsilon}

\def\wZ{{\widehat\Z}}
\def\wZp{{\wZ^{]p[}}}

\def\G{{\cal G}}

\def \R{{\bf R}}               \def \N{{\bf N}}

\makeatletter
\let\over\@@over
\makeatother

\def\piqp{{{\bf P}^1}}

 \def\A{{\bf A}} \def\B{{\bf B}}

\def\Ai{\A^{]\infty[}}
\def\Aidu{\A^{]\infty[,\dual}}

\def\matrice#1#2#3#4{{\big(\begin{smallmatrix}#1&#2\\ #3&#4\end{smallmatrix}\big)}}

\begin{document}
\title
{Exercices ad\'eliques}
\author{Pierre Colmez}
\address{C.N.R.S., IMJ-PRG, Sorbonne Universit\'e, 4 place Jussieu,
75005 Paris, France}
\email{pierre.colmez@imj-prg.fr}
\begin{abstract}
On \'etudie la cohomologie du groupe des points rationnels d'un groupe
alg\'ebrique lin\'eaire sur des espaces fonctionnels ad\'eliques,
et on explore le lien avec la cohomologie compl\'et\'ee d'Emerton.
\end{abstract}
\begin{altabstract}
We study the cohomology of the group of rational points of a linear algebraic group
on adelic functional spaces, and we explore the relation with Emerton's completed cohomology.
\end{altabstract}
\setcounter{tocdepth}{1}

\maketitle

\stepcounter{tocdepth}
{\Small
\tableofcontents
}

\section*{Introduction}
Dans l'avatar $p$-adique du programme de Langlands, un objet fondamental est la cohomologie
compl\'et\'ee d'Emerton~\cite{Em06,CE}: si $\GG$ est un groupe r\'eductif d\'efini sur $\Q$,
et si $K_\A$ est un sous-groupe ouvert compact maximal\footnote{On note $\A$
l'anneau des ad\`eles de $\Q$, et $\A^{]S[}$ l'anneau des ad\`eles hors de $S$, si
$S$ est un ensemble fini de places de $\Q$.} de $\GG(\A)$, on dispose
de la tour des espaces localement sym\'etriques
$Y(K):=\GG(\Q)\backslash\GG(\Q)/K$, pour $K$ sous-groupe ouvert de $K_\A$,
et on pose
$$(\widehat H^i)^{(p)}:=\varinjlim\nolimits_{K^{]p[}}\widehat H^i(K^{]p[}),
\quad \widehat H^i(K^{]p[})
:=\Q_p\otimes_{\Z_p}\big({\varprojlim\nolimits_k}\big({\varinjlim\nolimits_{K_p}}
\hskip-1mm H^i(Y(K^{]p[}K_p),\Z/p^k)\big)\big)$$
o\`u $K_p$ (resp.~$K^{]p[}$) d\'ecrit les sous-groupes ouverts de $\GG(\Q_p)$ (resp.~$\GG(\A^{]p[})$)
tels que $K^{]p[}K_p\subset K_\A$. On d\'efinit de m\^eme la cohomologie compl\'et\'ee
\`a support compact, et les homologie compl\'et\'ee et homologie compl\'et\'ee \`a support
compact.  Tous ces groupes sont naturellement munis d'une action continue de $K_\A$ qui
s'\'etend naturellement en une action continue de $\GG(\A)$ et l'un des buts du programme
de Langlands $p$-adique est de d\'ecrire les repr\'esentations de $\GG(\A)$ apparaissant
dans ces groupes.

On peut utiliser le lemme de Shapiro pour transformer la cohomologie de $Y(K)$ en celle
de $Y(K_\A)$ \`a valeurs dans un syst\`eme de coefficents (fonctions sur $K_\A/K$),
et passer \`a la limite pour \'ecrire $\widehat H^i(K^{]p[})$ comme la cohomologie
de $Y(K_\A)$ \`a valeurs dans un gros syst\`eme de coefficients 
(espace ${\cal C}^{(p)}(K_\A)$ des fonctions continues
sur $K_\A$, lisses en dehors de $p$); 
c'est le point de vue de Hill~\cite{hill}.

D'un autre c\^ot\'e, dans le programme de Langlands classique, le point
de d\'epart est 
plut\^ot les formes automorphes, i.e.~les invariants sous $\GG(\Q)$ 
dans des espaces fonctionnels sur $\GG(\A)$:
on fait agir $\GG(\Q)$ et $\GG(\A)$ sur les fonctions
sur $\GG(\A)$ par
$$(\gamma *\phi)(x)=\phi(\gamma^{-1}x),\ {\text {si $\gamma\in \GG(\Q)$}},
\quad
(g\star\phi)(x)=\phi(xg),\ {\text {si $g\in \GG(\A)$}}.$$
Ces deux actions commutent.  Il s'ensuit que, si $X$ est un espace de fonctions
sur $\GG(\A)$ qui est stable par ces deux actions, les groupes de cohomologie
$H^i(\GG(\Q),X)$ sont munis d'une action de $\GG(\A)$.

Dans le cas classique, on prend en g\'en\'eral $i=0$,
on fixe un caract\`ere central,
et on prend pour $X$ l'espace des fonctions de carr\'e
int\'egrable modulo le centre ou bien son sous-espace des formes automorphes 
(fonctions $K_\A$-finies, ${\cal C}^\infty$ comme fonctions
de $x_\infty$, se transformant par un caract\`ere sous l'action du centre
de l'alg\`ebre enveloppante de l'alg\`ebre de Lie,
et \`a d\'ecroissance rapide \`a l'infini).

Si on essaye d'interpr\'eter la cohomologie compl\'et\'ee
d'Emerton de cette mani\`ere, on peut transformer la cohomologie
de $Y(K_\A)$ en celle de son groupe fondamental\footnote{C'est le point de vue adopt\'e
dans~\cite{bbk-Kato,CW} (pour $\GG={\bf GL}_2$).}, \`a savoir le groupe
arithm\'etique $\Gamma:=\GG(\Q)\cap K_\A^{]\infty[}$ (comme $Y(K_\A)$ peut avoir
plusieurs composantes connexes, il y a en fait plusieurs groupes arithm\'etiques
qui interviennent), et on peut utiliser le lemme de Shapiro pour passer de
$\Gamma$ \`a $\GG(\Q)$, ce qui transforme ${\cal C}^{(p)}(K_\A)$ en un espace
proche de ${\cal C}^{(p)}(\GG(\A))$ et sugg\`ere qu'un 
espace naturel \`a regarder est $H^i(\GG(\Q),{\cal C}^{(p)}(\GG(\A)))$.
Mais l'espace ${\cal C}^{(p)}$ n'est pas le seul espace naturel
que l'on peut consid\'erer; 
parmi les espaces possibles, mentionnons les espaces
${\cal C}\supset{\cal C}^{(p)}\supset{\rm LA}\supset{\rm LP}$, o\`u ayant fix\'e
une extension finie $L$ de $\Q_p$, on pose:
\begin{align*}
{\cal C}(\GG(\A))&:=\{\phi:\GG(\A)\to L,\ {\text{$\phi$ continue}}\}\\
{\cal C}^{(p)}(\GG(\A))&:=\{\phi\in {\cal C}(\GG(\A)),
\ {\text{$\phi$ localement lisse pour l'action de $\GG(\wZ^{]p[})$}}\}\\
{\rm LA}(\GG(\A))&:=\{\phi\in {\cal C}^{(p)}(\GG(\A)),
\ {\text{$\phi$ localement analytique en $x_p$}}\}\\
{\rm LP}(\GG(\A))&:=\{\phi\in {\cal C}^{(p)}(\GG(\A)),
\ {\text{$\phi$ localement alg\'ebrique en $x_p$}}\}
\end{align*}
Notons que, $L$ \'etant totalement discontinue,
une fonction continue $\phi:\GG(\A)\to L$ est constante modulo la composante connexe
$\GG(\R)_+$ de l'identit\'e de $\GG(\R)$. Si on impose \`a $\phi$ d'\^etre aussi invariante par
translation \`a gauche par $\GG(\Q)$, cela force $\phi$ \`a se factoriser
par le quotient \`a gauche par l'adh\'erence de $\GG(\Q)\GG(\R)_+$,
et comme ce quotient est tr\`es petit, le groupe
$H^0(\GG(\Q),{\cal C}(\GG(\A)))$ n'est pas tr\`es int\'eressant.
On est donc forc\'e de regarder les
$$H^i(\GG,X):=H^i(\GG(\Q),X(\GG(\A)))$$
 pour $i\geq 1$, dans le cadre du programme
de Langlands $p$-adique. 

Dans cet article, nous explorons les liens entre ces diff\'erents espaces et leurs versions
\`a support compact, purement du point de vue des repr\'esentations de $\GG(\A)$
(i.e.~on ne s'int\'eresse qu'au versant automorphe du programme de Langlands $p$-adique).
Nous n'abordons pas l'aspect le plus int\'eressant, \`a savoir le lien avec les repr\'esentations
du groupe de Galois absolu ${\rm Gal}_\Q$ de $\Q$.  Les r\'esultats d'Emerton~\cite{Em08}
(cf.~aussi~\cite{CW}) pour
$\GG={\bf GL}_2$ sugg\`erent que le centre de ${\rm End}_{\GG(\A)}H^i(\GG,{\cal C})$
param\`etre des pseudocaract\`eres de ${\rm Gal}_\Q$, mais il est probablement pr\'ematur\'e
de formuler une conjecture pr\'ecise (nous renvoyons \`a~\cite[\S\,1.8]{CE} pour des
r\'eflexions sur ce th\`eme).

Le gros de l'article est consacr\'e au cas o\`u $\GG={\bf GL}_2$; beaucoup des r\'esultats
sont des reformulations en langage ad\'elique de r\'esultats d'Emerton (d'o\`u le
titre de l'article).  Supposons donc $\GG={\bf GL}_2$ dans le reste de cette introduction
(sauf mention explicite du contraire).

\begin{theo}
{\rm (i)} Si $X={\cal C}, {\cal C}^{(p)}, {\rm LA}, {\rm LP}, {\rm LC}$,
alors $H^i(\GG,X)=0$ si $i\geq 2$, et $H^0(\GG,X)\cong X(\wZ^\dual)$, l'action de $\GG(\A)$
sur $\wZ^\dual\cong \A^\dual/\R_+^\dual\Q^\dual$ se faisant \`a travers $\det:\GG(\A)\to\A^\dual$.

{\rm (ii)} $H^1(\GG,{\cal C}^{(p)})$ est la limite inductive des
$H^1(\GG(\Q),{\cal C}(\GG(\A)/K^{]p[}))$ qui sont
des repr\'esentations admissibles de $\GG(\Q_p)$.

{\rm (iii)} $H^1(\GG,{\cal C}^{(p)})$ est l'ensemble des vecteurs
$\GG(\wZp)$-lisses de $H^1(\GG,{\cal C})$.

{\rm (iv)} $H^1(\GG,{\rm LA})$ est l'ensemble des vecteurs
localement analytiques de $H^1(\GG,{\cal C}^{(p)})$.

{\rm (v)} $H^1(\GG,{\rm LP})$ est l'ensemble des vecteurs
localement alg\'ebriques de $H^1(\GG,{\cal C}^{(p)})$.
\end{theo}

\begin{rema}
Si $\GG$ est g\'en\'eral:

{\rm (i)}  
 Calegari et Emerton~\cite[conj.\,1.5]{CE} 
conjecturent que\footnote{Les notations $q_0$ et $\ell_0$
sont standard: $\ell_0$ mesure le d\'efaut d'existence de s\'eries discr\`etes pour $\GG(\R)$
et $q_0=(d-\ell_0)/2$ o\`u $d$ est la dimension des $Y(K)$.}
$H^i(\GG,{\cal C}^{(p)})=0$ si $i>q_0$ et est petit si $i<q_0$.

{\rm (ii)} Le (ii) reste valable pour tout $H^i$; 
il est possible que le (iii) reste valable mais dans
le cas o\`u $\GG={\rm Res}_{F/\Q}{\bf GL}_1$ c'est \'equivalent \`a la conjecture
de Leopoldt pour $F$ (nous prouvons que $H^i(\GG,{\cal C})=0$ si $i\geq 1$,
et Hill~\cite{hill} a prouv\'e que $H^i(\GG,{\cal C}^{(p)})=0$ pour tout $i\geq 1$ si et seulement
si la conjecture de Leopoldt est vraie pour $F$).
Le (iv) reste vrai pour tout $i$, et ce que l'on peut esp\'erer concernant le (v)
n'est pas compl\`etement clair. 
\end{rema}

En ce qui concerne la cohomologie \`a support compact, nous prouvons les r\'esultats suivants
(toujours pour $\GG={\bf GL}_2$).
\begin{theo}
{\rm (i)}
On a $H^i_c(\GG,{\cal C})=0$ si $i\neq 1$, et
une suite exacte de $\GG(\A)$-modules
$$0\to {\cal C}(\wZ^\dual)\to{\rm Ind}_{\BB(\A)}^{\GG(\A)}{\cal C}(\LL(\wZ))\to
H^1_c(\GG,{\cal C})\to H^1(\GG,{\cal C})\to 0$$
o\`u $\wZ^\dual=\A^\dual/\R_+^\dual\Q^\dual$ sur lequel $\GG(\A)$ agit \`a travers le d\'eterminant,
et $\BB(\A)$ \`a travers $\LL(\A)$ sur $\LL(\wZ)$.

{\rm (ii)} $H^i_c(\GG,{\rm LP})=0$ si $i\neq 1$;
$H^1_c(\GG,{\rm LP})$ est l'ensemble des vecteurs localement alg\'ebriques
de $H^1_c(\GG,{\cal C})$ et on a une suite exacte de $\GG(\A)$-modules
$$\xymatrix@R=2mm@C=4mm{
0\ar[r]& {\rm LP}(\wZ^\dual)\ar[r]&{\rm Ind}_{\BB(\A)}^{\GG(\A)}{\rm LP}(\LL(\wZ))\ar[r]&
H^1_c(\GG,{\rm LP})\ar[dl]\\
& & H^1(\GG,{\rm LP})\ar[r]&
\big({\rm Ind}_{\BB(\A)}^{\GG(\A)}{\rm LP}(\LL(\wZ))\big)^\vee
\ar[r]& {\rm LP}(\wZ^\dual)\ar[r]& 0
}$$
o\`u $\BB$ est le borel sup\'erieur de $\GG$, $\LL$ est son l\'evi, et $(-)^\vee$
d\'esigne la contragr\'ediente.
\end{theo}

\section{Pr\'eliminaires}\label{chapi1}

Le but de ce chapitre est de fixer un certain nombre de notations et de normalisations.
\Subsection{Ad\`eles}\label{prelim1}
\subsubsection{Notations}\label{prelim2}
On note ${\cal P}$ \index{premiers@\premiers}l'ensemble des nombres premiers.  

Si $v\in {\cal P}\cup\{\infty\}$
est une place de $\Q$, on note $\Q_v$ le compl\'et\'e de $\Q$ en $v$ (et donc $\Q_\infty=\R$).
On note $\A$ l'anneau des \index{adeles@\adeles}ad\`eles de $\Q$, produit restreint des $\Q_v$:
si $\wZ=\prod_{p\in{\cal P}}\Z_p$ est le compl\'et\'e profini de $\Z$, on a
$\A=\R\times(\Q\otimes\widehat\Z)$.

Si $S\subset {\cal P}\cup\{\infty\}$ est fini, \index{Q@\QQQ}on \index{Z@\ZZZ}pose
$$\Z_S=\prod_{\ell\in S\cap{\cal P}}\Z_\ell, 
\quad\Z[\tfrac{1}{S}]=\Z[\tfrac{1}{\ell},\,\ell\in S\cap{\cal P}],
\quad \Q_S=\prod_{v\in S}\Q_v$$
On note $\A^{]S[}$ le produit restreint des $\Q_v$ pour $v\notin S$ et $\wZ^{]S[}$ son
intersection avec $\wZ$.
Par exemple, $\A^{]\infty[}=\Q\otimes\wZ$. On a $\A=\Q_S\times \A^{]S[}$ 
et $\wZ=\Z_S\times\wZ^{]S[}$ pour tout $S$.

Si $x$ est un objet ad\'elique (i.e., un \'el\'ement de $\A$, $\A^\dual$, ${\bf GL}_n(\A)$,
etc.), et si 
$v$ est une place de $\Q$, on note $x_v$ la composante de $x$ en $v$ (et donc $x=(x_v)_v$).
Si
$S$ est un ensemble fini de places de $\Q$, on note $x_S=(x_v)_{v\in S}$ la composante de $x$ 
en les \'el\'ements de $S$ et
$x^{]S[}=(x_v)_{v\notin S}$ la composante de $x$ hors de $S$ (et donc $x=(x_S,x^{]S[})$.

Si $\GG$ est un groupe alg\'ebrique sur $\Q$, les projections $\GG(\A)\to \GG(\Q_v)$ ont
des sections naturelles qui permettent de consid\'erer les
$\GG(\Q_v)$ comme des sous-groupes de $\GG(\A)$.

De m\^eme, si $S$ est un ensemble fini de places de $\Q$, alors $\GG(\Q_S)=\prod_{v\in S}\GG(\Q_v)$
est naturellement un sous-groupe de $\GG(\A)$ et tout \'el\'ement de $\GG(\A)$ peut
s'\'ecrire de mani\`ere unique sous la forme $x_Sx^{]S[}$ avec $x_S\in \GG(\Q_S)$ et
$x^{]S[}\in \GG(\A^{]S[})$; de plus $x_S$ et $x^{]S[}$ commutent.


\Subsubsection{Le caract\`ere $|\ |_\A$ et ses avatars}\label{prelim4}
\noindent $\bullet$ {\it Le caract\`ere norme $|\ |_\A$}.
Si $v$ est une place de $\Q$, on note $|\ |_v$ 
\index{adeles@\adeles}la norme sur $\Q_v$ (si $v$ est la place
correspondant \`a un nombre premier $\ell$, on a $|\ell|_v=\ell^{-1}$).
On d\'efinit $|\ |_\A$ par la formule
$$|x|_\A=\prod_v|x_v|_v.$$
\vskip.1cm
\noindent $\bullet$ {\it Le caract\`ere $\delta_\A $}.
On note $\delta_\A :\A^\dual\to \C^\dual$ le caract\`ere
$$x\mapsto \delta_\A (x)=x_{\infty}^{-1}|x|_\A,$$ 
et donc $\delta_\A $ est localement constant, \`a valeurs
dans $\Q^\dual$.
De plus,
$$\delta_{\A,\infty}={\rm sign},\quad \delta_{\A,\ell}=|\ |_\ell,
\quad
\delta_\A =|\ |_\A\ {\text{sur $(\A^{]\infty[})^\dual$}}.$$ 
\noindent $\bullet$ {\it Le caract\`ere $|\ |_{\A,p}$}.
On d\'efinit $|\ |_{\A,p}$ par $|\ |_{\A,p}(x):=x_p\delta_\A(x)$.
La formule du produit implique que $|\ |_{\A,p}$ 
se factorise \`a travers $\A^\dual/\R_+^\dual\Q^\dual$.
Notons que l'inclusion $\wZ^\dual\hookrightarrow \A^\dual$ induit un
isomorphisme $\wZ^\dual\overset{\sim}{\to} \A^\dual/\R_+^\dual\Q^\dual$.

\Subsection{Espaces fonctionnels ad\'eliques}\label{fma3}
Soit $\GG$ un groupe alg\'ebrique r\'eductif d\'efini sur $\Q$.
Alors $\GG$ a un mod\`ele lisse sur $\Z[\frac{1}{S}]$, pour $S$ assez grand.
On choisit pour tout $p\in{\cal P}$ un sous-groupe ouvert compact maximal de $\GG(\Q_p)$
\'egal \`a $\GG(\Z_p)$ si $p\notin S$, et on note encore $\GG(\Z_p)$ ce sous-groupe si $p\notin S$.
On pose $\GG(\wZ)=\prod_p\GG(\Z_p)$, et donc $\GG(\wZ)$ 
est un sous-groupe ouvert compact de $G(\A^{]\infty[})$.

\subsubsection{Fonctions continues}\label{fma6.1}
Soit $L$ une extension finie de $\Q_p$, et soit $\O_L$ l'anneau de ses entiers.
Si $\Lambda=L,\O_L$,
on \index{C@\calC}note ${\cal C}(\GG(\A),\Lambda)$ l'espace des fonctions continues
sur $\GG(\A)$ \`a valeurs dans $\Lambda$.  
On munit ${\cal C}(\GG(\A),\Lambda)$ des actions de $\GG(\Q)$
et de $\GG(\A)$ de l'introduction:
$$(\gamma*\phi)(x)=\phi(\gamma^{-1}x),\ {\text{si $\gamma\in \GG(\Q)$}},\quad
(g\star\phi)(x)=\phi(xg),\ {\text{si $g\in \GG(\A)$}}.$$
Ces actions commutent.

On note ${\cal C}^{(p)}(\GG(\A),\Lambda)$ 
le sous-espace de ${\cal C}(\GG(\A),\Lambda)$ des $\phi$ 
qui sont localement $\GG(\wZp)$-lisses. 

Dans la suite, on note simplement $X(\GG(\A))$ l'espace $X(\GG(\A),L)$, si $X$ est
une classe de fonctions.

\begin{rema}\phantomsection\label{coco3}
(i)
Si $\phi\in{\cal C}(\GG(\A))$, alors $\phi$ est constante
sur les classes (\`a gauche ou \`a droite) modulo $\GG(\R)_+$ car
$\GG(\R)_+$ est connexe alors que $L$ est totalement discontinu.
Par contre, si $\ell\neq p$, une fonction continue de $\GG(\Q_\ell)$ dans $L$
n'est pas forc\'ement localement constante bien que les topologies
de $\GG(\Q_\ell)$ et $L$ ne soient pas tr\`es compatibles, et 
${\cal C}^{(p)}(\GG(\A))$ est
un (tr\`es) petit sous-espace de ${\cal C}(\GG(\A))$.

(ii) On peut aussi consid\'erer ${\cal C}_b(\GG(\A))$, espace des fonctions continues born\'ees,
ou l'espace ${\cal C}_u^{(p)}(\GG(\A))$ des fonctions $\GG(\wZp)$-lisses,
(i.e.~telles qu'il existe un sous-groupe ouvert
$K_\phi$ de $\GG(\wZp)$ tel que l'on ait $\phi(x\kappa)=\phi(x)$ pour tous
$x\in \GG(\A)$ et $\kappa\in K_\phi$); c'est la limite inductive des
${\cal C}(\GG(\A)/K)$,
o\`u $K$ d\'ecrit les sous-groupes ouverts de $\GG(\wZp)$.
\end{rema}

\subsubsection{Fonctions alg\'ebriques}\label{fma4}
On suppose que $L$ est assez grand pour que les repr\'esentations
alg\'ebriques irr\'eductibles de $\GG$ soient d\'efinies sur $L$.
On note ${\rm Irr}(\GG)$ l'ensemble des $L$-repr\'esentations
alg\'ebriques de $\GG$ (\`a isomorphisme pr\`es).
On voit $W\in {\rm Irr}(\GG)$ comme une repr\'esentation de $\GG(L)$

On note ${\rm Alg}(\GG)$ \index{alg@\rmalg}l'espace des fonctions alg\'ebriques
sur $\GG(L)$, \`a valeurs dans $L$ (i.e.~$L\otimes\O(\GG)$, o\`u $\O(\GG)$ d\'esigne
l'anneau des fonctions r\'eguli\`eres sur la $\Q$-vari\'et\'e alg\'ebrique $\GG$).
On fait agir $\GG(\Q)\times \GG(L)$ sur ${\rm Alg}(\GG)$ par
$$(h_1,h_2)\cdot\phi(g)=\phi(h_1^{-1}gh_2).$$

Si $W$ est une repr\'esentation alg\'ebrique de $\GG$, on note $W^\dual$ la repr\'esentation duale.
Si $v\in W$ et $\check v\in W^\dual$, la fonction 
$$
g\mapsto\phi_{\check v,v}(g)
=\langle g\cdot\check v,v\rangle
$$
est un \'el\'ement de ${\rm Alg}(\GG)$, et l'application 
$\check v\otimes v\mapsto \phi_{\check v,v}$, 
de $W^\dual\otimes W$ dans ${\rm Alg}(\GG)$, 
est $\GG(\Q)\times \GG(L)$-\'equivariante car $\langle h_1^{-1}gh_2\cdot\check v,v\rangle
=\langle gh_2\cdot\check v,h_1\cdot v\rangle$ (et donc $\GG(\Q)$ agit trivialement sur $W^\dual$,
tandis que $\GG(L)$ agit trivialement sur $W$).  
Ceci fournit un isomorphisme $\GG(\Q)\times \GG(L)$-\'equivariant
\begin{equation}\phantomsection\label{decomp}
\bigoplus\nolimits_{W\in {\rm Irr}(\GG)}W^\dual\otimes W\cong {\rm Alg}(\GG).
\end{equation}

\subsubsection{Fonctions localement alg\'ebriques}\label{fma5}
Si $\Lambda=L,\O_L$,
soit ${\rm LC}(\GG(\A),\Lambda)$ 
l'espace des $\phi:\GG(\A)\to \Lambda$, localement constantes.
Cet espace est stable par les actions de $\GG(\Q)$ et $\GG(\A)$ du \no\ref{fma3}.
Soit
\begin{align*}
{\rm LP}(\GG(\A))&:={\rm LC}(\GG(\A),L)\otimes_L {\rm Alg}(\GG)
\cong\bigoplus\nolimits_{W\in{\rm Irr}}
{\rm LC}(\GG(\A))\otimes W^\dual\otimes W
\end{align*}
Cet espace est muni d'actions de $\GG(\Q)$ et $\GG(\A)$ (actions diagonales 
des actions d\'efinies aux ${\rm n}^{\rm os}$~\ref{fma6.1} et~\ref{fma4}; 
dans la d\'ecomposition faisant intervenir les $W$,
$\GG(\Q)$ agit diagonalement sur ${\rm LC}$ et $W$ et trivialement sur $W^\dual$, et
$\GG(\A)$ agit diagonalement sur ${\rm LC}$ et $W^\dual$ (sur lequel il agit \`a travers son
quotient $\GG(\Q_p)$) et trivialement sur $W$.

De plus, l'application $\phi\otimes P\mapsto \phi P$ induit une injection
$\GG(\Q)\times \GG(\A)$-\'equivariante
$${\rm LP}(\GG(\A))\hookrightarrow{\cal C}(\GG(\A))$$ 

\subsubsection{Fonctions localement analytiques}
On note ${\rm LA}(\GG(\A))$ le sous-espace de ${\cal C}^{(p)}(\GG(\A))$ des $\phi$
localement analytiques en $x_p$

Si $\theta$ est un caract\`ere du centre $Z(U({\goth g}))$ de l'alg\`ebre enveloppante universelle
de l'alg\`ebre de Lie de $\GG(\Q_p)$, on peut consid\'erer
l'espace
${\rm LA}^\theta(\GG(\A))$, sous-espace de
${\rm LA}(\GG(\A))$ sur lesquel $Z(U({\goth g}))$
agit par le caract\`ere infinit\'esimal $\theta$.

Si $W\in{\rm Irr}(\GG)$, et si $\theta_W$ est le caract\`ere 
\`a travers lequel $Z(U({\goth g}))$ agit sur $W^\dual$, alors
$${\rm LC}(\GG(\A))\otimes W\otimes W^\dual\subset {\rm LA}^{\theta_W}(\GG(\A))$$

\subsubsection{Autres espaces}
On peut varier les conditions de r\'egularit\'e en $p$ \`a loisir, 
par exemple regarder les fonctions de classe ${\cal C}_u^r$, pour $r\in\R_+\sqcup\{\infty\}$.

On peut aussi consid\'erer les fonctions \`a support compact ou tendant vers~$0$ \`a l'infini,
ainsi que les duaux de tous les espaces pr\'ec\'edents, comme par exemple {\it l'espace
${\rm Mes}(\GG(\A),\Lambda)$ des mesures} \`a valeurs dans $\Lambda$, qui est le $\Lambda$-dual
de ${\cal C}_c(\GG(\A),\Lambda)$ (fonctions continues \`a support compact).

\subsubsection{Cohomologie}
On va \'etudier les $H^i(\GG(\Q),X(\GG(\A)))$ pour $X$ une classe de fonctions.
Cela am\`ene naturellement \`a regarder aussi des espaces du type
$H^i(\HH(\Q),X(\GG(\A)))$ o\`u $\HH$ est un sous-groupe alg\'ebrique de $\GG$,
ou encore des $H^i(\HH(\Q),X(\GG(\A)/K))$ o\`u $K$ est un sous-groupe de $\GG(\A)$.
Dans le cas de $H^i(\GG(\Q),X(\GG(\A)))$, on se permet parfois d'abr\'eger
la notation en
$$H^i(\GG,X):=H^i(\GG(\Q),X(\GG(\A)))$$
comme dans l'introduction.

\section{Le borel de ${\bf GL}_2$}
Le but de ce chapitre est de prouver la prop.~\ref{indu5} et le th.\,\ref{indu6}
qui nous servirons \`a comparer les cohomologie et cohomologie \`a support compact
dans le cas de ${\bf GL}_2$.
\Subsection{Le groupe ${\bf G}_a$}

\begin{lemm}\phantomsection\label{gagm1}
Si $\GG={\bf G}_a$ et si $X$ est un des espaces ${\cal C},{\cal C}^{(p)},{\rm LC},{\rm LP},{\rm LA}$,
on a 
$$X(\GG(\A))=X(\GG(\Ai))={\rm Ind}_{\GG(\Z)}^{\GG(\Q)}X(\GG(\wZ))$$
\end{lemm}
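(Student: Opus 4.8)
The plan is to treat the two equalities separately: the first is a connectedness statement, the second a consequence of the decomposition $\Ai=\Q+\wZ$. For the first equality $X(\GG(\A))=X(\GG(\Ai))$, I would use the product decomposition $\GG(\A)=\R\times\GG(\Ai)$ together with the fact that $L$ is totally discontinuous. Since $\R$ is connected, the connected components of $\GG(\A)$ are exactly the fibres $\R\times\{y\}$, $y\in\GG(\Ai)$, so any continuous $\phi\colon\GG(\A)\to L$ is constant along them and factors through the projection $\GG(\A)\to\GG(\Ai)$; this is Remark~\ref{coco3}(i). The remaining conditions defining ${\cal C}^{(p)},{\rm LC},{\rm LP},{\rm LA}$ only constrain the behaviour at the finite places, hence are unaffected by forgetting the archimedean coordinate, and the actions of $\GG(\Q)$ and $\GG(\A)$ factor accordingly (the connected subgroup $\GG(\R)_+$ acting trivially). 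This gives the first identification, compatibly with both actions.

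For the second equality I would first record the elementary decomposition $\Ai=\Q+\wZ$ with $\Q\cap\wZ=\Z$ (any finite adele differs from an element of $\wZ$ by a sum of principal parts, and a rational integral at every prime is an integer). Since $\wZ$ is open and compact in $\Ai$, its cosets are open and compact, two of them $q+\wZ$, $q'+\wZ$ being equal or disjoint according as $q-q'\in\Z$ or not; thus $\Ai=\bigsqcup_{\bar q\in\Q/\Z}(q+\wZ)$ is a disjoint union of clopen sets indexed by the discrete set $\Q/\Z$. The crucial point is that, because these pieces are clopen and pairwise disjoint, a function on $\Ai$ lies in the class $X$ if and only if each of its restrictions to a coset $q+\wZ\cong\wZ$ lies in $X(\wZ)$, with no compatibility imposed between distinct cosets.

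I would then define $\Theta\colon X(\Ai)\to{\rm Ind}_{\GG(\Z)}^{\GG(\Q)}X(\wZ)$ by $\Theta(\phi)(q)(z)=\phi(q+z)$ for $q\in\Q$ and $z\in\wZ$. The relation $(z_0\star\phi)(x)=\phi(x+z_0)$ gives $\Theta(\phi)(q+n)=n\star\Theta(\phi)(q)$ for $n\in\Z$, i.e.\ $\Theta(\phi)$ satisfies the induction condition with $\Z$ acting on $X(\wZ)$ through $\Z\hookrightarrow\wZ$; an inverse sends $f$ to the function $q+z\mapsto f(q)(z)$, well defined by that same relation together with $\Q\cap\wZ=\Z$. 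That this inverse lands in $X(\Ai)$ is exactly the clopen observation of the previous paragraph. The main point to get right — and the one place where one must resist passing to compact induction — is that no support condition appears: a general $\phi$ (say the constant function $1$) restricts to a nonzero element of $X(\wZ)$ on infinitely many cosets, so $\Theta$ surjects onto the full induced module. Finally I would check that $\Theta$ intertwines the $*$-action of $\GG(\Q)=\Q$ with the natural $\Q$-action on the induced module (and the $\star$-action of $\GG(\A)$ through its $\wZ$- and $\Q$-components), which is what makes the identification usable in the Shapiro argument to come.
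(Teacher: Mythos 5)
Votre preuve est correcte et suit essentiellement la même route que celle de l'article : la première égalité vient de la connexité de $\GG(\R)=\R$ contre le but totalement discontinu $L$, et la seconde de la décomposition $\GG(\Ai)=\GG(\Q)\cdot\GG(\wZ)$ avec ambiguïté exactement $\GG(\Q)\cap\GG(\wZ)=\GG(\Z)=\Z$. Vous ne faites qu'expliciter la décomposition en classes ouvertes-fermées $q+\wZ$ et l'entrelacement $\Theta$ que l'article laisse implicites.
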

\begin{proof}
L'\'egalit\'e $X(\GG(\A))=X(\GG(\Ai))$ d\'ecoule de ce que $\GG(\R)=\R$ est connexe;
le reste de l'\'enonc\'e
r\'esulte de ce que $\GG(\Ai)=\GG(\Q)\cdot\GG(\wZ)$, et l'\'ecriture est
unique \`a $(\gamma,\kappa)\mapsto(\gamma\alpha,\alpha^{-1}\kappa)$, 
avec $\kappa\in\GG(\Q)\cap\GG(\wZ)=\GG(\Z)$ ($=\Z$).
\end{proof}

\begin{prop}\phantomsection\label{gagm3}
Si $\GG={\bf G}_a$,
on a les r\'esultats suivants:
\begin{align*}
H^i(\GG(\Q),{\cal C}(\GG(\A)))&\cong\begin{cases}
L&{\text{si $i=0$,}}\\ 0&{\text{si $i\geq 1$}}
\end{cases}\\ 
H^i(\GG(\Q),{\rm LC}(\GG(\A)))&\cong\begin{cases}
L&{\text{si $i=0,1$,}}\\ 0&{\text{si $i\geq 2$}}
\end{cases} 
\end{align*}
\end{prop}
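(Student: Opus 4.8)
L'approche consiste \`a se ramener, via le lemme de Shapiro, \`a la cohomologie du groupe $\Z=\GG(\Z)$. D'apr\`es le lemme~\ref{gagm1}, on a $X(\GG(\A))={\rm Ind}_{\GG(\Z)}^{\GG(\Q)}X(\GG(\wZ))$, et comme il s'agit de l'espace des fonctions $\GG(\Z)$-\'equivariantes $\GG(\Q)\to X(\GG(\wZ))$ (le module coinduit, $\GG(\Q)/\GG(\Z)$ \'etant infini), le lemme de Shapiro en cohomologie fournit des isomorphismes
$$H^i(\GG(\Q),X(\GG(\A)))\cong H^i(\Z,X(\wZ)),$$
o\`u $\Z$ agit sur $X(\wZ)$ par translation ($\gamma\ast\phi=\phi(\cdot-\gamma)$). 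Comme $\Z$ est de dimension cohomologique $1$, on obtient imm\'ediatement l'annulation pour $i\geq 2$ dans les deux cas, et il ne reste que $H^0$ et $H^1$. Le $H^0$ est l'espace des fonctions invariantes par translation par $\Z$; comme $\Z$ est dense dans $\wZ$ et que les \'el\'ements de $X(\wZ)$ sont continus, une telle fonction est invariante par $\wZ$ tout entier, donc constante, d'o\`u $H^0=L$ dans les deux cas.

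Le point crucial de l'\'enonc\'e est le calcul de $H^1(\Z,X(\wZ))={\rm coker}(\gamma-1)$, o\`u les deux espaces diff\`erent. Pour $X={\rm LC}$, on \'ecrit ${\rm LC}(\wZ)=\varinjlim_N{\rm Func}(\Z/N,L)$, et comme ${\rm Func}(\Z/N,L)={\rm Ind}_{N\Z}^{\Z}L$ est la repr\'esentation r\'eguli\`ere de $\Z/N$, le lemme de Shapiro donne $H^1(\Z,{\rm Func}(\Z/N,L))=H^1(N\Z,L)=L$, un g\'en\'erateur \'etant d\'etect\'e par la sommation $f\mapsto\sum_{\Z/N}f$. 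Pour $N\mid M$ la fl\`eche de transition est la multiplication par $d=M/N$ (chaque fibre de $\Z/M\to\Z/N$ a $d$ \'el\'ements), qui est un isomorphisme de $L$ puisque $L$ est de caract\'eristique nulle; \`a la limite on trouve $H^1(\Z,{\rm LC}(\wZ))=L$.

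Pour $X={\cal C}$, ces multiplications par des entiers ne sont plus inversibles, et c'est ce qui fait chuter le $H^1$. Le plan est de travailler d'abord \`a coefficients dans $\O_L$, puis d'inverser $p$. On \'ecrit ${\cal C}(\wZ,\O_L)=\varprojlim_k{\rm LC}(\wZ,\O_L/p^k)$ (une fonction continue \`a valeurs dans un quotient fini est localement constante). Le calcul pr\'ec\'edent, fait \`a coefficients $\O_L/p^k$, donne cette fois $H^1(\Z,{\rm LC}(\wZ,\O_L/p^k))=\varinjlim_N(\O_L/p^k,\,\times d)=0$, car la multiplication par $p$ finit par annuler $\O_L/p^k$ (en passant au niveau $Np^k$). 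On conclut par la suite exacte de Milnor associ\'ee \`a la tour $\big({\rm LC}(\wZ,\O_L/p^k)\big)_k$, dont les fl\`eches de transition sont surjectives: puisque $H^1(\Z,{\rm LC}(\wZ,\O_L/p^k))=0$ et $H^0=\O_L/p^k$, elle se r\'eduit \`a
$$0\to H^0(\Z,{\cal C}(\wZ,\O_L))\to\prod_k\O_L/p^k\xrightarrow{1-s}\prod_k\O_L/p^k\to H^1(\Z,{\cal C}(\wZ,\O_L))\to 0,$$
d'o\`u $H^0=\varprojlim_k\O_L/p^k=\O_L$ (en accord avec le calcul direct) et $H^1={\varprojlim_k}^1\,\O_L/p^k=0$, ce dernier terme \'etant nul par Mittag-Leffler. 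En inversant $p$ (la cohomologie commute \`a ${\cal C}(\wZ,\O_L)\mapsto{\cal C}(\wZ,\O_L)[\tfrac1p]={\cal C}(\wZ)$), on obtient $H^0=L$ et $H^1=0$.

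L'obstacle principal est donc le $H^1$ de ${\cal C}$: il faut identifier correctement ${\cal C}(\wZ,\O_L)$ \`a la limite projective des r\'eductions modulo $p^k$, contr\^oler l'interaction de $H^*(\Z,-)$ avec cette limite (le terme ${\varprojlim}^1$), et surtout voir que le m\'ecanisme qui produit $L$ dans le cas ${\rm LC}$ (multiplication par des entiers, inversible sur un $\Q_p$-espace) est exactement celui qui produit $0$ dans le cas ${\cal C}$ (multiplication par $p$ nilpotente sur les $\O_L/p^k$).
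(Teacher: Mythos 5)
Votre preuve est correcte et suit le m\^eme squelette que celle du texte: r\'eduction par Shapiro \`a $H^i(\Z,X(\wZ))$, annulation pour $i\geq 2$ et calcul du $H^0$ par densit\'e de $\Z$ dans $\wZ$, puis calcul du $H^1$ comme conoyau de $\delta$ en \'ecrivant ${\rm LC}(\wZ)$ comme colimite sur $N$ et ${\cal C}(\wZ,\O_L)$ comme limite sur les coefficients finis. Les diff\'erences sont de d\'etail mais m\'eritent d'\^etre signal\'ees. Au niveau fini, le texte exhibe des cocycles explicites (la base $e_{N,i}$ et la suite exacte de la mesure de Haar pour ${\rm LC}$, la primitive $\tilde\phi(x)=\sum_{i=0}^x\phi(i)$ de p\'eriode $Np$ pour ${\cal C}$), alors que vous r\'eappliquez Shapiro \`a ${\rm Func}(\Z/N,\Lambda)={\rm Ind}_{N\Z}^{\Z}\Lambda$ et identifiez les fl\`eches de transition \`a la multiplication par l'indice: c'est exactement le m\^eme m\'ecanisme (inversible sur $L$, nilpotent sur $\O_L/p^k$), vu de fa\c{c}on plus structurelle. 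Enfin, pour le passage \`a la limite projective sur les coefficients, le texte se contente d'un \og par d\'evissage \fg{} assez elliptique, tandis que vous contr\^olez explicitement le terme ${\varprojlim}^1$ via la suite de Milnor et Mittag-Leffler sur les $H^0(\Z,{\rm LC}(\wZ,\O_L/p^k))=\O_L/p^k$; c'est le point o\`u votre r\'edaction est en fait plus compl\`ete que celle du texte. Aucune lacune.
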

\begin{proof}
Comme $X(\A)=X(\Ai)$, si $X={\cal C}, {\rm LC}$,
le lemme de Shapiro implique que $H^i(\GG(\Q),X(\GG(\A)))\cong 
H^i(\Z,X(\wZ))$.
La nullit\'e des $H^i$, pour $i\geq 2$, est imm\'ediate et le 
r\'esultat pour les $H^0$
est une cons\'equence de ce que $\Z$ est dense dans $\wZ$ et donc qu'une fonction invariante
par $\Z$ est constante.  Il reste \`a calculer les $H^1$, i.e.~le conoyau
de $\delta$ d\'efini par $\delta\phi(x)=\phi(x)-\phi(x-1)$.

$\bullet$ Pour $X={\cal C}$, on a ${\cal C}(\wZ,L)=L\otimes{\cal C}(\wZ,\O_L)$
par compacit\'e de $\wZ$, et il suffit donc de prouver
que $H^1(\Z,{\cal C}(\wZ,\O_L))=0$. Comme
${\cal C}(\wZ,\O_L)=\varprojlim_k{\cal C}(\wZ,\O_L/{\goth m}_L^k)$, il suffit, par d\'evissage,
de prouver que $H^1(\Z,{\cal C}(\wZ,k_L))=0$.
Maintenant, ${\cal C}(\wZ,k_L)=\varinjlim_N{\cal C}(\Z/N,k_L)$ et, si $\phi$
est p\'eriodique de p\'eriode $N$, alors $\tilde\phi(x)=\sum_{i=0}^x\phi(i)$
est p\'eriodique de p\'eriode $Np$ et v\'erifie $\tilde\phi(x)-\tilde\phi(x-1)=\phi(x)$.
Autrement dit
l'image de $\delta:{\cal C}(\Z/Np,k_L)\to {\cal C}(\Z/Np,k_L)$ contient ${\cal C}(\Z/N,k_L)$,
et donc $\delta$ est surjective sur ${\cal C}(\wZ,k_L)$; i.e.~$H^1(\Z,{\cal C}(\wZ,k_L))=0$,
ce que l'on voulait d\'emontrer.

$\bullet$ Passons \`a $X={\rm LC}$.
Si $i\in\Z/N$, soit
$e_{N,i}={\bf 1}_{i+N\wZ}$.  
Les $e_{N,i}$ forment une base de ${\rm LC}(\Z/N)$ sur $L$,
et on a $\delta e_{N,i}=e_{N,i}-e_{N,i+1}$; il s'ensuit que 
les $\delta e_{N,i}$ forment une base de ${\rm LC}(\Z/N)_0$ (noyau de la mesure de Haar
sur ${\rm LC}(\Z/N)$).
Comme ${\rm LC}(\wZ)$ est la limite
inductive des ${\rm LC}(\Z/N)$, on obtient une suite exacte
$$0\to {\rm LC}(\wZ)_0\to {\rm LC}(\wZ)\to H^1(\Z,{\rm LC}(\wZ))\to 0$$
Le r\'esultat s'en d\'eduit.
\end{proof}
\begin{rema}\phantomsection\label{gagm4}
$H^1(\GG(\Q),{\rm LC}(\GG(\A)))$ 
admet comme base naturelle l'image du 1-cocycle $r\mapsto r\,{\bf 1}_\A$,
o\`u ${\bf 1}_\A$ est la fonction constante $x\mapsto 1$.
\end{rema}

\begin{rema}\phantomsection\label{gagm2}
La mesure de Haar induit une suite exacte
$$0\to {\rm LC}(\wZ,\O_L)_0\to {\rm LC}(\wZ,\O_L)\to L\to 0$$
qui montre qu'un quotient de deux $\O_L$-modules sans sous-$L$-droite peut
parfaitement \^etre un $L$-module non nul.
On en d\'eduit que
 $$H^1(\GG(\Q),{\rm LC}(\GG(\A),\O_L))\cong L$$

Par contre, si $M_1,M_2$ sont s\'epar\'es et complets pour la topologie $p$-adique, le quotient
ne contient pas de sous-$L$-module non nul (si $M_1/M_2$ contient une suite d'\'el\'ements $v_n$
v\'erifiant $v_n=pv_{n+1}$, et si $\hat v_n$ est un rel\`evement de $v_n$ dans $M_1$, alors
$x_n:=\hat v_n-p\hat v_{n+1}\in M_2$ et donc $x_0+px_1+p^2x_2+\cdots$ converge dans $M_2$
et donc aussi dans $M_1$ vers la m\^eme limite (car $M_1$ est s\'epar\'e),
et comme cette limite est $\hat v_0$, cela implique $v_0=0$).
\end{rema}

\Subsection{Le groupe ${\bf G}_m$}
On suppose maintenant que $\GG={\bf G}_m$.
Si $\Lambda$ est un sous-anneau de $\Q$,
on note $\GG(\Lambda)_+$ l'ensemble des $x\in\GG(\Lambda)$ tels que $x_\infty\in\GG(\R)_+$
(en particulier, $\GG(\Z)_+=\{1\}$).
\begin{lemm}\phantomsection\label{gagm5}
Si $\GG={\bf G}_m$ et si $X$ est un des espaces ${\cal C},{\cal C}^{(p)},{\rm LC},{\rm LP},{\rm LA}$,
on a 
$$X(\GG(\Ai))\cong {\rm Ind}_{\GG(\Z)}^{\GG(\Q)}X(\GG(\wZ))
\quad{\rm et}\quad
X(\GG(\A))\cong {\rm Ind}_{\GG(\Z)_+}^{\GG(\Q)}X(\GG(\wZ))$$
\end{lemm}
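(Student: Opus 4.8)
The plan is to imitate the proof of Lemma~\ref{gagm1}: in each case I reduce the statement to a factorization of the relevant group, the identification of the induced module then being purely formal. The two ingredients that were absent in the additive case are the class-number-one property of $\Q$ and the fact that $\GG(\R)=\R^\dual$ is disconnected.

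For the first isomorphism I would produce the decomposition $\GG(\Ai)=\GG(\Q)\cdot\GG(\wZ)$ together with its ambiguity. Given a finite id\`ele $x=(x_p)_p$, write $x_p=p^{n_p}u_p$ with $u_p\in\Z_p^\dual$ and $n_p\in\Z$ almost all zero; then $\gamma:=\prod_p p^{n_p}\in\Q^\dual=\GG(\Q)$ satisfies $\gamma^{-1}x\in\wZ^\dual=\GG(\wZ)$, which gives the factorization. It is unique up to $\GG(\Q)\cap\GG(\wZ)$, and a rational that is a $p$-adic unit at every $p$ is $\pm1$, so this intersection is $\{\pm1\}=\GG(\Z)$. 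Exactly as in Lemma~\ref{gagm1}, sending $\phi$ to $\gamma\mapsto(\kappa\mapsto\phi(\gamma\kappa))$ identifies $X(\GG(\Ai))$ with ${\rm Ind}_{\GG(\Z)}^{\GG(\Q)}X(\GG(\wZ))$, the left-translation action of $\GG(\Q)$ becoming the regular action on the induced module.

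For the second isomorphism the new feature is that $\GG(\R)=\R^\dual$ has two connected components. By Remark~\ref{coco3}(i) every $\phi\in X(\GG(\A))$ is constant on the cosets of the identity component $\GG(\R)_+=\R_+^\dual$, so it suffices to factor $\GG(\A)$ modulo $\GG(\R)_+$. Here I would use $\GG(\A)=\GG(\Q)\cdot\big(\GG(\R)_+\times\GG(\wZ)\big)$: choosing $\gamma$ as above so that the finite part of $\gamma^{-1}x$ lies in $\wZ^\dual$, and multiplying by $-1\in\Q^\dual$ if necessary, one arranges the archimedean component to lie in $\R_+^\dual$; this is just the computation $\A^\dual/\Q^\dual\cong\R_+^\dual\times\wZ^\dual$ of the id\`ele class group of $\Q$. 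This factorization is unique up to $\GG(\Q)\cap\big(\GG(\R)_+\times\GG(\wZ)\big)$, and a rational that is positive at $\infty$ and a unit at every finite place equals $1$, so this intersection is $\{1\}=\GG(\Z)_+$. The same recipe $\phi\mapsto\big(\gamma\mapsto(\kappa\mapsto\phi(\gamma\kappa))\big)$ then yields $X(\GG(\A))\cong{\rm Ind}_{\GG(\Z)_+}^{\GG(\Q)}X(\GG(\wZ))$.

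The only point needing care, and the reason the inducing subgroup shrinks from $\GG(\Z)$ to $\GG(\Z)_+$, is the behaviour at $\infty$: the rational $-1$ has nontrivial sign at the archimedean place in addition to finite part $-1\in\wZ^\dual$, so imposing positivity at $\infty$ removes it from the stabilizer. It remains to check that the regularity condition defining each of ${\cal C},{\cal C}^{(p)},{\rm LC},{\rm LP},{\rm LA}$, which bears only on the variable $x_p$, is preserved by the above bijections. This is immediate: $\GG(\Q)$ is discrete and $\GG(\R)_+$ is collapsed, while for fixed $\gamma$ the passage $x_p\mapsto\gamma^{-1}x_p$ is multiplication by a fixed element of $\Q_p^\dual$, hence an analytic automorphism preserving local constancy, local analyticity and local algebraicity at $p$.
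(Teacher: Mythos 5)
Your proof is correct and follows essentially the same route as the paper: the decompositions $\GG(\Ai)=\GG(\Q)\cdot\GG(\wZ)$ and $\GG(\A)=\GG(\Q)\cdot(\GG(\wZ)\times\GG(\R)_+)$, with ambiguity groups $\GG(\Q)\cap\GG(\wZ)=\{\pm1\}=\GG(\Z)$ and $\GG(\Q)\cap(\GG(\wZ)\times\GG(\R)_+)=\{1\}=\GG(\Z)_+$, are exactly the ones the paper uses, together with the observation that $X(\GG(\wZ))=X(\GG(\wZ)\times\GG(\R)_+)$. You merely spell out the id\`ele factorization and the stability of the regularity classes in more detail than the paper does.
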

\begin{proof}
Pour $\GG(\Ai)$, cela r\'esulte de ce que $\GG(\Ai)=\GG(\Q)\cdot\GG(\wZ)$, et l'\'ecriture est
unique \`a $(\gamma,\kappa)\mapsto(\gamma\alpha,\alpha^{-1}\kappa)$ pr\`es, avec $\alpha\in\GG(\Z)$.
Pour $\GG(\A)$, cela r\'esulte de ce que $X(\GG(\wZ))=X(\GG(\wZ)\times\GG(\R)_+)$
et de ce que $\GG(\A)=\GG(\Q)\cdot(\GG(\wZ)\times\GG(\R)_+)$, et l'\'ecriture est
unique \`a $(\gamma,\kappa, g_\infty)\mapsto(\gamma\alpha,\alpha^{-1}\kappa,\alpha^{-1}g_\infty)$
pr\`es, avec $\alpha\in\GG(\Z)_+$ (et donc est unique).
\end{proof}

\begin{prop}\phantomsection\label{gagm6}
Si $\GG={\bf G}_m$, et si $X={\cal C},{\rm LC}$, alors 
\begin{align*}
H^i(\GG(\Q),X(\GG(\A)))&\cong\begin{cases}
X(\GG(\wZ)) &{\text{si $i=0$,}}\\
0 &{\text{si $i\geq 1$.}}
\end{cases}
\end{align*}
l'action de
$\GG(\A)=\A^\dual$ sur les $H^0$ se faisant \`a travers $\A^\dual/\R_+^\dual\Q^\dual\cong\wZ^\dual=\GG(\wZ)$.
\end{prop}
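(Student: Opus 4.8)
The plan is to reduce everything to \emph{Shapiro's lemma}. For $\GG={\bf G}_m$ one has $\GG(\Z)_+=\{1\}$, so Lemma~\ref{gagm5} gives a $\GG(\Q)$-equivariant isomorphism
$$X(\GG(\A))\cong {\rm Ind}_{\{1\}}^{\GG(\Q)}X(\GG(\wZ)),$$
the $\GG(\Q)$-action being left translation on the index. Granting that this induction is the \emph{full} coinduced module $\operatorname{Map}(\GG(\Q),X(\GG(\wZ)))$ of \emph{all} maps, Shapiro's lemma yields $H^i(\GG(\Q),X(\GG(\A)))\cong H^i(\{1\},X(\GG(\wZ)))$, which is $X(\GG(\wZ))$ for $i=0$ and $0$ for $i\geq 1$. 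This is exactly the claimed computation, uniformly for $X={\cal C}$ and $X={\rm LC}$.

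The one point requiring care — and the only real obstacle — is to verify that the induction in Lemma~\ref{gagm5} is genuinely \emph{coinduction} (a product over cosets), so that Shapiro applies to group cohomology; for the infinite group $\GG(\Q)=\Q^\dual$ the compactly supported induction would \emph{not} be acyclic. Here I would use that $\Q^\dual$ is discrete in $\A^\dual$ while $K:=\GG(\wZ)\times\GG(\R)_+$ is open. Indeed $\Q^\dual\cap K=\GG(\Z)_+=\{1\}$, so the translates $\{\gamma K\}_{\gamma\in\Q^\dual}$ are pairwise disjoint, and since $K$ is open they are clopen; hence $\A^\dual=\bigsqcup_{\gamma\in\Q^\dual}\gamma K$ is a decomposition into clopen pieces. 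A function on $\A^\dual$ that is continuous (resp.\ locally constant) is precisely an independent choice of a continuous (resp.\ locally constant) function on each piece $\gamma K$, and on each piece $X(\gamma K)\cong X(K)=X(\GG(\wZ))$ because $X$-functions are constant on the connected factor $\GG(\R)_+$. Thus $X(\GG(\A))=\prod_{\gamma\in\Q^\dual}X(\GG(\wZ))=\operatorname{Map}(\GG(\Q),X(\GG(\wZ)))$, the coinduced module, and Shapiro's lemma applies verbatim.

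It remains to read off the $\GG(\A)$-action on $H^0$. By definition $H^0=X(\GG(\A))^{\GG(\Q)}$ is the space of left-$\Q^\dual$-invariant functions, i.e.\ $X(\Q^\dual\backslash\A^\dual)$; using $\A^\dual=\Q^\dual\times\GG(\wZ)\times\GG(\R)_+$ and constancy on $\GG(\R)_+$ this is $X(\wZ^\dual)=X(\GG(\wZ))$, recovering the $H^0$ above. The residual right $\star$-action of $\A^\dual$ factors as stated: since $\A^\dual$ is abelian, right translation by $\gamma\in\Q^\dual$ coincides with left translation, hence is trivial on the quotient $\Q^\dual\backslash\A^\dual$, while right translation by $\R_+^\dual$ is trivial by constancy on $\GG(\R)_+$. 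Therefore the action descends to $\A^\dual/\R_+^\dual\Q^\dual$, which by the isomorphism of~\ref{prelim4} is $\wZ^\dual=\GG(\wZ)$. This gives the announced $\GG(\A)$-equivariance and completes the plan.
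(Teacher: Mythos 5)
Your proof is correct and follows the same route as the paper, which simply invokes Lemma~\ref{gagm5} together with $\GG(\Z)_+=\{1\}$ and concludes by Shapiro's lemma. The one point you elaborate --- that the induction from the trivial subgroup is genuinely the full coinduced module $\prod_{\gamma\in\Q^\dual}X(\GG(\wZ))$, because the cosets $\gamma K$ are clopen and continuity/local constancy impose no support condition across them --- is left implicit in the paper but is exactly the justification needed for Shapiro to apply to cohomology.
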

\begin{proof}
Cela r\'esulte du lemme de Shapiro: on a $\GG(\Z)_+=\{1\}$.
\end{proof}

Si $\eta:\GG(\Q)\to L^\dual$ est un caract\`ere, notons 
$$\eta_\A:\GG(\A)\to L^\dual$$
le caract\`ere d\'efini par $\eta_\A(r\,a\,x_\infty)=\eta(r)$, si $r\in\Q^\dual$, $a\in\wZ^\dual$
et $x_\infty\in\R_+^\dual$.  Par exemple, si $\eta(r)=r^{-1}$, alors $\eta_\A$
est le caract\`ere $\delta_\A$ du \no\ref{prelim4}.

Le r\'esultat suivant est imm\'ediat.
\begin{lemm}\phantomsection\label{gagm6.1}
L'application $\phi\otimes\eta_\A\mapsto\eta_\A\phi$ induit un isomorphisme
$\GG(\A)$-\'equivariant
$$H^0(\GG(\Q),X)\otimes\eta_\A\overset{\sim}{\to} H^0(\GG(\Q),X\otimes\eta)$$
\end{lemm}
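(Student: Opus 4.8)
The plan is to exhibit the map $\Phi:\phi\mapsto\eta_\A\phi$ explicitly and to verify the three things that make it an isomorphism of $\GG(\A)$-modules: that it is well defined with values in the correct invariant space, that it is bijective, and that it intertwines the two $\GG(\A)$-actions. Throughout I use the unique decomposition of Lemma~\ref{gagm5}: every $x\in\GG(\A)=\A^\dual$ writes uniquely as $x=r\,a\,x_\infty$ with $r\in\Q^\dual$, $a\in\wZ^\dual$ and $x_\infty\in\R_+^\dual$, so that $\eta_\A(x)=\eta(r)$ is a well-defined character; in particular $\eta_\A|_{\Q^\dual}=\eta$ while $\eta_\A$ is trivial on $\wZ^\dual\times\R_+^\dual$.

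First I would record that multiplication by $\eta_\A^{\pm1}$ preserves every class $X\in\{{\cal C},{\cal C}^{(p)},{\rm LC},{\rm LP},{\rm LA}\}$. Since $\wZ^\dual\times\R_+^\dual$ is open in $\A^\dual$ and $\eta_\A$ is trivial on it, $\eta_\A$ is locally constant; being trivial on $\GG(\wZp)\subset\wZ^\dual$ it is moreover $\GG(\wZp)$-smooth and locally constant in $x_p$. Hence multiplying any $\phi\in X$ by $\eta_\A$ respects continuity, $\GG(\wZp)$-smoothness, local constancy, and local analyticity or algebraicity in $x_p$, so $\Phi$ and its candidate inverse $\psi\mapsto\eta_\A^{-1}\psi$ are mutually inverse $L$-linear automorphisms of the underlying space of $X$.

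Next I would check that $\Phi$ sends $\GG(\Q)$-invariants of $X$ to $\GG(\Q)$-invariants of $X\otimes\eta$. By definition of the twist, $\psi$ is invariant in $X\otimes\eta$ exactly when $\psi(\gamma^{-1}x)=\eta(\gamma)^{-1}\psi(x)$ for all $\gamma\in\Q^\dual$. For $\phi\in H^0(\GG(\Q),X)$ and $\psi=\eta_\A\phi$ one computes $\psi(\gamma^{-1}x)=\eta_\A(\gamma)^{-1}\eta_\A(x)\phi(x)=\eta(\gamma)^{-1}\psi(x)$, using that $\phi$ is $\GG(\Q)$-invariant, that $\eta_\A$ is a character, and that $\eta_\A|_{\Q^\dual}=\eta$; the same computation read backwards shows that $\eta_\A^{-1}$ carries invariants of $X\otimes\eta$ to invariants of $X$, so $\Phi$ is a bijection between the two invariant subspaces.

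Finally, for the $\GG(\A)$-equivariance I would recall that on the source $g$ acts by $\eta_\A(g)\,(g\star-)$ (the twist by the $\GG(\A)$-character $\eta_\A$) while on the target it acts by the untwisted $g\star-$. Evaluating $\Phi(g\cdot(\phi\otimes\eta_\A))$ and $g\star(\eta_\A\phi)$ at a point $x$ and invoking $\eta_\A(xg)=\eta_\A(x)\eta_\A(g)$, both become the function $x\mapsto\eta_\A(g)\eta_\A(x)\phi(xg)$, which proves equivariance. The argument is pure bookkeeping; the only step needing a moment's attention is the first one, namely that $\eta_\A$ really is a locally constant character so that multiplication by it preserves the class $X$, and this is exactly where the uniqueness in Lemma~\ref{gagm5} and the openness of $\wZ^\dual\times\R_+^\dual$ are used. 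I expect no genuine obstacle.
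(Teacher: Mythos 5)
Votre démonstration est correcte et suit exactement la vérification directe que le texte sous-entend : le papier ne donne aucune preuve (le résultat y est déclaré « immédiat »), et votre calcul — $\eta_\A$ est un caractère localement constant trivial sur le sous-groupe ouvert $\wZ^\dual\times\R_+^\dual$, donc la multiplication par $\eta_\A^{\pm1}$ préserve la classe $X$, échange les deux espaces d'invariants via $\eta_\A|_{\Q^\dual}=\eta$, et entrelace les actions tordue et non tordue de $\GG(\A)$ — est précisément la vérification de routine attendue. Rien à redire.
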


\Subsection{L'unipotent et le l\'evi}
Dans le reste de ce chapitre, $\GG={\bf GL}_2$.
Soit $\BB=\matrice{*}{*}{0}{*}$ le borel de $\GG$.
On a $\BB=\UU\LL$, o\`u $\UU=\matrice{1}{*}{0}{1}$ est l'unipotent et $\LL=\matrice{*}{0}{0}{*}$
est le l\'evi de $\BB$ (un tore d\'eploy\'e).
On a 
\begin{equation}\phantomsection\label{scindage1}
\BB(\Ai)\backslash\GG(\Ai)\cong\BB(\wZ)\backslash\GG(\wZ)\cong\piqp(\wZ)
\end{equation}
\begin{lemm}\phantomsection\label{scindage}
La projection naturelle $\GG(\wZ)\to\piqp(\wZ)$ admet
un scindage continu {\rm (et m\^eme localement analytique)}.
\end{lemm}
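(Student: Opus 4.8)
The plan is to reduce to one prime at a time and then take a product. Since $\GG(\wZ)=\prod_p\GG(\Z_p)$ and $\piqp(\wZ)=\prod_p\piqp(\Z_p)$, and the projection is the product of the projections $\pi_p\colon\GG(\Z_p)\to\piqp(\Z_p)$, it suffices to construct a section $s_p$ of each $\pi_p$ and set $s=\prod_p s_p$. By (\ref{scindage1}) and a direct computation, $\pi_p$ sends $g$ to $\BB(\Z_p)g$, which is recorded by the bottom row $(g_{21},g_{22})$ of $g$ up to multiplication by a unit; so I must produce, for each point of $\piqp(\Z_p)$, a matrix in $\GG(\Z_p)$ whose bottom row represents it.

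First I would cover $\piqp(\Z_p)$ by its two standard affine charts $\Omega_1=\{[x:1]:x\in\Z_p\}$ and $\Omega_2=\{[1:y]:y\in p\Z_p\}$. The crucial point is that these are not merely open but clopen and disjoint: they are the preimages, under reduction $\piqp(\Z_p)\to\piqp(\Z/p)$, of the finite discrete subset $\{[a:1]\}$ and of the single point $[1:0]$ respectively. On each chart I set
$$s_p([x:1])=\matrice{1}{0}{x}{1}\quad(x\in\Z_p),\qquad s_p([1:y])=\matrice{0}{1}{1}{y}\quad(y\in p\Z_p).$$
Both matrices lie in $\GG(\Z_p)$ (determinants $1$ and $-1$) and have bottom rows $(x,1)$ and $(1,y)$, so that $\pi_p\circ s_p=\mathrm{id}$. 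Because $\Omega_1$ and $\Omega_2$ are clopen there is no gluing condition to verify, and on each of them $s_p$ is given by polynomial (indeed affine) formulas in the local coordinate; hence $s_p$ is analytic on each piece, in particular locally analytic, a fortiori continuous.

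Assembling $s=\prod_p s_p$ yields a section of the projection. Continuity is immediate, since a basic open set of $\GG(\wZ)$ constrains only finitely many coordinates and each $s_p$ is continuous. The one point requiring care — and the step I would treat as the main obstacle — is to pin down the sense in which $s$ is \emph{locally analytic} over the infinite product, since the product charts $\prod_p\Omega_\bullet$ are closed but not open; I would phrase local analyticity coordinatewise, namely that in each coordinate, and on each of its two clopen charts, $s$ is given by the analytic formulas above. For the use made of the lemma this is exactly what is needed: precomposition with $s$ carries a function that is locally constant in a variable $x_\ell$ to a locally constant function (the composite of a locally constant map with the continuous $s_\ell$), and carries a function analytic in $x_p$ to an analytic one (composite of analytic maps), so that $s^*$ preserves both the local smoothness away from $p$ and the local analyticity at $p$.
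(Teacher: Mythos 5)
Votre construction est correcte et co\"{\i}ncide essentiellement avec celle du texte: le scindage y est aussi d\'efini prime par prime, en envoyant $r\in\Z_\ell$ sur $\matrice{1}{0}{r}{1}$ et $r\notin\Z_\ell$ sur $\matrice{0}{-1}{1}{1/r}$, ce qui est exactement votre d\'ecoupage en les deux cartes clopen $[x:1]$ et $[1:y]$ (au signe pr\`es dans la seconde matrice). Les remarques suppl\'ementaires sur le sens de l'analyticit\'e locale coordonn\'ee par coordonn\'ee sont pertinentes mais ne changent pas l'argument.
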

\begin{proof}
La fl\`eche naturelle
$\BB(\wZ)\backslash\GG(\wZ)\to \piqp(\wZ)$ envoie $\matrice{x}{y}{z}{t}$ sur
$\frac{z}{t}$ et on utilise comme scindage le produit des scindages
$\piqp(\Z_\ell)\to \GG(\Z_\ell)$ envoyant
$r\in\piqp(\Z_\ell)=\piqp(\Q_\ell)$ sur $\matrice{1}{0}{r}{1}$
si $r\in\Z_\ell$, et sur $\matrice{0}{-1}{1}{1/r}$ si $r\in\piqp(\Q_\ell)\moins\Z_\ell$.
\end{proof}

\begin{lemm}\phantomsection\label{scindage2}
Soit $I_\infty=\matrice{-1}{0}{0}{-1}\in\GG(\R)$.

{\rm (i)} On a une factorisation
$\BB(\A)$-\'equivariante
$${\cal C}(\GG(\A))\cong{\cal C}(\piqp(\wZ))\wotimes{\cal C}(\BB(\A))^{I_\infty=1}$$
o\`u $\BB(\A)$ agit trivialement sur ${\cal C}(\piqp(\wZ))$.

{\rm (ii)} On a une factorisation
$\LL(\A)$-\'equivariante
$${\rm LC}(\UU(\A)\backslash\GG(\A))\cong{\rm LC}(\piqp(\wZ))\wotimes{\rm LC}(\LL(\A))^{I_\infty=1}$$
o\`u $\LL(\A)$ agit trivialement sur ${\rm LC}(\piqp(\wZ))$.
\end{lemm}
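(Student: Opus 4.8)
The plan for both parts is to combine the adelic Iwasawa decomposition with the continuous section of Lemma~\ref{scindage} in order to split off the $\piqp(\wZ)$-variable, and then to dispose of the archimedean place using connectedness of $\GG(\R)_+$ (Remark~\ref{coco3}). Since $\piqp$ is proper, $\GG(\Ai)/\BB(\Ai)=\piqp(\Ai)=\piqp(\wZ)$, and the section $s$ of Lemma~\ref{scindage}, in its right-coset (first-column) form, trivializes the fibration $\GG(\Ai)\to\piqp(\wZ)$: every $g\in\GG(\Ai)$ is written uniquely as $g=s(\xi)\,b$ with $\xi\in\piqp(\wZ)$ and $b\in\BB(\Ai)$. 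It is important to use $\piqp=\GG/\BB$ (the class of the first column), not the bottom-row model used in~(\ref{scindage1}): only then is the coordinate $\xi$ right-$\BB(\Ai)$-invariant, which is exactly what forces the $\star$-action of $\BB(\A)$ to be trivial on the $\piqp(\wZ)$-factor. As $s$ is continuous, this is a homeomorphism, and since $\piqp(\wZ)$ is compact we get ${\cal C}(\GG(\Ai))\cong{\cal C}(\piqp(\wZ))\wotimes{\cal C}(\BB(\Ai))$.

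For (i) I would next handle the real place. By Remark~\ref{coco3}(i) a function in ${\cal C}(\GG(\A))$ is constant modulo $\GG(\R)_+$, so ${\cal C}(\GG(\A))\cong{\cal C}(\pi_0(\GG(\R))\times\GG(\Ai))$ with $\pi_0(\GG(\R))=\{\pm1\}$ read off by the sign of $\det$; right translation by $\BB(\R)$ acts on this $\{\pm1\}$ through $\det$ and trivially on $\piqp(\wZ)$ and $\BB(\Ai)$. It then remains to recognise $\{\pm1\}\times\BB(\Ai)$ inside $\BB(\A)$: a continuous function on $\BB(\A)$ is constant modulo $\BB(\R)_+$, so it is determined by its values on $\pi_0(\BB(\R))\times\BB(\Ai)=(\Z/2)^2\times\BB(\Ai)$, and $I_\infty$-invariance divides by the class of $I_\infty=\matrice{-1}{0}{0}{-1}$, leaving $\pi_0(\BB(\R))/\langle I_\infty\rangle\cong\{\pm1\}$ via $\det$. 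Hence ${\cal C}(\BB(\A))^{I_\infty=1}\cong{\cal C}(\{\pm1\}\times\BB(\Ai))$; the two copies of $\{\pm1\}$ coincide (both are $\det$-signs), and assembling the three factors yields the $\BB(\A)$-equivariant isomorphism of (i).

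Part (ii) runs in parallel, with $\UU$ in place of the left copy of $\BB$ and $\LL$ as the acting group. The geometric input is that $\GG(\Ai)/\UU(\Ai)$ fibres over $\GG(\Ai)/\BB(\Ai)=\piqp(\wZ)$ with fibre $\BB(\Ai)/\UU(\Ai)=\LL(\Ai)$, and that right translation by $\LL(\A)$ is trivial on the base \emph{precisely because} $\LL\subset\BB$ (right multiplication by $\LL$ fixes every $\GG/\BB$-coset). Trivializing this fibration by $s$ and using $\ell\,\UU=\UU\,\ell$, one gets a homeomorphism $\GG(\Ai)/\UU(\Ai)\cong\piqp(\wZ)\times\LL(\Ai)$ under which right-$\LL(\Ai)$-translation is $(\xi,\ell)\mapsto(\xi,\ell m)$. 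Passing to locally constant functions and disposing of the real place exactly as in (i) (now via $\pi_0(\LL(\R))/\langle I_\infty\rangle\cong\{\pm1\}$) gives the $\LL(\A)$-equivariant isomorphism of (ii).

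The routine steps are the compatibility of $s$ with the product topology and the standard identity ${\cal C}(X\times Y)\cong{\cal C}(X)\wotimes{\cal C}(Y)$ for $X$ compact, together with its ${\rm LC}$-analogue. The main subtlety is the archimedean bookkeeping: working at the finite level gives $\GG(\Ai)/\BB(\Ai)=\piqp(\wZ)$ on the nose, but the component group $\pi_0(\GG(\R))=\{\pm1\}$ (the $\det$-sign) survives and must be absorbed into the $\BB(\A)$- (resp.\ $\LL(\A)$-) factor; the role of the $I_\infty$-invariance is precisely to effect this, and one has to check that the $\{\pm1\}$ coming from $\GG(\R)$ is matched with $\pi_0(\BB(\R))/\langle I_\infty\rangle$ (resp.\ $\pi_0(\LL(\R))/\langle I_\infty\rangle$) through $\det$. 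The other point demanding care is to keep the right-coset (first-column) model of $\piqp$ throughout, so that the $\star$-action is genuinely trivial on $\piqp(\wZ)$.
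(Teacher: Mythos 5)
Your overall route --- trivialise the fibration over $\piqp(\wZ)$ by the continuous section of Lemma~\ref{scindage}, then absorb the archimedean place via $\pi_0$ and $I_\infty$-invariance --- is exactly the paper's (its proof consists of the decomposition $\GG(\A)=\BB(\Ai)\times\piqp(\wZ)\times\GG(\R)$, its analogue for $\UU(\A)\backslash\GG(\A)$, and the remark that $1$ and $I_\infty$ lie in the same component of $\GG(\R)$ but not of $\BB(\R)$ or $\LL(\R)$), and your $\pi_0$-bookkeeping at infinity is correct and more explicit than the paper's.

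The genuine problem is the point you yourself single out as the main subtlety, which you resolve the wrong way round. You insist on the first-column model $g=s(\eta)b$, $\eta\in\GG(\Ai)/\BB(\Ai)$, so that \emph{right} translation ($\star$) by $\BB(\A)$ fixes $\eta$. For (i) this does yield a true statement, but not the one the lemma is for: in Prop.~\ref{indu5} and Lemma~\ref{indu61} the factorisation is fed into $H^i(\BB(\Q),-)$ and $H^i(\LL(\Q),-)$, where the relevant action is \emph{left} translation ($*$), and in your model left translation by $\beta$ does not fix $\eta$ (one gets a twisted action on both factors), so your factorisation gives no handle on these cohomology groups. The version actually needed is the opposite one, i.e.\ the bottom-row model of~(\ref{scindage1}): $g=b\,s(\xi)$ with $\xi\in\BB(\Ai)\backslash\GG(\Ai)$, for which left translation by $\BB(\A)$ fixes $\xi$ and acts by left translation on $b$. (No single decomposition can be trivial on $\piqp(\wZ)$ for both actions at once: each fibre would then be a bi-$\BB$-stable single right coset, forcing $\GG=\BB$.) For (ii) the mismatch is worse than a change of convention: the space in the statement is $\UU(\A)\backslash\GG(\A)$ --- it arises as $H^0(\UU(\Q),{\rm LC}(\GG(\A)))$ for the left action --- and you silently replace it by $\GG(\Ai)/\UU(\Ai)$. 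On $\UU(\A)\backslash\GG(\A)$ itself the right $\LL(\A)$-action does \emph{not} split off a trivial $\piqp(\wZ)$-factor (by Bruhat, $\UU\backslash\GG/\LL$ is not $\piqp$); only the left action does, through the fibration $\UU\backslash\GG\to\BB\backslash\GG$ with fibre $\UU\backslash\BB\cong\LL$. The repair is mechanical --- keep the model of~(\ref{scindage1}) and verify triviality of the left-translation action --- after which the rest of your argument goes through verbatim.
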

\begin{proof}
Cela r\'esulte des d\'ecompositions
$\GG(\A)=\BB(\Ai)\times\piqp(\wZ)\times\GG(\R)$ et
$\UU(\A)\backslash\GG(\A)=\BB(\Ai)\times\piqp(\wZ)\times(\UU(\R)\backslash\GG(\R))$.
La raison pour laquelle il faut prendre les points fixes par $I_\infty$ est que $1$
et $I_\infty$ sont dans la m\^eme composante connexe de $\GG(\R)$ mais pas dans la
m\^eme composante connexe de $\BB(\R)$ ou $\LL(\R)$. 
\end{proof}

\begin{lemm}\phantomsection\label{gagm8}
Si $k\in\N$ et $j\in\Z$, on a des isomorphismes de
$\LL(\Q)$-modules
\begin{align*}
H^i(\UU(\Q),{\rm LC}(\UU(\A))\otimes W_{k,j}) \cong\begin{cases}
L(a^{k+j}\otimes d^j)&{\text{si $i=0$,}}\\
L(a^{j-1}\otimes d^{k+j+1})&{\text{si $i=1$,}}\\
0&{\text{si $i\geq 2$.}}\end{cases}
\end{align*}
\end{lemm}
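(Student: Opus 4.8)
The plan is to reduce everything to the cohomology of $\UU(\Z)\cong\Z$ by Shapiro's lemma, to run the computation along a $\BB(\Q)$-stable flag of $W_{k,j}$, and to read off the two surviving characters of $\LL(\Q)$ from the top and bottom of that flag. Throughout I write $W_{k,j}={\rm Sym}^k\,{\rm std}\otimes(\det)^j$, with basis $v_m=e_1^{k-m}e_2^m$ ($0\le m\le k$), so that the nilpotent operator $N=\matrice{1}{1}{0}{1}-1$ satisfies $Ne_2=e_1$, hence $Nv_m=mv_{m-1}$.

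First I would use lemma~\ref{gagm1} to write ${\rm LC}(\UU(\A))\cong{\rm Ind}_{\UU(\Z)}^{\UU(\Q)}{\rm LC}(\UU(\wZ))$ as a $\UU(\Q)$-module. Since $W_{k,j}$ is finite-dimensional, the projection formula gives a $\UU(\Q)$-isomorphism ${\rm LC}(\UU(\A))\otimes W_{k,j}\cong{\rm Ind}_{\UU(\Z)}^{\UU(\Q)}\big({\rm LC}(\UU(\wZ))\otimes W_{k,j}\big)$, and Shapiro's lemma reduces the computation to $H^i(\Z,{\rm LC}(\wZ)\otimes W_{k,j})$, where the generator $u=\matrice{1}{1}{0}{1}$ of $\UU(\Z)$ acts by $T_1\otimes\exp(N)$, with $T_1$ translation by $1$ on ${\rm LC}(\wZ)$. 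As $\Z$ has cohomological dimension $1$, this already yields $H^i=0$ for $i\ge 2$, and identifies $H^0$ with $\ker(u-1)$ and $H^1$ with ${\rm coker}(u-1)=M/(u-1)M$.

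Next I would filter $W_{k,j}$ by $F_m=\langle v_0,\dots,v_m\rangle$; this is a $\BB(\Q)$-stable flag whose graded pieces $F_m/F_{m-1}=Lv_m$ carry the trivial $\UU(\Q)$-action and the $\LL(\Q)$-weight $\chi_m=a^{k-m+j}d^{m+j}$. Setting $M_m={\rm LC}(\UU(\A))\otimes F_m$, I get a $\LL(\Q)$-equivariant filtration with graded pieces ${\rm LC}(\UU(\A))\otimes\chi_m$, whose cohomology is given by prop.~\ref{gagm3}: one-dimensional in degrees $0$ and $1$, zero beyond. The key is the connecting map $\partial_m\colon H^0({\rm gr}_mM)\to H^1(M_{m-1})$. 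Working in the Shapiro model, where $H^1$ is coinvariants, one computes $\partial_m[{\bf 1}_{\wZ}\otimes v_m]=[(u-1)({\bf 1}_{\wZ}\otimes v_m)]=[{\bf 1}_{\wZ}\otimes m v_{m-1}]=m\,[{\bf 1}_{\wZ}\otimes v_{m-1}]$, using $u\,{\bf 1}_{\wZ}={\bf 1}_{\wZ}$ and $uv_m=v_m+mv_{m-1}$. Since $[{\bf 1}_{\wZ}\otimes v_0]$ is nonzero in $H^1(M_0)$ (its Haar integral is $1$, and $(T_1-1)$ has image exactly the hyperplane ${\rm LC}(\wZ)_0$ killed by the Haar measure, by the proof of prop.~\ref{gagm3}), an induction on $m$ shows $\partial_m$ is an isomorphism for $m\ge 1$ (here characteristic $0$ is used). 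The long exact sequences then collapse to give $H^0\cong L\,({\bf 1}_{\wZ}\otimes v_0)$ and $H^1\cong L\,({\bf 1}_{\wZ}\otimes v_k)$, each of dimension $1$.

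Finally I would read off the $\LL(\Q)$-weights from the two ends of the flag. The class ${\bf 1}_\A\otimes v_0$ is $\UU(\Q)$-invariant and $\LL(\Q)$ fixes the constant ${\bf 1}_\A$, so $H^0$ has weight $\chi_0=a^{k+j}d^j$, settling $i=0$. For $i=1$, the quotient $M\twoheadrightarrow{\rm LC}(\UU(\A))\otimes\chi_k$ sends the generator ${\bf 1}_\A\otimes v_k$ to a generator of $H^1(\UU(\Q),{\rm LC}(\UU(\A)))\otimes\chi_k$, so the weight is $\psi_1\chi_k$, where $\psi_1$ is the weight of $\LL(\Q)$ on $H^1(\UU(\Q),{\rm LC}(\UU(\A)))$. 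Using the generating cocycle $r\mapsto r\,{\bf 1}_\A$ of remark~\ref{gagm4}, the conjugation $t^{-1}\matrice{1}{r}{0}{1}t=\matrice{1}{(d/a)r}{0}{1}$ for $t=\matrice{a}{0}{0}{d}$, and the fact that $\LL(\Q)$ fixes ${\bf 1}_\A$, one finds $\psi_1=a^{-1}d$; this is forced anyway by $\LL(\Q)$-equivariance of $\partial_1$, which requires $\chi_1=\psi_1\chi_0$. Hence $H^1$ has weight $\psi_1\chi_k=a^{j-1}d^{k+j+1}$, as claimed. The main obstacle is precisely this last weight $\psi_1=a^{-1}d$: it is what produces the shifts $a^{j-1}$ and $d^{k+j+1}$ in the degree-one answer, and pinning it down requires combining the conjugation action of $\LL(\Q)$ on the group $\UU(\Q)$ with the triviality of that action on the constants of ${\rm LC}(\UU(\A))$.
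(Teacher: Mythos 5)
Your proof is correct and follows essentially the same route as the paper: the flag $F_m=\langle e_1^k,\dots,e_1^{k-m}e_2^m\rangle$ is exactly the filtration $e_1\otimes W_{k-1,j}\subset W_{k,j}$ iterated, your observation that $\partial_m$ is an isomorphism (the factor $m\neq 0$ in characteristic $0$) is the paper's remark that the class coming from $W_{k-1,j}$ is the boundary of $\tfrac1k e_2^k$, and the twist $\psi_1=a^{-1}d$ on $H^1$ is computed by the same conjugation $\matrice{a^{-1}}{0}{0}{d^{-1}}\matrice{1}{r}{0}{1}\matrice{a}{0}{0}{d}=\matrice{1}{a^{-1}dr}{0}{1}$. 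The only cosmetic differences are your preliminary Shapiro reduction to $H^i(\Z,-)$ and the phrasing via connecting maps rather than explicit cocycles.
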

\begin{proof}
On a
$$\matrice{a}{b}{0}{d}\cdot(e_1^ie_2^{k-i}(e_1\wedge e_2)^j)=
(ad)^j(ae_1)^i(be_1+de_2)^{k-i}(e_1\wedge e_2)^j$$
On en d\'eduit une suite exacte de repr\'esentations de $\BB(\Q)$
$$0\to e_1\otimes W_{k-1,j}\to W_{k,j}\to L(a^j\otimes d^{k-j})\to 0$$
(le caract\`ere $a^j\otimes d^{k-j}$ est celui par lequel $\BB(\Q)$ agit
sur $e_2^k$ modulo $e_1\otimes W_{k-1,j}$).
On en d\'eduit, par r\'ecurrence sur $k$
que $H^0(\UU(\Q),{\rm LC}(\UU(\A))\otimes W_{k,j})=L$
(engendr\'e par ${\bf 1}_{\UU(\A)}e_1^k$)
et que $H^1(\UU(\Q),{\rm LC}(\UU(\A))\otimes W_{k,j})=L$
(engendr\'e par la classe du cocycle
$\matrice{1}{b}{0}{1}\mapsto {\bf 1}_{\UU(\A)}\frac{(be_1+de_2)^{k+1}-e_2^{k+1}}{(k+1)e_1}$,
i.e. $b{\bf 1}_{\UU(\A)}e_2^k$ modulo $e_1\otimes W_{k-1,j}$; notons que le cocycle 
correspondant pour $W_{k-1,j}$
(multipli\'e par $e_1$) meurt
dans $H^1(\UU(\Q),{\rm LC}(\UU(\A))\otimes W_{k,j})$ 
(c'est le bord de $\frac{1}{k}e_2^{k}$), ce qui fait
que $H^1(\UU(\Q),{\rm LC}(\UU(\A))\otimes W_{k,j})\to H^1(\UU(\Q),{\rm LC}(\UU(\A)))$
est un isomorphisme, et permet de faire fonctionner la r\'ecurrence).

L'action de $\LL(\A)$ sur $H^0(\UU(\Q),{\rm LC}(\UU(\A))\otimes W_{k,j})$ est
celle sur $e_1^k(e_1\wedge e_2)^j$, i.e.~$a^{j+k}\otimes d^j$, ce qui fournit
le r\'esultat pour $H^0$.  

Pour $H^1$, il y a une torsion suppl\'ementaire par
rapport \`a l'action sur $e_2^k(e_1\wedge e_2)^j$: en effet,
$\gamma\in\LL(\Q)$ agit sur un $1$-cocycle
$u\mapsto\phi_u$ en l'envoyant sur le $1$-cocycle 
$u\mapsto \gamma*\phi_{\gamma^{-1}u\gamma}$, et comme
$\matrice{a^{-1}}{0}{0}{d^{-1}}\matrice{1}{r}{0}{1}\matrice{a}{0}{0}{d}=\matrice{1}{a^{-1}dr}{0}{1}$,
l'action de $\matrice{a}{0}{0}{d}$ est multipli\'ee
par $a^{-1}d$ par rapport \`a celle sur $e_2^k(e_1\wedge e_2)^j$;
on obtient donc le caract\`ere $a^{j-1}\otimes d^{k+j+1}$.
\end{proof}

\begin{lemm}\phantomsection\label{indu61}
Si $\eta:\LL(\Q)\to L^\dual$ est un caract\`ere, alors
$$H^i(\LL(\Q),{\rm LC}(\UU(\A)\backslash\GG(\A))\otimes\eta)\cong
\begin{cases}
{\rm Ind}_{\BB(\A)}^{\GG(\A)} \big(H^0(\LL,{\rm LC})\otimes\eta_\A\big)&{\text{si $i=0$,}}\\
0&{\text{si $i\geq 1$.}}\end{cases}$$
\end{lemm}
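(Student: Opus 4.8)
The plan is to realise ${\rm LC}(\UU(\A)\backslash\GG(\A))$ as a representation induced from $\BB(\A)$, in a way compatible with both the left $\LL(\Q)$-action and the right $\GG(\A)$-action, and then to commute $\LL(\Q)$-cohomology with that induction, reducing everything to the torus computations already established. Concretely, using that $\UU$ is normal in $\BB$ with quotient $\LL$ and that $\LL(\Q)$ normalises $\UU(\A)$, I would first write down the map
$$\Phi\colon {\rm LC}(\UU(\A)\backslash\GG(\A))\ \overset{\sim}{\longrightarrow}\ {\rm Ind}_{\BB(\A)}^{\GG(\A)}\big({\rm LC}(\UU(\A)\backslash\BB(\A))\big),\qquad (\Phi\phi)(g)=\big(\UU(\A)b\mapsto\phi(\UU(\A)bg)\big).$$
One checks directly that $\Phi$ is a bijection (inverse $f\mapsto(\UU(\A)g\mapsto f(g)(\UU(\A)))$), that it intertwines the right $\star$-action of $\GG(\A)$ with right translation on the induced space, and that it intertwines the left $*$-action of $\LL(\Q)$ with the left-translation action of $\LL(\Q)$ on the coefficient space ${\rm LC}(\UU(\A)\backslash\BB(\A))$; the latter commutes with the right $\BB(\A)$-action (which factors through $\BB(\A)\to\LL(\A)$) used to form the induction. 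Via $\UU(\A)\backslash\BB(\A)\cong\LL(\A)$ the coefficient module is ${\rm LC}(\LL(\A))$, with $\LL(\Q)$ acting by left and $\BB(\A)$ through $\LL(\A)$ by right translation.

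Second, I would commute the cohomology with the induction. By \eqref{scindage1} the base $\BB(\A)\backslash\GG(\A)$ is (up to the connected components of the real factor $\BB(\R)\backslash\GG(\R)$, on which $L$-valued locally constant functions are constant) the profinite set $\piqp(\wZ)$; choosing the continuous section of Lemma~\ref{scindage} trivialises the induction, giving ${\rm Ind}_{\BB(\A)}^{\GG(\A)}(V)\cong{\rm LC}(\BB(\A)\backslash\GG(\A),V)$. As $L$-valued locally constant functions on a profinite set take finitely many values, this is a filtered colimit of finite products of $V$ on which $\LL(\Q)$ acts only through $V$. Since $H^*(\LL(\Q),-)$ commutes with filtered colimits and with finite products, and since the twist by $\eta$ (a character of $\LL(\Q)$, acting only on the coefficients) passes inside, I obtain
$$H^i(\LL(\Q),{\rm LC}(\UU(\A)\backslash\GG(\A))\otimes\eta)\ \cong\ {\rm Ind}_{\BB(\A)}^{\GG(\A)}\big(H^i(\LL(\Q),{\rm LC}(\LL(\A))\otimes\eta)\big).$$

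Third, I would compute the coefficient cohomology. Here $\LL$ is the split torus ${\bf G}_m\times{\bf G}_m$ with $\LL(\Z)_+=\{1\}$, so the arguments of Lemmas~\ref{gagm5} and~\ref{gagm6} apply verbatim: $X(\LL(\A))\cong{\rm Ind}_{\LL(\Z)_+}^{\LL(\Q)}X(\LL(\wZ))$, whence Shapiro gives $H^0(\LL(\Q),{\rm LC}(\LL(\A)))={\rm LC}(\LL(\wZ))=H^0(\LL,{\rm LC})$ and $H^i=0$ for $i\geq1$, with the residual right $\LL(\A)$-action described in Proposition~\ref{gagm6} (through $\LL(\A)\to\LL(\wZ)$). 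Twisting by $\eta$ via Lemma~\ref{gagm6.1} yields $H^0(\LL(\Q),{\rm LC}(\LL(\A))\otimes\eta)\cong H^0(\LL,{\rm LC})\otimes\eta_\A$ and vanishing in positive degrees. Substituting into the displayed isomorphism gives the claimed answer, the $\BB(\A)$-action on $H^0(\LL,{\rm LC})\otimes\eta_\A$ being through $\BB(\A)\to\LL(\A)$.

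The conceptual content lies in the first two steps; the third is a citation of the torus case. The main point requiring care is the commutation of $\LL(\Q)$-cohomology with ${\rm Ind}_{\BB(\A)}^{\GG(\A)}$: one must verify that, after trivialising over the base by Lemma~\ref{scindage}, the induced module genuinely becomes a filtered colimit of finite products of the coefficient module with $\LL(\Q)$ acting fibrewise — in particular that locally constant $L$-valued functions detect only a profinite quotient of $\BB(\A)\backslash\GG(\A)$, so that the archimedean factor contributes solely through its finitely many connected components. Once this is granted, the higher vanishing of the fibre cohomology removes any extension or spectral-sequence issue, and the degree-zero term assembles into the stated induced representation.
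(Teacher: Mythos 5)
Your overall strategy --- factor ${\rm LC}(\UU(\A)\backslash\GG(\A))$ over the flag variety so that $\LL(\Q)$ acts fibrewise, then quote the torus computation --- is the paper's own (lemme~\ref{scindage2}\,(ii) combined with la prop.~\ref{gagm6} et le lemme~\ref{gagm6.1}), and your final formulas are correct. But the justification of the step you yourself single out as the crucial one, namely
$H^i(\LL(\Q),{\rm Ind}_{\BB(\A)}^{\GG(\A)}V\otimes\eta)\cong{\rm Ind}_{\BB(\A)}^{\GG(\A)}H^i(\LL(\Q),V\otimes\eta)$,
has a genuine gap, for two reasons. First, after trivialising over $\piqp(\wZ)$ the module is \emph{not} a filtered colimit of finite products of $V={\rm LC}(\LL(\A))$: the space $\LL(\A)$ has infinitely many connected components (indexed, via le lemme~\ref{gagm5}, by $\LL(\Q)$ itself), so a locally constant function on $\piqp(\wZ)\times\LL(\A)$ need not factor through a single finite quotient $X_n$ of $\piqp(\wZ)$ uniformly in the $\LL(\A)$-variable. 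The correct model is the product $\prod_{\gamma\in\LL(\Q)}\big({\rm LC}(\piqp(\wZ))\otimes{\rm LC}(\LL(\wZ)\times\LL(\R)_+)\big)$ --- this is exactly what the completed tensor product $\wotimes$ of lemme~\ref{scindage2} records --- and your $\varinjlim_n V^{X_n}$ is only a proper submodule of it. Second, even for an honest filtered colimit the commutation of $H^i(\LL(\Q),-)$ with $\varinjlim$ is not free: $\LL(\Q)\cong(\{\pm1\}\times\bigoplus_p\Z)^2$ is not finitely generated, and already $H^1(G,-)={\rm Hom}(G,-)$ on trivial modules fails to commute with filtered colimits when $G$ has infinite rank.

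The repair is not to separate base from fibre: the product model above exhibits ${\rm LC}(\UU(\A)\backslash\GG(\A))\otimes\eta$ as coinduced from the trivial subgroup of $\LL(\Q)$ (with coefficient module ${\rm LC}(\piqp(\wZ)\times\LL(\wZ))$, the twist by $\eta$ being absorbed as in le lemme~\ref{gagm6.1}), so Shapiro gives the vanishing for $i\geq1$ and identifies $H^0$ in one stroke; this is precisely what the paper does, the explicit change of variables $\phi^\dual(z,t)=\phi(tz,t)$ in its $i=0$ argument being the hands-on form of your ``cohomology commutes with induction''. A further small point: your trivialisation should carry the exponent $I_\infty=1$ on the fibre, as in lemme~\ref{scindage2}\,(ii), since $\UU(\R)\backslash\GG(\R)$ has two connected components while $\LL(\R)\times\piqp(\R)$ has four; this is harmless for the final statement because ${\rm Ind}_{\BB(\A)}^{\GG(\A)}M={\rm Ind}_{\BB(\A)}^{\GG(\A)}M^{I_\infty=1}$ (rem.~\ref{indu3}), but it does change the intermediate coefficient module.
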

\begin{proof}
La factorisation du (ii) du lemme~\ref{scindage2} alli\'ee au lemme~\ref{gagm6}
fournit le r\'esultat pour $i\geq 1$.

Pour $i=0$,
le membre de gauche s'identifie \`a l'espace des $\phi:\GG(\A)\times\LL(\A)\to L$
v\'erifiant $\phi(x,y\ell)=\phi(x,y)$ si $\ell\in\LL(\A)$ (et donc $\phi(x,y)=\phi(x,1)$
et la seconde variable ne joue pas vraiment de r\^ole), $\phi(u^{-1}x,y)=\phi(x,y)$
si $u\in\UU(\A)$, et $\eta(\lambda)\phi(\lambda^{-1}x,y)=\phi(x,y)$ si $\lambda\in\LL(\Q)$;
l'action de $\GG(\A)$ \'etant $(g\star\phi)(x,y)=\phi(xg,y)$.

Posons $\phi^\dual(z,t)=\phi(tz,t)$ (et donc $\phi(x,y)=\phi^\dual(y^{-1}x,y)$).
Les conditions ci-dessus deviennent $\phi^\dual(\ell^{-1}z,t\ell)=\phi^\dual(z,t)$ si $\ell\in\LL(\A)$,
$\phi^\dual(t^{-1}utz,t)=\phi^\dual(z,t)$ si $u\in\UU(\A)$ (comme $u\mapsto t^{-1}ut$ est
un isomorphisme de $\UU(\A)$, cette condition \'equivaut \`a 
$\phi^\dual(u^{-1}z,t)=\phi^\dual(z,t)$ si $u\in\UU(\A)$), et
$\eta(\lambda)\phi^\dual(\lambda^{-1}z,t)=\phi^\dual(z,t)$ si $\lambda\in\LL(\Q)$
(compte-tenu de la premi\`ere condition, et de la commutativit\'e de $\LL$,
cette condition \'equivaut \`a $\eta(\lambda)\phi^\dual(z,\lambda^{-1}t)=\phi^\dual(z,t)$ 
si $\lambda\in\LL(\Q)$);
l'action de $\GG(\A)$ restant $(g\star\phi^\dual)(z,t)=\phi^\dual(zg,y)$.

La derni\`ere condition se traduit par $\phi^\dual\in 
{\rm LC}(\GG(\A))\otimes H^0(\LL,{\rm LC}\otimes\eta)$, et on a
$H^0(\LL,{\rm LC}\otimes\eta)=\eta_\A H^0(\LL,{\rm LC})$, 
cf.~lemme~\ref{gagm6.1}.
Les deux premi\`eres conditions se traduisent par l'invariance par $\BB(\A)$, pour
l'action $(b*\phi^\dual)(z,t)=\phi(b^{-1}z,t\overline b)$ o\`u $b\mapsto \overline b$
est la projection naturelle $\BB\to\LL$.  On reconnait la d\'efinition
de l'induite, ce qui permet de conclure. 
\end{proof}

\Subsection{Le borel}

\Subsubsection{Cohomologie \`a valeurs dans des espaces fonctionnels sur $\BB(\A)$>}
\begin{prop}\phantomsection\label{gagm7}
On a:
\begin{align*}
H^i(\BB(\Q),{\cal C}(\BB(\A)))&\cong \begin{cases}
{\cal C}(\LL(\wZ)) &{\text{si $i=0$}}\\
0&{\text{si $i\geq 1$.}}\end{cases}\\
H^i(\BB(\Q),{\rm LC}(\BB(\A)))&\cong\begin{cases}
{\rm LC}(\LL(\wZ)) &{\text{si $i=0$}}\\
{\rm LC}(\LL(\wZ))\otimes(\delta_\A\otimes\delta_\A^{-1}) &{\text{si $i=1$}}\\
0&{\text{si $i\geq 2$.}}\end{cases}
\end{align*}
l'action de
$\BB(\A)$ se faisant \`a travers $\LL(\A)$ qui agit sur
${\cal C}(\LL(\wZ))$ et ${\rm LC}(\LL(\wZ))$
\`a travers $\LL(\A)/\LL(\R)_+\LL(\Q)\cong\LL(\wZ)$.
\end{prop}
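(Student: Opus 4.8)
The plan is to run the Hochschild--Serre spectral sequence attached to the extension $1\to\UU(\Q)\to\BB(\Q)\to\LL(\Q)\to1$,
$$E_2^{p,q}=H^p\big(\LL(\Q),\,H^q(\UU(\Q),X(\BB(\A)))\big)\Longrightarrow H^{p+q}(\BB(\Q),X(\BB(\A))),$$
feeding in the computation for $\UU={\bf G}_a$ (Prop.~\ref{gagm3}) on the inner cohomology and the one for the torus $\LL={\bf G}_m^2$ (Prop.~\ref{gagm6}) on the outer one. First I would fix coordinates $\BB(\A)\cong\UU(\A)\times\LL(\A)$, sending $\matrice{a}{b}{0}{d}$ to $(u,\ell)$ with $u=b/d$ and $\ell=(a,d)$. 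In these coordinates left translation by $\UU(\Q)$ is $u\mapsto u+c$ and fixes $\ell$, right translation by $\UU(\A)$ is $u\mapsto u+(a/d)s$ and fixes $\ell$, while right translation by $\LL(\A)$ fixes $u$ and right-translates $\ell$; in particular $\UU(\A)$ acts trivially on every cohomology group, so the residual $\BB(\A)$-action factors through $\LL(\A)$, as the statement requires.

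For the inner cohomology $\UU(\Q)$ acts only through the $\UU(\A)$-variable. The ${\bf G}_a$-decomposition $\UU(\A)=\UU(\Q)\cdot\UU(\wZ)$ of Lemma~\ref{gagm1}, carrying $\LL(\A)$ along as coefficients, gives $X(\BB(\A))\cong{\rm Ind}_{\UU(\Z)}^{\UU(\Q)}X(\UU(\wZ)\times\LL(\A))$, so by Shapiro $H^q(\UU(\Q),X(\BB(\A)))\cong H^q(\Z,X(\wZ\times\LL(\A)))$, with $\Z=\UU(\Z)$ acting by translation on $\wZ=\UU(\wZ)$. I would then rerun the computation of Prop.~\ref{gagm3} with $\LL(\A)$ kept as a parameter. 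For $X={\rm LC}$ this is immediate from the explicit $e_{N,i}$-basis argument and yields $H^0\cong{\rm LC}(\LL(\A))$, $H^1\cong{\rm LC}(\LL(\A))$, and $0$ in higher degrees; for $X={\cal C}$ the same d\'evissage gives $H^0\cong{\cal C}(\LL(\A))$ and $H^{\geq1}=0$.

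Next I would pin down the $\LL(\Q)$-action on the inner $H^1$, which is where the twist is born. By Remark~\ref{gagm4} a generator is the class of the cocycle $\matrice{1}{r}{0}{1}\mapsto r\,f$, where $f\in{\rm LC}(\LL(\A))\cong H^0$ is viewed as the function $(u,\ell)\mapsto f(\ell)$ on $\BB(\A)$; and, exactly as in Lemma~\ref{gagm8}, an element $\gamma=\matrice{a}{0}{0}{d}\in\LL(\Q)$ sends a cocycle $v\mapsto\phi_v$ to $v\mapsto\gamma*\phi_{\gamma^{-1}v\gamma}$. Since $\gamma^{-1}\matrice{1}{r}{0}{1}\gamma=\matrice{1}{a^{-1}dr}{0}{1}$, the generator is sent to $\matrice{1}{r}{0}{1}\mapsto a^{-1}d\,r\,(\gamma*f)$. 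Hence the copy of ${\rm LC}(\LL(\A))$ inside $H^1(\UU(\Q),{\rm LC}(\BB(\A)))$ carries the $\LL(\Q)$-action twisted by the character $(a,d)\mapsto a^{-1}d$, whereas $H^0$ carries the untwisted action; one checks directly that the right $\UU(\A)$-action on this cocycle is trivial and the right $\LL(\A)$-action is plain right translation, so no further twist enters.

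Finally I would compute the outer cohomology and assemble. Prop.~\ref{gagm6} applies verbatim to $\LL={\bf G}_m^2$ (one still has $\LL(\Z)_+=\{1\}$), giving $H^p(\LL(\Q),X(\LL(\A)))\cong X(\LL(\wZ))$ for $p=0$ and $0$ for $p\geq1$, with $\LL(\A)$ acting through $\LL(\A)/\LL(\R)_+\LL(\Q)\cong\LL(\wZ)$; applying the twisting Lemma~\ref{gagm6.1} to the $H^1$-layer turns the character $(a,d)\mapsto a^{-1}d$ into its adelic avatar $\delta_\A\otimes\delta_\A^{-1}$, so that $E_2^{0,0}\cong{\rm LC}(\LL(\wZ))$ and $E_2^{0,1}\cong{\rm LC}(\LL(\wZ))\otimes(\delta_\A\otimes\delta_\A^{-1})$, all other $E_2^{p,q}$ vanishing. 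Being concentrated on the column $p=0$, the spectral sequence degenerates and gives the claimed values for ${\rm LC}$; the continuous case collapses to $E_2^{0,0}\cong{\cal C}(\LL(\wZ))$ with everything else zero. The main obstacle is the parametrized vanishing $H^{\geq1}(\Z,{\cal C}(\wZ\times\LL(\A)))=0$: one must check that the antiderivative built at each finite torsion level of the d\'evissage can be chosen to depend continuously on $\ell\in\LL(\A)$, which I would arrange by taking the canonical normalized antiderivative, locally constant in $\ell$ at each level and therefore glueing over $\LL(\A)$.
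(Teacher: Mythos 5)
Your proposal is correct and follows essentially the same route as the paper: the Hochschild--Serre spectral sequence for $1\to\UU(\Q)\to\BB(\Q)\to\LL(\Q)\to1$, the ${\bf G}_a$ computation for the inner cohomology, the conjugation twist $a^{-1}\otimes d$ on the inner $H^1$, and the prop.~\ref{gagm6} together with the lemme~\ref{gagm6.1} for the outer layer. The only difference is presentational: you keep $\LL(\A)$ as an explicit parameter and rerun the argument of the prop.~\ref{gagm3} with coefficients (rightly flagging the continuity of the antiderivative in $\ell$), whereas the paper factors the coefficient module as ${\cal C}(\UU(\A))\wotimes{\cal C}(\LL(\A))$ and quotes the prop.~\ref{gagm3} directly --- the same parametrized vanishing underlies both.
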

\begin{proof}
On utilise la suite spectrale de Hochschild-Serre pour 
$$1\mapsto \UU(\Q)\to\BB(\Q)\to\LL(\Q)\to 1$$
La d\'ecomposition $\BB=\UU\LL$ fournit des factorisations
\begin{align*}
{\rm LC}(\BB(\A))&\cong{\rm LC}(\UU(\A))\otimes{\rm LC}(\LL(\A))\\
{\cal C}(\BB(\A))&\cong{\cal C}(\UU(\A))\wotimes{\cal C}(\LL(\A))
\end{align*}
Compte-tenu des~prop.\,\ref{gagm3} et~\ref{gagm6}, les seuls $H^i(\LL(\Q),H^j(\UU(\Q),-))$ non nuls
sont $i=j=0$ pour ${\cal C}$ et ${\rm LC}$, et $i=0,j=1$ pour ${\rm LC}$.

On en d\'eduit le r\'esultat pour les $H^0$ via la prop.\,\ref{gagm6}, 
puisque les $H^0(\UU(\Q),X(\UU(\A)))$ sont \'egaux \`a $L$,
avec action triviale de $\LL(\Q)$ (prop.\,\ref{gagm3}).  Par contre, l'action de $\LL(\Q)$ sur
$H^1(\UU(\Q),{\rm LC}(\UU(\A)))\cong L$ n'est pas triviale: $\gamma$ agit sur un $1$-cocycle
$u\mapsto\phi_u$ en l'envoyant sur le $1$-cocycle $u\mapsto \gamma*\phi_{\gamma^{-1}u\gamma}$, et comme
$\matrice{a^{-1}}{0}{0}{d^{-1}}\matrice{1}{r}{0}{1}\matrice{a}{0}{0}{d}=\matrice{1}{a^{-1}dr}{0}{1}$,
l'action de $\matrice{a}{0}{0}{d}$ est la multiplication par $a^{-1}d$ puisque $\gamma*{\bf 1}_\A={\bf 1}_\A$.
Notons $a^{-1}\otimes d$ 
le caract\`ere $\matrice{a}{0}{0}{d}\mapsto a^{-1}d$. Il r\'esulte de ce qui pr\'ec\`ede
que 
$$H^1(\BB(\Q),{\rm LC}(\BB(\A)))\cong H^0(\LL(\Q),{\rm LC}(\LL(\A))
\otimes L(a^{-1}\otimes d))$$
et le r\'esultat est une cons\'equence du lemme~\ref{gagm6.1} et du fait que
le caract\`ere de $\LL(\A)$ associ\'e \`a $a^{-1}\otimes d$ est
$\delta_\A\otimes\delta_\A^{-1}$.
\end{proof}

\begin{rema}\phantomsection\label{gagm2.1}
La m\^eme preuve montre, en utilisant la rem.\,\ref{gagm2}, que
$$H^1(\BB(\Q),{\rm LC}(\BB(\A),\O_L))\cong
{\rm LC}(\LL(\wZ),L)\otimes(\delta_\A\otimes\delta_\A^{-1})$$
\end{rema}

\Subsubsection{Induction de $\BB$ \`a $\GG$}\label{indu0}
\begin{lemm}\phantomsection\label{indu1}
Si $r\leq s$,
${\rm Ind}_\BB^\GG(a^r\otimes d^s)={\rm Sym}^{s-r}\otimes\det^r$.
\end{lemm}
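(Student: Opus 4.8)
The plan is to realize $V:={\rm Ind}_\BB^\GG(a^r\otimes d^s)$ concretely as the space of algebraic functions $f\colon\GG(L)\to L$ satisfying $f(bg)=(a^r\otimes d^s)(b)\,f(g)$ for all $b\in\BB(L)$, the group $\GG$ acting by right translation $(g\cdot f)(x)=f(xg)$. Since $\BB\backslash\GG\cong\PP^1$ through $\matrice{\alpha}{\beta}{\gamma}{\delta}\mapsto[\gamma:\delta]$ (the lower row, on which $\BB$ acts by a scalar), such an $f$ is a section of a line bundle on $\PP^1$; it is therefore finite-dimensional, and everything comes down to computing its dimension and the resulting $\GG$-action.

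First I would restrict $f$ to the big cell via the section $\sigma_0(z)=\matrice{1}{0}{z}{1}$ and set $P(z)=f(\sigma_0(z))$; as the big cell is dense, $f\mapsto P$ is injective. On the complementary chart I use $\sigma_\infty(w)=\matrice{0}{-1}{1}{w}$, and a one-line matrix identity gives $\sigma_\infty(1/z)=\matrice{z}{-1}{0}{1/z}\,\sigma_0(z)$ with $\matrice{z}{-1}{0}{1/z}\in\BB$ of $(a^r\otimes d^s)$-value $z^{r-s}$. The covariance then forces $f(\sigma_\infty(w))=w^{s-r}P(1/w)$, so regularity of $f$ at $z=\infty$ is equivalent to $P$ being a polynomial of degree $\le s-r$. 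This is exactly where $r\le s$ is needed (for $s<r$ one gets $V=0$), and it yields $\dim V=s-r+1$ with $f\mapsto P$ an isomorphism onto polynomials of degree $\le s-r$.

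Next I would transport the action. Writing $g=\matrice{\alpha}{\beta}{\gamma}{\delta}$ and factoring $\sigma_0(z)g=b'\,\sigma_0(z')$ with $z'=\frac{\alpha z+\gamma}{\beta z+\delta}$ and $b'\in\BB$ whose diagonal is $\big(\det g/(\beta z+\delta),\,\beta z+\delta\big)$, the covariance gives $(g\cdot P)(z)=(\det g)^r(\beta z+\delta)^{\,s-r}\,P\big(\tfrac{\alpha z+\gamma}{\beta z+\delta}\big)$. This is precisely the standard ${\rm Sym}^{s-r}$-action twisted by $\det^r$. To be sure the determinant twist is $\det^r$ and not a different power (e.g.\ to avoid confusing the standard representation with its dual), I would read off the torus weights: $t=\matrice{a}{0}{0}{d}$ sends $z^k$ to $a^{r+k}d^{s-k}z^k$, so the weights of $V$ are $a^rd^s,a^{r+1}d^{s-1},\dots,a^sd^r$, with highest weight $a^sd^r$. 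These are exactly the weights of ${\rm Sym}^{s-r}\otimes\det^r$, which therefore is $V$.

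The difficulty here is bookkeeping rather than conceptual: fixing the conventions (left $\BB$-covariance versus right $\GG$-action, and the direction of the M\"obius transformation) so that the twist comes out as $\det^r$, and running the regularity argument at $z=\infty$ that simultaneously forces $r\le s$ and computes $\dim V$. As a cross-check I would invoke Frobenius reciprocity: for $W={\rm Sym}^n\otimes\det^m$ the $\UU$-coinvariants $W_\UU$ form the line on which $\LL$ acts by $a^m\otimes d^{n+m}$, so $\operatorname{Hom}_\GG(W,V)\cong\operatorname{Hom}_\LL(W_\UU,a^r\otimes d^s)$ is nonzero exactly when $n=s-r$ and $m=r$; complete reducibility of algebraic representations in characteristic $0$ then recovers the same answer.
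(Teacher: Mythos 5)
Your proof is correct and follows essentially the same route as the paper: both realize ${\rm Ind}_\BB^\GG(a^r\otimes d^s)$ inside the regular functions on $\GG$ and identify it with polynomials of degree $\le s-r$, the $\det^r$ twist coming from the covariance. The only difference is presentational — the paper works in homogeneous coordinates, writing $P=(xt-yz)^r\tilde P$ and showing $\tilde P$ is homogeneous of degree $s-r$ in the bottom-row variables, whereas you dehomogenize to the affine chart of $\PP^1$ and check regularity at infinity (and add a weight/Frobenius-reciprocity cross-check), which amounts to the same computation.
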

\begin{proof}
On a $\matrice{a}{b}{0}{d}\matrice{x}{y}{z}{t}=\matrice{ax+bz}{ay+bt}{dz}{dt}$.
Il s'ensuit que ${\rm Ind}_\BB^\GG(a^r\otimes d^s)$ est l'espace
des $\phi\in L[x,y,z,t,(xt-yz)^{-1}]$ v\'erifiant
$P({ax+bz},{ay+bt},{dz},{dt})=a^rd^sP(x,y,z,t)$ pour tout $\matrice{a}{b}{0}{d}\in\BB$.
On peut \'ecrire $P=(xy-zt)^r\tilde P$ et on a
$\tilde P({ax+bz},{ay+bt},{dz},{dt})=d^{s-r}\tilde P(x,y,z,t)$ pour tout $\matrice{a}{b}{0}{d}$.
Utilisant cette identit\'e pour $\matrice{a}{0}{0}{1}$ prouve que
$\tilde P$ est constant en $x$ et $y$, et pour $\matrice{1}{0}{0}{d}$, que $\tilde P$
est homog\`ene de degr\'e total $s-r$ en $z$ et $t$.  Le r\'esultat s'en d\'eduit.
\end{proof}

\begin{lemm}\phantomsection\label{indu2}
Si $V$ est une repr\'esentation lisse de $\BB(\A)$
et si $r\leq s\in\Z$,
alors\footnote{L'induite du membre de gauche est l'induite localement alg\'ebrique;
celle du membre de droite est l'induite lisse.}
$${\rm Ind}_{\BB(\A)}^{\GG(\A)}\big(V\otimes(a_p^r\otimes d_p^s)\big)=
\big({\rm Ind}_{\BB(\A)}^{\GG(\A)} V\big)
\otimes {\rm Sym}^{s-r}_p\otimes{\det}^r_p$$ 
o\`u $\GG(\A)$ agit sur
${\rm Sym}^{s-r}_p$ et $\det^r_p$ \`a travers $\GG(\Q_p)$, et ${\rm Sym}^{s-r}_p$ et $\det^r_p$
sont les repr\'esentations alg\'ebriques usuelles de $\GG(\Q_p)$, et
$a_p^r\otimes d_p^s$ est le caract\`ere $\matrice{a}{b}{0}{d}\mapsto a_p^rd_p^s$ de $\B(\A)$.
\end{lemm}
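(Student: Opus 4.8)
The plan is to realise both sides as spaces of functions on $\GG(\A)$ and to write down an explicit isomorphism given by pointwise multiplication. By lemma~\ref{indu1} the algebraic representation $W:={\rm Sym}^{s-r}_p\otimes\det^r_p$ is the algebraic induction ${\rm Ind}_\BB^\GG(a^r\otimes d^s)$; thus its elements are the algebraic functions $w\colon\GG(\Q_p)\to L$ satisfying $w(\matrice{a}{b}{0}{d}g)=a^rd^s\,w(g)$, with $\GG(\Q_p)$ acting by right translation, and I pull them back to $\GG(\A)$ through $\GG(\A)\to\GG(\Q_p)$. An element $f$ of ${\rm Ind}_{\BB(\A)}^{\GG(\A)}V$ is a smooth map $f\colon\GG(\A)\to V$ with $f(bg)=b\cdot f(g)$. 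I would then study the multiplication map
$$\Theta\colon\big({\rm Ind}_{\BB(\A)}^{\GG(\A)}V\big)\otimes W\lra{\rm Ind}_{\BB(\A)}^{\GG(\A)}\big(V\otimes(a_p^r\otimes d_p^s)\big),\qquad f\otimes w\mapsto\big(g\mapsto f(g)\,w(g)\big).$$
Morally $\Theta$ is the projection formula ${\rm Ind}(V\otimes{\rm Res}_\BB W)\cong({\rm Ind}V)\otimes W$, but since locally algebraic induction is not exact it is cleaner to argue with $\Theta$ directly rather than through the $\BB(\A)$-filtration of ${\rm Res}_\BB W$.

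That $\Theta$ is well defined and $\GG(\A)$-equivariant is routine: from $w(bg)=(a_p^r\otimes d_p^s)(b)\,w(g)$ and $f(bg)=b\cdot f(g)$ one gets $\Theta(f\otimes w)(bg)=b\cdot\Theta(f\otimes w)(g)$, the character twist on $a_p^r\otimes d_p^s$ matching exactly the diagonal $\BB(\A)$-action on $V\otimes(a_p^r\otimes d_p^s)$; the product of the smooth function $f$ with the algebraic function $w$ is locally algebraic, so $\Theta(f\otimes w)$ lies in the locally algebraic induction, and equivariance is immediate because right translation is multiplicative.

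For injectivity I would write an element as a finite sum $\sum_j f_j\otimes w_j$ with the $w_j$ linearly independent in $W$. If $\sum_j f_j(g)\,w_j(g)=0$ for all $g$, I fix the coordinates away from $p$, apply a linear form of $V$, and restrict to a small open subset of $\GG(\Q_p)$ on which the (smooth) scalar functions $\lambda(f_j(\cdot))$ are constant; linear independence of the algebraic functions $w_j$ on a Zariski-dense open then forces these constants, hence all the $f_j$, to vanish.

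The surjectivity is the main obstacle and the only place where the hypothesis $r\le s$ is used. By~(\ref{scindage1}) and the connectedness of $\GG(\R)_+$, the sole direction carrying any algebraicity is $\piqp(\Q_p)$, so I reduce to the local statement that the locally algebraic sections over the compact space $\piqp(\Q_p)$ of the $\GG$-equivariant line bundle $\mathcal L$ attached to $a^r\otimes d^s$ are exactly $C^\infty(\piqp(\Q_p))\otimes W$. The crux is a degree bound: a locally algebraic vector is contained in a finite-dimensional subspace on which a small compact subgroup acts algebraically, and in the principal-series model a local polynomial of degree $>s-r$ would acquire poles under translation, so the local algebraic degree is at most $s-r=\deg\mathcal L$. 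For $s\ge r$ the global sections $W=H^0(\piqp,\mathcal L)$ already exhaust the local sections of degree $\le s-r$, so a locally constant partition of unity on $\piqp(\Q_p)$ expresses any such section as a finite $C^\infty$-combination of elements of $W$, that is, as an element of the image of $\Theta$; this proves surjectivity. (For $r>s$ the bundle has negative degree, there are no nonzero such sections, and both sides vanish, which is why one restricts to $r\le s$.)
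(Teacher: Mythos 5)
Your argument is correct, but it follows a genuinely different route from the paper's. The paper proves the lemma by a pure density argument: it realises the right-hand side as the fixed points of $\BB(\A)\times\BB$ (the second factor viewed as an algebraic group with its Zariski topology) acting on ${\rm LC}(\GG(\A))\otimes V\otimes{\rm Alg}(\GG)\otimes(a^r\otimes d^s)$, realises the left-hand side as the fixed points of the image of $\BB(\A)$ under ${\rm id}\times(x\mapsto x_p)$ in that product, and concludes from the density of this image (i.e.\ from the Zariski density of $\BB(\Q_p)$ dans $\BB$); the hypothesis $r\le s$ is entirely absorbed into the lemme~\ref{indu1}. You instead exhibit the explicit multiplication map $\Theta$ and check bijectivity by hand. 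Your injectivity step is fine (a nonzero element of $W$ is a nonzero algebraic function, hence cannot vanish on a $p$-adic open, which is Zariski dense). Your surjectivity step rests on the degree bound, and that bound is indeed correct and is the real content: a local polynomial piece $h$ of degree $k>s-r$ on the chart $z\mapsto\matrice{1}{0}{z}{1}$ extends by the transformation law to $(\det g)^r\,t^{s-r}h(z'/t)$ on the corresponding open of $\GG(\Q_p)$, which has a pole in $t$ and therefore cannot agree with an element of ${\rm Alg}(\GG)=L[x,y,z,t,(xt-yz)^{-1}]$ on a $p$-adically open (hence Zariski-dense) set; so the local degree is at most $s-r$ and, for $s\ge r$, the global sections $W=H^0(\piqp,{\cal O}(s-r))$ plus a locally constant partition of unity exhaust everything. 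What each approach buys: the paper's is three lines and conceptually clean, a projection formula by density; yours is longer but makes visible exactly where $r\le s$ and the finite-dimensionality of $H^0(\piqp,{\cal O}(s-r))$ enter, and it explains the vanishing for $r>s$ (rem.~\ref{indu1.1}) as the absence of sections of a negative-degree bundle. The only part I would ask you to flesh out is the surjectivity paragraph, which is written as a plan: the bookkeeping between the two charts of $\piqp(\Z_p)$ and the verification that the away-from-$p$ and $V$-valued data contribute only locally constant coefficients should be written down, but there is no gap of substance there.
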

\begin{proof}
On a ${\rm LP}(\GG(\A))={\rm LC}(\GG(\A))\otimes {\rm Alg}(\GG)$.
Le membre de droite est l'espace des points fixes
de $\big({\rm LC}(\GG(\A))\otimes V\big)
\otimes\big({\rm Alg}(\GG)\otimes (a^r\otimes b^s)\big)$
sous l'action de $\BB(\A)\times\BB$ (o\`u $\BB$ est consid\'er\'e comme un groupe
alg\'ebrique et muni de la topologie de Zariski),
et le membre de gauche est celui des points fixes sous l'action de $\BB(\A)$
s'envoyant dans $\BB(\A)\times\BB$ par l'application ${\rm id}\times (x\mapsto x_p)$.
Ces deux espaces co\"{\i}ncident car l'image de $\BB(\A)$ est dense dans 
$\BB(\A)\times\BB$.
\end{proof}
\begin{rema}\phantomsection\label{indu1.1}
Si $r> s$, alors
$${\rm Ind}_\BB^\GG(a^r\otimes d^s)=0
\quad{\rm et}\quad
{\rm Ind}_{\BB(\A)}^{\GG(\A)}\big(V\otimes(a_p^r\otimes d_p^s)\big)=0$$ 
En effet, en \'ecrivant $P=(xy-zt)^s\tilde P$,
on obtient $\tilde P({ax+bz},{ay+bt},{dz},{dt})=a^{r-s}\tilde P(x,y,z,t)$ 
pour tout $\matrice{a}{b}{0}{d}$, ce qui implique que $\tilde P$ est constant
en $x,y$ et homog\`ene de degr\'e~$r-s$ en $x,y$.
\end{rema}

\begin{rema}\phantomsection\label{indu3}
Si $M$ est une $L$-repr\'esentation continue de $\BB(\A)$, 
alors 
$${\rm Ind}_{\BB(\A)}^{\GG(\A)}M={\rm Ind}_{\BB(\A)}^{\GG(\A)}M^{I_\infty=1}$$
En effet, $M=M^{I_\infty=1}\oplus M^{I_\infty=-1}$, et comme $I_\infty\in\GG(\R_+)$,
on a $\phi((I_\infty)x)=\phi(x)$ si $\phi:\GG(\A)\to M$ est continue. Par ailleurs,
comme $I_\infty\in\B(\A)$, on a $\phi((I_\infty)x)=\pm\phi(x)$ si $\phi\in
{\rm Ind}_{\BB(\A)}^{\GG(\A)}M^{I_\infty=\pm}$.  Il en r\'esulte
que ${\rm Ind}_{\BB(\A)}^{\GG(\A)}M^{I_\infty=-1}=0$.

De m\^eme, comme $\BB(\wZ)\backslash\GG(\wZ)\cong \BB(\Ai)\backslash\GG(\Ai)$,
$${\rm Res}_{\GG(\A)}^{\GG(\wZ)}\,{\rm Ind}_{\BB(\A)}^{\GG(\A)}M\cong
{\rm Ind}_{\BB(\wZ)}^{\GG(\wZ)}{\rm Res}_{\BB(\A)}^{\BB(\wZ)}M^{I_\infty}$$
\end{rema}

\Subsubsection{Cohomologie de $\BB(\Q)$ \`a valeurs dans des induites}

\begin{prop}\phantomsection\label{indu5}
On a des isomorphismes de repr\'esentations de $\GG(\A)$
$$H^i(\BB(\Q),{\cal C}(\GG(\A)))\cong\begin{cases}
{\rm Ind}_{\BB(\A)}^{\GG(\A)}{\cal C}(\LL(\wZ))&{\text{si $i=0$,}}\\
0 &{\text{si $i\geq 1$.}}\end{cases}$$
l'action de $\BB(\A)$ sur ${\cal C}(\LL(\wZ))$ se faisant \`a travers
le quotient $\LL(\wZ)\cong \LL(\A)/\LL(\R)_+\LL(\Q)$ de $\BB(\A)$.
\end{prop}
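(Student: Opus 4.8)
The plan is to compute $H^i(\BB(\Q),{\cal C}(\GG(\A)))$ by combining the factorization of Lemma~\ref{scindage2}(i) with the cohomology of $\BB(\Q)$ acting on spaces of functions on $\BB(\A)$, already computed in Proposition~\ref{gagm7}. The key observation is the $\BB(\A)$-equivariant factorization
$$
{\cal C}(\GG(\A))\cong{\cal C}(\piqp(\wZ))\wotimes{\cal C}(\BB(\A))^{I_\infty=1},
$$
in which $\BB(\A)$ (hence $\BB(\Q)$) acts trivially on the first factor ${\cal C}(\piqp(\wZ))$. Since $\BB(\Q)$ acts only through the second factor, I would pull the constant factor ${\cal C}(\piqp(\wZ))$ out of the cohomology, so that
$$
H^i(\BB(\Q),{\cal C}(\GG(\A)))\cong {\cal C}(\piqp(\wZ))\wotimes H^i\big(\BB(\Q),{\cal C}(\BB(\A))^{I_\infty=1}\big).
$$
By Proposition~\ref{gagm7}, the cohomology of $\BB(\Q)$ on ${\cal C}(\BB(\A))$ is concentrated in degree $0$, where it is ${\cal C}(\LL(\wZ))$, and vanishes for $i\geq 1$. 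This immediately forces vanishing for $i\geq 1$ in the statement, and identifies the degree-$0$ term with ${\cal C}(\piqp(\wZ))\wotimes{\cal C}(\LL(\wZ))^{I_\infty=1}$, where the action of $\BB(\A)$ factors through $\LL(\wZ)\cong\LL(\A)/\LL(\R)_+\LL(\Q)$, exactly as asserted.

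The remaining task is to recognize this degree-$0$ answer as the induced representation ${\rm Ind}_{\BB(\A)}^{\GG(\A)}{\cal C}(\LL(\wZ))$ and, crucially, to track the full $\GG(\A)$-action (not merely the $\BB(\A)$-action visible so far). Here I would invoke the description of the restriction of an induced representation to $\GG(\wZ)$ from Remark~\ref{indu3}: since $\BB(\wZ)\backslash\GG(\wZ)\cong\BB(\Ai)\backslash\GG(\Ai)\cong\piqp(\wZ)$ by \eqref{scindage1}, the induced space ${\rm Ind}_{\BB(\A)}^{\GG(\A)}{\cal C}(\LL(\wZ))$ restricted to $\GG(\wZ)$ is ${\rm Ind}_{\BB(\wZ)}^{\GG(\wZ)}{\cal C}(\LL(\wZ))^{I_\infty=1}$, which as a space of functions on $\piqp(\wZ)$ matches the $\wotimes$-factorization above. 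The isomorphism of $\BB(\A)$-modules thus lifts to an isomorphism of $\GG(\A)$-modules because both sides carry the unique $\GG(\A)$-action extending the common $\GG(\wZ)\times\BB(\A)$-structure, the point being that $\GG(\A)=\BB(\Ai)\cdot\GG(\wZ)\cdot\GG(\R)$ so the $\GG(\A)$-action is determined by its restrictions to $\BB(\A)$ and to $\GG(\wZ)$.

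The main obstacle I anticipate is not the vanishing in degrees $i\geq 1$, which is essentially formal once the factorization is in place, but rather the careful bookkeeping of the full $\GG(\A)$-equivariance in degree $0$. The subtlety is that the factorization of Lemma~\ref{scindage2}(i) is only asserted to be $\BB(\A)$-equivariant, whereas the target ${\rm Ind}_{\BB(\A)}^{\GG(\A)}{\cal C}(\LL(\wZ))$ is a genuine $\GG(\A)$-module; one must verify that the right-translation action of the full group $\GG(\A)$ on $H^0(\BB(\Q),{\cal C}(\GG(\A)))$ is transported correctly to the induction under the identification, and in particular that the action of $\GG(\A)$ permutes the fibers over $\piqp(\wZ)$ in the manner prescribed by the induction. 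I would handle this by checking equivariance directly on the level of functions via the decomposition $\GG(\A)=\BB(\Ai)\times\piqp(\wZ)\times\GG(\R)$ from Lemma~\ref{scindage2}, exactly as in the $i=0$ computation of Lemma~\ref{indu61}, where the analogous recognition of an induced representation was carried out by passing to the coordinate $\phi^\dual$.
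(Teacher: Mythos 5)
Your computation of the vanishing for $i\geq 1$ is exactly the paper's: the factorisation du lemme~\ref{scindage2}\,(i) plus la prop.\,\ref{gagm7}. Where you diverge is in degree $0$. The paper writes ${\cal C}(\GG(\A))={\rm Ind}_{\BB(\A)}^{\GG(\A)}{\cal C}(\BB(\A))=H^0(\BB(\A),{\cal C}(\GG(\A)\times\BB(\A)))$ for the commuting right actions of $\BB(\A)$ and $\BB(\Q)$ on $\GG(\A)\times\BB(\A)$, then simply permutes the two $H^0$'s to get $H^0(\BB(\Q),{\cal C}(\GG(\A)))={\rm Ind}_{\BB(\A)}^{\GG(\A)}H^0(\BB(\Q),{\cal C}(\BB(\A)))$ and concludes by la prop.\,\ref{gagm7}; the advantage of that route is that the full $\GG(\A)$-structure is carried along for free, since $\GG(\A)$ acts on the first factor of $\GG(\A)\times\BB(\A)$ throughout. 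You instead identify the underlying space as ${\cal C}(\piqp(\wZ))\wotimes{\cal C}(\LL(\wZ))^{I_\infty=1}$ and match it against the restriction of the induced representation to $\GG(\wZ)$ via la rem.\,\ref{indu3} and~(\ref{scindage1}), recovering the $\GG(\A)$-equivariance from the separate $\BB(\A)$- and $\GG(\wZ)$-equivariances. That works, and your $I_\infty$-bookkeeping is consistent, but it does leave a verification that the paper's argument avoids: you must exhibit a single map that is simultaneously equivariant for both subgroups (the two identifications you quote are a priori unrelated), and you must note that $\BB(\A)$ and $\GG(\wZ)$ only generate $\GG(\Ai)\times\BB(\R)$, so that the reduction to these two subgroups uses the fact (rem.\,\ref{coco3}) that $\GG(\R)_+$ acts trivially on both sides. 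Your proposal to carry out the explicit check in the style of the $\phi^\dual$ change of variables of the proof du lemme~\ref{indu61} would indeed close this; the paper's $H^0$-swap is the same idea packaged so that no such check is needed.
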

\begin{proof}
D'apr\`es le lemme~\ref{scindage2}, on a une factorisation
$\BB(\A)$-\'equivariante 
${\cal C}(\GG(\A))\cong{\cal C}(\piqp(\wZ))\wotimes{\cal C}(\BB(\A))^{I_\infty=1}$,
o\`u $\BB(\A)$ agit trivialement sur ${\cal C}(\piqp(\wZ))$.
Comme 
${\cal C}(\BB(\A))={\cal C}(\BB(\A))^{I_\infty=1}\oplus {\cal C}(\BB(\A))^{I_\infty=-1}$, et comme
$H^i(\BB,{\cal C})=0$ si $i\geq 1$ (prop.\,\ref{gagm7}),
cela prouve que $H^i(\BB(\Q),{\cal C}(\GG(\A)))=0$,
pour $i\geq 1$.

Passons au calcul de $H^0(\BB(\Q),{\cal C}(\GG(\A)))$. On a
$${\cal C}(\GG(\A))={\rm Ind}_{\BB(\A)}^{\GG(\A)}{\cal C}(\BB(\A))
=H^0(\BB(\A),{\cal C}(\GG(\A)\times\BB(\A)))$$
o\`u $b\in\BB(\A)$ agit (\`a droite) sur $\GG(\A)\times\BB(\A)$ par $b\cdot(x,y)=(b^{-1}x,yb)$.
Si $\beta\in\BB(\Q)$ agit (\`a droite) sur $\GG(\A)\times\BB(\A)$ 
par $\beta\cdot(x,y)=(x,\beta^{-1}y)$, on a:
\begin{align*}
H^0(\BB(\Q),{\cal C}(\GG(\A)))&=H^0(\BB(\Q),H^0(\BB(\A),{\cal C}(\GG(\A)\times\BB(\A))))\\
&=H^0(\BB(\A),H^0(\BB(\Q),{\cal C}(\GG(\A)\times\BB(\A))))\\
&={\rm Ind}_{\BB(\A)}^{\GG(\A)}H^0(\BB(\Q),{\cal C}(\BB(\A)))
\end{align*}
On conclut en utilisant la prop.\,\ref{gagm7}.
\end{proof}

\begin{prop}\phantomsection\label{indu6.0}
Si $k\in\N$ et $j\in\Z$, alors\footnote{$W_{k,j}={\rm Sym}^k\otimes\det^j$,
$\delta_\A$ et $|\ |_{\A,p}$ sont les caract\`eres de $\A^\dual$ d\'efinis
au \no\ref{prelim4},  en particulier $|\ |_{\A,p}$ 
est unitaire contrairement \`a $\delta_\A$,
et on a $|x|_{\A,p}=x_p\delta_\A(x)$.}
\begin{align*}
H^i(\BB(\Q),{\rm LC}(\GG(\A))\otimes (W_{k,j}\otimes W_{k,j}^\dual)) \cong
\begin{cases}
{\rm Ind}_{\BB(\A)}^{\GG(\A)}{\rm LC}(\LL(\wZ))_{k,j}&\text{si $i=0$,}\\
{\rm Ind}_{\BB(\A)}^{\GG(\A)}{\rm LC}(\LL(\wZ))'_{k,j}&\text{si $i=1$,}\\
0&\text{si $i\geq 2$.}\end{cases}
\end{align*}
O\`u ${\rm LC}(\LL(\wZ))_{k,j}$ et ${\rm LC}(\LL(\wZ))'_{k,j}$ sont les twists:
\begin{align*}
{\rm LC}(\LL(\wZ))_{k,j}&:={\rm LC}(\LL(\wZ))\otimes (|a|_{\A,p}^{-k-j}\otimes
|d|_{\A,p}^{-j})\\
{\rm LC}(\LL(\wZ))'_{k,j}&:={\rm LC}(\LL(\wZ))\otimes (|a|_{\A,p}^{-k-j}\otimes
|d|_{\A,p}^{-j})\otimes(\delta_\A\otimes\delta_\A^{-1})^{k+1}
\end{align*}
\end{prop}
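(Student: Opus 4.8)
The plan is to strip off the dual factor and then run the Hochschild--Serre spectral sequence for the semidirect product $\BB(\Q)=\UU(\Q)\rtimes\LL(\Q)$. Since $\GG(\Q)$, hence $\BB(\Q)$, acts trivially on $W_{k,j}^\dual$, one has $H^i(\BB(\Q),{\rm LC}(\GG(\A))\otimes W_{k,j}\otimes W_{k,j}^\dual)\cong H^i(\BB(\Q),M)\otimes W_{k,j}^\dual$ with $M:={\rm LC}(\GG(\A))\otimes W_{k,j}$, the factor $W_{k,j}^\dual$ carrying only the $\GG(\A)$-action through $\GG(\Q_p)$. I will compute $H^i(\BB(\Q),M)$ via $E_2^{p,q}=H^p(\LL(\Q),H^q(\UU(\Q),M))$ and reattach $W_{k,j}^\dual$ at the very end through lemma~\ref{indu2}.

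First I would compute the $\UU(\Q)$-cohomology. The projection $\GG(\A)\to\UU(\A)\backslash\GG(\A)$ is a left $\UU(\A)$-torsor, so a section yields a $\UU(\Q)$-equivariant factorisation ${\rm LC}(\GG(\A))\cong{\rm LC}(\UU(\A))\otimes{\rm LC}(\UU(\A)\backslash\GG(\A))$ in which $\UU(\Q)$ acts only on the first factor by left translation. Tensoring with $W_{k,j}$ and applying lemma~\ref{gagm8} to the first factor gives $H^q(\UU(\Q),M)\cong{\rm LC}(\UU(\A)\backslash\GG(\A))\otimes\eta_q$, where $\eta_0=a^{k+j}\otimes d^j$, $\eta_1=a^{j-1}\otimes d^{k+j+1}$, and $H^q=0$ for $q\geq 2$. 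The hard part is pinning down the residual $\LL(\Q)$-action. Because $\LL(\Q)$ normalises $\UU(\A)$ but the section is not $\LL(\Q)$-equivariant, the left $\LL(\Q)$-action is a priori only triangular: on the fibre variable it is conjugation by $\LL(\Q)$ composed with a right translation by a $\UU(\A)$-valued cocycle $c(\lambda,\bar g)$ depending on the base point. One must check that this cocycle acts trivially after passing to $\UU(\Q)$-cohomology; this holds because right translation by $\UU(\A)$ fixes the generators ${\bf 1}_{\UU(\A)}e_1^k$ and (the class of) ${\bf 1}_{\UU(\A)}e_2^k$ of the one-dimensional groups of lemma~\ref{gagm8}, the $\GG(\A)$-action on $W_{k,j}$ being trivial. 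The conjugation part then produces exactly the character $\eta_q$ of lemma~\ref{gagm8}, so that $H^q(\UU(\Q),M)$ is genuinely the tensor product of ${\rm LC}(\UU(\A)\backslash\GG(\A))$ (with its natural left action) by $\eta_q$. This is the main obstacle.

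Next I would apply $H^p(\LL(\Q),-)$. Lemma~\ref{indu61} with $\eta=\eta_q$ gives $E_2^{p,q}=0$ for $p\geq 1$ and $E_2^{0,q}={\rm Ind}_{\BB(\A)}^{\GG(\A)}\big(H^0(\LL,{\rm LC})\otimes(\eta_q)_\A\big)$, where $H^0(\LL,{\rm LC})={\rm LC}(\LL(\wZ))$ by prop.~\ref{gagm6}. Being concentrated in the column $p=0$, the spectral sequence degenerates, and hence $H^i(\BB(\Q),M)\cong{\rm Ind}_{\BB(\A)}^{\GG(\A)}\big({\rm LC}(\LL(\wZ))\otimes(\eta_i)_\A\big)$ for $i=0,1$ and $0$ for $i\geq 2$, which already gives the vanishing claimed in the proposition for $i\geq 2$.

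Finally I would reattach $W_{k,j}^\dual$ and match characters. Writing $W_{k,j}^\dual\cong{\rm Sym}^k\otimes{\det}^{-k-j}$ and using lemma~\ref{indu2} with $(s-r,r)=(k,-k-j)$ absorbs the dual into the induction as the $\BB(\A)$-character $a_p^{-k-j}\otimes d_p^{-j}$ acting through $\GG(\Q_p)$. It then remains to identify $(\eta_i)_\A\otimes(a_p^{-k-j}\otimes d_p^{-j})$ as a character of $\LL(\A)$. Using $\eta_\A=\delta_\A^{-m}$ for $\eta(r)=r^m$ (\no\ref{prelim4}) together with $|x|_{\A,p}=x_p\delta_\A(x)$, the $i=0$ character collapses to $|a|_{\A,p}^{-k-j}\otimes|d|_{\A,p}^{-j}$, which is exactly ${\rm LC}(\LL(\wZ))_{k,j}$; for $i=1$ the extra factor $\eta_1/\eta_0=a^{-k-1}\otimes d^{k+1}$ contributes $(\delta_\A\otimes\delta_\A^{-1})^{k+1}$, giving ${\rm LC}(\LL(\wZ))'_{k,j}$. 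This yields the three cases of the proposition.
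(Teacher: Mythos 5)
Your argument is correct and follows essentially the same route as the paper: strip off $W_{k,j}^\dual$, run Hochschild--Serre for $1\to\UU(\Q)\to\BB(\Q)\to\LL(\Q)\to 1$ with lemme~\ref{gagm8} and lemme~\ref{indu61}, then reinsert $W_{k,j}^\dual$ via le lemme~\ref{indu2} and match the characters. The only difference is that you spell out why the non-$\LL(\Q)$-equivariance of the section of $\GG(\A)\to\UU(\A)\backslash\GG(\A)$ is harmless on $\UU(\Q)$-cohomology (the $\UU(\A)$-valued correction acts by right translation, which fixes the generators built from ${\bf 1}_{\UU(\A)}$), a point the paper leaves implicit; your verification is correct and the character bookkeeping agrees with the paper's.
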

\begin{proof}
On utilise la suite spectrale de Hochschild-Serre associ\'ee
au d\'evissage $1\to\UU(\Q)\to\BB(\Q)\to\LL(\Q)\to 1$.  Par ailleurs,
$\BB(\Q)$ agit trivialement sur $W_{k,j}^\dual$ qui peut donc \^etre sorti
de la cohomologie.
D'apr\`es le lemme~\ref{gagm8}, $H^i(\UU(\Q),{\rm LC}(\GG(\A))\otimes W_{k,j})$
est $0$ pour $i\geq 2$ et de la forme ${\rm LC}(\UU(\A)\backslash\GG(\A))\otimes\eta_{k,j}^{(i)}$,
si $i=0,1$, o\`u $\eta_{k,j}^{(0)}=a^{k+j}\otimes d^j$ et
$\eta_{k,j}^{(1)}=a^{j-1}\otimes d^{k+j+1}$.  On peut donc utiliser le lemme~\ref{indu61}
pour en d\'eduire que $H^i(\BB(\Q),-)=H^0(\LL(\Q),H^i(\UU(\Q),-))$ (tous les autres $E_2$-termes
de la suite spectrale sont $0$),
et obtenir la formule (pour $i=0,1$)
$$H^i(\BB(\Q),{\rm LC}(\GG(\A))\otimes (W_{k,j}\otimes W_{k,j}^\dual))\cong
\big({\rm Ind}_{\BB(\A)}^{\GG(\A)}\big(H^0(\LL,{\rm LC})\otimes\eta_{k,j,\A}^{(i)}\big)\big)\otimes
W_{k,j}^\dual$$

Pour conclure, on \'ecrit $W_{k,j}^\dual=W_{k,-k-j}$ sous la forme
${\rm Ind}_{\BB(\Q_p)}^{\GG(\Q_p)}(a_p^{-k-j}\otimes d_p^{-j})$ et on utilise
la prop.\,\ref{gagm6} pour mettre $H^0(\LL,{\rm LC})$ sous la forme ${\rm LC}(\LL(\wZ))$,
le lemme~\ref{indu2} pour faire rentre $W_{k,-k-j}$ dans l'induite,
et les identit\'es
\begin{align*}
(a^{j+k}{\hskip.5mm\otimes\hskip.5mm}{d^j})\otimes(a_p^{-k-j}{\hskip.5mm\otimes\hskip.5mm} d_p^{-j})
&=|\ |_{\A,p}^{-k-j}{\hskip.5mm\otimes\hskip.5mm}|\ |_{\A,p}^{-j}\\
\hskip-2cm(a^{j-1}{\hskip.5mm\otimes\hskip.5mm}{d^{k+j+1}})\otimes(a_p^{-k-j}{\hskip.5mm\otimes\hskip.5mm} d_p^{-j})
&=\big(|\ |_{\A,p}^{-k-j}{\hskip.5mm\otimes\hskip.5mm}|\ |_{\A,p}^{-j}\big)
\otimes (\delta_\A{\hskip.5mm\otimes\hskip.5mm}\delta_\A^{-1})^{k+1}
\qedhere
\end{align*}
\end{proof}

Si $\pi$ est une $L$-repr\'esentation localement alg\'ebrique de $\GG(\A)$, on note
$\pi^\vee$ sa contragr\'ediente, i.e.~l'ensemble des vecteurs $\GG(\wZ)$-finis de
$\pi^\dual$. 
\begin{exem}\phantomsection\label{exindu}
(i)
Si $\eta_1,\eta_2:\Aidu\to L^\dual$ sont des caract\`eres lisses, alors
$$\big({\rm Ind}_{\BB(\A)}^{\GG(\A)}(\eta_1\otimes\eta_2)\big)^\vee\cong
{\rm Ind}_{\BB(\A)}^{\GG(\A)}(\eta_1^{-1}\delta_\A\otimes\eta_2^{-1}\delta_\A^{-1})$$
(ii) Si $\eta_1,\eta_2:\Aidu\to L^\dual$ sont des caract\`eres lisses,
et si $s\geq r$, alors
\begin{align*}
\big({\rm Ind}_{\BB(\A)}^{\GG(\A)}(\eta_1a_p^r{\hskip.3mm\otimes\hskip.3mm}\eta_2d_p^s)\big)^\vee&\cong
{\rm Ind}_{\BB(\A)}^{\GG(\A)}
(\eta_1^{-1}\delta_\A a_p^{-s}{\hskip.3mm\otimes\hskip.3mm}\eta_2^{-1}\delta_\A^{-1}d_p^{-r})\\
\big({\rm Ind}_{\BB(\A)}^{\GG(\A)}
(\eta_1\,|\ |_{\A,p}^r{\hskip.3mm\otimes\hskip.3mm}\eta_2\,|\ |_{\A,p}^s)\big)^\vee&\cong
{\rm Ind}_{\BB(\A)}^{\GG(\A)}\big((\eta_1^{-1}|\ |_{\A,p}^{-s}
{\hskip.3mm\otimes\hskip.3mm}\eta_2^{-1}|\ |_{\A,p}^{-r})
{\hskip.3mm\otimes\hskip.3mm}(\delta_\A\otimes\delta_\A^{-1})^{s-r+1}\big)
\end{align*}
Le (i) est classique, le (ii) s'en d\'eduit en utilisant le lemme~\ref{indu2} et 
l'identit\'e 
$$({\rm Sym}^{s-r}\otimes{\det}^r)^\dual\cong{\rm Sym}^{s-r}\otimes{\det}^{-s}$$
Remarquons que ces formules sont encore valables si $s<r$, mais alors les deux membres sont $0$
(cf.~rem.\,\ref{indu1.1}).
\end{exem}
\begin{theo}\phantomsection\label{indu6}
On les isomorphismes suivants de repr\'esentations de $\GG(\A)$:
\begin{align*}
H^i(\BB(\Q),{\rm LP}(\GG(\A))) \cong
\begin{cases}
{\rm Ind}_{\BB(\A)}^{\GG(\A)}{\rm LP}(\LL(\wZ))&\text{si $i=0$,}\\
\big({\rm Ind}_{\BB(\A)}^{\GG(\A)}{\rm LP}(\LL(\wZ))\big)^\vee&\text{si $i=1$,}\\
0&\text{si $i\geq 2$.}\end{cases}
\end{align*}
\end{theo}
\begin{proof}
Le cas $i=0$ r\'esulte de la prop.\,\ref{indu6.0} en prenant la somme directe sur $k\in\N$
et $j\in\Z$, en utilisant la d\'ecomposition
$${\rm LP}(\LL(\wZ))=\bigoplus\nolimits_{r,s\in\Z}{\rm LC}(\LL(\wZ)) 
\big(|\ |_{\A,p}^{r}\otimes |\ |_{\A,p}^{s}\big)$$
 et le
fait que ${\rm Ind}_{\BB(\A)}^{\GG(\A)}\big({\rm LC}(\LL(\wZ)) (|\ |_{\A,p}^{r}
\otimes |\ |_{\A,p}^{s})\big)=0$ si $s<r$.

Pour le cas $i=1$, on \'etend les scalaires \`a $\Qbar_p$ pour \'ecrire
\begin{equation}\phantomsection\label{indu6.7}
{\rm LP}(\LL(\wZ))=\bigoplus  \big(\eta_1|\ |_{\A,p}^{r}\otimes \eta_2|\ |_{\A,p}^{s}\big)
\end{equation}
o\`u
la somme porte sur $r,s\in\Z$ et sur $\eta_1,\eta_2$ caract\`eres lisses de $\wZ^\dual$ 
(vus comme
des caract\`eres de $\Aidu/\R_+^\dual\Q^\dual$).  On utilise alors la prop.\,\ref{indu6.0}
et le (ii)
des exemples ci-dessus pour \'ecrire la contragr\'ediente de
${\rm Ind}_{\BB(\A)}^{\GG(\A)}{\rm LP}(\LL(\wZ))$ comme l'ensemble des
vecteurs $\GG(\wZ)$-finis de 
$$\prod\nolimits_{r,s,\eta_1,\eta_2}
 \big({\rm Ind}_{\BB(\A)}^{\GG(\A)}
\big(\eta_1\,|\ |_{\A,p}^r\otimes\eta_2\,|\ |_{\A,p}^s\big)\big)^\vee$$
Pour conclure, il suffit d'utiliser le fait que l'ensemble des vecteurs $\GG(\wZ)$-finis du
produit est la somme directe car il n'y a qu'un nombre fini de couples $(\eta_1,\eta_2)$
de conducteurs born\'es et que des couples $(r,s)$ distincts induisent des repr\'esentations
alg\'ebriques irr\'eductibles et non isomorphes.
\end{proof}

\begin{rema}\phantomsection\label{indu6.8}
La d\'ecomposition~(\ref{indu6.7}) fournit une 
d\'ecomposition
$${\rm Ind}_{\BB(\A)}^{\GG(\A)}{\rm LP}(\LL(\wZ))=\bigoplus_{r,s,\eta_1,\eta_2} 
{\rm Ind}_{\BB(\A)}^{\GG(\A)} \big(\eta_1|\ |_{\A,p}^{r}\otimes \eta_2|\ |_{\A,p}^{s}\big)$$
mais il r\'esulte des rem.\,\ref{indu1.1} et~\ref{indu3} que l'induite
\`a droite est $0$ si $s<r$ ou si $(-1)^{r+s}\eta_{1,\infty}\eta_{2,\infty}(-1)=-1$
(o\`u $\eta_{1,\infty},\eta_{2,\infty}$ sont les restrictions de $\eta_1,\eta_2$ \`a $\R^\dual$).
\end{rema}

\section{Le cas $\GG={\bf GL}_2$}

%
%
%
%

\Subsection{Descente de $\GG(\Q)$ \`a $\Gamma$}\label{COCO3}
On note $\Gamma$ le sous-groupe $\GG(\Z)_+={\bf SL}_2(\Z)$ de $\GG(\Q)$.
Le r\'esultat suivant est classique.
\begin{lemm}\phantomsection\label{coco1}
Tout \'el\'ement $x$ de $\GG(\A)$ peut s'\'ecrire sous la forme
$x=\gamma^{-1}x_\infty\kappa$, avec
$\gamma\in \GG(\Q)$, $x_\infty\in \GG(\R)_+$ et $\kappa\in \GG(\widehat \Z)$.
De plus $\gamma^{-1}x_\infty\kappa=(\gamma')^{-1}x'_\infty\kappa'$ si
et seulement s'il existe $\alpha\in \Gamma$ tel
que $\gamma'=\alpha\gamma$, $x'_\infty=\alpha_\infty x_\infty$
et $\kappa'=\alpha^{]\infty[}\kappa$.
\end{lemm}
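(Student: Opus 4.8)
The plan is to read this as the assertion that the class number of $\GG=\mathbf{GL}_2$ over $\Q$ equals one, together with a sign bookkeeping at the archimedean place. Set $H:=\GG(\R)_+\times\GG(\wZ)$, an open subgroup of $\GG(\A)=\GG(\R)\times\GG(\Ai)$. The existence statement is then exactly $\GG(\A)=\GG(\Q)\cdot H$ (the factor in $\GG(\Q)$ being the $\gamma^{-1}$ of the statement), and the uniqueness statement identifies the ambiguity of such a factorization with the ``intersection'' $\GG(\Q)\cap H$, which I will compute to be $\Gamma$. The single non-formal input is $\GG(\Ai)=\GG(\Q)\cdot\GG(\wZ)$; everything else is bookkeeping, and I expect this to be the crux.

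For that key point I would argue with lattices, which over $\Q$ makes the argument elementary and self-contained. A coset $g\,\GG(\wZ)$, with $g\in\GG(\Ai)$, corresponds to the $\wZ$-lattice $M:=g\,\wZ^2\subset(\Ai)^2$; put $\Lambda:=M\cap\Q^2$. Since $\Q^2$ is dense in $(\Ai)^2$ and $M$ is open and compact, $\Lambda$ is dense in $M$, so $M=\wZ\cdot\Lambda=\Lambda\otimes_\Z\wZ$; in particular $\Lambda$ is a free $\Z$-module of rank $2$, as $\Z$ is a principal ideal domain. Choosing $\gamma\in\GG(\Q)$ carrying $\Z^2$ onto $\Lambda$ gives $M=\gamma\,\wZ^2$, hence $\gamma^{-1}g\in\GG(\wZ)$ and $g\in\gamma\,\GG(\wZ)\subset\GG(\Q)\cdot\GG(\wZ)$. (Equivalently one may invoke strong approximation for $\mathbf{SL}_2$ relative to $\infty$ together with the principality of the ideals of $\Z$ to adjust the determinant, but the lattice argument avoids both.)

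To finish existence, decompose $x=(x_\infty,x^{]\infty[})$. Applying the key point to $x^{]\infty[}$ produces $\gamma\in\GG(\Q)$ and $\kappa\in\GG(\wZ)$ with $(\gamma^{-1}x)^{]\infty[}=\kappa$, so $\gamma^{-1}x=x_\infty'\,\kappa$ with $x_\infty':=\gamma_\infty^{-1}x_\infty\in\GG(\R)$. If $\det x_\infty'<0$ I replace $\gamma$ by $\gamma\,\matrice{-1}{0}{0}{1}$: the matrix $\matrice{-1}{0}{0}{1}$ lies in $\GG(\Z)\subset\GG(\wZ)$, so this keeps the finite component inside $\GG(\wZ)$ while multiplying $\det x_\infty'$ by $-1$, placing $x_\infty'$ in $\GG(\R)_+$. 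This realizes $x$ in the required form $\gamma^{-1}x_\infty\kappa$, the $\GG(\Q)$-factor being $\gamma^{-1}$.

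For uniqueness, suppose $\gamma^{-1}x_\infty\kappa=(\gamma')^{-1}x_\infty'\kappa'$ and set $\alpha:=\gamma'\gamma^{-1}\in\GG(\Q)$. Comparing the two sides place by place is immediate, since $\kappa,\kappa'$ are trivial at $\infty$ and $x_\infty,x_\infty'$ are trivial at the finite places: the archimedean components give $x_\infty'=\alpha_\infty x_\infty$, the finite components give $\kappa'=\alpha^{]\infty[}\kappa$, and $\gamma'=\alpha\gamma$ by definition of $\alpha$. These are the asserted relations; it remains to check $\alpha\in\Gamma$. From $\kappa'=\alpha^{]\infty[}\kappa$ we get $\alpha^{]\infty[}=\kappa'\kappa^{-1}\in\GG(\wZ)$, forcing the entries of $\alpha$ into $\Q\cap\bigcap_p\Z_p=\Z$ and $\det\alpha$ into $\Q^\dual\cap\wZ^\dual=\{\pm1\}$, i.e.\ $\alpha\in\GG(\Z)$; and from $x_\infty'=\alpha_\infty x_\infty$ with $x_\infty,x_\infty'\in\GG(\R)_+$ we get $\alpha_\infty\in\GG(\R)_+$, i.e.\ $\det\alpha=1$. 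Hence $\alpha\in\GG(\Z)_+=\Gamma$. The converse (that any $\alpha\in\Gamma$ produces an admissible second factorization via these formulas) is a direct substitution, using $\alpha^{]\infty[}\in\GG(\wZ)$ and $\alpha_\infty\in\GG(\R)_+$.
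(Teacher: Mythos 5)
Your proof is correct. Note that the paper itself offers no argument here: Lemma~\ref{coco1} is introduced with ``Le r\'esultat suivant est classique'' and left unproved, so there is nothing to compare against; what you have written is the standard way to fill it in. The reduction to the single non-formal input $\GG(\A^{]\infty[})=\GG(\Q)\cdot\GG(\wZ)$, the lattice proof of that fact, the sign adjustment by $\matrice{-1}{0}{0}{1}\in\GG(\Z)$ at the archimedean place, and the componentwise uniqueness analysis identifying the ambiguity with $\GG(\Z)_+={\bf SL}_2(\Z)=\Gamma$ are all sound. One presentational quibble in the lattice step: as written you deduce ``$M=\wZ\cdot\Lambda$, in particular $\Lambda$ is free of rank $2$'', but the logic runs more cleanly in the other order. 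Since $M$ is compact open one has $N\wZ^2\subseteq M\subseteq N^{-1}\wZ^2$ for some $N\geq 1$, whence $N\Z^2\subseteq\Lambda\subseteq N^{-1}\Z^2$, so $\Lambda$ is finitely generated torsion-free of rank $2$, hence free; only then is $\Lambda\otimes_\Z\wZ$ compact, hence closed, and density of $\Lambda$ in $M$ gives $M=\Lambda\otimes_\Z\wZ$. This is a matter of ordering the deductions, not a gap.
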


On fait agir $\Gamma$ sur ${\cal C}(\GG(\wZ ))$ par
$(\alpha\cdot\phi)(x)=\phi((\alpha^{]\infty[})^{-1}x)$.
\begin{lemm}\phantomsection\label{coco4}
Si $X={\cal C},{\cal C}^{(p)},{\rm LA},{\rm LP},{\rm LC}$,
alors
$X(\GG(\A))\cong {\rm Ind}_\Gamma^{\GG(\Q)}X(\GG(\wZ) )$.
\end{lemm}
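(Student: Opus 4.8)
The plan is to exhibit an explicit, mutually inverse pair of $\GG(\Q)$-equivariant maps between the two sides, relying on the decomposition of Lemma~\ref{coco1} and on the fact (Remark~\ref{coco3}) that every $\phi\in X(\GG(\A))$ is right $\GG(\R)_+$-invariant, since $\GG(\R)_+$ is connected and $L$ totally disconnected. Concretely, I would send $\phi$ to the function $f_\phi\colon\GG(\Q)\to X(\GG(\wZ))$ given by $f_\phi(\gamma)(\kappa)=\phi(\gamma^{-1}\kappa)$ for $\gamma\in\GG(\Q)$ and $\kappa\in\GG(\wZ)$. Here $\kappa\mapsto\phi(\gamma^{-1}\kappa)$ is $\phi$ precomposed with left translation by the rational element $\gamma^{-1}$ and with the inclusion $\GG(\wZ)\hookrightarrow\GG(\A)$; as these operations preserve each of the regularity conditions defining the five classes (continuity, local $\GG(\wZp)$-smoothness, local analyticity, local algebraicity, local constancy), one gets $f_\phi(\gamma)\in X(\GG(\wZ))$.

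First I would verify the two equivariances. For $\alpha\in\Gamma$ write $\alpha=\alpha_\infty\,\alpha^{]\infty[}$ as a product of its infinite and finite parts; since $\alpha\in{\bf SL}_2(\Z)$ one has $\alpha_\infty\in\GG(\R)_+$. As $\alpha_\infty^{-1}$ commutes with the finite-place factors, $\gamma^{-1}\alpha^{-1}\kappa=\big(\gamma^{-1}(\alpha^{]\infty[})^{-1}\kappa\big)\,\alpha_\infty^{-1}$, so right $\GG(\R)_+$-invariance gives $f_\phi(\alpha\gamma)(\kappa)=\phi\big(\gamma^{-1}(\alpha^{]\infty[})^{-1}\kappa\big)=(\alpha\cdot f_\phi(\gamma))(\kappa)$, which is the defining relation of ${\rm Ind}_\Gamma^{\GG(\Q)}X(\GG(\wZ))$ for the action $(\alpha\cdot\psi)(x)=\psi((\alpha^{]\infty[})^{-1}x)$. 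For the $\GG(\Q)$-action, matching the left translation $(\gamma_0*\phi)(x)=\phi(\gamma_0^{-1}x)$ with the right-translation action on the induced module gives $f_{\gamma_0*\phi}(\gamma)=f_\phi(\gamma\gamma_0)$, so $\phi\mapsto f_\phi$ is $\GG(\Q)$-equivariant.

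For injectivity I would use that, by Lemma~\ref{coco1} combined with right $\GG(\R)_+$-invariance, every $x\in\GG(\A)$ satisfies $\phi(x)=\phi(\gamma^{-1}\kappa)$ for a suitable decomposition $x=\gamma^{-1}x_\infty\kappa$; hence $f_\phi=0$ forces $\phi=0$. For surjectivity, given $f$ in the induced module I would set $\phi(\gamma^{-1}x_\infty\kappa)=f(\gamma)(\kappa)$. This is well defined because the ambiguity $(\gamma,x_\infty,\kappa)\mapsto(\alpha\gamma,\alpha_\infty x_\infty,\alpha^{]\infty[}\kappa)$ of Lemma~\ref{coco1} is absorbed exactly by the relation $f(\alpha\gamma)=\alpha\cdot f(\gamma)$, and it is then immediate that $\phi\mapsto f_\phi$ and $f\mapsto\phi$ are inverse to one another.

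The main obstacle is verifying that this reconstructed $\phi$ actually lies in $X(\GG(\A))$, i.e.\ that it has the continuity and finer regularity demanded by $X$. Here I would exploit the local structure of the decomposition: the set $U=\GG(\R)_+\times\GG(\wZ)$ is open in $\GG(\A)=\GG(\R)\times\GG(\Ai)$, its $\GG(\Q)$-translates $\gamma^{-1}U$ cover $\GG(\A)$ by Lemma~\ref{coco1}, and on each such translate one has $\phi(x)=f(\gamma)\big((\gamma x)^{]\infty[}\big)$, the composite of $f(\gamma)\in X(\GG(\wZ))$ with the map $x\mapsto(\gamma x)^{]\infty[}$ (left translation by the rational $\gamma$ followed by projection to the finite part), which preserves every regularity condition defining $X$ and lands in $\GG(\wZ)$ on $\gamma^{-1}U$. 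Since all these conditions are local on $\GG(\A)$ and the local descriptions agree on overlaps by well-definedness, $\phi\in X(\GG(\A))$. This treats the five classes uniformly and yields the desired $\GG(\Q)$-isomorphism.
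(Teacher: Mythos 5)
Your proposal is correct and follows essentially the same route as the paper: the same map $\phi\mapsto(\gamma\mapsto\phi(\gamma^{-1}\cdot))$, the same equivariance computation using $\alpha=\alpha_\infty\alpha^{]\infty[}$ with $\alpha_\infty\in\GG(\R)_+$ and right $\GG(\R)_+$-invariance, and the same inverse built from the decomposition of Lemma~\ref{coco1}. The only difference is that you spell out the verification that the reconstructed $\phi$ retains the regularity defining $X$ (via the open cover by the translates $\gamma^{-1}(\GG(\R)_+\times\GG(\wZ))$), a point the paper leaves implicit.
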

\begin{proof}
Si $\phi\in {\cal C}(\GG(\A),L)$, alors $\phi$ est constante sur les classes
modulo $\GG(\R)_+$. Si $\gamma\in \GG(\Q)$, soit $\phi_\gamma\in {\cal C}(\GG(\wZ),L)$
d\'efinie par $\phi_\gamma(\kappa)=\phi(\gamma^{-1} x_\infty\kappa)$ pour n'importe quel
choix de $x_\infty\in \GG(\R)_+$ (le r\'esultat ne d\'epend pas de ce choix d'apr\`es ce qui pr\'ec\`ede).

Si $\alpha\in\Gamma$, alors
\begin{align*}
\phi_{\alpha\gamma}(\kappa)=\phi(\gamma^{-1}\alpha^{-1}x_\infty\kappa)&=
\phi(\gamma^{-1}(\alpha_\infty^{-1}x_\infty)((\alpha^{]\infty[})^{-1}\kappa))\\ 
&= \phi_\gamma((\alpha^{]\infty[})^{-1}\kappa)=(\alpha*\phi_\gamma)(\kappa),
\end{align*}
ce qui prouve que $(\phi_\gamma)_{\gamma\in \GG(\Q)}\in {\rm Ind}_\Gamma^{\GG(\Q)}{\cal C}(\GG(\wZ))$.

R\'eciproquement, si $(\phi_\gamma)_{\gamma\in \GG(\Q)}\in {\rm Ind}_\Gamma^{\GG(\Q)}{\cal C}(\GG(\wZ))$,
on d\'efinit $\phi\in {\cal C}(\GG(\A),L)$, en posant $\phi(\gamma^{-1}x_\infty\kappa)=
\phi_\gamma(\kappa)$: le lemme~\ref{coco1} implique que 
$\gamma^{-1} x_\infty \kappa$, avec $\gamma\in \GG(\Q)$,
la condition
$\phi_{\alpha\gamma}=\alpha*\phi_\gamma$ est exactement celle qu'il faut pour
que $\phi(z)$ ne d\'epende pas de la d\'ecomposition de $z$ sous
la forme $\gamma^{-1}x_\infty\kappa$.
\end{proof}

Il r\'esulte du lemme~\ref{coco4} et du lemme de Shapiro
que l'on a:
\begin{coro}\phantomsection\label{coco5}
Si $X={\cal C},{\cal C}^{(p)},{\rm LA},{\rm LP},{\rm LC}$ et si $i\in\N$,
alors
$$H^i(\GG(\Q),X(\GG(\A)))\cong H^i(\Gamma,X(\GG(\wZ))).$$
\end{coro}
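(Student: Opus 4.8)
The plan is to derive the corollary immediately from Lemma~\ref{coco4} together with Shapiro's lemma, exactly as the connecting sentence announces. Lemma~\ref{coco4} already supplies, for each of the five function classes $X={\cal C},{\cal C}^{(p)},{\rm LA},{\rm LP},{\rm LC}$, an isomorphism of $\GG(\Q)$-modules
$$X(\GG(\A))\cong {\rm Ind}_\Gamma^{\GG(\Q)}X(\GG(\wZ)),$$
so the only thing left to do is to compute $H^i(\GG(\Q),-)$ of the right-hand side. First I would substitute this isomorphism into $H^i(\GG(\Q),X(\GG(\A)))$, reducing the claim to the assertion
$$H^i(\GG(\Q),{\rm Ind}_\Gamma^{\GG(\Q)}X(\GG(\wZ)))\cong H^i(\Gamma,X(\GG(\wZ))).$$

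The key point is to recognize that the functor ${\rm Ind}_\Gamma^{\GG(\Q)}$ occurring in Lemma~\ref{coco4} is the \emph{coinduction}: unwinding the definition, its elements are families $(\phi_\gamma)_{\gamma\in\GG(\Q)}$ --- equivalently functions on all of $\GG(\Q)$ valued in $X(\GG(\wZ))$ --- subject only to the equivariance $\phi_{\alpha\gamma}=\alpha*\phi_\gamma$ for $\alpha\in\Gamma$, with no support restriction. This is precisely ${\rm Coind}_\Gamma^{\GG(\Q)}X(\GG(\wZ))$, and since $[\GG(\Q):\Gamma]$ is infinite it is the coinduced rather than the induced module, which is what determines the correct form of Shapiro's lemma. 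I would then invoke the cohomological Shapiro isomorphism $H^i(G,{\rm Coind}_H^G M)\cong H^i(H,M)$, applied to the discrete groups $\Gamma\subset\GG(\Q)$ with coefficient module $M=X(\GG(\wZ))$; composing it with the isomorphism from Lemma~\ref{coco4} yields the corollary for every $i\in\N$ at once.

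Because the cohomology in play is that of discrete groups acting $L$-linearly, no topological subtlety enters Shapiro's lemma, and the only genuine point requiring care --- the main, rather small, obstacle --- is the bookkeeping identifying Lemma~\ref{coco4}'s ${\rm Ind}$ with the coinduced module, so that the cohomological (rather than homological) version of Shapiro is the right tool. Note also that the statement asserts merely an abstract $L$-linear isomorphism for each $i$, so one need not track the residual $\GG(\A)$-action through the argument; had one wanted to, the $\GG(\Q)$-equivariance already recorded in Lemma~\ref{coco4} would be the natural starting point.
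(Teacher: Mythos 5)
Your proposal is correct and follows exactly the paper's route: the paper derives the corollary in one line from Lemma~\ref{coco4} combined with Shapiro's lemma, which is precisely what you do. Your added remark that the ${\rm Ind}_\Gamma^{\GG(\Q)}$ of Lemma~\ref{coco4} is the coinduced module (functions on all of $\GG(\Q)$ with $\Gamma$-equivariance and no support condition), so that the cohomological form of Shapiro applies, is a correct and useful clarification of the same argument.
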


Sous la forme du membre de droite, 
les groupes ont l'air beaucoup plus petits et maniables, mais l'action de
$\GG(\A)$ n'est plus visible.
\begin{rema}
Il r\'esulte du cor.\,\ref{coco5} et de la compacit\'e de $\GG(\wZ)$ que
$$H^1(\GG(\Q),{\cal C}(\GG(\A)))=H^1(\GG(\Q),{\cal C}_b(\GG(\A)))$$
o\`u ${\cal C}_b$ d\'esigne l'espace des fonctions continues born\'ees
(cf.~(i) de la prop.\,\ref{coco7}).
\end{rema}

\begin{prop}\phantomsection\label{coco6}
{\rm (i)} 
Si $X={\cal C},{\cal C}^{(p)},{\rm LA},{\rm LP},{\rm LC}$,
alors,
$$H^0(\GG(\Q),X(\GG(\A)))\cong X(\wZ ^\dual)
\quad{\rm et}\quad
H^i(\GG(\Q),X(\GG(\A)))=0\ {\text{si $i\geq 2$}}.$$
\end{prop}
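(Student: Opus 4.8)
The plan is to exploit the reduction already provided by Corollary~\ref{coco5}, which gives $H^i(\GG(\Q),X(\GG(\A)))\cong H^i(\Gamma,X(\GG(\wZ)))$ with $\Gamma={\bf SL}_2(\Z)$ acting by left translation through $\alpha\mapsto\alpha^{]\infty[}$. Both assertions then become statements about the cohomology of the arithmetic group $\Gamma$ with coefficients in the left-$\Gamma$-module $X(\GG(\wZ))$, and one treats the two cases ($i\geq 2$ and $i=0$) by rather different arguments.

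For the vanishing in degrees $i\geq 2$, I would use that ${\bf SL}_2(\Z)$ has virtual cohomological dimension~$1$: a principal congruence subgroup $\Gamma(N)$ with $N\geq 3$ is torsion-free and free, hence of cohomological dimension~$1$ over any coefficients. Since the index $[\Gamma:\Gamma(N)]$ is finite and therefore invertible in $L$ (which has characteristic~$0$), the composite $\mathrm{cor}\circ\mathrm{res}$ is multiplication by this index, so restriction splits and embeds $H^i(\Gamma,M)$ into $H^i(\Gamma(N),M)$ for every $L$-module $M$. As the latter vanishes for $i\geq 2$, so does $H^i(\Gamma,X(\GG(\wZ)))$. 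One could equally invoke the action of ${\bf SL}_2(\Z)=\Z/4\ast_{\Z/2}\Z/6$ on its Bass--Serre tree: the associated Mayer--Vietoris sequence has finite vertex and edge stabilizers, whose higher cohomology with $L$-coefficients vanishes.

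For $H^0$, the invariants $X(\GG(\wZ))^\Gamma$ consist of the $\phi\in X(\GG(\wZ))$ invariant under left translation by ${\bf SL}_2(\Z)$. The key point is strong approximation: ${\bf SL}_2(\Z)$ is dense in ${\bf SL}_2(\wZ)=\prod_p{\bf SL}_2(\Z_p)$, so, $\phi$ being continuous, its left-$\Gamma$-invariance upgrades to left-${\bf SL}_2(\wZ)$-invariance. Thus $\phi$ factors through the quotient ${\bf SL}_2(\wZ)\backslash\GG(\wZ)$, which the determinant identifies with $\wZ^\dual$ (the map $\det:\GG(\wZ)\to\wZ^\dual$ being surjective with kernel ${\bf SL}_2(\wZ)$). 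Writing $\phi=\psi\circ\det$, one checks that the regularity conditions defining each class $X$ are transported through $\det$; this is routine because $\det$ is a surjective submersion given by a polynomial, so it both pulls back and descends continuity, local constancy, local analyticity, local algebraicity, and $\GG(\wZp)$-smoothness, yielding $H^0\cong X(\wZ^\dual)$.

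Finally, to recover the $\GG(\A)$-action lost in the passage through Corollary~\ref{coco5}, I would observe that the right $\star$-action of $\GG(\A)$ commutes with the left $\GG(\Q)$-action and hence descends to $H^0$; under $\phi=\psi\circ\det$ one has $(g\star\phi)(x)=\psi(\det(x)\det(g))$, so $\GG(\A)$ acts on $X(\wZ^\dual)$ through $\det:\GG(\A)\to\A^\dual$ and therefore through the quotient $\A^\dual/\R_+^\dual\Q^\dual\cong\wZ^\dual$. The main obstacle is really the input of strong approximation for ${\bf SL}_2$, which is exactly where the special structure of ${\bf GL}_2$ enters (for a general reductive $\GG$ the shape of $\Gamma$ and of strong approximation is more delicate); the cohomological-dimension bound and the bookkeeping of regularity classes through $\det$ are then formal.
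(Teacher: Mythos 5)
Your proposal is correct and follows essentially the same route as the paper: reduction to $H^i(\Gamma,X(\GG(\wZ)))$ via the corollary \ref{coco5}, vanishing for $i\geq 2$ from the existence of a free finite-index subgroup of $\Gamma$ (with the restriction--corestriction splitting made explicit), and the computation of $H^0$ from the density of ${\bf SL}_2(\Z)$ in ${\bf SL}_2(\wZ)$ forcing invariant functions to factor through $\det$. The only difference is that you spell out the transport of the regularity classes through $\det$ and the identification of the $\GG(\A)$-action, which the paper leaves implicit.
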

\begin{proof}
Le cas $i=0$ est une cons\'equence 
du cor.\,\ref{coco5}
et de ce que
l'adh\'erence de ${\bf SL}_2(\Z)$ dans $\GG(\wZ )$ est
${\bf SL}_2(\wZ )$:
une fonction continue de $g\in \GG(\wZ)$, invariante par
$\Gamma$, ne d\'epend donc que de $\det g\in\wZ^\dual$.

Le cas $i\geq 2$ est une cons\'equence du fait que $\Gamma$ contient un sous-groupe
libre d'indice fini.
\end{proof}

\subsubsection{Cohomologie de $\Gamma$}\label{gamma}
Soient 
$$I=\matrice{-1}{0}{0}{-1},
\quad S=\matrice{0}{1}{-1}{0},\quad U=\matrice{0}{1}{-1}{1}.$$
\begin{lemm}\phantomsection\label{coco2}
Si $M$ est un $\Gamma$-module sur lequel $2$ est inversible,
on a une suite exacte:
$$\xymatrix{0\to H^0(\Gamma ,M)\ar[r]&
M^{S=1}\ar[r]^-{1-U}& M^{1+U+U^2=0}\ar[r]
&H^1(\Gamma ,M)\to0}$$
\end{lemm}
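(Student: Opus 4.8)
The plan is to exploit the central involution $I=S^2=U^3$ together with the invertibility of $2$ to reduce the computation to $\mathbf{PSL}_2(\Z)$, and then to read off $H^0$ and $H^1$ directly from the free-product presentation via $1$-cocycles.

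First I would split $M=M^{I=1}\oplus M^{I=-1}$ into the $\pm1$-eigenspaces of the involution $I$; this is a decomposition of $\Gamma$-modules because $I$ is central and $2$ is invertible. Since $I$ is central, the automorphism of the pair $(\Gamma,M)$ given by (trivial) conjugation by $I$ on $\Gamma$ and multiplication by $I$ on $M$ induces the identity on $H^\bullet(\Gamma,M)$; hence multiplication by $I$ on coefficients acts as the identity on cohomology. On $M^{I=-1}$ this forces $-1=1$, i.e. $2=0$, on $H^\bullet(\Gamma,M^{I=-1})$, so $H^\bullet(\Gamma,M^{I=-1})=0$ and I may replace $M$ by $N:=M^{I=1}$, on which $S^2=U^3=1$ and $\Gamma$ acts through ${\bf PSL}_2(\Z)$. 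Along the way I would note that, for the same reason, $M^{S=1}$, $M^{U=1}$ and $M^{1+U+U^2=0}$ all lie in $N$: on $M^{I=-1}$ one has $S^2=-1$, so $S$-invariance forces vanishing, and $(U-1)(1+U+U^2)=U^3-1=-2$ is invertible, so $1+U+U^2$ is injective there. Thus $M^{S=1}=N^{S=1}$ and $M^{1+U+U^2=0}=N^{1+U+U^2=0}$.

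Next I would use that ${\bf PSL}_2(\Z)=\langle\overline S\rangle*\langle\overline U\rangle\cong\Z/2*\Z/3$ admits the presentation $\langle s,u\mid s^2,u^3\rangle$, and compute $H^1(\Gamma,M)=H^1({\bf PSL}_2(\Z),N)$ from $1$-cocycles. A cocycle is determined by its values $c_s$, $c_u$ on the generators, and a cocycle on the free group on $s,u$ descends to ${\bf PSL}_2(\Z)$ exactly when it kills the two relators, i.e. when $(1+s)c_s=0$ and $(1+u+u^2)c_u=0$; hence $Z^1(N)\cong N^{s=-1}\oplus N^{1+u+u^2=0}$, while $B^1(N)=\{((s-1)m,(u-1)m):m\in N\}$. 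Because $2$ is invertible, $(s-1)N=-2\,N^{s=-1}=N^{s=-1}$, so the projection of $B^1$ onto the first factor is onto; every class therefore has a representative $(0,c_u)$ with $c_u\in N^{1+u+u^2=0}$, and two such are cohomologous precisely when they differ by $(u-1)m$ with $m\in N^{s=1}$. This gives $H^1(\Gamma,M)\cong N^{1+U+U^2=0}/(1-U)N^{S=1}$, and $H^0(\Gamma,M)=N^{\langle s,u\rangle}=N^{S=1}\cap N^{U=1}=\ker\big(1-U\colon M^{S=1}\to M^{1+U+U^2=0}\big)$. Translating the subspaces back via the first step then assembles the asserted four-term exact sequence, its exactness at $M^{S=1}$ and at $M^{1+U+U^2=0}$ being exactly the two identifications just obtained.

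The delicate points will be the vanishing $H^\bullet(\Gamma,M^{I=-1})=0$, which is where the hypothesis that $2$ is invertible really enters and which rests on the triviality of the action of a central element on group cohomology, and the bookkeeping in the cocycle step that produces $N^{S=1}$ rather than all of $N$ in the denominator: the crux is that $B^1$ already surjects onto the $s$-coordinate, so only the stabiliser $N^{s=1}$ survives in the residual relation on the $u$-coordinate.
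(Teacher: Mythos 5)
Your proof is correct and follows essentially the same route as the paper: reduce to the $+1$-eigenspace of the central involution $I$ (the paper asserts $H^i(\Gamma,M)=H^i(\overline\Gamma,M_+)$ directly, you justify the reduction via the standard fact that a central element acts trivially on cohomology), then describe $1$-cocycles on the free product $\overline\Gamma\cong\Z/2*\Z/3$ by their values at $S$ and $U$, and use invertibility of $2$ to normalise the $S$-component to zero (the paper takes the coboundary of $\tfrac12 x$, you observe $(S-1)N=N^{S=-1}$ — the same computation). The identifications $M^{S=1}=M_+^{S=1}$ and $M^{1+U+U^2=0}=M_+^{1+U+U^2=0}$ that close the argument also match the paper's.
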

\begin{proof}
$U^3=S^2=I$ et $I^2=1$ est une pr\'esentation
de $\Gamma$ par g\'en\'erateurs et relations.
En particulier, $S$ et $U$ sont respectivement d'ordre
$2$ et $3$ dans $\overline\Gamma=\Gamma/\{\pm I\}$, et 
$\overline\Gamma $ est le produit libre des sous-groupes
engendr\'es par $S$ et $U$.

Soit 
$M_+=\big\{x\in M,\ I\cdot x=x\big\}$.
Alors $H^i(\Gamma,M)=H^i(\overline\Gamma ,M_+)$.
Par ailleurs, comme $\overline\Gamma $ est le produit libre
des sous-groupes
engendr\'es par $S$ et $U$, un $1$-cocycle sur
$\overline\Gamma $ est d\'etermin\'e par ses valeurs
$x$ et $y$ en $S$ et $U$, avec les seules conditions
$x+Sx=0$ et $(1+U+U^2)y=0$.
On en d\'eduit que
$$H^1(\overline\Gamma ,M_+)\cong
\frac{\{(x,y)\in M_+\times M_+,\ x+Sx=0,\ (1+U+U^2)y=0\}}{\{(1-S)z,(1-U)z,\ z\in M_+\}}\,.$$
En prenant $z=\frac{1}{2}x$, on peut tuer $x$ et obtenir la suite exacte suivante
\begin{equation}\phantomsection\label{gamma1}
\xymatrix{0\to H^0(\overline\Gamma ,M_+)\ar[r]&
M_+^{S=1}\ar[r]^-{1-U}& M_+^{1+U+U^2=0}\ar[r]
&H^1(\overline\Gamma ,M_+)\to0}
\end{equation}
On conclut en remarquant que
$M^{S=1}=M_+^{S=1}$ et $M^{1+U+U^2=0}=M_+^{1+U+U^2=0}$.
\end{proof}

\Subsection{La cohomologie des fonctions continues}
\Subsubsection{Lien avec la cohomologie compl\'et\'ee d'Emerton}\label{COCO4}
On envoie $\C^*$ dans $\GG(\R)$ par $a+ib\mapsto\matrice{a}{-b}{b}{a}$.
Alors $$\GG(\R)/\C^\dual\cong {\bf P}^1(\C)-{\bf P}^1(\R)={\cal H}\sqcup\overline{\cal H}.$$
Soit $\wGamma(N)={\rm Ker}(\GG(\wZ)\to \GG(\Z/N\Z))$.
Soit $$Y(N)_\C=\GG(\Q)\backslash \GG(\A)/(\C^\dual\times\wGamma(N)).$$

La proposition suivante montre que $H^1(\GG(\Q),{\cal C}(\GG(\A),L))$ n'est
autre que la cohomologie compl\'et\'ee d'Emerton de la tour des courbes
modulaires de tous niveaux.

\begin{prop}\phantomsection\label{coco7}
{\rm (i)} $H^1(\GG(\Q),{\cal C}(\GG(\A),L))=L\otimes_{\O_L}H^1(\GG(\Q),{\cal C}(\GG(\A),\O_L))$.

{\rm (ii)} $H^1(\GG(\Q),{\cal C}(\GG(\A),\O_L))=\varprojlim_k 
H^1(\GG(\Q),{\cal C}(\GG(\A),\O_L/p^n))$.

{\rm (iii)}
$H^1(\GG(\Q),{\cal C}(\GG(\A),\O_L/p^n))\cong 
H^1(\Gamma,{\cal C}(\GG(\wZ),\O_L/p^n))$.

{\rm (iv)} $H^1(\Gamma,{\cal C}(\GG(\wZ),\O_L/p^n))
=\varinjlim_N H^1(\Gamma,{\cal C}(\GG(\Z/N\Z),\O_L/p^n))$.

{\rm (v)}
$H^1(\Gamma,{\cal C}(\GG(\Z/N\Z),\O_L/p^n))\cong H^1(Y(N)_\C,\O_L/p^n)$.
\end{prop}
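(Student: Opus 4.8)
The plan is to establish the five isomorphisms almost independently: (iii) is Shapiro, (i) and (iv) are instances of cohomology commuting with a filtered colimit, (ii) is a $\varprojlim$-argument, and the only genuinely geometric step is (v).

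First, (iii) is simply the $\Lambda=\O_L/p^n$ case of cor.~\ref{coco5}. Although that corollary is phrased for $L$-coefficients, its proof (lemma~\ref{coco4} followed by Shapiro) uses only the total disconnectedness of the target and the decomposition of lemma~\ref{coco1}, and so applies verbatim with coefficients in $\O_L$ and in $\O_L/p^n$; I will use all three forms below. For (i) and (iv) I reduce both sides to $\Gamma={\bf SL}_2(\Z)$ and $\GG(\wZ)$ via cor.~\ref{coco5}. Since $\GG(\wZ)$ is compact, every continuous $L$-valued function is bounded, whence ${\cal C}(\GG(\wZ),L)=\varinjlim_m p^{-m}{\cal C}(\GG(\wZ),\O_L)=L\otimes_{\O_L}{\cal C}(\GG(\wZ),\O_L)$; and, $\O_L/p^n$ being finite and discrete, a continuous map $\GG(\wZ)\to\O_L/p^n$ is locally constant, hence factors through a finite quotient $\GG(\wZ)/\wGamma(N)=\GG(\Z/N\Z)$, so ${\cal C}(\GG(\wZ),\O_L/p^n)=\varinjlim_N{\cal C}(\GG(\Z/N\Z),\O_L/p^n)$. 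As $\Gamma$ is finitely presented (indeed $\mathrm{FP}_\infty$, being virtually free), $H^1(\Gamma,-)$ commutes with these filtered colimits, which gives (i) and (iv).

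For (ii), reduce again to $\Gamma$ and use ${\cal C}(\GG(\wZ),\O_L)=\varprojlim_n{\cal C}(\GG(\wZ),\O_L/p^n)$ (compactness), a tower with surjective transition maps. The bar cochain complexes $C^\bullet(\Gamma,{\cal C}(\GG(\wZ),\O_L/p^n))$ then have surjective transitions and satisfy $C^\bullet(\Gamma,\varprojlim_n-)=\varprojlim_n C^\bullet(\Gamma,-)$, so $\varprojlim^1$ of the cochains vanishes and the Milnor sequence gives
$$0\to {\varprojlim_n}^1 H^0(\Gamma,{\cal C}(\GG(\wZ),\O_L/p^n))\to H^1(\Gamma,{\cal C}(\GG(\wZ),\O_L))\to \varprojlim_n H^1(\Gamma,{\cal C}(\GG(\wZ),\O_L/p^n))\to 0.$$
By prop.~\ref{coco6} the groups $H^0(\Gamma,{\cal C}(\GG(\wZ),\O_L/p^n))\cong{\cal C}(\wZ^\dual,\O_L/p^n)$ have surjective transitions, so the $\varprojlim^1$ term vanishes by Mittag--Leffler and (ii) follows.

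The main obstacle is (v), the only step with geometric content. Using $\GG(\A)/(\C^\dual\times\wGamma(N))=(\GG(\R)/\C^\dual)\times(\GG(\Ai)/\wGamma(N))$, the identification $\GG(\R)/\C^\dual\cong{\cal H}\sqcup\overline{\cal H}$, and lemma~\ref{coco1} together with strong approximation for ${\bf SL}_2$, I would identify $Y(N)_\C$ with a disjoint union of $(\Z/N\Z)^\dual$ copies of $\Gamma(N)\backslash{\cal H}$, where $\Gamma(N)=\ker({\bf SL}_2(\Z)\to{\bf SL}_2(\Z/N\Z))$. On the other side, ${\cal C}(\GG(\Z/N\Z),\O_L/p^n)$ breaks up under the left-translation action of $\Gamma$ into its orbits on ${\bf GL}_2(\Z/N\Z)$, i.e.\ the $\det$-cosets of ${\bf SL}_2(\Z/N\Z)$, each being the permutation module $\cong{\rm Ind}_{\Gamma(N)}^\Gamma(\O_L/p^n)$; Shapiro then yields $H^1(\Gamma,{\cal C}(\GG(\Z/N\Z),\O_L/p^n))\cong\bigoplus_{(\Z/N\Z)^\dual}H^1(\Gamma(N),\O_L/p^n)$. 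It remains to match this with $H^1(Y(N)_\C,\O_L/p^n)$: for $N\ge 3$ the group $\Gamma(N)$ is torsion-free and acts freely on the contractible ${\cal H}$, so each component is a $K(\Gamma(N),1)$ whose singular cohomology is $H^1(\Gamma(N),\O_L/p^n)$ for \emph{any} coefficients, and the identification is clean. The delicate point is small $N$, where $-I$ and the elliptic points contribute torsion and the bare comparison can fail unless $6$ is invertible; I would circumvent it by observing that the levels $N\ge 3$ are cofinal in the colimit of (iv), so only these are needed to reach Emerton's completed cohomology. Chaining (i)--(v) finally exhibits $H^1(\GG(\Q),{\cal C}(\GG(\A),L))$ as $L\otimes_{\O_L}\varprojlim_n\varinjlim_N H^1(Y(N)_\C,\O_L/p^n)$.
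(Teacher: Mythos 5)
Your proof is correct, and on two of the five points it genuinely diverges from the paper's. For (i), (iii), (iv) you and the paper do essentially the same thing (Shapiro plus commutation of $H^1(\Gamma,-)$ with the filtered colimits of coefficients; the paper justifies that commutation via the explicit four-term sequence of the lemme~\ref{coco2} coming from the presentation $U^3=S^2=I$, $I^2=1$, whereas you invoke finite presentability of $\Gamma$ — interchangeable). The real difference is in (ii): the paper again exploits lemme~\ref{coco2}, reducing the claim to the statement that ${\cal C}(\GG(\wZ),\O_L)^{S=1}/p^n\to{\cal C}(\GG(\wZ),\O_L/p^n)^{S=1}$ and its $1+U+U^2$ analogue are injective with cokernels killed by $4$ and $6$, and then chases the resulting diagrams; you instead run the Milnor $\varprojlim^1$ sequence on the bar complexes (levelwise surjective transitions) and kill the $\varprojlim^1 H^0$ term by Mittag--Leffler, using $H^0(\Gamma,{\cal C}(\GG(\wZ),\O_L/p^n))\cong{\cal C}(\wZ^\dual,\O_L/p^n)$ with surjective transitions. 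Your argument is cleaner and does not use the specific presentation of ${\bf SL}_2(\Z)$, at the cost of being less quantitative (the paper's version records exactly which torsion obstructs the comparison). Finally, on (v) you are more careful than the source: you correctly note that the identification $H^1(\Gamma(N)\backslash{\cal H},\Lambda)\cong H^1(\Gamma(N),\Lambda)$ requires a free action, hence $N\geq 3$ (otherwise one must read $Y(N)_\C$ as an orbifold, or invert $6$), and you rightly observe that these levels are cofinal so nothing is lost; the paper's one-line "Shapiro plus contractibility of ${\cal H}$" silently elides this.
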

\begin{proof}
Pour le (i), on utilise les isomorphismes
\begin{align*}
H^1(\GG(\Q),{\cal C}(\GG(\A),L))&\cong H^1(\Gamma,{\cal C}(\GG(\wZ),L))\\
H^1(\GG(\Q),{\cal C}(\GG(\A),\O_L))&\cong H^1(\Gamma,{\cal C}(\GG(\wZ),\O_L))
\end{align*}
la compacit\'e de $\GG(\wZ)$ qui implique que
${\cal C}(\GG(\wZ),L)\cong L\otimes_{\O_L}{\cal C}(\GG(\wZ),\O_L)$,
et le 
lemme~\ref{coco2}.

Le (ii) est une cons\'equence facile
du lemme~\ref{coco2} et de ce que
\begin{align*}
{\cal C}(\GG(\wZ),\O_L)^{1+U+U^2}/p^n&\to {\cal C}(\GG(\wZ),\O_L/p^n)^{1+U+U^2}\\
{\cal C}(\GG(\wZ),\O_L)^{S=1}/p^n&\to {\cal C}(\GG(\wZ),\O_L/p^n)^{S=1}
\end{align*}
sont injectives de conoyaux respectivement tu\'es par $6$ et $4$
puisque $U$ et $S$ sont d'ordre $6$ et $4$.

Le (iii) r\'esulte de ce que
${\cal C}(\GG(\wZ),\O_L/p^n)=\varinjlim_N {\cal C}(\GG(\Z/N\Z),\O_L/p^n)$.

Le (v) est une cons\'equence du lemme de
Shapiro et de ce que le demi-plan sup\'erieur est contractile.
\end{proof}

\begin{rema}\phantomsection\label{coco7.3}
On a aussi $H^1(Y(N)_\C,\O_L/p^n)\cong H^1_{\eet}(Y(N)_{\Qbar},\O_L/p^n)$ et comme
$Y(N)$ est d\'efinie sur $\Q$, cela munit tous les goupes ci-dessus d'une action
du groupe de Galois absolu de $\Q$.
\end{rema}

\begin{rema}\phantomsection\label{cup7.3}
Soit $S\subset{\cal P}\cup\{\infty\}$ fini.
Si $\infty\in S$ (resp.~$\infty\notin S$),
 tout \'el\'ement
de $\GG(\Q_S)$ s'\'ecrit sous la forme $\gamma^{-1}x_\infty\kappa$ (resp.~$\gamma^{-1}\kappa$),
avec $\gamma\in \GG(\Z[\frac{1}{S}])$, $x_\infty\in \GG(\R)_+$ et $\kappa\in \GG(\Z_S)$,
et une telle \'ecriture est unique \`a changements
simultan\'es $\gamma\mapsto \alpha\gamma$, $x_\infty\mapsto \alpha_\infty x_\infty$
et $\kappa\mapsto \alpha_{S\moins\{\infty\}}\kappa$, avec $\alpha\in\Gamma$ (resp.~$\alpha\in \GG(\Z)$).
On en d\'eduit des isomorphismes de $\GG(\Z[\frac{1}{S}])$-modules, 
si $X={\cal C}$, ${\cal C}^{(p)}$, ${\rm LC}$, ${\rm LP}$, ${\rm LA}$:
$$ X(\GG(\Q_S),L)=\begin{cases}
{\rm Ind}_{\Gamma}^{\GG(\Z[\frac{1}{S}])}X(\GG(\Z_S),L) &{\text{si $\infty\in S$,}}\\
{\rm Ind}_{\GG(\Z)}^{\GG(\Z[\frac{1}{S}])}X(\GG(\Z_S),L) &{\text{si $\infty\notin S$}}
\end{cases}$$
En particulier, si $\infty\in S$, alors
$H^1(\GG(\Z[\frac{1}{S}]),{\cal C}(\GG(\Q_S),L))$ est la cohomologie compl\'et\'ee
de la tour des courbes modulaires de niveaux dont les facteurs premiers appartiennent \`a $S$.
\end{rema}

\subsubsection{Vecteurs lisses en dehors de $p$}
L'action de $\GG(\wZp)$ sur ${\cal C}^{(p)}(\GG(\A))$ n'est pas lisse (seulement localement
lisse) mais
la proposition suivante montre que celle sur la cohomologie
de ${\cal C}^{(p)}(\GG(\A))$ l'est.
\begin{prop}\phantomsection\label{cp}
L'application naturelle
$$\varinjlim\nolimits_{K^{]p[}} H^1(\GG(\Q),{\cal C}(\GG(\A)/{K^{]p[}}))\to H^1(\GG(\Q),{\cal C}^{(p)}(\GG(\A)))$$
est un isomorphisme {\rm (${K^{]p[}}$ d\'ecrit les sous-groupes ouverts de $\GG(\wZ^{]p[})$)}
\end{prop}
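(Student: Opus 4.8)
The plan is to reduce both sides, via the descent of Corollary~\ref{coco5}, to the cohomology of $\Gamma={\bf SL}_2(\Z)$ acting on function spaces on the \emph{compact} group $\GG(\wZ)$, where the distinction between locally smooth and globally smooth functions evaporates. First I would treat the right-hand side: Corollary~\ref{coco5} gives $H^1(\GG(\Q),{\cal C}^{(p)}(\GG(\A)))\cong H^1(\Gamma,{\cal C}^{(p)}(\GG(\wZ)))$. For the left-hand side, the module ${\cal C}(\GG(\A)/K^{]p[})$ is the $\GG(\Q)$-stable subspace of ${\cal C}(\GG(\A))$ of functions invariant under right translation by $K^{]p[}\subset\GG(\wZp)$; the decomposition of Lemma~\ref{coco1} and the argument of Lemma~\ref{coco4} apply verbatim (the variable $\kappa$ on which $K^{]p[}$ acts lives in $\GG(\wZ)$), yielding ${\cal C}(\GG(\A)/K^{]p[})\cong{\rm Ind}_\Gamma^{\GG(\Q)}{\cal C}(\GG(\wZ)/K^{]p[})$ and hence, by Shapiro, $H^1(\GG(\Q),{\cal C}(\GG(\A)/K^{]p[}))\cong H^1(\Gamma,{\cal C}(\GG(\wZ)/K^{]p[}))$. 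These identifications are compatible with the natural map, so it suffices to show that the induced map
$$\varinjlim\nolimits_{K^{]p[}}H^1(\Gamma,{\cal C}(\GG(\wZ)/K^{]p[}))\longrightarrow H^1(\Gamma,{\cal C}^{(p)}(\GG(\wZ)))$$
is an isomorphism.

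Next I would pass the colimit through the cohomology. By definition $\varinjlim_{K^{]p[}}{\cal C}(\GG(\wZ)/K^{]p[})={\cal C}_u^{(p)}(\GG(\wZ))$, the space of functions on $\GG(\wZ)$ invariant under right translation by \emph{some} open subgroup of $\GG(\wZp)$. The key technical point is that $H^1(\Gamma,-)$ commutes with filtered colimits of $L$-vector spaces: since $2$ is invertible in $L$, Lemma~\ref{coco2} presents $H^1(\Gamma,M)$ as the cokernel of $1-U\colon M^{S=1}\to M^{1+U+U^2=0}$, and the two fixed-point spaces are finite limits while the cokernel is a colimit, so all three constructions commute with filtered colimits (filtered colimits being exact). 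Hence the left-hand side above is $H^1(\Gamma,{\cal C}_u^{(p)}(\GG(\wZ)))$, and the map in question is the one induced by the inclusion ${\cal C}_u^{(p)}(\GG(\wZ))\hookrightarrow{\cal C}^{(p)}(\GG(\wZ))$.

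The crux is then to prove that on the compact group $\GG(\wZ)$ one has ${\cal C}^{(p)}(\GG(\wZ))={\cal C}_u^{(p)}(\GG(\wZ))$. Indeed, if $\phi$ is locally $\GG(\wZp)$-smooth, each point of $\GG(\wZ)$ has an open neighbourhood on which $\phi$ is invariant under right translation by some open subgroup of $\GG(\wZp)$; extracting a finite subcover from the resulting open cover of the compact space $\GG(\wZ)$ and intersecting the finitely many subgroups produces a single open $K\subset\GG(\wZp)$ under which $\phi$ is globally invariant, so $\phi\in{\cal C}_u^{(p)}(\GG(\wZ))$. With this equality the inclusion, hence the map above, is the identity, and the proposition follows.

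The main obstacle—really the whole content of the statement—is this last compactness step. On the noncompact group $\GG(\A)$ the inclusion ${\cal C}_u^{(p)}(\GG(\A))\subsetneq{\cal C}^{(p)}(\GG(\A))$ is strict, since the smoothness level may degenerate as one goes to infinity in $\GG(\A)$; it is only after descending to the compact $\GG(\wZ)$ that local smoothness forces global smoothness, which is what makes the action on the cohomology smooth. The secondary point is the commutation of $H^1(\Gamma,-)$ with the filtered colimit, and that is handled cleanly by the explicit presentation of Lemma~\ref{coco2} rather than any abstract finiteness of $\Gamma$.
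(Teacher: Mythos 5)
Votre démonstration est correcte et suit essentiellement la même route que celle du texte : lemme de Shapiro dans les deux sens (cor.\,\ref{coco5} et son analogue pour ${\cal C}(\GG(\A)/K^{]p[})$), identification ${\cal C}^{(p)}(\GG(\wZ))=\varinjlim_{K^{]p[}}{\cal C}(\GG(\wZ)/K^{]p[})$ par compacité de $\GG(\wZ)$, puis commutation de $H^1(\Gamma,-)$ aux colimites filtrantes. Vous explicitez deux points que le texte laisse implicites (l'argument de recouvrement fini pour passer de « localement lisse » à « uniformément lisse », et l'usage de la présentation du lemme~\ref{coco2} pour faire sortir la colimite de la cohomologie), ce qui est bienvenu mais ne change pas la nature de la preuve.
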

\begin{proof}
On a une suite d'isomorphismes
\begin{align*}
H^1(\GG(\Q),{\cal C}^{(p)}(\GG(\A)))&\cong H^1(\Gamma,{\cal C}^{(p)}(\GG(\wZ)))\\
&\cong \varinjlim\nolimits_{K^{]p[}} H^1(\Gamma,{\cal C}(\GG(\wZ)/{K^{]p[}}))\\
&\cong \varinjlim\nolimits_{K^{]p[}} H^1(\GG(\Q),{\cal C}(\GG(\A)/{K^{]p[}}))
\end{align*}
Le premier isomorphisme est le lemme de Shapiro, le second r\'esulte de ce que
${\cal C}^{(p)}(\GG(\wZ)))\cong \varinjlim\nolimits_{K^{]p[}}{\cal C}(\GG(\wZ)/{K^{]p[}})$ (par compacit\'e de
$\GG(\wZ)$), et le troisi\`eme est de nouveau le lemme de Shapiro.

Le r\'esultat s'en d\'eduit.
\end{proof}

\begin{rema}\phantomsection\label{cp1}
(i) Nous prouvons plus loin (prop.\,\ref{coco14}) que
$$H^1(\GG(\Q),{\cal C}(\GG(\A)/{K^{]p[}}))=H^1(\GG(\Q),{\cal C}(\GG(\A)))^{K^{]p[}}$$
et donc que $H^1(\GG(\Q),{\cal C}^{(p)}(\GG(\A)))$ est l'ensemble des
vecteurs $\GG(\wZp)$-lisses de $H^1(\GG(\Q),{\cal C}(\GG(\A)))$.

(ii) En reprenant les arguments de la preuve de la prop.\,\ref{coco7}, on d\'emontre
que $H^1(\GG(\Q),{\cal C}(\GG(\A)/{K^{]p[}}))$ est la cohomologie compl\'et\'ee
de la tour des courbes modulaires de niveaux $K^{]p[}K_p$ o\`u $K_p$
d\'ecrit les sous-groupes ouverts de $\GG(\Z_p)$.
\end{rema}

\begin{prop}\phantomsection\label{coco15}
$H^1(\GG(\Q),{\cal C}(\GG(\A)/{K^{]p[}}))$ est une repr\'esentation admissible de $\GG(\Q_p)$.
\end{prop}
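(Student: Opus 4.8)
The plan is to reduce the statement to the finiteness of the $K_p$-invariants in a smooth mod-$p^n$ representation, and then to pin those invariants between two finite groups by means of a Hochschild--Serre spectral sequence whose abutment is the cohomology at finite level.

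First I would reduce from the Banach representation to its finite-level pieces. By (the $K^{]p[}$-analogue of) prop.\,\ref{coco7}, $H^1(\GG(\Q),{\cal C}(\GG(\A)/K^{]p[},L))\cong L\otimes_{\O_L}\varprojlim_n H^1(\GG(\Q),{\cal C}(\GG(\A)/K^{]p[},\O_L/p^n))$ is a $p$-adic Banach representation of $\GG(\Q_p)$, and by Emerton's criterion it is admissible as soon as each smooth $\O_L/p^n$-representation $V_n:=H^1(\GG(\Q),{\cal C}(\GG(\A)/K^{]p[},\O_L/p^n))$ is admissible, i.e.\ $V_n^{K_p}$ is finite for every compact open $K_p\subseteq\GG(\Z_p)$. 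Writing $F:={\cal C}(\GG(\wZ)/K^{]p[},\O_L/p^n)$, the Shapiro isomorphism (cor.\,\ref{coco5}, with $\O_L/p^n$-coefficients) gives $V_n\cong H^1(\Gamma,F)$, where $\Gamma={\bf SL}_2(\Z)$ acts on the left and $\GG(\Z_p)$ acts on the right by translation, these two actions commuting.

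The crux is a spectral sequence. As a smooth $K_p$-module under right translation, $F$ is coinduced (it is ${\cal C}(\GG(\Z_p),\O_L/p^n)$ tensored with the finite tame factor $\GG(\wZp)/K^{]p[}$), so $H^i(K_p,F)=0$ for $i\geq 1$ while $H^0(K_p,F)=F^{K_p}$ is the space of functions on the \emph{finite} set $\GG(\wZ)/(K^{]p[}K_p)$. The Grothendieck spectral sequence for the two commuting actions then reads $E_2^{i,j}=H^i(K_p,H^j(\Gamma,F))\Rightarrow H^{i+j}(\Gamma,F^{K_p})$, and by the vanishing part of prop.\,\ref{coco6} only the rows $j=0,1$ survive, so it collapses to a long exact sequence. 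Its abutment is finite: $H^{i}(\Gamma,F^{K_p})$ is the cohomology of a finite-index subgroup of ${\bf SL}_2(\Z)$ with finite coefficients (the finite-level cohomology of rem.\,\ref{cp1}). The relevant segment is
$$H^1(\Gamma,F^{K_p})\longrightarrow V_n^{K_p}\overset{d_2}{\longrightarrow} H^2(K_p,H^0(\Gamma,F))\longrightarrow H^2(\Gamma,F^{K_p}),$$
so $V_n^{K_p}=H^0(K_p,V_n)$ is squeezed between the finite group $H^1(\Gamma,F^{K_p})$ and $H^2(K_p,H^0(\Gamma,F))$. It remains to show the latter is finite. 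By prop.\,\ref{coco6}, $H^0(\Gamma,F)\cong{\cal C}(\wZ^\dual/\det(K^{]p[}),\O_L/p^n)$ with $K_p$ acting through $\det:K_p\to\Z_p^\dual$; as $\wZ^\dual/\det(K^{]p[})\cong\Z_p^\dual\times(\text{finite})$, this is a finite sum of copies of ${\cal C}(\Z_p^\dual,\O_L/p^n)$, which is $\Z_p^\dual$-acyclic. Inflation--restriction along $1\to K_p^1\to K_p\to\det(K_p)\to 1$, with $K_p^1=K_p\cap{\bf SL}_2(\Z_p)$ acting trivially, then reduces $H^i(K_p,H^0(\Gamma,F))$ to $H^i(K_p^1,\O_L/p^n)$, which is finite since $K_p^1$ is a compact $p$-adic Lie group (a Poincar\'e duality group of dimension $3$). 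Hence $V_n^{K_p}$ is finite, $V_n$ is admissible, and so is the Banach representation.

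I expect the main obstacle to be the rigorous construction of the spectral sequence in the smooth setting and the identification of its abutment: one must check that smooth $K_p$-invariants are exact on the relevant coinduced modules, that $F$ is genuinely $K_p$-acyclic, and that the two commuting actions yield the Grothendieck composite-functor spectral sequence with abutment $H^\bullet(\Gamma,F^{K_p})$. Once this bookkeeping is in place, the collapse to two rows (prop.\,\ref{coco6}) and the $p$-adic Lie group finiteness of $H^\bullet(K_p^1,\O_L/p^n)$ are routine, and the odd primes $p=2,3$ can be absorbed by the torsion remarks accompanying lemme\,\ref{coco2}.
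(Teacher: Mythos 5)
Your proof is correct, but it follows a genuinely different route from the paper's. The paper's own argument is much shorter: after the Shapiro descent to $\Gamma$, it observes that $M={\cal C}(\GG(\Z_p)\times\GG(\wZp)/K^{]p[})$ is an admissible Banach representation of $\GG(\Z_p)$ (a finite sum of copies of ${\cal C}(\GG(\Z_p))$), that $M^{S=1}$ and $M^{1+U+U^2=0}$ are therefore admissible as closed subrepresentations, and that the four-term exact sequence of the lemme~\ref{coco2} exhibits $H^1(\Gamma,M)$ as the cokernel of a morphism of admissible representations, hence admissible by the abelian-category theorem of Schneider--Teitelbaum~\cite{ST1}. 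You instead dévisse mod $p^n$ and prove finiteness of $V_n^{K_p}$ directly, by playing the two Hochschild--Serre spectral sequences for the commuting $\Gamma$- and $K_p$-actions against each other: the $K_p$-acyclicity of $F$ identifies the abutment with the finite groups $H^\bullet(\Gamma,F^{K_p})$, and the five-term sequence squeezes $V_n^{K_p}$ between $H^1(\Gamma,F^{K_p})$ and $H^2(K_p,H^0(\Gamma,F))$, the latter finite by Lazard. This is essentially the mechanism the paper itself uses elsewhere (prop.\,\ref{coco14}, prop.\,\ref{gener7}), here redirected from vanishing to finiteness. What each approach buys: the paper's is economical but leans on the structure theory of admissible Banach representations; yours is more elementary at each finite level and makes the finiteness of invariants explicit, at the cost of the mod-$p^n$ reduction (where you should note that $H^1(\cdot,\O_L)/p^n\hookrightarrow V_n$ with cokernel of bounded exponent, as in the proof of prop.\,\ref{coco7}\,(ii), and that subquotients of admissible smooth mod-$p^n$ representations are admissible) and of the spectral-sequence bookkeeping you already flag. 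Both are sound; yours is closer in spirit to Emerton's original proof via complexes of admissible representations (cf.\ prop.\,\ref{gener4}).
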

\begin{proof}
Par d\'efinition, il suffit de regarder l'action de $\GG(\Z_p)$, 
et on peut utiliser le lemme de Shapiro pour descendre de $\GG(\Q)$ \`a $\Gamma$
et \'ecrire le groupe qui nous int\'eresse comme
$H^1(\Gamma,M)$, avec $M={\cal C}(\GG(\Z_p)\times \GG(\wZp)/K^{]p[})$.
Le $\GG(\Z_p)$-module $M$ est trivialement admissible
(c'est la somme directe d'un nombre fini de copies de ${\cal C}(\GG(\Z_p))$).
On en d\'eduit que $M^{S=1}$ et $M^{1+U+U^2}$ sont admissibles
en tant que sous-$\GG(\Z_p)$-modules ferm\'es
d'un module admissible.
Le lemme~\ref{coco11} 
permet alors d'en d\'eduire que
$H^0(\Gamma,M)$ et $H^1(\Gamma,M)$ sont admissibles en tant
que noyau et conoyau d'un morphisme de $\GG(\Z_p)$-modules admissibles~\cite{ST1}.
\end{proof}

\begin{prop}\phantomsection\label{coco14}
On a
$$H^1(\GG(\Q),{\cal C}(\GG(\A)/{K^{]p[}}))=H^1(\GG(\Q),{\cal C}(\GG(\A)))^{K^{]p[}}$$
\end{prop}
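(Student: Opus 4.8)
The plan is to transport everything to the arithmetic group $\Gamma={\bf SL}_2(\Z)$ and to prove that taking $K^{]p[}$-invariants commutes with $H^1(\Gamma,-)$. First I would record that $K^{]p[}\subset\GG(\wZp)$ acts on ${\cal C}(\GG(\A))$ by right translation, so it commutes with the left $\GG(\Q)$-action and is carried along by the Shapiro isomorphism of the corollary~\ref{coco5}. Writing $M:={\cal C}(\GG(\wZ),L)$, this gives, $K^{]p[}$-equivariantly,
$$H^1(\GG(\Q),{\cal C}(\GG(\A)))\cong H^1(\Gamma,M),\qquad H^1(\GG(\Q),{\cal C}(\GG(\A)/K^{]p[}))\cong H^1(\Gamma,M^{K^{]p[}}),$$
since ${\cal C}(\GG(\A)/K^{]p[})={\cal C}(\GG(\A))^{K^{]p[}}$ corresponds to $M^{K^{]p[}}$ under the isomorphism of the lemme~\ref{coco4}. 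Thus the assertion is equivalent to the natural map $H^1(\Gamma,M^{K^{]p[}})\to H^1(\Gamma,M)^{K^{]p[}}$ being an isomorphism.

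Here $M$ carries two commuting actions: the discrete group $\Gamma$ on the left and the profinite group $K^{]p[}$ on the right, the latter continuous. I would run the two Grothendieck spectral sequences of the bicomplex of $\Gamma$-cochains valued in continuous $K^{]p[}$-cochains. The first key input is that $M$ is $K^{]p[}$-acyclic: right translation of $\GG(\wZ)=\GG(\Z_p)\times\GG(\wZp)$ by $K^{]p[}$ is free, so $M$ is, as a $K^{]p[}$-module, the right regular representation with coefficients, which is relatively injective and hence satisfies $H^j(K^{]p[},M)=0$ for $j\geq 1$. One spectral sequence therefore collapses and identifies the abutment with $H^n(\Gamma,M^{K^{]p[}})$, while the other reads
$$E_2^{i,j}=H^i\big(K^{]p[},H^j(\Gamma,M)\big)\Longrightarrow H^{i+j}(\Gamma,M^{K^{]p[}}).$$
Setting $Z:=H^0(\Gamma,M)$, the five-term exact sequence becomes
$$0\to H^1(K^{]p[},Z)\to H^1(\Gamma,M^{K^{]p[}})\to H^1(\Gamma,M)^{K^{]p[}}\xrightarrow{\ d_2\ }H^2(K^{]p[},Z),$$
so it suffices to prove $H^1(K^{]p[},Z)=H^2(K^{]p[},Z)=0$; equivalently, that $Z$ is itself $K^{]p[}$-acyclic.

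The main obstacle is precisely this acyclicity of $Z$. By the proposition~\ref{coco6}(i) (density of ${\bf SL}_2(\Z)$ in ${\bf SL}_2(\wZ)$) one has $Z\cong{\cal C}(\wZ^\dual,L)$, on which $K^{]p[}$ acts through $\det\colon K^{]p[}\to(\wZp)^\dual$ by translation. Writing $H:=\det(K^{]p[})$, the subgroup $\ker\det$ acts trivially on $Z$, while $Z$ is co-induced, hence acyclic, for the free translation action of $H$; the Hochschild–Serre sequence for $\ker\det\to K^{]p[}\to H$ then degenerates to $H^i(K^{]p[},Z)\cong{\cal C}(\wZ^\dual/H,L)\otimes_L H^i(\ker\det,L)$. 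I would finish by checking $H^i(\ker\det,L)=0$ for $i\geq1$: the group $\ker\det$ is a closed subgroup of $\prod_{\ell\neq p}{\bf SL}_2(\Z_\ell)$, and for $\ell\neq p$ the factor ${\bf SL}_2(\Z_\ell)$ has \emph{finite} pro-$p$ Sylow subgroups (its congruence kernel is pro-$\ell$ and the reductive quotient ${\bf SL}_2(\F_\ell)$ is finite). A profinite group with finite $p$-Sylow subgroups has index prime to $p$ onto any Sylow, so restriction is split injective and $H^i(\ker\det,\Z_p)$ is finite for $i\geq1$; tensoring with $L$ over the characteristic-zero ring $\Z_p$ kills it. This makes $d_2$ trivial and annihilates the left-hand term, yielding the isomorphism, which via Shapiro is the claimed equality. (Alternatively, the very same two acyclicities feed into the four-term sequence of the lemme~\ref{coco2}, showing that $(-)^{K^{]p[}}$ is exact on $M^{S=1}\xrightarrow{1-U}M^{1+U+U^2=0}$ and hence commutes with $H^1(\Gamma,-)$.)
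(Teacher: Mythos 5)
Your overall strategy coincides with the paper's: Shapiro's lemma reduces everything to $\Gamma$ and $M={\cal C}(\GG(\wZ))$, and the statement becomes the commutation of $(-)^{K^{]p[}}$ with $H^1(\Gamma,-)$, which follows once one knows that $M$ and $H^0(\Gamma,M)\cong{\cal C}(\wZ^\dual)$ are $K^{]p[}$-acyclic in degrees $1$ and $2$. (The paper runs this through the suite exacte du lemme~\ref{coco2} and the crit\`ere du lemme~\ref{coco11} rather than the five-term sequence of a spectral sequence, but it is the same computation, and your reduction of $H^i(K^{]p[},{\cal C}(\wZ^\dual))$ to $H^i(\ker\det,L)$ via the determinant is exactly the proof of the (ii) du lemme~\ref{coco12}.)

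The gap is in your last step, the vanishing of $H^i(\ker\det,L)$ for $i=1,2$. Each factor ${\bf SL}_2(\Z_\ell)$, $\ell\neq p$, indeed has finite $p$-Sylow subgroups, but $\ker\det$ is an open subgroup of the full product $\prod_{\ell\neq p}{\bf SL}_2(\Z_\ell)$, and a $p$-Sylow subgroup of that product is the product $\prod_{\ell}P_\ell$ of $p$-Sylow subgroups of the factors; since $P_\ell\neq 1$ for every $\ell\equiv\pm1\pmod p$, and there are infinitely many such $\ell$, this $p$-Sylow is an \emph{infinite} pro-$p$ group (and it already meets any open subgroup in an infinite pro-$p$ group). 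So there is no finite Sylow to restrict to, and the finiteness of $H^i(\ker\det,\Z_p)$ does not follow. Nor can one repair the argument by working at each finite level $S$ and passing to the limit: the exponent $|P_S|$ annihilating $H^i(\prod_{\ell\in S}{\bf SL}_2({\bf F}_\ell),\Z/p^n)$ is unbounded in $S$, and the colimit over $S$ does not commute with the limit over $n$ defining $\Z_p$- or $L$-coefficients. The vanishing you need is true, but it genuinely uses that for each individual $\ell\neq p$ one has $H^1({\bf SL}_2({\bf F}_\ell),\Z/p^n)=0$ (perfectness, up to the small primes $\ell=2,3$) \emph{and} $H^2({\bf SL}_2({\bf F}_\ell),\Z/p^n)=0$ (triviality of the $p$-part of the Schur multiplier, Steinberg's theorem quoted in the lemme~\ref{psl2}), followed by an induction over finite sets of primes via Hochschild--Serre and a continuity argument; this is the content of the (i) du lemme~\ref{coco12}, and no soft Sylow-theoretic argument can substitute for the $H^2$ input.
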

\begin{proof}
Posons $K=K^{]p[}$.
Le lemme de Shapiro nous ram\`ene \`a prouver que
$H^1(\Gamma,M^K)=H^1(\Gamma,M)^K$, avec $M={\cal C}(\GG(\wZ))$.
Pour cela, compte-tenu du calcul des $H^0$ du lemme~\ref{coco13},
il suffit de prouver que la suite des $K$-invariants de la suite
du lemme~\ref{coco2} est exacte.  
Cela r\'esulte du crit\`ere du lemme~\ref{coco11}
avec $A={\cal C}(\wZ^\dual)$ et $B={\cal C}(\GG(\wZ^\dual))^{S=1}$,
et les r\'esultats d'annulation des lemmes~\ref{coco13} et~\ref{coco12}.
\end{proof}

\begin{lemm}\phantomsection\label{coco13}
On a les r\'esultats suivants:
\begin{align*}
&H^0(K,{\cal C}(\GG(\wZ))^{S=1})={\cal C}(\GG(\wZ)/K)^{S=1}
&&H^1(K,{\cal C}(\GG(\wZ))^{S=1})=0\\
&H^0(K,{\cal C}(\GG(\wZ))^{1+U+U^2=0})={\cal C}(\GG(\wZ)/K)^{1+U+U^2=0}
&&H^1(K,{\cal C}(\GG(\wZ))^{1+U+U^2=0})=0
\end{align*}
\end{lemm}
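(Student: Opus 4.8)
The four assertions all rest on a single mechanism: $K=K^{]p[}$ acts on ${\cal C}(\GG(\wZ))$ by right translation, and since $\GG(\wZ)=\GG(\Z_p)\times\GG(\wZp)$ with $K$ open, hence of finite index, in $\GG(\wZp)$, this right action is free and the projection $\GG(\wZ)\to\GG(\wZ)/K$ admits a continuous section (choose coset representatives for the finite set $\GG(\wZp)/K$). The plan is therefore to first prove that the ambient module is $K$-acyclic, and then to cut out the two subspaces by idempotents commuting with $K$. First I would establish that $H^i(K,{\cal C}(\GG(\wZ)))=0$ for $i\geq 1$ and $H^0(K,{\cal C}(\GG(\wZ)))={\cal C}(\GG(\wZ)/K)$: the clopen trivialization gives a $K$-equivariant isomorphism ${\cal C}(\GG(\wZ))\cong{\cal C}(\GG(\wZ)/K)\wotimes{\cal C}(K,L)$, where $K$ acts only on the second factor by the right regular representation, so ${\cal C}(\GG(\wZ))$ is coinduced from the trivial subgroup and Shapiro's lemma yields the vanishing. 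As in the proof of prop.~\ref{coco7}, the coefficients in $L$ are handled by reducing to ${\cal C}(\GG(\wZ),\O_L/p^n)$, where this is the standard acyclicity of coinduced modules in the continuous cohomology of the profinite group $K$, and then passing to the limit and tensoring with $L$.

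Next I would observe that the operators defining the two subspaces come from the left translation action of $S$ and $U$, which commutes with the right $K$-action. In $\GG(\wZ)$ the element $S$ has order $4$ and $U$ has order $6$, so they generate finite cyclic groups, and over the characteristic-zero field $L$ the group algebras $L[\langle S\rangle]$ and $L[\langle U\rangle]$ are semisimple. Hence the invariants ${\cal C}(\GG(\wZ))^{S=1}$ and the kernel ${\cal C}(\GG(\wZ))^{1+U+U^2=0}$ are the images of idempotents: respectively $\tfrac14(1+S+S^2+S^3)$, and the idempotent of $L[\langle U\rangle]$ projecting onto the sum of the isotypic components where $U$ acts by a primitive cube root of unity, i.e.\ where $1+U+U^2$ vanishes (this idempotent is Galois-stable, hence already lies in $\Q[\langle U\rangle]$). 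Both idempotents commute with $K$, so each subspace is a $K$-equivariant direct summand of ${\cal C}(\GG(\wZ))$.

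A direct summand of an acyclic module is acyclic, which gives at once the vanishing of $H^1(K,-)$ for both subspaces; and taking $K$-invariants of the summand gives $H^0(K,{\cal C}(\GG(\wZ))^{S=1})=({\cal C}(\GG(\wZ))^{S=1})^K={\cal C}(\GG(\wZ)/K)^{S=1}$, and likewise for the $U$-condition, using that the $K$-action commutes with $S$ and $U$, so that the $K$-invariance and the conditions $S=1$, $1+U+U^2=0$ may be imposed in either order. The main obstacle is the acyclicity step: although the free right $K$-action makes ${\cal C}(\GG(\wZ))$ formally coinduced, justifying the vanishing of continuous cohomology with $L$-coefficients requires the dévissage to finite coefficients $\O_L/p^n$ and the limit argument, exactly in the spirit of prop.~\ref{coco7}; once this is in hand the idempotent splitting is routine.
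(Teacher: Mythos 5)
Votre démonstration est correcte et suit essentiellement la même stratégie que celle du texte : on se ramène à l'acyclicité du module ambiant ${\cal C}(\GG(\wZ))$, qui est coinduit de ${\cal C}(K)$ (via ${\cal C}(\GG(\wZ))\cong{\cal C}(\GG(\Z_p))\wotimes{\cal C}(K)^{\GG(\wZp)/K}$), puis on découpe les deux sous-espaces comme facteurs directs $K$-équivariants grâce à l'inversibilité de $2$ et $3$ dans $L$. Votre formulation par idempotents de $L[\langle S\rangle]$ et $L[\langle U\rangle]$ est équivalente aux décompositions en sommes directes écrites dans le texte, et le dévissage par les $\O_L/p^n$ pour justifier l'annulation en coefficients $L$ est bien le bon argument.
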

\demo
Comme $2$ et $3$ sont inversibles dans $L$, 
on a des d\'ecompositions $K$-\'equivariantes
\begin{align*}
{\cal C}(\GG(\wZ))&={\cal C}(\GG(\wZ))^{S=1}\oplus {\cal C}(\GG(\wZ))^{S=-1}\\
{\cal C}(\GG(\wZ))&={\cal C}(\GG(\wZ))^{1+U+U^2=0}\oplus {\cal C}(\GG(\wZ))^{U=1}
\oplus {\cal C}(\GG(\wZ))^{U^3=-1}
\end{align*}
Il suffit de prouver que
$H^0(K,{\cal C}(\GG(\wZ)))={\cal C}(\GG(\wZ)/K)$, ce qui est imm\'ediat,
et $H^1(K,{\cal C}(\GG(\wZ)))=0$, ce qui suit de ce que
${\cal C}(\GG(\wZ))={\cal C}(\GG(\Z_p))\widehat\otimes{\cal C}(\GG(\wZp))$
et ${\cal C}(\GG(\wZp))\cong{\cal C}(K)^{\GG(\wZp)/K}$.

\begin{lemm}\phantomsection\label{coco11}
Soit $0\to A\to B\to C\to D\to 0$
une suite exacte de $K$-modules topologiques.
Si $H^1(K,B)=0$ et si $H^1(K,A)=H^2(K,A)=0$, la suite
$0\to A^K\to B^K\to C^K\to D^K\to 0$ est exacte.
\end{lemm}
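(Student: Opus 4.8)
The plan is to split the four-term exact sequence
$$0\to A\to B\to C\to D\to 0$$
into two short exact sequences, apply the long exact cohomology sequence for the functor of $K$-invariants to each, and splice them back together. Let $E:={\rm Im}(B\to C)=\ker(C\to D)$, so that we have short exact sequences of $K$-modules
$$0\to A\to B\to E\to 0
\quad\text{and}\quad
0\to E\to C\to D\to 0.$$

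\emph{Exactness on the left and the role of $B$.} Taking $K$-invariants of the first sequence gives the long exact sequence
$$0\to A^K\to B^K\to E^K\to H^1(K,A)\to H^1(K,B)\to\cdots$$
Since $H^1(K,A)=0$ by hypothesis, the map $B^K\to E^K$ is surjective and we obtain a short exact sequence $0\to A^K\to B^K\to E^K\to 0$. Likewise, taking $K$-invariants of the second sequence gives
$$0\to E^K\to C^K\to D^K\to H^1(K,E)\to\cdots,$$
so the sequence $0\to A^K\to B^K\to C^K\to D^K\to 0$ will be exact as soon as I know that the connecting map $D^K\to H^1(K,E)$ vanishes, i.e. that $C^K\to D^K$ is surjective, equivalently that the image of $H^1(K,E)$ in (or its receiving of $D^K$ from) the sequence is trivial.

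\emph{Killing the obstruction $H^1(K,E)$.} The main point, and the only place the hypotheses $H^1(K,B)=0$ and $H^2(K,A)=0$ are used, is to show $H^1(K,E)=0$, which forces the connecting map $D^K\to H^1(K,E)$ to be zero and hence $C^K\to D^K$ to be surjective. I read this off the long exact sequence attached to $0\to A\to B\to E\to 0$:
$$\cdots\to H^1(K,B)\to H^1(K,E)\to H^2(K,A)\to\cdots$$
Here $H^1(K,B)=0$ by hypothesis kills everything coming from the left, and $H^2(K,A)=0$ by hypothesis kills the target on the right, so the middle term $H^1(K,E)$ is squeezed to $0$. With $H^1(K,E)=0$ in hand, the second long exact sequence degenerates to $0\to E^K\to C^K\to D^K\to 0$, and concatenating with $0\to A^K\to B^K\to E^K\to 0$ through the common term $E^K$ yields the desired four-term exact sequence $0\to A^K\to B^K\to C^K\to D^K\to 0$.

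\emph{The expected obstacle.} Conceptually the argument is a routine diagram chase once the vanishing of $H^1(K,E)$ is secured, so the genuine content is entirely the squeeze $H^1(K,B)=0$, $H^2(K,A)=0\Rightarrow H^1(K,E)=0$; everything else is formal. The only subtlety worth checking is that these are short exact sequences of \emph{topological} $K$-modules, so I should make sure the functor $M\mapsto M^K$ (continuous $K$-invariants) really does produce a long exact sequence, i.e. that the relevant group cohomology is the derived functor computed by continuous cochains and that the short exact sequences $0\to A\to B\to E\to 0$ and $0\to E\to C\to D\to 0$ remain exact in the topological/continuous setting. This is where I expect to have to be slightly careful, though in the application $K$ is a profinite (indeed compact $p$-adic) group and the modules are the continuous-function spaces of the previous lemmas, for which continuous cohomology behaves as a $\delta$-functor.
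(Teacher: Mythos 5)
Your proof is correct and is essentially the paper's own argument: you split the four-term sequence at $E=B/A$, use $H^1(K,A)=0$ to get exactness of $0\to A^K\to B^K\to E^K\to 0$, and squeeze $H^1(K,E)$ between $H^1(K,B)=0$ and $H^2(K,A)=0$ to get surjectivity of $C^K\to D^K$. The paper phrases it with $B/A$ in place of your $E$ but the two proofs are identical in substance.
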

\begin{proof}
On a des suites exactes $0\to A\to B\to B/A\to 0$ et $0\to B/A\to C\to D$.
Si $H^1(K,A)=0$, on a $(B/A)^K=B^K/A^K$, et donc
la suite $0\to A^K\to B^K\to C^K\to D^K\to 0$ est exacte
si et seulement si $C^K\to D^K$ est surjective.
Ceci est le cas si $H^1(K,B/A)=0$.
Or on a une suite exacte $H^1(K,B)\to H^1(K,B/A)\to H^2(K,A)$;
l'hypoth\`ese $H^1(K,B)=H^2(K,A)=0$ permet donc de conclure.
\end{proof}

\subsubsection{R\'esultats d'annulation pour la cohomologie de ${\bf SL}_2$}
Soit $\GG'={\bf SL}_2\subset\GG$.

\begin{lemm}\phantomsection\label{psl2}
On suppose $\ell\neq p$.

{\rm (i)} $H^1(\GG'({\bf F}_\ell),\Z/p^n)=0$. 

{\rm (ii)} $H^2(\GG'({\bf F}_\ell),\Z/p^n)=0$.
\end{lemm}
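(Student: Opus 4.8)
The plan is to read off these two groups from the structure of the finite group $G:=\GG'({\bf F}_\ell)={\bf SL}_2({\bf F}_\ell)$, using crucially that the coefficients $\Z/p^n$ are $p$-primary while, by hypothesis, $p\neq\ell$. The action on $\Z/p^n$ is trivial, so $H^1$ and $H^2$ are governed by the low-degree integral homology $H_1(G,\Z)=G^{\rm ab}$ and $H_2(G,\Z)$ (the Schur multiplier) via the universal coefficient theorem. The whole point will be that, because $p\neq\ell$, the relevant contributions from these homology groups are prime to $p$ (indeed zero).

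For (i) one has $H^1(G,\Z/p^n)={\rm Hom}(G^{\rm ab},\Z/p^n)$, so it suffices to check that $p\nmid|G^{\rm ab}|$. I would recall the abelianization of ${\bf SL}_2({\bf F}_\ell)$: for $\ell\geq 5$ the group is perfect, so $G^{\rm ab}=0$; for the two small primes one has ${\bf SL}_2({\bf F}_2)\cong S_3$ with $G^{\rm ab}\cong\Z/2$, and ${\bf SL}_2({\bf F}_3)$, the binary tetrahedral group $2.A_4$, with $G^{\rm ab}\cong\Z/3$. In every case $|G^{\rm ab}|$ is a power of $\ell$, hence prime to $p$, and the ${\rm Hom}$ vanishes.

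For (ii) I would invoke the universal coefficient exact sequence
$$0\to{\rm Ext}^1_{\Z}(G^{\rm ab},\Z/p^n)\to H^2(G,\Z/p^n)\to{\rm Hom}(H_2(G,\Z),\Z/p^n)\to 0.$$
The ${\rm Ext}$ term vanishes since $|G^{\rm ab}|$ is prime to $p$ by the computation above, so it remains to see that ${\rm Hom}(H_2(G,\Z),\Z/p^n)=0$, for which it is enough that the Schur multiplier $H_2(G,\Z)$ be trivial. This holds for all prime $\ell$: for $\ell\geq 5$ it is the superperfectness of ${\bf SL}_2({\bf F}_\ell)$, which is the universal central extension of the simple group ${\bf PSL}_2({\bf F}_\ell)$; and for $\ell=2,3$ the multipliers of $S_3$ and of $2.A_4$ are trivial (the only exceptional $q$ for ${\bf SL}_2({\bf F}_q)$ being $q=4,9$, which are not prime).

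The main obstacle is precisely this triviality of $H_2(G,\Z)$, which I would either quote from the standard tables of multipliers of ${\bf SL}_2({\bf F}_q)$ or, to remain self-contained, establish by reducing to a Sylow $p$-subgroup $P$. Here restriction–corestriction gives $\mathrm{cor}\circ\mathrm{res}=[G:P]$, an isomorphism on the $p$-primary groups $H^\bullet(G,\Z/p^n)$, so $\mathrm{res}$ embeds them into $H^\bullet(P,\Z/p^n)$ as the stable elements; and since $p\neq\ell$, the subgroup $P$ is cyclic when $p$ is odd and generalized quaternion when $p=2$, lying in a maximal torus on which the Weyl element acts by inversion, an action that kills the relevant stable classes. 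I expect the $p=2$ (quaternion) case to demand the most care in controlling the fusion, which is why I would favour the uniform argument through the Schur multiplier.
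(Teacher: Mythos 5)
Your proposal is correct and follows essentially the same route as the paper: part (i) is the same reduction to the abelianization of $\GG'({\bf F}_\ell)$ (perfect for $\ell\geq 5$ via the simplicity of ${\bf PSL}_2({\bf F}_\ell)$, of order $2$ resp.\ $3$ for $\ell=2,3$, hence prime to $p$), and part (ii) rests on the same group-theoretic input, namely the vanishing of the Schur multiplier of ${\bf SL}_2({\bf F}_\ell)$ for $\ell$ prime, which the paper extracts directly from Steinberg (th.\,14 du chap.\,7), the exceptional fields ${\bf F}_4$ and ${\bf F}_9$ not being prime fields. Your universal-coefficient unpacking and the sketched restriction--corestriction alternative are harmless elaborations rather than a genuinely different method.
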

\begin{proof}
Le (i) est \'evident si $\ell\geq 5$ car ${\bf PSL}_2({\bf F}_\ell)$
est alors un groupe simple (et $H^1={\rm Hom}$). 
Les cas $\ell=2,3$ se traitent \`a la main et il ressort que, m\^eme
si on ne suppose pas $\ell\neq p$, les seuls cas o\`u ${\rm Hom}(\GG'({\bf F}_\ell),\Z/p^n)\neq 0$
sont:
$p=2$ et $\ell=2$ o\`u ce groupe est \'egal \`a $\Z/2$,
$p=3$ et $\ell=3$ o\`u ce groupe est \'egal \`a $\Z/3$.

Le (ii)
est un cas particulier du th.\,14 du chap.\,7 de
 \cite{Steinberg} (cf. (a) des Remarks \`a la suite du th\'eor\`eme).
\end{proof}

\begin{lemm}\phantomsection\label{coco12}
{\rm (i)} Si $K_N={\rm Ker}\big(\GG'(\wZp)\to \GG'(\Z/N)\big)$, avec $(N, p)=1$,
alors $H^i(K_N,\Z_p)=0$ si $i=1,2$.

{\rm (ii)} Si $K$ est un sous-groupe ouvert de $\GG(\wZp)$,
alors $H^i(K,{\cal C}(\wZ))=0$ si $i=1,2$.
\end{lemm}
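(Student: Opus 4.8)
\emph{Part (i).} The plan is to work first with finite coefficients $\Z/p^n$ and exploit the product decomposition $\GG'(\wZp)=\prod_{\ell\neq p}{\bf SL}_2(\Z_\ell)$. Writing $N=\prod_{\ell\mid N}\ell^{v_\ell}$ (all factors $\neq p$), the subgroup $K_N$ is the product of the pro-$\ell$ groups $P_\ell:=\ker\big({\bf SL}_2(\Z_\ell)\to{\bf SL}_2(\Z/\ell^{v_\ell})\big)$ for $\ell\mid N$ and of the full groups ${\bf SL}_2(\Z_\ell)$ for the remaining $\ell\neq p$. For each local factor $G_\ell$ I would check that $H^j(G_\ell,\Z/p^n)=0$ for $j=1,2$: if $G_\ell=P_\ell$ this is automatic, a pro-$\ell$ group having no higher cohomology with coefficients of order prime to $\ell$; if $G_\ell={\bf SL}_2(\Z_\ell)$, the Hochschild--Serre sequence for $1\to P_\ell\to{\bf SL}_2(\Z_\ell)\to{\bf SL}_2({\bf F}_\ell)\to1$ degenerates (again $H^{\geq1}(P_\ell,\Z/p^n)=0$), so $H^j({\bf SL}_2(\Z_\ell),\Z/p^n)\cong H^j({\bf SL}_2({\bf F}_\ell),\Z/p^n)$, which vanishes for $j=1,2$ by Lemma~\ref{psl2}.

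To assemble the (infinite) product I would use that $K_N=\varprojlim_S\prod_{\ell\in S}G_\ell$ over finite sets $S$ of primes $\neq p$, so that continuity of cohomology gives $H^j(K_N,\Z/p^n)=\varinjlim_S H^j\big(\prod_{\ell\in S}G_\ell,\Z/p^n\big)$. An induction on $|S|$, via the Hochschild--Serre (K\"unneth) sequence for adjoining one factor $G_{\ell_0}$, then shows each finite product has vanishing $H^1$ and $H^2$: every contributing term $H^a\big(\prod_{S'}G_\ell,H^b(G_{\ell_0},\Z/p^n)\big)$ with $a+b\in\{1,2\}$ dies, either because $b\in\{1,2\}$ (local vanishing) or because $b=0$ and $a\in\{1,2\}$ (induction). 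Hence $H^j(K_N,\Z/p^n)=0$ for $j=1,2$ and all $n$. Passing to $\Z_p$ through $H^j(K_N,\Z_p)=\varprojlim_nH^j(K_N,\Z/p^n)$ and the $\varprojlim^1$-term (which only involves $H^{j-1}(K_N,\Z/p^n)$, equal to $\Z/p^n$ for $j=1$ with surjective transition maps, and to $0$ for $j=2$) gives $H^1(K_N,\Z_p)=H^2(K_N,\Z_p)=0$, proving (i).

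\emph{Part (ii).} The coefficient module $A={\cal C}(\wZ^\dual)$ carries the $K$-action through $\det\colon K\to(\wZp)^\dual$ acting by translation, so $K_1:=K\cap\GG'(\wZp)$ acts trivially and $D:=\det(K)$ is open, hence of finite index, in $(\wZp)^\dual$. Splitting $\wZ^\dual=\Z_p^\dual\times(\wZp)^\dual$, the factor $\Z_p^\dual$ is fixed and pulls out, so it suffices to treat $B={\cal C}((\wZp)^\dual)$. As $D$ has finite index, $K$ has finitely many orbits on $(\wZp)^\dual$, each a $D$-torsor $\cong K_1\backslash K$; thus $B$ is a finite direct sum of copies of $\mathrm{CoInd}_{K_1}^{K}(L)$, and Shapiro's lemma gives $H^i(K,B)=\bigoplus H^i(K_1,L)$. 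It remains to prove $H^i(K_1,L)=0$ for $i=1,2$. Since $K_1$ contains some $K_N$ with finite quotient $F=K_1/K_N$, I would first upgrade (i) to characteristic $0$: from $H^i(K_N,\Z/p^n)=0$ one gets $H^i(K_N,\Q_p/\Z_p)=0$, and the sequence $0\to\Z_p\to\Q_p\to\Q_p/\Z_p\to0$ together with (i) gives $H^i(K_N,L)=0$ for $i=1,2$; a final Hochschild--Serre against $F$ then yields $H^i(K_1,L)=H^i(F,L)=0$, since $H^{\geq1}(F,L)=0$ for a finite group acting trivially on a $\Q_p$-vector space. This gives $H^1(K,A)=H^2(K,A)=0$.

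I expect the main obstacle to be the bookkeeping in (i): one must verify that continuity of cohomology applies to the restricted product and that the iterated Hochschild--Serre inductions control degrees $1$ and $2$ simultaneously --- the $b=0$ row must feed the induction while the $b=1,2$ rows are killed locally (by Lemma~\ref{psl2} for the full factors, by pro-$\ell$-ness for the congruence factors). A secondary subtlety, in (ii), is that the integral vanishing \emph{fails} for $K_1$ (the finite quotient $F$ contributes $p$-torsion), which is exactly why one must pass to the characteristic-$0$ coefficients $L$ before invoking Shapiro.
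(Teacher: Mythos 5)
Your proof is correct and follows essentially the same path as the paper: for (i) the paper likewise reduces, via the prime-to-$p$ congruence kernels, to $\prod_{\ell}\GG'({\bf F}_\ell)$, invokes Lemma~\ref{psl2}, runs an induction on finite sub-products with Hochschild--Serre, and passes to $\Z_p$ by a $\varprojlim$/${\rm R}^1\varprojlim$ argument. For (ii) the paper organizes the same computation slightly differently --- Hochschild--Serre for $1\to K_N\to K\to U\to 1$ with $U=\det K$, using (i) to kill the $j=1,2$ rows and the Shapiro-type vanishing $H^i(U,{\cal C}(\wZ^{]p[,\dual}))=0$ for the $j=0$ row --- whereas you package the determinant direction as a module coinduced from the closed subgroup $K_1=\ker(\det|_K)$ and apply Shapiro there (at the cost of first upgrading (i) from $\Z_p$- to $L$-coefficients); the two organizations are equivalent.
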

\begin{proof} (i)

$\bullet$ Pour $H^1$, on commence par remarquer
que tout morphisme continu $K_N\to \Z/p^m$ se factorise \`a travers
$\prod_{\ell\nmid pN}\GG'({\bf F}_\ell)$ 
car le noyau est un produit de pro-$\ell$-groupes, avec
$\ell\neq p$. Comme cette image est un produit de groupes finis et que $\Z_p$
ne contient pas de sous-groupe fini, cela permet de
conclure. (En fait, il r\'esulte du (i) du lemme~\ref{psl2} que  
$H^1(K_N,\Z/p^m)=0$ 
pour tout $m$.)

$\bullet$ Pour $H^2$, la nullit\'e de
$H^1(K_N,\Z/p^m)$ implique
que $H^2(K_N,\Z_p)$ est la limite projective des
$H^2(K_N,\Z/p^n)$ puisque le ${\rm R}^1\lim$ des $H^1(K_N,\Z/p^m)$ est nul.
Il suffit donc de prouver que $H^2(K_N,\Z/p^n)=0$ 
pour tout $n$ et, par d\'evissage, il suffit de le prouver pour $n=1$. 

Comme le noyau de $K_N\to \prod_{\ell\nmid pN}\GG'({\bf F}_\ell)$
est d'ordre premier \`a $p$, la suite spectrale de Hochschild-Serre permet
de remplacer $K_N$ par $K'_N=\prod_{\ell\nmid pN}\GG'({\bf F}_\ell)$.
 Par continuit\'e des cocycles consid\'er\'es,
$H^2(K'_N,\Z/p^n)$ est la limite inductive
des $H^2(\prod_{\ell\in S}\GG'({\bf F}_\ell),\Z/p)$, avec $S$ fini.
Une r\'ecurrence imm\'ediate sur le cardinal de $S$,
utilisant la suite spectrale de Hochschild-Serre et le lemme~\ref{psl2} pour $|S|=1$,
permet de conclure.

(ii)
On a une suite exacte
$1\mapsto K'\to K\to U\to 1$, o\`u $K'$ est un sous-groupe
ouvert de $\GG'(\wZp)$ et $U$ un sous-groupe ouvert
de $\wZ^{]p[,\dual}$.
Quitte \`a remplacer $K$ par un sous-groupe d'indice fini 
(et \`a utiliser restriction et corestriction
pour revenir \`a $K$), on peut supposer que 
$K'=K_N$.
On est alors ramen\'e \`a v\'erifier que
$H^i(U,H^j(K_N,{\cal C}(\wZ^\dual)))=0$ si $1\leq i+j\leq 2$
(en particulier $j\leq 2$).

Comme $K$ agit par $g\cdot\phi(x)=\phi(x\det g)$ sur
${\cal C}(\wZ^\dual)$, l'action de $K_N$ est triviale
et donc $H^0(K_N,{\cal C}(\wZ^\dual))={\cal C}(\wZ^\dual)$.
Maintenant, ${\cal C}(\wZ^\dual)={\cal C}(\Z_p^\dual)\widehat\otimes
{\cal C}(\wZ^{]p[,\dual})$, et on a $H^i(U,{\cal C}(\wZ^{]p[,\dual}))=0$
si $i\geq 1$, pour tout sous-groupe ouvert
compact $U$ de $\wZ^{]p[,\dual}$.
On en d\'eduit que
$H^i(U,{\cal C}(\wZ^\dual))=0$ pour tout
$i\geq 1$ (car $U$ agit trivialement sur ${\cal C}(\Z_p^\dual)$),
ce qui permet de traiter le cas $j=0$.

Si $j=1,2$,
comme l'action de $K_N$ sur ${\cal C}(\wZ^\dual)$ est triviale, 
on a $H^j(K_N,{\cal C}(\wZ^\dual))=H^j(K_N,\Z_p)\widehat\otimes
{\cal C}(\wZ^\dual)=0$, d'apr\`es le (i).

Ceci permet de conclure.
\end{proof}

\begin{lemm}\phantomsection\label{coco18}
{\rm(cf.~\cite[cor.\,4.3.2]{Em06})}
Si ${K_p}$ est un sous-groupe ouvert de $\GG'(\Z_p)$,
et si $W$ est une repr\'esentation de dimension
finie de $\GG(\Q_p)$, alors
$H^i({K_p},W)=0$ si $i=1,2$.
\end{lemm}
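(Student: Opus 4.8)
The plan is to reduce the continuous cohomology of $K_p$ to Lie algebra cohomology by Lazard's comparison theorem, and then to invoke the Whitehead lemmas for the semisimple Lie algebra ${\goth{sl}}_2$. First I would fix an open normal uniform pro-$p$ subgroup $K_p'$ of $K_p$; such a subgroup exists by the structure theory of compact $p$-adic analytic groups (one may also take $K_p\cap\Gamma(p^n)$ for a principal congruence subgroup $\Gamma(p^n)$ of $\GG'(\zp)$ with $n$ large, which is normal in $K_p$ since $K_p\subset\GG'(\zp)$ normalizes $\Gamma(p^n)$). Being open in $\GG'(\zp)$, the group $K_p'$ has Lie algebra ${\goth{sl}}_2$, and the restriction of $W$ to $K_p'$ is a finite-dimensional continuous $L$-representation whose differential is the restriction to ${\goth{sl}}_2$ of the ${\goth{gl}}_2$-action on $W$.

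Second, Lazard's theorem supplies, for the uniform group $K_p'$ and the finite-dimensional continuous representation $W$, a natural isomorphism $H^i(K_p',W)\cong H^i({\goth{sl}}_2,W)$ between continuous group cohomology and Lie algebra cohomology (carried out $L$-linearly, i.e. after extending scalars to $L$ throughout). This is the step carrying the genuine content; the remainder is formal. Now ${\goth{sl}}_2$ is semisimple and $L$ has characteristic $0$, so the first and second Whitehead lemmas give $H^1({\goth{sl}}_2,W)=H^2({\goth{sl}}_2,W)=0$ for the finite-dimensional module $W$. Hence $H^1(K_p',W)=H^2(K_p',W)=0$.

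Finally I would descend from $K_p'$ to $K_p$ via the Hochschild-Serre spectral sequence for $1\to K_p'\to K_p\to Q\to 1$, where $Q=K_p/K_p'$ is finite. Since $|Q|$ is invertible in $L$, the higher cohomology of $Q$ with coefficients in any $L$-vector space vanishes, so the spectral sequence degenerates and gives $H^n(K_p,W)\cong H^n(K_p',W)^{Q}$ for all $n$. Combined with the vanishing just established, this yields $H^1(K_p,W)=H^2(K_p,W)=0$, as claimed.

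The only delicate point is the precise hypotheses under which Lazard's isomorphism applies (uniformity of $K_p'$ and analyticity of the coefficients, the latter being automatic here after passing to $K_p'$). Once that comparison is in place the argument is purely homological, and indeed the same mechanism—Lazard plus Whitehead plus a prime-to-characteristic descent—would work for any open subgroup of a semisimple $p$-adic group. I expect the verification of Lazard's hypotheses to be the main obstacle, and this is precisely why the statement is attributed to \cite[cor.\,4.3.2]{Em06}.
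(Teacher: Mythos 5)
Your proof is correct and follows essentially the same route as the paper: Lazard's comparison of continuous group cohomology with Lie algebra cohomology of ${\goth{sl}}_2$, followed by the first and second Whitehead lemmas. The only difference is that you first pass to a uniform open normal subgroup $K_p'$ and descend by Hochschild--Serre using that $[K_p:K_p']$ is invertible in $L$, whereas the paper applies Lazard directly to $K_p$; your extra step is a harmless (indeed prudent) refinement, not a different argument.
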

\begin{proof}
D'apr\`es Lazard~\cite{laz}, $H^i({K_p},W)\cong H^i({\goth{sl}}_2,W)$,
et comme ${\goth{sl}}_2$ est une alg\`ebre de Lie semi-simple,
on a $H^1({\goth{sl}}_2,W)$ 
et $H^2({\goth{sl}}_2,W)=0$ d'apr\`es les premier et second lemmes de
Whitehead.
\end{proof}

\begin{lemm}\phantomsection\label{coco200}
Si $K$ est un sous-groupe ouvert de $\GG(\wZ)$ et si $W$ est une repr\'esentation alg\'ebrique
de $\GG(\Q_p)$, alors $H^i(K,{\cal C}(\GG(\wZ))\otimes W)=0$ si $i=1,2$.
\end{lemm}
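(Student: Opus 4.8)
The plan is to reduce the computation to the acyclicity of the right regular representation, in two steps: first I would remove the algebraic factor $W$ by an equivariant change of variables, and then exploit that ${\cal C}(\GG(\wZ))$ is coinduced as a $K$-module, exactly as in the proof of lemma~\ref{coco13}.

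First I would reduce to the case where $W$ is trivial. Write ${\cal C}(\GG(\wZ))\otimes W={\cal C}(\GG(\wZ),W)$, the continuous $W$-valued functions, on which $K$ acts diagonally by $(k\star\psi)(x)=k_p\cdot\psi(xk)$, where $k_p\in\GG(\zp)$ is the $p$-component of $k$ and $k_p\cdot$ is the algebraic action on $W$. Define $\Theta\colon{\cal C}(\GG(\wZ),W)\to{\cal C}(\GG(\wZ),W)$ by $(\Theta\psi)(x)=x_p\cdot\psi(x)$. Since the $\GG(\zp)$-action on $W$ is polynomial, hence continuous, $\Theta$ is a continuous automorphism with inverse $\psi\mapsto(x\mapsto x_p^{-1}\psi(x))$. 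Using $(xk)_p=x_pk_p$ one checks that $\Theta$ intertwines $\star$ with the action $(k\bullet\psi)(x)=\psi(xk)$ that is trivial on the $W$-variable. Hence, as a $K$-module, ${\cal C}(\GG(\wZ))\otimes W$ is isomorphic to $\dim W$ copies of ${\cal C}(\GG(\wZ))$ with the right regular action, and it suffices to prove $H^i(K,{\cal C}(\GG(\wZ)))=0$ for $i=1,2$.

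For the second step, since $K$ is open in the compact group $\GG(\wZ)$, the space $\GG(\wZ)$ is a finite disjoint union of right cosets $g_jK$, so ${\cal C}(\GG(\wZ))\cong\bigoplus_j{\cal C}(g_jK)\cong\bigoplus_j{\cal C}(K)$ as $K$-modules for the right regular action. Each ${\cal C}(K)={\rm CoInd}_{\{1\}}^K L$ is coinduced from the trivial subgroup, so Shapiro's lemma gives $H^i(K,{\cal C}(K))=H^i(\{1\},L)=0$ for every $i\ge1$. This is the acyclicity already used in lemma~\ref{coco13}, and it yields $H^i(K,{\cal C}(\GG(\wZ))\otimes W)=0$ for all $i\ge1$, in particular for $i=1,2$.

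The only delicate points are the well-definedness and $K$-equivariance of $\Theta$ (resting on continuity of the algebraic action and on multiplicativity of the $p$-component map) and the application of Shapiro's lemma to the coinduced Banach-space module ${\cal C}(K)$ in continuous cohomology; both are formal once set up. Alternatively, one obtains the statement exactly in the range $i=1,2$ without the twist, by running Hochschild--Serre for $1\to K\cap\GG(\wZp)\to K\to K_p\to1$ (with $K_p$ the image of $K$ in $\GG(\zp)$): the prime-to-$p$ part is killed by the coinduced structure as in lemma~\ref{coco13}, leaving $H^i(K_p,{\cal C}(\GG(\zp))\otimes V\otimes W)$ with $V$ finite dimensional, which one treats by the same coinduction on $\GG(\zp)$, or by reducing to $\GG'={\bf SL}_2$ and invoking lemma~\ref{coco18}. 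The main obstacle in either approach is keeping careful track of the precise $K$-actions on the two factors $\GG(\zp)$ and $\GG(\wZp)$ of $\GG(\wZ)$.
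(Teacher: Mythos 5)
Your argument is internally correct, but it proves the statement as literally printed rather than the statement the paper actually proves and uses, and the two are not the same. The giveaway is that your main argument yields $H^i(K,{\cal C}(\GG(\wZ))\otimes W)=0$ for \emph{all} $i\geq 1$, whereas the lemma is carefully restricted to $i=1,2$. Looking at where the lemma is invoked clears this up: in the proof of th.\,\ref{coco17} it is applied to $A={\cal C}(\Z_p^\dual)\otimes W$, and in th.\,\ref{alge10} to ${\cal C}(\wZ^\dual)\otimes W=H^0(\GG(\Q),{\cal C}(\GG(\A))\otimes W)$ --- in both cases $K$ acts on the first factor \emph{through the determinant}. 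The paper's own proof confirms this reading: its first step writes the coefficient module as a finite sum of copies of ${\rm Ind}_{K'}^KW$ with $K'=K\cap\GG'(\wZ)$, which is exactly what ${\cal C}(\wZ^\dual)\otimes W$ is ($\wZ^\dual$ is a finite union of $\det K$-cosets, ${\cal C}(\det K)\cong{\cal C}(K/K')$, and ${\cal C}(K/K')\otimes W\cong{\rm Ind}_{K'}^K(W|_{K'})$ by the projection formula), but is false for ${\cal C}(\GG(\wZ))\otimes W$ with the right regular action. So the printed ${\cal C}(\GG(\wZ))$ should be read as ${\cal C}(\wZ^\dual)$; for the genuine regular-representation case, your coinduction argument is precisely what the paper uses inline in the proof of th.\,\ref{alge10} (the observation that each ${\cal C}(gK)\otimes W$ is induced from $\{1\}$ to $K$), with vanishing in all degrees.

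For the intended module both of your steps break down. The untwisting $\Theta\psi(x)=x_p\cdot\psi(x)$ is unavailable because $x$ now ranges over $\wZ^\dual$ and carries no $p$-component acting on $W$ compatibly with the $K$-action through $\det$; and the vanishing really does fail for $i\geq 3$ (Shapiro reduces to $H^i(K',W)$ with $K'\subset\GG'(\wZ)$, and already $H^3(K_p,L)\cong H^3(\goth{sl}_2,L)\neq 0$ by Lazard), so no soft acyclicity argument can possibly work. The actual content, missing from your proposal, is: (a) Shapiro down to $K'=K\cap\GG'(\wZ)$; (b) after shrinking $K$ so that $K'=K^{]p[}\times K_p$ (and using corestriction to return to $K$), the K\"unneth decomposition $H^i(K',W)=\oplus_{j+k=i}H^j(K^{]p[},L)\otimes H^k(K_p,W)$; (c) the vanishing of $H^1$ and $H^2$ of $K^{]p[}$ with trivial coefficients (lemme~\ref{coco12}, resting on the simplicity of ${\bf PSL}_2({\bf F}_\ell)$ and Steinberg's computation of $H^2$); and (d) the vanishing of $H^1(K_p,W)$ and $H^2(K_p,W)$ via Lazard's comparison with Lie algebra cohomology and the Whitehead lemmas (lemme~\ref{coco18}). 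Your closing alternative gestures at (c) and (d), but it is still framed for the full function space ${\cal C}(\GG(\wZ))$, where none of this is needed; it never engages with the determinant twist that makes the lemma nontrivial.
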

\begin{proof}
Si $K'=K\cap \GG'(\wZ)$, alors ${\cal C}(\GG(\wZ))\otimes W$
est la somme d'un nombre fini de copies de ${\rm Ind}_{K'}^KW$, et il suffit
de prouver que $H^i(K',W)=0$ si $i=1,2$.

Quitte \`a remplacer notre $K$ initial par un sous-groupe d'indice fini (et utiliser la
corestriction pour revenir \`a $K$), on peut supposer $K'=K^{]p[}K_p$ o\`u $K_p$
est un sous-groupe ouvert de $\GG(\Z_p)$ et $K^{]p[}$ est un sous-groupe ouvert de $\GG(\wZp)$.
Alors $H^i(K',W)=\oplus_{j+k=i}H^j(K^{]p[},L)\otimes H^k(K_p,W)$ et dans chaque terme,
au moins un des deux groupes qui appara\^{\i}t est $0$ d'apr\`es les lemmes~\ref{coco12} et~\ref{coco18}.
\end{proof}

\Subsection{Vecteurs localement analytiques et localement alg\'ebriques}\label{COCO7}
Si $W$ est une repr\'esentation de $\GG(\Q_p)$, on note $W^{\rm an}$ et $W^{\rm alg}$
les sous-espaces des vecteurs localement analytiques et localement alg\'ebriques.
\Subsubsection{Cohomologie des fonctions localement analytiques}
\begin{theo}\phantomsection\label{coco16}
$H^1(\GG(\Q),{\cal C}^{(p)}(\GG(\A)))^{\rm an}=
H^1(\GG(\Q),{\rm LA}(\GG(\A)))$.
\end{theo}
\demo
C'est une question qui ne fait intervenir que l'action de $\GG(\Z_p)$.
Le lemme de Shapiro permet de descendre de $\GG(\Q)$ \`a $\Gamma$ sans faire disparaitre
l'action de $\GG(\Z_p)$:
il suffit de prouver que, pour tout quotient $Y=\GG(\wZp)/K^{]p[}$, o\`u $K^{]p[}$ est un sous-groupe
ouvert de $\GG(\wZp)$, on a
$$H^1(\Gamma,{\cal C}(\GG(\Z_p)\times Y))^{\rm an}=
H^1(\Gamma,{\rm LA}(\GG(\Z_p)\times Y)).$$
Cela r\'esulte de: 

$\bullet$ la suite exacte du lemme~\ref{coco2}
appliqu\'ee \`a $M={\cal C}(\GG(\Z_p)\times Y)$,

$\bullet$ l'exactitude du foncteur $\Pi\mapsto\Pi^{\rm an}$ pour les repr\'esentations
admissibles~\cite{ST4},

$\bullet$ l'identit\'e
${\cal C}(\GG(\Z_p)\times Y)^{\rm an}={\rm LA}(\GG(\Z_p)\times Y)$,

$\bullet$ la suite exacte du lemme~\ref{coco2}
appliqu\'ee \`a $M={\rm LA}(\GG(\Z_p)\times Y)$.

\Subsubsection{Vecteurs localement alg\'ebriques de la cohomologie compl\'et\'ee}\label{COCO8}
Le r\'esultat suivant est une reformulation d'un r\'esultat d'Emerton~\cite[(4.3.4)]{Em06},
\cite[th.\,7.4.2]{Em06b}
 (et la preuve qui suit est une adaptation de celle d'Emerton).
\begin{theo}\phantomsection\label{coco17}
$H^1(\GG(\Q),{\cal C}^{(p)}(\GG(\A)))^{\rm alg}=
H^1(\GG(\Q),{\rm LP}(\GG(\A)))$.
\end{theo}
\begin{proof}
Via le lemme de Shapiro, on se ram\`ene, comme ci-dessus, \`a prouver
que $H^1(\Gamma,{\cal C}(\GG(\Z_p)\times Y))^{\rm alg}=
H^1(\Gamma,{\cal C}(\GG(\Z_p)\times Y)^{\rm alg})$, pour tout quotient $Y=\GG(\wZp)/K^{]p[}$.
Ceci se ram\`ene \`a v\'erifier que, si $W$ est une repr\'esentation alg\'ebrique
de $\GG(\Q_p)$, et si $K_p$ est un sous-groupe ouvert de $\GG(\Z_p)$,
alors
$$H^1(\Gamma,{\cal C}(\GG(\Z_p)\times Y)\otimes W)^{K_p}=
H^1(\Gamma,({\cal C}(\GG(\Z_p)\times Y)\otimes W)^{K_p}).$$
Il suffit donc de prouver que la suite des ${K_p}$-invariants
de la suite exacte du lemme~\ref{coco2} tensoris\'ee par $W$
est encore exacte.
Pour cela, on utilise le crit\`ere du lemme~\ref{coco11},
avec $A={\cal C}(\Z_p^\dual)\otimes W$
et $B={\cal C}(\GG(\Z_p))\otimes W$.
On a $H^1({K_p},B)=0$ car
$B$ est isomorphe \`a une somme de copies
de ${\cal C}({K_p})$, et le lemme~\ref{coco200} fournit la nullit\'e
de $H^1({K_p},A)$ et $H^2({K_p},A)$ qui permet de conclure.
\end{proof}

En utilisant la d\'ecomposition~(\ref{decomp}) de ${\rm LP}(\GG(\A))$, on en d\'eduit le r\'esultat
suivant.
\begin{coro}\phantomsection\label{coco21}
On a une d\'ecomposition
$$H^1(\GG(\Q),{\cal C}^{(p)}(\GG(\A)))^{\rm alg}=
\bigoplus\nolimits_W\big(H^1(\GG(\Q),{\rm LC}(\GG(\A))\otimes W)\big)\otimes W^\dual$$
o\`u $W$ parcourt les $\Q_p$-repr\'esentations alg\'ebriques irr\'eductibles de $\GG$.
\end{coro}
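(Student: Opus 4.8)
The plan is to feed the isotypic decomposition of ${\rm LP}(\GG(\A))$ through Theorem~\ref{coco17}. By that theorem, $H^1(\GG(\Q),{\cal C}^{(p)}(\GG(\A)))^{\rm alg}=H^1(\GG(\Q),{\rm LP}(\GG(\A)))$, so it suffices to compute the right-hand side. Recall from \no\ref{fma5}, via the isomorphism~(\ref{decomp}), the $\GG(\Q)\times\GG(\A)$-equivariant decomposition
$${\rm LP}(\GG(\A))\cong\bigoplus\nolimits_{W\in{\rm Irr}}{\rm LC}(\GG(\A))\otimes W^\dual\otimes W,$$
where $\GG(\Q)$ acts diagonally on ${\rm LC}(\GG(\A))$ and $W$ and trivially on $W^\dual$, while $\GG(\A)$ acts diagonally on ${\rm LC}(\GG(\A))$ and $W^\dual$ (through its quotient $\GG(\Q_p)$) and trivially on $W$.

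Since $\GG(\Q)$ acts trivially on the finite-dimensional $L$-vector space $W^\dual$, I would pull this factor out of the cohomology for each $W$, obtaining
$$H^1(\GG(\Q),{\rm LC}(\GG(\A))\otimes W^\dual\otimes W)\cong H^1(\GG(\Q),{\rm LC}(\GG(\A))\otimes W)\otimes W^\dual.$$
The surviving $\GG(\A)$-action on $W^\dual$ (factoring through $\GG(\Q_p)$) is precisely what endows the right-hand side with its $\GG(\A)$-module structure, so the isomorphism is $\GG(\A)$-equivariant.

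It remains to commute $H^1(\GG(\Q),-)$ with the direct sum over $W$. The clean way is to descend to $\Gamma={\bf SL}_2(\Z)$ by the Shapiro isomorphism of Corollary~\ref{coco5}, and then to invoke the four-term exact sequence of Lemma~\ref{coco2}: there $H^1(\Gamma,M)$ is the cokernel of $1-U\colon M^{S=1}\to M^{1+U+U^2=0}$, and both the passage to these fixed subspaces (each being the image of an idempotent, as $2$ and $3$ are invertible in $L$) and the formation of the cokernel are additive, hence commute with arbitrary direct sums. Assembling the pieces yields the asserted decomposition.

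The main obstacle is purely the bookkeeping of the two commuting actions: one must verify that the $\GG(\A)$-action genuinely annihilates the $W$ factor while retaining the $\GG(\Q_p)$-action on $W^\dual$, since it is this asymmetry that turns the abstract additivity above into an honest $\GG(\A)$-equivariant decomposition indexed by the irreducible algebraic representations $W$.
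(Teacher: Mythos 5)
Votre démonstration suit exactement la voie du texte : le cor.\,\ref{coco21} y est déduit du th.\,\ref{coco17} combiné à la décomposition~(\ref{decomp}) de ${\rm LP}(\GG(\A))$, avec extraction du facteur $W^\dual$ sur lequel $\GG(\Q)$ agit trivialement. Votre vérification supplémentaire de la commutation de $H^1(\GG(\Q),-)$ aux sommes directes, via le cor.\,\ref{coco5} et le complexe explicite du lemme~\ref{coco2}, est correcte et ne fait qu'expliciter un point que le texte laisse implicite.
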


\begin{rema}\phantomsection\label{coco22}
(i)
L'action de $\GG(\Q_p)$ sur $H^1(\GG(\Q),{\rm LC}(\GG(\A))\otimes W)$ est lisse;
celle sur $W^\dual$ est alg\'ebrique.

(ii) Le lemme de Shapiro permet de descendre de $\GG(\Q)$ \`a $\Gamma$ puis \`a $\Gamma(N)$
pour les fonctions invariantes par $\wGamma(N):={\rm Ker}(\GG(\wZ)\to\GG(\Z/N))$: on obtient alors
$$H^1(\GG(\Q),{\rm LC}(\GG(\A))\otimes W)\cong\varinjlim\nolimits_{N}
H^1_{\rm et}(Y_{N,\Qbar},W)$$
ou $N$ parcourt les entiers~$\geq 1$.
L'espace $H^1_{\rm et}(Y_{N,\Qbar},W)$ est celui dans lequel on
decoupe les repr\'esentations galoisiennes associ\'ees aux formes modulaires de niveau $N$
et de poids d\'etermin\'e par $W$.
\end{rema}

\Subsection{Cohomologie \`a support compact}\label{cup1}
\subsubsection{Cocycles nuls sur le borel}
Si $M$ est un $\GG(\Q)$-module,
on d\'efinit la cohomologie \`a support compact $H^\bullet_c(\GG(\Q),M)$
comme la cohomologie du c\^one $[\rg(\GG(\Q),M)\to\rg(\BB(\Q),M)]$.  
On a une suite exacte longue
\begin{equation}\phantomsection\label{cohcom}
\cdots\to H^{i-1}(B,M)\to H^i_c(G,M)\to H^i(G,M)\to H^i(B,M)\to H^{i+1}_c(G,M)\to \cdots
\end{equation}
La cohomologie \`a support compact est calcul\'ee
par le complexe\footnote{${\cal C}$ d\'esigne les fonctions continues, 
mais comme $\GG(\Q)$ est discret,
toutes les fonctions sont continues.}
(dans lequel on note simplement $B$ et $G$ les groupes $\BB(\Q)$ et $\GG(\Q)$)
$$M\to {\cal C}(G,M)\oplus M\to {\cal C}(G\times G,M)\oplus {\cal C}(B,M)\to
{\cal C}(G\times G\times G,M)\oplus {\cal C}(B\times B,M)\to\cdots\,,$$
o\`u les fl\`eches ${\cal C}(H^i,M)\to {\cal C}(H^{i+1},M)$, pour $H=G,B$, sont
les diff\'erentielles usuelles, les fl\`eches ${\cal C}(G^i,M)\to {\cal C}(B^i,M)$
sont les restrictions, et les autres fl\`eches sont nulles\footnote{On d\'efinit de m\^eme
$H^1_c(\Gamma(1),M)$.}.  
En particulier,
$$H^1_c(\GG(\Q),M)=\frac{\{((c_\sigma)_{\sigma\in \GG(\Q)},c_B),\ c_{\sigma\tau}=\sigma\cdot c_\tau+c_\sigma,\ 
c_\sigma=(\sigma-1)\cdot c_B,\ {\text{si $\sigma\in \BB(\Q)$}}\}}
{\{(((\sigma-1)\cdot a)_{\sigma\in \GG(\Q)},a),\ a\in M\}}.$$
On note $Z^1(\GG(\Q),\BB(\Q),M)$ le module
des $1$-cocycles $(c_\sigma)$ sur $\GG(\Q)$, 
\`a valeurs dans $M$, qui sont identiquement nuls
sur $\BB(\Q)$.
On dispose d'une application naturelle
$Z^1(\GG(\Q),\BB(\Q),M){\to} H^1_c(\GG(\Q),M)$ envoyant
$(c_\sigma)$ sur la classe de $((c_\sigma),0)$.
\begin{lemm}\phantomsection\label{ES2}
Cette application induit un isomorphisme naturel
$$Z^1(\GG(\Q),\BB(\Q),M)\overset{\sim}{\to} H^1_c(\GG(\Q),M).$$
\end{lemm}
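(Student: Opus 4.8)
The plan is to read everything off directly from the explicit degree-one formula for $H^1_c(\GG(\Q),M)$ displayed just above the statement, which already presents compactly supported cohomology as pairs $((c_\sigma),c_B)$ modulo the indicated coboundaries. First I would check that the map is well defined: given $(c_\sigma)\in Z^1(\GG(\Q),\BB(\Q),M)$, the pair $((c_\sigma),0)$ satisfies the two conditions defining a compact-support cocycle, since $(c_\sigma)$ is by hypothesis a genuine $1$-cocycle on $\GG(\Q)$, and the boundary condition $c_\sigma=(\sigma-1)\cdot 0=0$ for $\sigma\in\BB(\Q)$ holds precisely because $(c_\sigma)$ vanishes identically on $\BB(\Q)$.

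For injectivity, suppose $(c_\sigma)$ maps to $0$, i.e. $((c_\sigma),0)$ is a coboundary $(((\sigma-1)\cdot a)_\sigma,a)$ for some $a\in M$. Comparing the second components forces $a=0$, and then the first components give $c_\sigma=(\sigma-1)\cdot 0=0$ for every $\sigma$. Hence $(c_\sigma)=0$ and the map is injective.

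For surjectivity, I would start from an arbitrary class $[((c_\sigma),c_B)]$ and normalize the $c_B$-component away: subtracting the coboundary attached to $a=c_B$ replaces the representative by $((c_\sigma-(\sigma-1)\cdot c_B)_\sigma,0)$ without changing its class. Setting $c'_\sigma=c_\sigma-(\sigma-1)\cdot c_B$ yields again a $1$-cocycle on $\GG(\Q)$, and for $\sigma\in\BB(\Q)$ the defining relation $c_\sigma=(\sigma-1)\cdot c_B$ of a compact-support cocycle gives $c'_\sigma=0$. Thus $(c'_\sigma)\in Z^1(\GG(\Q),\BB(\Q),M)$ and it maps to the prescribed class.

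The computation is entirely formal, so there is no genuine obstacle beyond bookkeeping; the one point worth keeping in mind is that an ordinary coboundary on $\GG(\Q)$ that happens to vanish on $\BB(\Q)$ need not be a \emph{compact-support} coboundary, because the latter requires the same element $a$ to serve as the $c_B$-component, which is here forced to be $0$. This is exactly the rigidity that makes the map injective, and it is also why the single normalization $a=c_B$ simultaneously kills the $c_B$-component and turns the boundary condition on $\BB(\Q)$ into the vanishing of the restricted cocycle, identifying the image with $Z^1(\GG(\Q),\BB(\Q),M)$.
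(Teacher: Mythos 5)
Your proof is correct and is essentially the paper's own argument: the single identity $((c_\sigma),c_B)=((c_\sigma-(\sigma-1)\cdot c_B),0)+(((\sigma-1)\cdot c_B),c_B)$ that the paper writes is exactly your surjectivity normalization, and your injectivity check (the second component forces $a=0$) is the implicit other half. Nothing to add.
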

\begin{proof}
Cela r\'esulte de ce que
$$\phantom{XXXXX}((c_\sigma),c_B)=((c_\sigma-(\sigma-1)\cdot c_B),0)+
(((\sigma-1)\cdot c_B),c_B).\phantom{XXXXX}\qedhere$$
\end{proof}

\Subsubsection{Lien entre cohomologie et cohomologie \`a support compact}
\begin{theo}\phantomsection\label{cococo}
On a $H^i_c(\GG,{\cal C})=0$ si $i\neq 1$, et
une suite exacte de $\GG(\A)$-modules
$$0\to {\cal C}(\wZ^\dual)\to{\rm Ind}_{\BB(\A)}^{\GG(\A)}{\cal C}(\LL(\wZ))\to
H^1_c(\GG,{\cal C})\to H^1(\GG,{\cal C})\to 0$$
o\`u $\wZ^\dual=\A^\dual/\R_+^\dual\Q^\dual$ sur lequel $\GG(\A)$ agit \`a travers le d\'eterminant,
et $\BB(\A)$ \`a travers $\LL(\A)$ sur $\LL(\wZ)$.
\end{theo}
\begin{proof}
On utilise la suite exacte longue~(\ref{cohcom}) pour $M={\cal C}(\GG(\A))$.
Comme $H^0(\GG,{\cal C})\to H^0(\BB(\Q),{\cal C}(\GG(\A))))$
est injective, on a $H^0_c(\GG,{\cal C})=0$.
Comme $H^i(\GG,{\cal C})=0$ pour $i\geq 2$ et $H^0(\GG,{\cal C})={\cal C}(\wZ^\dual)$ 
(prop.\,\ref{coco6}), il suffit de prouver que $H^i(\BB(\Q),{\cal C}(\GG(\A)))=0$,
pour $i\geq 1$, et d'identifier $H^0(\BB(\Q),{\cal C}(\GG(\A)))$, ce qui fait l'objet
de la prop.\,\ref{indu5}
\end{proof}

%
%
%

\Subsection{Dualit\'e entre cohomologie et cohomologie \`a support compact}
\subsubsection{Descente de $\GG(\Q)$ \`a $\Gamma$ pour la cohomologie \`a support compact}
Le lemme de Shapiro n'a, a priori, pas de raison
d'\^etre vrai pour la cohomologie \`a support compact (on a quand m\^eme 
une fl\`eche naturelle $H^i_c(\GG(\Q),{\rm Ind}_\Gamma^{\GG(\Q)}M)\to H^i_c(\Gamma,M)$
obtenue en \'evaluant les fonctions $\phi:\GG(\Q)\to M$ en $1$).
Mais on a le r\'esultat suivant:
\begin{prop}\phantomsection\label{cup7}
Si $X={\cal C}, {\cal C}^{(p)},{\rm LA},{\rm LP},{\rm LC}$,
l'application naturelle induit, pour tout $i\geq 0$, un isomorphisme
$$H^i_c(\GG(\Q),X(\GG(\A),L))\overset{\sim}{\to} H^i_c(\Gamma,X(\GG(\wZ),L))$$
\end{prop}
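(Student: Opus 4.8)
The plan is to prove the statement for all the listed function spaces at once by comparing the two cones that compute the compactly supported cohomologies. Write $M=X(\GG(\A))$ and $N=X(\GG(\wZ))$. By definition $H^\bullet_c(\GG(\Q),M)$ is the cohomology of the cone $[\rg(\GG(\Q),M)\to\rg(\BB(\Q),M)]$ (the arrow being restriction), and $H^\bullet_c(\Gamma,N)$ is defined in the same way from the cone $[\rg(\Gamma,N)\to\rg(\Gamma_\infty,N)]$, where $\Gamma_\infty:=\BB(\Q)\cap\Gamma$ is the Borel subgroup of $\Gamma$, i.e.\ the stabiliser of the cusp $\infty$. The natural comparison map of the statement is realised on cochains by ``evaluation at $1$'', and I would show that it induces a quasi-isomorphism on each of the two terms of the cone; by functoriality of the cone this produces an isomorphism on $H^i_c$ for every $i$ simultaneously, with no five-lemma and no restriction on $i$.

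For the $\GG(\Q)$-term this is immediate: Lemma~\ref{coco4} gives $M\cong{\rm Ind}_\Gamma^{\GG(\Q)}N$, so Shapiro's lemma (already invoked in Cor.~\ref{coco5}) promotes to a quasi-isomorphism $\rg(\GG(\Q),M)\simeq\rg(\Gamma,N)$ realised by evaluation at $1$.

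The heart of the matter is the $\BB(\Q)$-term, where I must restrict ${\rm Ind}_\Gamma^{\GG(\Q)}N$ to $\BB(\Q)$. The key geometric input is that $\Gamma={\bf SL}_2(\Z)$ acts transitively on $\piqp(\Q)\cong\BB(\Q)\backslash\GG(\Q)$ (the modular curve $Y(1)$ has a single cusp); equivalently, the double coset space $\BB(\Q)\backslash\GG(\Q)/\Gamma$ is a single point, represented by $1$, whose stabiliser in $\BB(\Q)$ is precisely $\BB(\Q)\cap\Gamma=\Gamma_\infty$. The Mackey decomposition of ${\rm Res}_{\BB(\Q)}{\rm Ind}_\Gamma^{\GG(\Q)}N$ therefore collapses to the single term ${\rm Ind}_{\Gamma_\infty}^{\BB(\Q)}N$, and a second application of Shapiro yields $\rg(\BB(\Q),M)\simeq\rg(\Gamma_\infty,N)$, again by evaluation at $1$.

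It then remains to observe that these two Shapiro quasi-isomorphisms are compatible with the restriction maps $\rg(\GG(\Q),-)\to\rg(\BB(\Q),-)$ and $\rg(\Gamma,-)\to\rg(\Gamma_\infty,-)$, since both are induced by the same evaluation-at-$1$ functional; hence they assemble into a quasi-isomorphism of cones, which is the desired isomorphism. The main obstacle is exactly this $\BB(\Q)$-term: one must correctly identify the Borel of $\Gamma$, establish the single-cusp (single-double-coset) fact, and---more delicately---verify that the Mackey identification is compatible with the two restriction maps, so that one obtains a genuine map of cones and not merely a termwise isomorphism. The transitivity of ${\bf SL}_2(\Z)$ on $\piqp(\Q)$ is what rescues Shapiro's lemma for compactly supported cohomology here, despite its failure in general.
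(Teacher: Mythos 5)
Your proof is correct and follows essentially the same route as the paper: the decisive point in both is the identification ${\rm Res}_{\BB(\Q)}X(\GG(\A))\cong{\rm Ind}_{\BB(\Q)\cap\Gamma}^{\BB(\Q)}X(\GG(\wZ))$, which you derive from the single double coset in $\BB(\Q)\backslash\GG(\Q)/\Gamma$ (transitivity of ${\bf SL}_2(\Z)$ on $\piqp(\Q)$) and the paper derives from the equivalent decomposition $\GG(\A)=\BB(\Q)\cdot(\GG(\wZ)\GG(\R)_+)$ with $\BB(\Q)\cap\GG(\wZ)\GG(\R)_+=\BB(\Q)\cap\Gamma$. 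The only (harmless) packaging difference is that you conclude by a quasi-isomorphism of cones, checked on cochains via evaluation at $1$, where the paper compares the two five-term exact sequences and invokes the five lemma.
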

\begin{proof}
On a un diagramme commutatif \`a lignes exactes, o\`u 
$$B=\BB(\Q),\hskip2mm G=\GG(\Q),\hskip2mm U=\BB(\Q)\cap\Gamma, \hskip2mm
X_\A=X(\GG(\A),L),\hskip2mm X_\wZ=X(\GG(\wZ),L)$$
et les deux isomorphismes verticaux r\'esultent du lemme de Shapiro:
$$\xymatrix@R=.4cm@C=.5cm{
H^{i-1}(G,X_{\A})\ar[d]^\wr\ar[r]& H^{i-1}(B,X_{\A})\ar[d]\ar[r]
&H^i_c(G,X_{\A})\ar[d]\ar[r] 
&H^i(G,X_{\A})\ar[d]^\wr\ar[r]& H^i(B,X_{\A})\ar[d]\\
H^{i-1}(\Gamma,X_{\wZ})\ar[r]& H^{i-1}(U,X_{\wZ})\ar[r]
&H^i_c(\Gamma,X_{\wZ})\ar[r] 
&H^i(\Gamma,X_{\wZ})\ar[r]& H^i(U,X_{\wZ})}$$
Pour conclure, gr\^ace au lemme des 5, il suffit de v\'erifier
que les fl\`eches $H^j(B,X_\A)\to H^j(U,X_{\wZ})$, pour $j=i-1,i$,
sont des isomorphismes.

On a $\GG(\A)=\BB(\Q)\cdot(\GG(\wZ)\GG(\R)_+)$ et $\BB(\Q)\cap(\GG(\wZ)\GG(\R)_+)=U$.
Il en r\'esulte que $X(\GG(\A))\cong{\rm Ind}_U^B X(\GG(\wZ))$,
et on conclut
en utilisant le lemme de Shapiro.
\end{proof}

\Subsubsection{Dualit\'e entre $H^0$ et $H^2_c$}
On reprend les notations du \no\ref{gamma}.
Le groupe $\oGamma$ est engendr\'e par $S=\matrice{0}{1}{-1}{0}$ et $U=\matrice{0}{1}{-1}{1}$,
les seules relations \'etant $S^2=1$ et $U^3=1$.  De plus, si $\overline B\subset\oGamma$ est l'image
de $B=\matrice{1}{\Z}{0}{1}$, alors $SU$ est un g\'en\'erateur de $\overline B$
(c'est l'image de $\matrice{1}{-1}{0}{1}$).

Si $M$ est un $\Z_p[\oGamma]$-module, on note $M^\dual$ son dual (suivant les cas, cela
peut \^etre le dual de Pontryagin, le $\Z_p$-dual, ou le $\Q_p$-dual (ou $L$-dual) topologique).

\begin{prop}\phantomsection\label{h2c}
{\rm (i)}  On a une identification naturelle 
$$H^2_c(\oGamma,M^\dual)=M^\dual/(U-1,S-1)$$

{\rm (ii)} $H^2_c(\oGamma,\Lambda)=\Lambda$.

{\rm (iii)}
 Le cup produit $H^2_c(\oGamma,M^\dual)\times H^0(\oGamma,M)\to H^2_c(\oGamma,\Lambda)=\Lambda$
induit un isomorphisme
$$H^0(\oGamma,M)\cong H^2_c(\oGamma,M^\dual)^\dual$$
\end{prop}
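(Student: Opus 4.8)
The plan is to deduce all three parts from the description of $\oGamma$ as the free product $\langle S\rangle*\langle U\rangle$, combined with the long exact sequence~(\ref{cohcom}) attached to the inclusion $\overline B\hookrightarrow\oGamma$, where $\overline B=\langle SU\rangle\cong\Z$. Throughout I write $N=M^\dual$ and use that $2$ and $3$ are invertible in the coefficients; the general case reduces to this by controlling the torsion coming from the orders of $S$ and $U$, exactly as in the proof of prop.~\ref{coco7}.

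First I would establish (i). Since $\oGamma$ is a free product of finite groups of orders $2$ and $3$, Mayer--Vietoris gives $H^i(\oGamma,N)\cong H^i(\langle S\rangle,N)\oplus H^i(\langle U\rangle,N)=0$ for $i\geq 2$, while $H^i(\overline B,N)=0$ for $i\geq 2$ because $\overline B\cong\Z$. Feeding this into~(\ref{cohcom}) collapses the sequence to
$$H^1(\oGamma,N)\xrightarrow{\ \mathrm{res}\ }H^1(\overline B,N)\xrightarrow{\ \delta\ }H^2_c(\oGamma,N)\to 0,$$
so $H^2_c(\oGamma,N)=\operatorname{coker}(\mathrm{res})$. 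Now $H^1(\overline B,N)=N/(SU-1)N$, and because $\oGamma$ is a free product a $1$-cocycle is determined by independent choices $c_S\in N^{S=-1}$ and $c_U\in\ker(1+U+U^2)$, its restriction to $\overline B$ being the class of $c_{SU}=c_S+Sc_U$. Hence $\operatorname{coker}(\mathrm{res})=N/\big(N^{S=-1}+S\ker(1+U+U^2)+(SU-1)N\big)$. Using that $2,3$ are invertible one has $N^{S=-1}=(S-1)N$ and $\ker(1+U+U^2)=(U-1)N$; moreover $Sy\equiv y$ modulo $(S-1)N$, so $S\ker(1+U+U^2)\equiv(U-1)N$, and finally $(SU-1)N\subseteq(S-1,U-1)N$. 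The denominator is therefore $(S-1,U-1)N$, giving $H^2_c(\oGamma,N)=N/(U-1,S-1)$, which is (i). Part (ii) is the special case $N=\Lambda$, where $(S-1)\Lambda=(U-1)\Lambda=0$.

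For (iii) I would first record the purely algebraic duality $(N_{\oGamma})^\dual\cong(N^\dual)^{\oGamma}$, valid here because the pairing $M^\dual\times M\to\Lambda$ is $\oGamma$-equivariant; combined with reflexivity $M^{\dual\dual}=M$ it yields a canonical isomorphism $M^{\oGamma}\overset{\sim}{\to}\big((M^\dual)_{\oGamma}\big)^\dual$. It then remains to identify this with the cup product pairing. The decisive observation is that cupping with a class $m\in H^0(\oGamma,M)=M^{\oGamma}$ is nothing but the functoriality of $H^2_c$ applied to the $\oGamma$-equivariant coefficient map $\mathrm{ev}_m\colon M^\dual\to\Lambda$, $\xi\mapsto\langle\xi,m\rangle$. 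Under the identifications (i) and (ii) this induces on $H^2_c$ precisely the descent $(M^\dual)_{\oGamma}\to\Lambda$ of $\mathrm{ev}_m$, namely $\bar\xi\mapsto\langle\xi,m\rangle$. Thus the cup product is the evaluation pairing between coinvariants and invariants, and $m\mapsto\langle-,m\rangle$ is exactly the reflexivity isomorphism above, proving (iii).

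The main obstacle is this last verification: one must confirm that the cup product with a degree-$0$ class is genuinely computed by coefficient functoriality and is compatible with the connecting map $\delta$ used in (i), which requires a little bookkeeping with the cone complex of~(\ref{cohcom}). Everything else is either the free-product computation of (i) or the formal duality of (iii). A secondary point to watch is the invertibility of $2$ and $3$, which must be finessed through explicit torsion bounds, as in prop.~\ref{coco7}, if one wants (ii) and (iii) for $\Lambda=\O_L$ when $p\in\{2,3\}$.
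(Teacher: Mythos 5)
Votre démonstration est correcte et suit pour l'essentiel la même voie que celle du texte : pour le (i), l'annulation de $H^2(\oGamma,M^\dual)$ (que vous obtenez par Mayer--Vietoris pour le produit libre, là où le texte invoque le sous-groupe libre d'indice $6$) ramène $H^2_c$ au conoyau de la restriction $H^1(\oGamma,M^\dual)\to H^1(\overline B,M^\dual)$, que vous calculez par la même description explicite des $1$-cocycles via $c_S$ et $c_U$; pour le (iii), c'est la même dualité coinvariants/invariants, dont vous explicitez (utilement) l'identification du cup-produit en degré $0$ avec la fonctorialité en les coefficients, point que le texte laisse implicite.
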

\begin{proof}
Comme $H^2(\oGamma,M^\dual)=0$
(puisque $\oGamma$ contient
un groupe libre d'indice $6$), on a 
$H^2_c(\oGamma,M^\dual)=H^1(\overline{B},M^\dual)/
H^1(\oGamma,M^\dual)$.
  Comme $\overline{B}$ est engendr\'e par $SU$, on a
$H^1(\overline{B},M^\dual)=M^\dual/(SU-1)$, o\`u un cocycle $\gamma\mapsto c_\gamma$ est
envoy\'e sur l'image de $c_{SU}$.  

Par ailleurs, comme on l'a vu plus haut un $1$-cocyle $\tau\mapsto c_\tau$ sur $\oGamma$ est
cohomologue \`a un $1$-cocycle v\'erifiant $c_S=0$ et $c_U\in (U-1)M^\dual$ arbitraire.
On a alors $c_{SU}=Sc_U$ et donc
$H^2_c(\oGamma,M^\dual)=M^\dual/(SU-1,S(U-1))=M^\dual/(S-1,U-1)$.

Ceci prouve le (i).  Le (ii) s'en d\'eduit imm\'ediatement, et le (iii) r\'esulte
de ce que $H^0(\oGamma,M)=M^{S=1,U=1}$.
\end{proof}

\begin{rema}\phantomsection\label{explicit}
On peut expliciter l'isomorphisme 
$$H^2_c(\oGamma,M^\dual)=H^1(\overline{B},M^\dual)/
H^1(\oGamma,M^\dual)$$ utilis\'e dans la preuve.
Une classe $c\in H^2_c(\oGamma,M^\dual)$ est repr\'esent\'ee par 
$((c_{\sigma,\tau})_{\sigma,\tau},(b_\sigma)_{\sigma})$
o\`u $(\sigma,\tau)\mapsto c_{\sigma,\tau}$ est un $2$-cocycle sur $\oGamma$,
et $b_\sigma:\overline{B}\to M^\dual$ v\'erifie 
$c_{\sigma,\tau}=b_{\sigma\tau}-\sigma\cdot b_\tau-b_\sigma$, pour tous $\sigma,\tau\in\overline{B}$.
Comme $H^2(\oGamma,M^\dual)=0$, on peut trivialiser le $2$-cocycle (i.e.~l'\'ecrire
comme le bord de $\sigma\mapsto c_\sigma$ (bien d\'etermin\'e \`a addition pr\`es
d'un $1$-cocycle sur $\Gamma$)), et obtenir un repr\'esentant
de la forme $((0)_{\sigma,\tau},(b'_\sigma)_{\sigma})$, o\`u 
$\sigma\mapsto b'_\sigma=b_\sigma-c_\sigma$
est un $1$-cocycle sur $\overline{B}$, unique \`a addition pr\`es
de la restriction \`a $\overline{B}$ d'un $1$-cocycle sur $\oGamma$.

Dans le cas o\`u $M=\Lambda$ (et donc $M^\dual=\Lambda$), un $1$-cocycle sur $\oGamma$
est identiquement nul puisque $\oGamma$ est engendr\'e par $S$ et $U$ qui sont de torsion.
Il s'ensuit que le $1$-cocycle $(b'_\sigma)_{\sigma}$ ci-dessus est uniquement
d\'etermin\'e et l'isomorphisme $H^2_c(\oGamma,\Lambda)\cong\Lambda$ est celui envoyant
$c$ sur $b'_{SU}$.
\end{rema}

\subsubsection{Dualit\'e entre $H^1$ et $H^1_c$}\label{cup3}
Le groupe $H^1_c(\oGamma,M^\dual)$ est le groupe des $1$-cocycles $\sigma\mapsto c^\dual_\sigma$
sur $\oGamma$, \`a valeurs dans $M^\dual$, qui sont identiquement nuls sur $\overline B$.
Un tel cocycle est enti\`erement d\'etermin\'e par $c^\dual_U$ et $c^\dual_S$, et on a
$(1+S)c^\dual_S=0$ et $(1+U+U^2)c^\dual_U=0$ (\`a cause des relations $S^2=1$ et $U^3=1$)
et la relation $c^\dual_{SU}=0$ impose en plus que $Sc^\dual_U+c^\dual_S=0$, ou encore
$Sc^\dual_U-Sc^\dual_S=0$, et donc $c^\dual_U=c^\dual_S$.
Autrement dit, on a une identification
\begin{equation}\phantomsection\label{h1.2}
H^1_c(\oGamma,M^\dual)=(M^\dual)^{1+U+U^2=0}\cap (M^\dual)^{1+S=0}
\end{equation}

Par ailleurs, il r\'esulte de la suite exacte (\ref{gamma1}) que l'on a
\begin{equation}\phantomsection\label{h1.1}
H^1(\oGamma, M)\cong M^{1+U+U^2=0}/(U-1)M^{S=1}
\end{equation}

Notons $\langle\ ,\ \rangle:M^\dual\times M\to \Lambda$, 
o\`u $\Lambda=\Z_p$ ou $\Q_p/\Z_p$ ou $\Q_p,L$..., l'accouplement
naturel. L'accouplement 
$$H^1_c(\oGamma,M^\dual)\times H^1(\oGamma, M)\to H^2_c(\oGamma,\Lambda)=\Lambda$$
induit,
en utilisant les identification pr\'ec\'edentes, un accouplement
$$\cup:\big((M^\dual)^{1+U+U^2=0}\cap (M^\dual)^{1+S=0}\big)\times
\big(M^{1+U+U^2=0}/(U-1)M^{S=1}\big)\to\Lambda$$
\begin{lemm}\phantomsection\label{ES66}
On a $3(c^\dual\cup c)=\langle c^\dual,(1-U^2)c\rangle$.
\end{lemm}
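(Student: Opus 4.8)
The plan is to compute the pairing at the level of the cone complex defining $H^\bullet_c(\oGamma,-)$. I represent $c^\dual$ by the cone $1$-cocycle $(c^\dual_\sigma,0)$, where, following~(\ref{h1.2}), $\sigma\mapsto c^\dual_\sigma$ is the $1$-cocycle on $\oGamma$ determined by $c^\dual_S=c^\dual_U=c^\dual$; it vanishes on $\overline B=\langle SU\rangle$ because $c^\dual_{SU}=(1+S)c^\dual=0$. I represent $c$ by the $1$-cocycle $\sigma\mapsto c_\sigma$ on $\oGamma$ with $c_S=0$ and $c_U=c$, the cocycle condition being precisely $(1+U+U^2)c=0$ as in~(\ref{h1.1}). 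The pairing $H^1_c\times H^1\to H^2_c$ is the one induced on the fibres of the restriction $\rg(\oGamma,-)\to\rg(\overline B,-)$ by the evaluation $M^\dual\otimes M\to\Lambda$; on cochains it sends $((a,b),d)$ to $(a\cup d,\,b\cup{\rm res}\,d)$, so our two representatives pair to the cone $2$-cocycle $(\omega,0)$ with $\omega_{\sigma,\tau}=\langle c^\dual_\sigma,\sigma\cdot c_\tau\rangle$. Everything thus reduces to computing the image of $(\omega,0)$ under the isomorphism $H^2_c(\oGamma,\Lambda)\cong\Lambda$.

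For the latter I follow rem.~\ref{explicit}: since $H^2(\oGamma,\Lambda)=0$ and ${\rm Hom}(\oGamma,\Lambda)=0$ (as $\oGamma$ is generated by the torsion elements $S$ and $U$), there is a unique $1$-cochain $\sigma\mapsto e_\sigma$ on $\oGamma$, normalized by $e_1=0$, with $e_{\sigma\tau}=e_\sigma+e_\tau+\omega_{\sigma,\tau}$ (the coboundary normalization of rem.~\ref{explicit}), and then $c^\dual\cup c=-e_{SU}$. I evaluate $e$ on the generators via the relations. From $S^2=1$ one gets $2e_S+\omega_{S,S}=0$, and $\omega_{S,S}=\langle c^\dual,S\cdot c_S\rangle=0$ because $c_S=0$, so $e_S=0$. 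From $U^3=1$ one gets $3e_U+\omega_{U,U}+\omega_{U^2,U}=0$; here $\omega_{U,U}=\langle c^\dual,Uc\rangle$, while $c^\dual_{U^2}=(1+U)c^\dual$ and the invariance of the pairing give $\omega_{U^2,U}=\langle c^\dual,U^2c\rangle+\langle c^\dual,Uc\rangle$, whence $3e_U=-2\langle c^\dual,Uc\rangle-\langle c^\dual,U^2c\rangle$. Finally $e_{SU}=e_S+e_U+\omega_{S,U}=e_U+\langle c^\dual,Sc\rangle$, and $(1+S)c^\dual=0$ together with $S=S^{-1}$ yields $\langle c^\dual,Sc\rangle=\langle Sc^\dual,c\rangle=-\langle c^\dual,c\rangle$.

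Putting these together and clearing the denominator $3$ — which is exactly the order of $U$, and accounts for the factor $3$ in the statement — gives $3(c^\dual\cup c)=-3e_{SU}=2\langle c^\dual,Uc\rangle+\langle c^\dual,U^2c\rangle+3\langle c^\dual,c\rangle$. It then remains to substitute $\langle c^\dual,Uc\rangle=-\langle c^\dual,c\rangle-\langle c^\dual,U^2c\rangle$, which is just the relation $(1+U+U^2)c=0$ paired against $c^\dual$; the expression collapses to $\langle c^\dual,c\rangle-\langle c^\dual,U^2c\rangle=\langle c^\dual,(1-U^2)c\rangle$, as asserted. The same relations show the right-hand side is independent of the representative of $c$ modulo $(U-1)M^{S=1}$: the variation under $c\mapsto c+(U-1)z$ is $\langle c^\dual,(U^2+U-2)z\rangle=-3\langle c^\dual,z\rangle$ using $(1+U+U^2)c^\dual=0$, and this vanishes for $z\in M^{S=1}$ since $\langle c^\dual,z\rangle=\langle Sc^\dual,z\rangle=-\langle c^\dual,z\rangle$ with $2$ invertible. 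I expect the only genuinely delicate point to be the sign bookkeeping: the cochain formula for the cup product on the cone and the normalization of $H^2_c(\oGamma,\Lambda)\cong\Lambda$ in rem.~\ref{explicit} must be made mutually consistent, since the algebra is otherwise routine and a single misplaced sign would merely flip the final answer.
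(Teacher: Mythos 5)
Your proof is correct and follows essentially the same route as the paper: you represent $c^\dual$ and $c$ by the same explicit cocycles (vanishing on $\overline B$, resp.\ with $c_S=0$), pair them into the $2$-cocycle $\langle c^\dual_\sigma,\sigma\cdot c_\tau\rangle$, trivialize it using $H^2(\oGamma,\Lambda)=0$, and evaluate at $SU$ via the relations $S^2=U^3=1$, the order of $U$ producing the factor $3$. Your algebra checks out (and your normalization $c^\dual\cup c=-e_{SU}$ is the one consistent with rem.\,\ref{explicit}); the added verification that the formula is well defined modulo $(U-1)M^{S=1}$ is a nice touch but not needed beyond what the paper does.
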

\begin{proof}
Soient $\sigma\mapsto c^\dual_\sigma$ le $1$-cocycle sur $\oGamma$
correspondant \`a $c^\dual$ et $\tau\mapsto c_\tau$ celui correspondant \`a $c$.
En particulier:

$\bullet$ $c^\dual_\sigma=0$ si $\sigma\in\overline{B}$, et $c^\dual_S=c^\dual_U=c^\dual$.

$\bullet$ $c_S=0$ et $c$ est l'image de $c_U$.

Alors 
$(\sigma,\tau)\mapsto c_{\sigma,\tau}:=\langle c^\dual_\sigma, \sigma\cdot c_\tau\rangle$
est un $2$-cocycle sur $\Gamma$ (dont la
restriction \`a $\overline{B}\times\overline{B}$, et m\^eme \`a $\overline{B}\times\oGamma$
est identiquement nulle), et $c^\dual\cup c$ 
est la classe de $((c_{\sigma,\tau})_{\sigma,\tau},(0)_\sigma)$.
Comme $H^2(\oGamma,\Lambda)=0$,
il existe
$\phi:\Gamma\to\Lambda$, unique, telle que l'on ait
$$\langle c^\dual_\sigma, \sigma\cdot c_\tau\rangle=\phi(\sigma\tau)-
\phi(\sigma)-\phi(\tau)$$
et on a $c^\dual\cup c=\phi(SU)$ (cf.~rem.\,\ref{explicit}).

Par exemple, si $c_\tau=(\tau-1)c$, on a $\phi(\sigma)=\langle c^\dual_\sigma, \sigma\cdot c\rangle$
et $c^\dual\cup c=\langle c^\dual_{SU}, SU\cdot c\rangle=0$ (comme il se doit)
puisque $c^\dual_{SU}=0$ vu que $SU\in\overline B$.

Par d\'efinition,
$\phi(SU)=\langle c^\dual_S,Sc_U\rangle+\phi(S)+\phi(U)$.
Notons que $\phi(1)=0$ (appliquer la formule \`a $\sigma=\tau=1$).
Par ailleurs $\phi(1)-2\phi(S)=\langle c^\dual_S,Sc_S\rangle$
(appliquer la formule \`a $\sigma=\tau=S$), et comme $c_S=0$, on obtient $\phi(S)=0$.

Maintenant, si on applique la formule pour $\sigma=\tau=U$ et pour $\sigma=U$, $\tau=U^2$,
on obtient
$$\langle c^\dual_U,Uc_U\rangle=\phi(U^2)-2\phi(U),\quad
 \langle c^\dual_U,Uc_{U^2}\rangle=-\phi(U)-\phi(U^2)$$
Comme $Uc_{U^2}=c_{U^3}-c_U=-c_U$, on en tire
$3\phi(U)=\langle c^\dual_U,(1-U)c_U\rangle,$
et comme $c^\dual_U=c^\dual_S$, on obtient finalement,
$$3(c^\dual\cup c)=\langle c^\dual,(U+2)c\rangle=\langle c^\dual,(1-U^2)c\rangle$$
car l'adjoint de $U^2+U+1$ est $U^{-2}+U^{-1}+1=U+U^2+1$
qui tue $c^\dual$, et $U+2=(1-U^2)+(1+U+U^2)$.
\end{proof}

\begin{prop}\phantomsection\label{ES5}
L'accouplement $\cup$ induit un isomorphisme
 (\`a $6$-torsion pr\`es)
$$H^1_c(\oGamma,M^\dual)\cong H^1(\oGamma,M)^\dual$$
\end{prop}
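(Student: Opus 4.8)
The plan is to transport the perfect duality $\langle\ ,\ \rangle\colon M^\dual\times M\to\Lambda$ across the explicit descriptions (\ref{h1.2}) and (\ref{h1.1}). Writing $V=M^{1+U+U^2=0}$ and $R=(U-1)M^{S=1}$, so that $H^1(\oGamma,M)=V/R$, Lemma~\ref{ES66} gives $3(c^\dual\cup c)=\langle c^\dual,(1-U^2)c\rangle=\langle (1-U)c^\dual,c\rangle$, the last equality obtained by transporting the operator across $\langle\ ,\ \rangle$: in the contragredient convention the adjoint of $U$ is $U^{-1}=U^2$ and that of $S$ is $S^{-1}=S$, so $1+U+U^2$ and $1+S$ are self-adjoint and $1-U^2$ has adjoint $1-U$. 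Thus the map I must analyse is
$$\Psi\colon H^1_c(\oGamma,M^\dual)=(M^\dual)^{1+U+U^2=0}\cap(M^\dual)^{1+S=0}\longrightarrow H^1(\oGamma,M)^\dual,\quad c^\dual\mapsto(c\mapsto c^\dual\cup c),$$
where $3\Psi(c^\dual)$ is represented on $V$ by the functional $c\mapsto\langle (1-U)c^\dual,c\rangle$.

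The two elementary facts I would use are, first, the exact operator relations $(U-1)(1+U+U^2)=U^3-1=0$, $(1+S)(1-S)=1-S^2=0$, and $(1-U)(1-U^2)=3-(1+U+U^2)$, which show that $1-U$ is invertible on $M^{1+U+U^2=0}$ with inverse $\tfrac13(1-U^2)$; and second, the annihilator relations following from perfectness of $\langle\ ,\ \rangle$ and self-adjointness, namely $V^\perp=(\ker(1+U+U^2)_M)^\perp\subseteq\ker(U-1)_{M^\dual}$ (the left side is the closure of $\mathrm{im}(1+U+U^2)_{M^\dual}$ and $(U-1)(1+U+U^2)=0$), and likewise $(M^{S=1})^\perp\subseteq(M^\dual)^{1+S=0}$.

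Injectivity of $\Psi$ up to $3$-torsion is then immediate. If $\Psi(c^\dual)=0$ then $\langle (1-U)c^\dual,c\rangle=3(c^\dual\cup c)=0$ for every $c\in V$, so $(1-U)c^\dual\in V^\perp\subseteq\ker(U-1)_{M^\dual}$, i.e. $(U-1)(1-U)c^\dual=0$. On $M^{1+U+U^2=0}$ one computes $(U-1)(1-U)=3U$, and $U$ is invertible there, whence $3c^\dual=0$. (Note also that $\Psi(c^\dual)$ automatically kills $R$, since $c^\dual\cup c$ is well defined on cohomology; evaluating $3(c^\dual\cup c)$ on $c=(U-1)m'$ with $m'\in M^{S=1}$ recovers $3\langle c^\dual,m'\rangle=0$, which combined with the $2$-torsion coming from $Sc^\dual=-c^\dual$ forces $\langle c^\dual,m'\rangle=0$.)

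The main work, and the main obstacle, is surjectivity. I would deduce it from non-degeneracy of the cup product in the other variable, which is the exact mirror of the injectivity computation: if $b\in V$ pairs trivially with all of $H^1_c(\oGamma,M^\dual)$ then $b\in R$ up to $6$-torsion, the factor $2$ now entering through $S$ (order $2$) and the factor $3$ through $U$ (order $3$). Equivalently, since the cup product is graded-commutative, $\Psi$ is the transpose of the corresponding map $\Psi'$ attached to the reflexive module $M^\dual$ (using $M^{\dual\dual}=M$), so surjectivity of $\Psi$ up to torsion is dual to injectivity of $\Psi'$, which holds by the argument above. Either route produces an isomorphism after inverting the orders of $S$ and $U$, i.e. up to $6$-torsion. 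The delicate points I expect are purely formal: checking that $\langle\ ,\ \rangle$ is perfect, that reflexivity $M^{\dual\dual}=M$ holds, and that functionals extend off the closed subspace $V$, in each of the cases $\Lambda=\Z_p,\ \Q_p/\Z_p,\ \Q_p,\ L$ — in the torsion-free cases $\Q_p,L$ all the factors $2,3$ are units and one obtains a genuine isomorphism, while for $\Z_p$ and $\Q_p/\Z_p$ they account precisely for the $6$-torsion of the statement.
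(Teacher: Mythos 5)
Your reduction of the problem via Lemma~\ref{ES66} and the descriptions~(\ref{h1.2}) et~(\ref{h1.1}), the adjunction $\langle c^\dual,(1-U^2)c\rangle=\langle (1-U)c^\dual,c\rangle$, and the injectivity computation (namely $(U-1)(1-U)=3U$ on $\ker(1+U+U^2)$, so the kernel of $\Psi$ is killed by a power of $3$) are all correct and match the first half of the paper's proof, which phrases the same fact as: $c\mapsto(1-U^2)c$ is an isomorphism of $M^{1+U+U^2=0}$ up to $3$-torsion. One small caveat: the inclusion $V^\perp\subseteq\ker(U-1)_{M^\dual}$ should not be justified by "$V^\perp$ is the closure of $\mathrm{im}(1+U+U^2)_{M^\dual}$" (that identity is itself a Hahn--Banach statement); the clean justification is the idempotent decomposition $M=M^{U=1}\oplus M^{1+U+U^2=0}$ and $M^\dual=(M^\dual)^{U=1}\oplus(M^\dual)^{1+U+U^2=0}$, valid up to $3$-torsion, under which $V^\perp=(M^\dual)^{U=1}$.

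The genuine gap is surjectivity. Neither of your two routes proves it. Injectivity of the transpose of $\Psi$ (equivalently, non-degeneracy of $\cup$ in the other variable) only says that no nonzero functional on $H^1(\oGamma,M)$ annihilates the image of $\Psi$, i.e.\ that the image is weakly dense; it puts no bound on the cokernel. Concretely, $\bigoplus_n\Z_p\hookrightarrow\prod_n\Z_p$ has injective transpose and a huge torsion-free cokernel, and multiplication by $p$ on $\Z_p$ has injective transpose without being surjective; the paper's own rem.\,\ref{ES6}\,(ii) makes exactly this point, warning that the mirror statement $H^1(\oGamma,M)\cong H^1_c(\oGamma,M^\dual)^\dual$ is \emph{not} automatic. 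What actually closes the argument --- and is the second half of the paper's proof --- is a direct identification of the target: because $V=M^{1+U+U^2=0}$ is a direct summand of $M$ up to $3$-torsion, every functional on $V$ extends by zero on $M^{U=1}$, so $V^\dual=(M^\dual)^{1+U+U^2=0}$; the dual of the quotient $V/R$ is then the annihilator of $R=(U-1)M^{S=1}$ in this space, and your own identity $(1-U^2)(U-1)=(1+U+U^2)-3$, together with the fact that $1+U+U^2$ kills $c^\dual$ and that $(3M^{S=1})^\perp=(M^\dual)^{1+S=0}$ up to $2$- and $3$-torsion, computes this annihilator to be precisely the right-hand side of~(\ref{h1.2}). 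This is the step you defer as "purely formal" (extending functionals off $V$); it is the crux, and it is carried by the direct-sum decomposition rather than by any duality between injectivity and surjectivity.
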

\begin{proof}
A $3$-torsion pr\`es, on a $M=M^{U=1}\oplus M^{U^2+U+1=0}$ et
$M^\dual=(M^\dual)^{U=1}\oplus (M^\dual)^{U^2+U+1=0}$.
L'orthogonal de $M^{U=1}$ est $(M^\dual)^{U^2+U+1=0}$, et donc
le dual de $M^{U^2+U+1=0}$ pour $\langle\ ,\ \rangle$
s'identifie \`a $(M^\dual)^{U^2+U+1=0}$. Comme $c\mapsto (1-U^2)c$ est un isomorphisme
de $M^{1+U+U^2=0}$ (\`a $3$-torsion pr\`es), il en est de m\^eme pour l'accouplement $\cup$.

Il s'ensuit (cf.~formule~(\ref{h1.1})) que
$H^1(\oGamma,M)^\dual=(M^\dual)^{U^2+U+1=0}\cap ((U-1)M^{S=1})^\perp$ (o\`u $\perp$ d\'esigne
l'orthogonal pour $\cup$).
Or le membre de droite est l'ensemble des
$c^\dual\in (M^\dual)^{U^2+U+1=0}$ v\'erifiant
$\langle c^\dual,(1-U^2)(U-1)c\rangle=0$ pour tout $c\in M^{S=1}$.
Comme $(1-U^2)(U-1)=-(U^2+U+1)-3$ et comme l'adjoint de $U^2+U+1$ est $U^{-2}+U^{-1}+1=U+U^2+1$
qui tue $c^\dual$, le membre de droite est aussi l'ensemble des
$c^\dual\in (M^\dual)^{U^2+U+1=0}$ v\'erifiant
$\langle c^\dual,3c\rangle=0$ pour tout $c\in M^{S=1}$.
Comme cette derni\`ere condition \'equivaut \`a $c^\dual\in (M^\dual)^{S+1=0}$ 
(\`a $2$-torsion pr\`es),
on en d\'eduit le r\'esultat gr\^ace \`a la description~(\ref{h1.2})
de $H^1_c(\oGamma,M^\dual)$.
\end{proof}
\begin{rema}\phantomsection\label{ES6}
{\rm (i)} Si $M$ est un $\Z_p[\Gamma]$-module,
on en d\'eduit un isomorphisme $H^1_c(\Gamma,M^\dual)\cong H^1(\Gamma,M)^\dual$
(\`a $12$-torsion pr\`es).
 
{\rm (ii)} L'isomorphisme dans l'autre sens $H^1(\Gamma,M)\cong H^1_c(\Gamma,M^\dual)^\dual$
n'est pas automatique, m\^eme si $(M^\dual)^\dual=M$: 
par exemple, si $M$ est $L$-espace vectoriel topologique,
il faut que $H^1(\Gamma,M)$ soit s\'epar\'e; si $M$ est un $\O_L$-module sans torsion,
il faut que $H^1(\Gamma,M)$ soit sans torsion. 
\end{rema}

\begin{coro}\phantomsection\label{ES6.4}
Les groupes $H^1_c(\GG,{\cal C})$ et $H^1(\GG,{\rm Mes})$
sont en dualit\'e.
\end{coro}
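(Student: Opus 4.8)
Le plan est de ramener les deux membres à de la cohomologie de $\Gamma$, où l'on dispose de la dualité de la prop.~\ref{ES5}, puis d'appliquer celle-ci dans le bon sens. Pour le premier membre, la prop.~\ref{cup7} (avec $X={\cal C}$) donne directement $H^1_c(\GG,{\cal C})\cong H^1_c(\Gamma,{\cal C}(\GG(\wZ)))$. Pour le second, je descendrais $H^1(\GG,{\rm Mes})$ à $\Gamma$ par dualité et Shapiro: comme les fonctions continues à valeurs dans $L$ sont $\GG(\R)_+$-invariantes, ${\cal C}_c(\GG(\A))$ est l'espace des fonctions à support compact modulo $\GG(\R)_+$, et le lemme~\ref{coco4} l'identifie à l'induite à support compact (somme directe sur $\GG(\Q)/\Gamma$) de ${\cal C}(\GG(\wZ))$; en dualisant, ${\rm Mes}(\GG(\A))={\cal C}_c(\GG(\A))^\dual$ devient la coïnduite (produit sur $\GG(\Q)/\Gamma$) de ${\rm Mes}(\GG(\wZ))$, d'où $H^1(\GG,{\rm Mes})\cong H^1(\Gamma,{\rm Mes}(\GG(\wZ)))$ par le lemme de Shapiro.

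Il resterait à accoupler $H^1_c(\Gamma,{\cal C}(\GG(\wZ)))$ et $H^1(\Gamma,{\rm Mes}(\GG(\wZ)))$ par le cup-produit associé à l'accouplement naturel ${\cal C}(\GG(\wZ))\times{\rm Mes}(\GG(\wZ))\to\Lambda$ (intégration), et à invoquer la prop.~\ref{ES5} avec $M={\rm Mes}(\GG(\wZ))$ et $M^\dual={\cal C}(\GG(\wZ))$, ce qui donnerait l'isomorphisme $H^1_c(\GG,{\cal C})\cong H^1(\GG,{\rm Mes})^\dual$, soit la dualité annoncée. C'est bien le sens \og direct\fg{} de la prop.~\ref{ES5} ($H^1_c$ du module dual contre $H^1$ du module), et non le sens problématique de la rem.~\ref{ES6}~(ii).

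Le point délicat est que l'hypothèse $M^\dual={\cal C}(\GG(\wZ))$ de la prop.~\ref{ES5} réclame que ${\cal C}(\GG(\wZ),L)$ soit le dual de ${\rm Mes}(\GG(\wZ),L)$, i.e.~la réflexivité de ${\cal C}(\GG(\wZ),L)$; or ce $L$-espace de Banach est isomorphe à $c_0$, qui n'est pas réflexif sur $L$ (sphériquement complet), de sorte que $({\rm Mes}(\GG(\wZ),L))^\dual$ est strictement plus gros que ${\cal C}(\GG(\wZ),L)$. C'est exactement l'obstruction signalée à la rem.~\ref{ES6}, et elle interdit d'appliquer la prop.~\ref{ES5} telle quelle au niveau $L$. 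Pour la contourner, je travaillerais d'abord modulo $p^n$: sur $\O_L/p^n$, le module ${\cal C}(\GG(\wZ),\O_L/p^n)=\varinjlim_N{\cal C}(\GG(\Z/N),\O_L/p^n)$ est discret, ${\rm Mes}(\GG(\wZ),\O_L/p^n)=\varprojlim_N{\rm Mes}(\GG(\Z/N),\O_L/p^n)$ est profini, et ils sont parfaitement duaux au sens de Pontryagin; la prop.~\ref{ES5} via la rem.~\ref{ES6}~(i) s'applique donc sans obstacle et fournit, à $12$-torsion près, $H^1_c(\Gamma,{\cal C}(\GG(\wZ),\O_L/p^n))\cong H^1(\Gamma,{\rm Mes}(\GG(\wZ),\O_L/p^n))^\dual$.

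Je conclurais en passant à la limite sur $n$ puis en inversant $p$. Comme dans la preuve de la prop.~\ref{coco7}, la suite exacte du lemme~\ref{coco2} ramène tous ces groupes aux sous-modules $M^{S=1}$ et $M^{1+U+U^2=0}$, ce qui permet de contrôler les termes $\varprojlim^1$ et d'écrire $H^1_c(\GG,{\cal C})=L\otimes_{\O_L}\varprojlim_n H^1_c(\Gamma,{\cal C}(\GG(\wZ),\O_L/p^n))$, et de même $H^1(\GG,{\rm Mes})$ comme limite des versions modulo $p^n$; l'inversion de $p$ tue la $12$-torsion et rend l'accouplement parfait. C'est là que se concentre la difficulté: garantir la séparation de $H^1$ et l'annulation des $\varprojlim^1$ parasites — précisément le phénomène pointé à la rem.~\ref{ES6}~(ii) — ce que l'admissibilité des représentations en jeu (prop.~\ref{coco15}) devrait assurer en fournissant la finitude nécessaire.
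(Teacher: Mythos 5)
Votre r\'eduction \`a $\Gamma$ et l'application de la prop.~\ref{ES5} avec $M={\rm Mes}(\GG(\wZ))$ et $M^\dual={\cal C}(\GG(\wZ))$ suivent bien le sch\'ema de la preuve du texte, mais il manque la moiti\'e de l'\'enonc\'e. \og \^Etre en dualit\'e\fg{} signifie que les deux fl\`eches $H^1_c(\GG,{\cal C})\to H^1(\GG,{\rm Mes})^\dual$ et $H^1(\GG,{\rm Mes})\to H^1_c(\GG,{\cal C})^\dual$ sont des isomorphismes; vous n'\'etablissez (au mieux) que la premi\`ere, qui est le sens automatique de la rem.~\ref{ES6}~(i). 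La seconde est pr\'ecis\'ement le sens non automatique de la rem.~\ref{ES6}~(ii): il faut v\'erifier que $H^1(\Gamma,{\rm Mes}(\GG(\wZ),\O_L))$ est sans $p$-torsion. C'est le seul ingr\'edient r\'eellement nouveau de la d\'emonstration, et il ne d\'ecoule pas d'un argument d'admissibilit\'e (la prop.~\ref{coco15} porte d'ailleurs sur ${\cal C}$ et non sur ${\rm Mes}$): le texte l'obtient en remarquant que $H^0(\Gamma,{\rm Mes}(\GG(\wZ),k_L))=0$ --- une mesure invariante par $\Gamma$ l'est par $\matrice{1}{\Z_p}{0}{1}$ par continuit\'e, donc est nulle faute de mesure de Haar $p$-adique --- puis en utilisant la suite exacte de coefficients $0\to\O_L\overset{p}{\to}\O_L\to k_L\to 0$. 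Rien dans votre texte ne remplace ce point.

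Par ailleurs, votre d\'etour par les coefficients $\O_L/p^n$ est \`a la fois \'evitable et lui-m\^eme incomplet. \'Evitable, car l'obstruction de r\'eflexivit\'e que vous soulevez dispara\^{\i}t si l'on munit ${\rm Mes}(\GG(\wZ),L)={\cal C}(\GG(\wZ),L)^\dual$ de la topologie faible: son dual est alors exactement ${\cal C}(\GG(\wZ),L)$, ce qui est tout ce que la preuve de la prop.~\ref{ES5} utilise (identification des formes lin\'eaires sur le facteur direct $M^{1+U+U^2=0}$ avec $(M^\dual)^{1+U+U^2=0}$); la non-r\'eflexivit\'e de $c_0$ concerne le dual fort et n'est pas l'obstruction vis\'ee par la rem.~\ref{ES6}~(ii), qui est la s\'eparation ou l'absence de torsion de $H^1$. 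Incomplet, car la dualit\'e de Pontryagin \'echange $\varprojlim_n$ et $\varinjlim_n$: vous obtiendriez une dualit\'e entre $\varprojlim_n H^1_c(\Gamma,{\cal C}(\GG(\wZ),\O_L/p^n))$ et $\varinjlim_n H^1(\Gamma,{\rm Mes}(\GG(\wZ),\O_L/p^n))$, qu'il reste \`a comparer aux groupes \`a coefficients $\O_L$ puis $L$ (contr\^ole des ${\rm R}^1\varprojlim$, et de la $6$-torsion de la prop.~\ref{ES5}, non inversible modulo $p^n$ si $p\leq 3$); la conversion finale en une dualit\'e $L$-lin\'eaire exige de nouveau l'absence de torsion que vous ne d\'emontrez pas.
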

\begin{proof}
D'apr\`es le cor.\,\ref{coco5} et la prop.\,\ref{cup7},
on a des isomorphismes:
\begin{align*}
H^1_c(\GG,{\cal C})&\cong H^1_c(\Gamma,{\cal C}(\GG(\wZ),L))\\
H^1(\GG,{\rm Mes})&\cong H^1(\Gamma,{\rm Mes}(\GG(\wZ),L))
\end{align*}
La prop.\,\ref{ES5} (cf.~rem.\,\ref{ES6}) implique donc une dualit\'e dans un sens.
Pour en d\'eduire celle dans l'autre sens, il suffit de v\'erifier que
$H^1(\Gamma,{\rm Mes}(\GG(\wZ),\O_L))$ est sans $p$-torsion, ce qui
r\'esulte de ce que $H^0(\Gamma,{\rm Mes}(\GG(\wZ),k_L))=0$ car une mesure invariante par
$\Gamma$ l'est aussi par $\matrice{1}{\Z_p}{0}{1}\subset\matrice{1}{\wZ}{0}{1}$ par continuit\'e
de l'action de $\matrice{1}{\wZ}{0}{1}$,
et donc est nulle (pas de mesure de Haar en $p$-adique).
\end{proof}

\Subsection{Cohomologie \`a support compact et vecteurs localement alg\'ebriques}
\Subsubsection{Cohomologie des fonctions localement alg\'ebriques}

\begin{theo}\phantomsection\label{alge1}
Si $X={\rm LC},{\rm LP}$,
on a 
$$H^i_c(\GG,X)=\begin{cases}0 &{\text{si $i\neq 1,2$,}}\\ X(\wZ^\dual) &{\text{si $i=2$,}}
\end{cases}$$
et on a une suite exacte de $\GG(\A)$-modules:
$$\xymatrix@R=3mm@C=4mm{
0\ar[r]& X(\wZ^\dual)\ar[r]&{\rm Ind}_{\BB(\A)}^{\GG(\A)}X(\LL(\wZ))\ar[r]&
H^1_c(\GG,X)\ar[dl]\\
& & H^1(\GG,X)\ar[r]&
\big({\rm Ind}_{\BB(\A)}^{\GG(\A)}X(\LL(\wZ))\big)^\vee
\ar[r]& X(\wZ^\dual)\ar[r]& 0
}$$
\end{theo}
\begin{proof}
Commen\c{c}ons par calculer $H^2_c$.
On a $H^i_c(\GG,X)=H^i_c(\Gamma,X(\GG(\wZ)))$ d'apr\`es la prop.\,\ref{cup7}.
Par ailleurs, $\GG(\wZ)=\GG'(\wZ)\cdot\matrice{\wZ^\dual}{0}{0}{1}$, et 
donc $H^2_c(\GG,X)\cong H^2_c(\Gamma,\GG'(\wZ))\otimes X(\wZ^\dual)$,
et on est ramen\'e \`a prouver que $H^2_c(\Gamma,X(\GG'(\wZ)))=L$.
Or $X(\GG'(\wZ))$ est une limite inductive de repr\'esentations de $\Gamma$
de dimension finie (de la forme $\oplus_W{\cal C}(\GG(\Z/N))\otimes(W\otimes W^\dual)$,
la somme \'etant sur un ensemble fini de repr\'esentations alg\'ebriques irr\'eductibles
de $\GG'(\Q_p)$).  Il r\'esulte de la prop.\.,\ref{h2c} que
$H^2_c(\Gamma,X(\GG'(\wZ)))$ est le dual de $H^0(\Gamma,X(\GG'(\wZ))^\dual)$.
Mais $H^0(\Gamma,-)=H^0(\GG'(\wZ),-)$ car $\Gamma$ est dense dans $\GG'(\wZ)$.
Utiliser l'action de l'alg\`ebre de Lie permet de montrer que seule la partie
correspondant \`a $W=1$ contribue; i.e.~on obtient le m\^eme r\'esultat pour
$X={\rm LC}$ que pour $X={\rm LP}$.  Maintenant, si $X={\rm LC}$, il est
facile de voir que la seule forme lin\'eaire invariante (\`a multiplication
pr\`es par un scalaire) est la\footnote{
Cette mesure n'est pas born\'ee ce qui explique
qu'il n'y ait pas de vraie mesure de Haar comme nous l'avons affirm\'e dans la
preuve du cor.\,\ref{ES6.4}).} ``mesure de Haar'' $\mu$
d\'efinie par $\mu({\bf 1}_{\gamma K})=\frac{1}{[\GG'(\wZ):K]}$ si $K$ est un sous-groupe
ouvert compact de $\GG'(\wZ)$, et $\gamma\in\GG'(\wZ)$.  On a donc
$H^0(\Gamma, X(\GG'(\wZ))^\dual)\cong L$;
on en d\'eduit le r\'esultat pour~$H^2_c$.

Le reste de l'\'enonc\'e r\'esulte d'une combinaison de
la suite exacte~(\ref{cohcom}) pour $M=X(\GG(\A))$, du th.\,\ref{indu6} qui
donne la valeur de $H^i(B,M)$ pour tout $i$, et de la prop.\,\ref{coco6} 
qui donne celle de $H^i(G,M)$ pour $i\neq 1$.
\end{proof}
\begin{rema}\phantomsection\label{alge2}
Le th.\,\ref{alge1} semble ne pas \^etre compatible avec le th.\,\ref{cococo}
mais
$H^1(\BB(\Q),{\rm LC}(\GG(\A),\O_L))$ est un $L$-espace vectoriel (cela r\'esulte de
la rem.\,\ref{gagm2.1} et de la prop.\,\ref{indu6.0}), et donc
son image dans $H^1(\BB(\Q),{\cal C}(\GG(\A),\O_L))$ est {\it a priori} nulle;
il n'est donc pas absurde que 
$H^1(\BB(\Q),{\cal C}(\GG(\A)))=0$ bien que $H^1(\BB(\Q),{\rm LC}(\GG(\A)))\neq 0$.
\end{rema}

\subsubsection{Vecteurs localement alg\'ebriques}
Soit $W$ une repr\'esentation alg\'ebrique de~$\GG$, que l'on voit comme
une repr\'esentation de $\GG(\A)$ agissant \`a travers $\GG(\Q_p)$.
Tensoriser la suite exacte 
$$0\to {\cal C}(\wZ^\dual)\to{\rm Ind}_{\BB(\A)}^{\GG(\A)}{\cal C}(\LL(\wZ))\to
H^1_c(\GG,{\cal C})\to H^1(\GG,{\cal C})\to 0$$
par $W$, fournit encore une suite exacte.  
Soit $K$ un sous-groupe ouvert de $\GG(\wZ)$.  
D'apr\`es le lemme~\ref{coco200}, on a
$H^i(K,{\cal C}(\wZ^\dual)\otimes W)$ si $i=1,2$.
Il s'ensuit que
$$H^1(K,\big({\rm Ind}_{\BB(\A)}^{\GG(\A)}{\cal C}(\LL(\wZ))\big)\otimes W)\overset{\sim}{\to}
H^1\big(K,\big(\big({\rm Ind}_{\BB(\A)}^{\GG(\A)}{\cal C}(\LL(\wZ))\big)/{\cal C}(\wZ^\dual)\big)\otimes W)$$
et que l'on a une suite exacte
\begin{equation}\phantomsection\label{suite}
\xymatrix@R=4mm@C=4mm{0\to\big({\cal C}(\wZ^\dual)\otimes W)^K\ar[r]&
\big(\big({\rm Ind}_{\BB(\A)}^{\GG(\A)}{\cal C}(\LL(\wZ))\big)\otimes W\big)^K\ar[r]&
\big(H^1_c(\GG,{\cal C})\otimes W\big)^K\ar[d]\\
&H^1(K,\big({\rm Ind}_{\BB(\A)}^{\GG(\A)}{\cal C}(\LL(\wZ))\big)\otimes W)
& \big(H^1(\GG,{\cal C})\otimes W\big)^K\ar[l]&}
\end{equation}

\begin{theo}\phantomsection\label{alge10}
$H^1_c(\GG,{\cal C})^{\rm alg}=H^1_c(\GG,{\rm LP})$
\end{theo}
\begin{proof}
Il suffit de prouver que, pour tout sous-groupe ouvert compact $K$ de $\GG(\wZ)$,
et toute repr\'esentation alg\'ebrique $W$ de $\GG(\Q_p)$, on a
$$H^0(K,H^1_c(\GG(\Q),{\cal C}(\GG(\A)))\otimes W)=H^1_c(\GG(\Q),H^0(K,{\cal C}(\GG(\A))\otimes W))$$
En effet, si on fixe $W$ et que l'on prend la limitive inductive sur $K$, le membre de gauche
calcule la $W^\dual$-composante de $H^1_c(\GG,{\cal C})^{\rm alg}$ et le membre de droite
celle de $H^1_c(\GG,{\rm LP})$: d'apr\`es la prop.\,\ref{cup7},
on a $H^1_c(\GG,{\rm LP})=H^1_c(\Gamma,{\rm LP}(\GG(\wZ)))$; par ailleurs
\begin{align*}
H^1_c(\Gamma,{\rm LP}(\GG(\wZ)))&=\oplus_W
H^1_c(\Gamma,\varinjlim\nolimits_K H^0(K,{\cal C}(\GG(\wZ))\otimes W\otimes W^\dual))\\
&=\oplus_W
\varinjlim\nolimits_K H^1_c(\Gamma,H^0(K,{\cal C}(\GG(\wZ))\otimes W))\otimes W^\dual\\
&=\oplus_W
\varinjlim\nolimits_K H^1_c(\GG(\Q),H^0(K,{\cal C}(\GG(\A))\otimes W))\otimes W^\dual
\end{align*}
(le dernier isomorphisme se d\'emontre comme la prop.\,\ref{cup7}).

Si $\Gamma$ est un sous-groupe de $\GG(\Q)\times\GG(\A)$, posons 
$H^i(\Gamma):=H^i(\Gamma,
{\cal C}(\GG(\A))\otimes W)$. On a les annulations suivantes:

\quad $\bullet$  
$H^i(\BB(\Q))=0$ si $i\geq 1$, d'apr\`es la prop.\,\ref{indu5}.

\quad $\bullet$ $H^i(K)=0$ si $i\geq 1$, car ${\cal C}(\GG(\A))\otimes W=
\prod_{g\in \GG(\A)/K}({\cal C}(gK)\otimes W)$ et chaque
${\cal C}(gK)\otimes W$ est une induite de $\{1\}$ \`a $K$ de $W$.

\quad $\bullet$ 
$H^i(K,H^0(\GG(\Q)))=0$ si $i=1,2$,
d'apr\`es le lemme~\ref{coco200}.

On a alors le diagramme commutatif suivant \`a colonnes exactes
$$\xymatrix@R=4mm@C=4mm{0\ar[d]&0\ar[d]&0\ar[d]\\
H^0(\GG(\Q),H^0(K))\ar@{=}[r]\ar[d]&H^0(\GG(\Q)\times K)\ar@{=}[r]\ar[d]
&H^0(K,H^0(\GG(\Q)))\ar[d]\\
H^0(\BB(\Q),H^0(K))\ar@{=}[r]\ar[d]&H^0(\BB(\Q)\times K)\ar@{=}[r]\ar[d]
&H^0(K,H^0(\BB(\Q)))\ar[d]\\
H^1_c(\GG(\Q),H^0(K))\ar[r]\ar[d]&H^1_c(\GG(\Q)\times K)\ar[r]\ar[d]
&H^0(K,H^1_c(\GG(\Q)))\ar[d]\\
H^1(\GG(\Q),H^0(K))\ar[r]^-{\sim}\ar[d]&H^1(\GG(\Q)\times K)\ar[r]^-{\sim}\ar[d]
&H^0(K,H^1(\GG(\Q)))\ar[d]\\
H^1(\BB(\Q),H^0(K))\ar[r]^-{\sim}&H^1(\BB(\Q)\times K)
&H^1(K,H^0(\BB(\Q)))\ar[l]_-{\sim}
}$$
La premi\`ere colonne est la suite (\ref{cohcom}) pour $M={\cal C}(\GG(\A))\otimes W$,
la seconde est l'analogue de la suite exacte pr\'ec\'edente pour $B=\BB(\Q)\times K$
et $G=\GG(\Q)\times K$, et la troisi\`eme est la suite exacte~(\ref{suite}).  Les fl\`eches
des 4-i\`eme et 5-i\`eme lignes proviennent des applications d'inflation et restriction;
ce sont des isomorphismes gr\^ace aux r\'esultats d'annulation rappel\'es ci-dessus.

La commutativit\'e du diagramme est imm\'ediate sauf celle du carr\'e en bas \`a droite.
Pour la v\'erifier, partons d'un $1$-cocycle $(\gamma,k)\mapsto \phi_{\gamma,k}$ 
sur $\GG(\Q)\times K$. Comme $H^1(\BB(\Q))=0$,
on peut supposer $\phi_{\beta,1}=0$ si $\beta\in\BB(\Q)$. La relation de cocycle donne
alors $\phi_{\beta,k}=\beta\phi_{1,k}+\phi_{\beta,1}=\beta\phi_{1,k}$ 
et $\phi_{\beta,k}=k\cdot\phi_{\beta,1}+\phi_{1,k}=
\phi_{1,k}$. Il s'ensuit que $\phi_{1,k}\in H^0(\BB(\Q))$, et $k\mapsto \phi_{1,k}$
est le cocycle repr\'esentant l'image de $(\gamma,k)\mapsto \phi_{\gamma,k}$ dans $H^1(K,H^0(\BB))$.
Il est alors clair que son inflation \`a $\BB(\Q)\times K$ est la restriction
de $(\gamma,k)\mapsto \phi_{\gamma,k}$ \`a $\BB(\Q)\times K$, ce qui prouve la commutativit\'e.

Le lemme des 5 permet d'en d\'eduire que les fl\`eches de la 3-i\`eme ligne sont des isomorphismes,
ce qui permet de conclure.
\end{proof}

\section{Le cas $\GG$ g\'en\'eral}

\subsection{Descente \`a un sous-groupe arithm\'etique}
On fixe un sous-groupe compact maximal $\GG(\wZ)$ de $\GG(\A^{]\infty[})$ comme 
dans le \S\,\ref{fma3}.
Soient $g_1,\dots,g_r$ des repr\'esentants des classes $\GG(\Q)\backslash \GG(\A)/\GG(\R)_+\GG(\wZ)$,
et soit 
$$\Gamma_i:=\GG(\Q)\cap g_{i}\GG(\wZ)\GG(\R)_+g_i^{-1}$$
Si $\alpha\in\Gamma_i$, alors $g_i^{-1}\alpha g_i=\alpha_i^{]\infty[}\alpha_{\infty,i}$
avec $\alpha_i^{]\infty[}\in\GG(\wZ)$ et $\alpha_{\infty,i}\in\GG(\R)_+$ et
$\alpha\mapsto \alpha_i^{]\infty[}$, $\alpha\mapsto\alpha_{\infty,i}$ sont des morphismes de groupes.

\begin{prop}\phantomsection\label{gener1}
Soit $X$ une classe de fonctions telle que $\phi\in X(\GG(\A))$ si et seulement si
$\kappa\mapsto \phi(x\kappa)\in X(\GG(\wZ))$ pour tout $x\in X(\GG(\A))$.
On a un isomorphisme de $\GG(\Q)$-modules
$$X(\GG(\A))\cong \oplus_i {\rm Ind}_{\Gamma_i}^{\GG(\Q)}(X(\GG(\wZ)))$$
\end{prop}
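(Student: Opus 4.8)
The plan is to generalize the proof of lemma~\ref{coco4} (the case $\GG={\bf GL}_2$, where $r=1$ and $\Gamma_1=\Gamma$), carrying out the argument one double coset at a time. The geometric input is the disjoint decomposition
$$\GG(\A)=\bigsqcup_i \GG(\Q)\,g_i\,\GG(\R)_+\GG(\wZ),$$
together with the fact that, writing an element as $x=\gamma^{-1}g_ih_\infty\kappa$ with $\gamma\in\GG(\Q)$, $h_\infty\in\GG(\R)_+$ and $\kappa\in\GG(\wZ)$, such a writing is unique up to the simultaneous change $(\gamma,\kappa)\mapsto(\alpha\gamma,\alpha_i^{]\infty[}\kappa)$ with $\alpha\in\Gamma_i$ (together with a compensating change of $h_\infty$), exactly as in lemma~\ref{coco1}. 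Throughout, $\Gamma_i$ acts on $X(\GG(\wZ))$ by left translation through the finite part $\alpha\mapsto\alpha_i^{]\infty[}$, as in lemma~\ref{coco4}, and the functions in $X$ being $L$-valued, every element is constant on right $\GG(\R)_+$-cosets by rem.~\ref{coco3}.

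First I would define the candidate isomorphism $\Theta$. Given $\phi\in X(\GG(\A))$, I set $f_{i,\gamma}(\kappa):=\phi(\gamma^{-1}g_i\kappa)$; this does not depend on the auxiliary $h_\infty$, because $h_\infty\in\GG(\R)_+$ commutes with the finite element $\kappa$ and is then absorbed by the right $\GG(\R)_+$-invariance of rem.~\ref{coco3}. Since $f_{i,\gamma}$ is the function $\kappa\mapsto\phi(y\kappa)$ attached to $y=\gamma^{-1}g_i\in\GG(\A)$, the ``only if'' half of the hypothesis on $X$ guarantees $f_{i,\gamma}\in X(\GG(\wZ))$. Thus $\Theta\colon\phi\mapsto(\gamma\mapsto f_{i,\gamma})_i$ is a well-defined map into $\oplus_i{\rm Ind}_{\Gamma_i}^{\GG(\Q)}X(\GG(\wZ))$, once one checks the induction condition.

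That condition is where I expect the only real bookkeeping. For $\alpha\in\Gamma_i$, the relation $\alpha^{-1}g_i=g_i\,\alpha_{\infty,i}^{-1}(\alpha_i^{]\infty[})^{-1}$ gives $\gamma^{-1}\alpha^{-1}g_i\kappa=\gamma^{-1}g_i(\alpha_i^{]\infty[})^{-1}\kappa\,\alpha_{\infty,i}^{-1}$, after commuting the archimedean factor $\alpha_{\infty,i}^{-1}\in\GG(\R)_+$ past the finite elements; the right $\GG(\R)_+$-invariance then yields $f_{i,\alpha\gamma}(\kappa)=f_{i,\gamma}((\alpha_i^{]\infty[})^{-1}\kappa)=(\alpha\cdot f_{i,\gamma})(\kappa)$, as required. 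The $\GG(\Q)$-equivariance of $\Theta$ is then immediate: $\Theta(\beta*\phi)_i(\gamma)=\Theta(\phi)_i(\gamma\beta)$, which is exactly the standard $\GG(\Q)$-action on the induced module.

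Finally I would construct the inverse. Given $(\gamma\mapsto f_{i,\gamma})_i$, I define $\phi$ on $\GG(\A)$ by $\phi(\gamma^{-1}g_ih_\infty\kappa):=f_{i,\gamma}(\kappa)$; the uniqueness statement above, combined with the induction relation $f_{i,\alpha\gamma}=\alpha\cdot f_{i,\gamma}$ read through the same computation in reverse, shows that $\phi$ is well defined on all of $\GG(\A)$. That $\phi\in X(\GG(\A))$ follows from the ``if'' half of the hypothesis, since for every $y\in\GG(\A)$ the restriction $\kappa\mapsto\phi(y\kappa)$ is a right $\GG(\wZ)$-translate of one of the $f_{i,\gamma}$, hence lies in $X(\GG(\wZ))$. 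These two constructions are visibly mutually inverse, giving the isomorphism. The main obstacle is thus not conceptual but lies in the $\Gamma_i$-equivariance step: because $\Gamma_i$ is a conjugate of an arithmetic subgroup, its elements act on $X(\GG(\wZ))$ only through their finite parts $\alpha_i^{]\infty[}$, and one must systematically use the right $\GG(\R)_+$-invariance of rem.~\ref{coco3} to discard the archimedean parts $\alpha_{\infty,i}$; everything else reduces, double coset by double coset, to the $r=1$ situation of lemma~\ref{coco4}.
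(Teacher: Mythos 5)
Your proposal is correct and follows essentially the same route as the paper: decompose $\GG(\A)$ into the double cosets $\GG(\Q)g_i\GG(\wZ)\GG(\R)_+$, identify $X$ of each coset with ${\rm Ind}_{\Gamma_i}^{\GG(\Q)}X(\GG(\wZ))$ via $\phi_\gamma(\kappa)=\phi(\gamma^{-1}g_i\kappa x_\infty)$, and verify the induction condition from $g_i^{-1}\alpha g_i=\alpha_i^{]\infty[}\alpha_{\infty,i}$ together with right $\GG(\R)_+$-invariance. The only (cosmetic) difference is that you discard the archimedean factor up front by invoking rem.~\ref{coco3}, whereas the paper keeps $x_\infty$ in the formula and notes the independence "pour les raisons habituelles".
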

\begin{proof}
Il suffit de prouver que, pour $i=1,\dots,r$, 
on a un isomorphisme de $\GG(\Q)\times\GG(\wZ)$-modules
$$ X(\GG(\Q)g_i\GG(\wZ)\GG(\R)_+)\cong{\rm Ind}_{\Gamma_i}^{\GG(\Q)}( X(\GG(\wZ)))$$
o\`u $\Gamma_i$ agit sur $ X(\GG(\wZ))$ par $(\alpha*_i\phi)(x)=
\phi((\alpha_i^{]\infty[})^{-1}x)$.

Un \'el\'ement de $\GG(\Q)g_i\GG(\wZ)\GG(\R)_+$ s'\'ecrit, sous la forme
$\gamma^{-1}g_i\kappa x_\infty$, avec $\gamma\in\GG(\Q)$, $\kappa\in \GG(\wZ)$ et $x_\infty\in\GG(\R)_+$
et cette \'ecriture est unique \`a changements simultan\'es $\gamma\mapsto\alpha\gamma$,
$\kappa\mapsto \alpha_i^{]\infty[}\kappa$ et $x_\infty\mapsto \alpha_{\infty,i}x_\infty$,
o\`u $\alpha\in\Gamma_i$.
Si $\phi\in  X(\GG(\Q)g_i\GG(\wZ)\GG(\R)_+)$, on d\'efinit $\phi_\gamma\in X(\GG(\wZ))$
par $\phi_\gamma(\kappa):=\phi(\gamma^{-1}g_i\kappa x_\infty)$ (et le r\'esultat ne d\'epend pas
de $x_\infty$ pour les raisons habituelles).
On a alors 
\begin{align*}
\phi_{\alpha\gamma}(x)&=\phi(\gamma^{-1}\alpha^{-1}g_i\kappa x_\infty)=
\phi(\gamma^{-1}g_i(g_i^{-1}\alpha^{-1}g_i)\kappa x_\infty)\\
&=\phi_\gamma((\alpha_i^{]\infty[})^{-1}\kappa)=\alpha*_i\phi(\kappa)
\end{align*}
ce qui fournit une fl\`eche $\GG(\Q)$-\'equivariante du membre de gauche dans le membre de droite.
R\'eciproquement, si $(\phi_\gamma)_{\gamma\in\GG(\Q)}$, on d\'efinit
$\phi(\gamma^{-1}g_i\kappa x_\infty):=\phi_\gamma(\kappa)$, et le r\'esultat est ind\'ependant du
choix de $\gamma$ car $\phi_{\alpha\gamma}=\alpha*_i\phi$, si $\alpha\in\Gamma_i$.
\end{proof}

Il en r\'esulte, via le lemme de Shapiro, que:

\begin{coro}\phantomsection\label{gener2}
 Si $r\geq 0$ et si $K^{]p[}$ est un sous-groupe ouvert de $\GG(\wZ^{]p[})$,
on a des isomorphismes:
\begin{align*}
H^r(\GG(\Q), X(\GG(\A)))&\cong \oplus_i H^r(\Gamma_i, X(\GG(\wZ)))\\
H^r(\GG(\Q), X(\GG(\A)/K^{]p[}))&\cong \oplus_i H^r(\Gamma_i, X(\GG(\wZ)/K^{]p[}))
\end{align*}
\end{coro}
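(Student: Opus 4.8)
The plan is to apply the functor $H^r(\GG(\Q),-)$ to the $\GG(\Q)$-equivariant isomorphism of la prop.~\ref{gener1} and then to invoke le lemme de Shapiro, exactly as in the passage from le lemme~\ref{coco4} to le cor.~\ref{coco5}. Each of the classes $X = {\cal C}, {\cal C}^{(p)}, {\rm LA}, {\rm LP}, {\rm LC}$ satisfies the hypothesis of la prop.~\ref{gener1} (membership in $X(\GG(\A))$ is detected on the right translates restricted to $\GG(\wZ)$), so that proposition applies and furnishes a $\GG(\Q)$-module isomorphism
$$X(\GG(\A))\cong \bigoplus\nolimits_i {\rm Ind}_{\Gamma_i}^{\GG(\Q)} X(\GG(\wZ)).$$

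First I would use that $H^r(\GG(\Q),-)$ commutes with the finite direct sum over $i$, reducing matters to the computation of each $H^r(\GG(\Q), {\rm Ind}_{\Gamma_i}^{\GG(\Q)} X(\GG(\wZ)))$. Then le lemme de Shapiro identifies this group with $H^r(\Gamma_i, X(\GG(\wZ)))$; I note that the induction built in the proof of la prop.~\ref{gener1} is the module of \emph{all} functions $\GG(\Q) \to X(\GG(\wZ))$ with the left equivariance $\phi_{\alpha\gamma} = \alpha *_i \phi_\gamma$, i.e.\ the coinduced module, for which cohomological Shapiro holds regardless of the (infinite) index of $\Gamma_i$ in $\GG(\Q)$. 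This yields the first asserted isomorphism.

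For the second isomorphism I would observe that $X(\GG(\A)/K^{]p[}) = X(\GG(\A))^{K^{]p[}}$ is again a function class of the same type, obtained by collapsing the right variable modulo $K^{]p[} \subset \GG(\wZ^{]p[})$. Since $K^{]p[}$ acts by right translation, it preserves each double coset $\GG(\Q) g_i \GG(\wZ) \GG(\R)_+$ and commutes with the left $\Gamma_i$-action on $X(\GG(\wZ))$; la prop.~\ref{gener1} therefore applies verbatim with $X(\GG(\wZ)/K^{]p[})$ in place of $X(\GG(\wZ))$, and applying $H^r(\GG(\Q),-)$ together with Shapiro as above gives the result. The corollary carries no real difficulty, its entire content sitting in la prop.~\ref{gener1}; the only point worth checking is precisely this compatibility of the right quotient by $K^{]p[}$ with the induction, which holds because the $\Gamma_i$- and $K^{]p[}$-actions are on opposite sides.
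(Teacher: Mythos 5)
Your proposal is correct and follows exactly the paper's route: the paper derives the corollary from la prop.~\ref{gener1} via le lemme de Shapiro, which is precisely your argument (your remarks on coinduction and on the compatibility of the right $K^{]p[}$-quotient with the left induction are just the details the paper leaves implicit).
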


\begin{rema}\phantomsection\label{gener3}
Les r\'esultats de la prop.\,\ref{gener1} et du cor.\,\ref{gener2} s'appliquent \`a
$X={\cal C}$, ${\cal C}^{(p)}$, ${\rm LA}$, ${\rm LP}$, ${\rm LC}$, etc.
\end{rema}

\begin{coro}
On a
$H^r(\GG(\Q),{\cal C}(\GG(\A)))=L\otimes_{\O_L}H^r(\GG(\Q),{\cal C}(\GG(\A),\O_L))$.
\end{coro}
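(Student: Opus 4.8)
The plan is to reduce to the arithmetic groups $\Gamma_i$ by means of Corollary \ref{gener2}, and then to exploit the finite homological type of these groups so that their cohomology commutes with the flat localization $\O_L\to L$. First I would apply Corollary \ref{gener2} (together with Remark \ref{gener3}) for both coefficient rings $\Lambda=L$ and $\Lambda=\O_L$, which gives
$$H^r(\GG(\Q),{\cal C}(\GG(\A),\Lambda))\cong \bigoplus_i H^r(\Gamma_i,{\cal C}(\GG(\wZ),\Lambda)).$$
Since $\O_L\to L$ is a flat localization it commutes with the (finite) direct sum, so it suffices to treat each index $i$ separately, i.e. to prove
$$H^r(\Gamma_i,{\cal C}(\GG(\wZ),L))\cong L\otimes_{\O_L}H^r(\Gamma_i,{\cal C}(\GG(\wZ),\O_L)).$$

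Next I would record, exactly as in the proof of Proposition \ref{coco7}, that compactness of $\GG(\wZ)$ forces every continuous $\O_L$-valued function to be bounded, whence ${\cal C}(\GG(\wZ),L)=L\otimes_{\O_L}{\cal C}(\GG(\wZ),\O_L)$. Writing $L=\varinjlim(\O_L\xrightarrow{\varpi}\O_L\xrightarrow{\varpi}\cdots)$ for a uniformizer $\varpi$ of $\O_L$, this exhibits ${\cal C}(\GG(\wZ),L)$ as the filtered colimit of ${\cal C}(\GG(\wZ),\O_L)$ under multiplication by $\varpi$. The point is then that $H^r(\Gamma_i,-)$ commutes with this filtered colimit.

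The key input is that each arithmetic group $\Gamma_i$ is of type $\mathrm{FP}_\infty$: by reduction theory (the Borel--Serre bordification), $\Gamma_i$ contains a torsion-free finite-index subgroup acting freely and cocompactly on a contractible manifold with corners, hence admitting a finite classifying space; consequently $\Gamma_i$ has a resolution $P_\bullet\to\Z$ of the trivial module by finitely generated projective $\Z[\Gamma_i]$-modules. For such $P_\bullet$ each functor $\mathrm{Hom}_{\Z[\Gamma_i]}(P_n,-)$ commutes with filtered colimits of coefficients, and as filtered colimits are exact they also commute with passing to the cohomology of the complex $\mathrm{Hom}_{\Z[\Gamma_i]}(P_\bullet,-)$. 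Therefore
$$H^r(\Gamma_i,{\cal C}(\GG(\wZ),L))=\varinjlim H^r(\Gamma_i,{\cal C}(\GG(\wZ),\O_L))=L\otimes_{\O_L}H^r(\Gamma_i,{\cal C}(\GG(\wZ),\O_L)),$$
where the last identification is the localization $H^r[\tfrac1\varpi]=L\otimes_{\O_L}H^r$. Summing over $i$ and re-applying Corollary \ref{gener2} yields the stated equality.

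The main obstacle is precisely this finiteness input: one needs the $\Gamma_i$ to be of type $\mathrm{FP}_\infty$ uniformly in all degrees $r$, which is what makes $H^r(\Gamma_i,-)$ commute with the filtered colimit above. For $\GG={\bf GL}_2$ this is elementary, since one has the explicit presentation of $\Gamma={\bf SL}_2(\Z)$ already used in Lemma \ref{coco2}; for a general reductive $\GG$ it rests on reduction theory rather than on anything established earlier in the text, and this is the step I would expect to require the most care to state cleanly.
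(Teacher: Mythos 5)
Your argument is correct and follows essentially the same route as the paper: descend to the arithmetic groups $\Gamma_i$ via the corollary~\ref{gener2}, use compactness of $\GG(\wZ)$ to get ${\cal C}(\GG(\wZ),L)=L\otimes_{\O_L}{\cal C}(\GG(\wZ),\O_L)$, and then commute $L\otimes_{\O_L}(-)$ past $H^r(\Gamma_i,-)$. The only difference is that the finiteness input you derive from the Borel--Serre bordification (type $\mathrm{FP}_\infty$ of the $\Gamma_i$) is exactly what the paper has already packaged as the finite complex~(\ref{rg}) coming from a triangulation of $Y(\Gamma_i)$, so you could invoke that complex directly instead of re-proving it.
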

\begin{proof}
On utilise le cor.~\ref{gener2} pour desscendre de ${\cal C}(\GG(\A))$ \`a ${\cal C}(\GG(\wZ))$
et la compacit\'e de ${\cal C}(\GG(\wZ))$ qui implique que
${\cal C}(\GG(\wZ))=L\otimes_{\O_L}{\cal C}(\GG(\wZ),\O_L)$.
\end{proof} 

\Subsection{La cohomologie de $\Gamma_i$}
\subsubsection{Admissibilit\'e}\label{COCO16}
Soit $K_\infty$ un sous-groupe compact maximal de $\GG(\R)_+$.
L'espace sym\'etrique $\GG(\R)/K_\infty$ est contractile.
Notons $Y(\Gamma_i)$ le quotient $\Gamma_i\backslash \GG(\R)/K_\infty$
(ce quotient peut \^etre une orbifold
si $\Gamma_i$ est trop gros, comme dans le cas $\Gamma={\bf SL}_2(\Z)$).

Si $M$ est un $\Gamma_i$-module, on transforme $M$ 
en un syst\`eme local $\underline M$ sur $Y(\Gamma_i)$ en consid\'erant le quotient
$\Gamma_i\backslash((\GG(\R)/K_\infty)\times M)$).
On a alors $H^r(\Gamma_i,M)=H^r(Y(\Gamma_i),\underline{M})$.
Ceci permet, en triangulant $Y(\Gamma_i)$, de repr\'esenter la cohomologie
de $\Gamma_i$ \`a valeur dans $M$ comme celle d'un complexe 
\begin{equation}\phantomsection\label{rg}
\rg(\Gamma_i,M):=M^{J_0}\to M^{J_1}\to M^{J_2}\to \cdots\to M^{J_d}
\end{equation}
o\`u les $J_i$ sont des ensembles finis et $d$ est la dimension de $Y(\Gamma_i)$.
Notons que les $J_i$ et les applications de connexion ne d\'ependent que de la triangulation
et pas de $M$ (comme dans le lemme~\ref{coco2}).

\begin{prop}\phantomsection\label{gener4}
Si $K^{]p[}$ est un sous-groupe ouvert de $\GG(\wZ^{]p[})$, et si $\Lambda=L,\O_L,\O_L/p^n$,
les $H^r(\GG(\Q),{\cal C}(\GG(\A)/K^{]p[},\Lambda))$ sont des $\Lambda$-repr\'esentations
admissibles de $\GG(\Q_p)$.
\end{prop}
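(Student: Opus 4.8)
The plan is to mimic the argument of prop.~\ref{coco15}, which handled admissibility for $\GG={\bf GL}_2$, simply replacing the explicit presentation of ${\bf SL}_2(\Z)$ used there by the finite complex~(\ref{rg}) furnished by a triangulation of $Y(\Gamma_i)$. First I would invoke cor.~\ref{gener2} to reduce the global statement to the arithmetic groups: one has $H^r(\GG(\Q),{\cal C}(\GG(\A)/K^{]p[},\Lambda))\cong\bigoplus_i H^r(\Gamma_i,M)$ with $M:={\cal C}(\GG(\wZ)/K^{]p[},\Lambda)$, so it suffices to show that each $H^r(\Gamma_i,M)$ is an admissible $\Lambda$-representation. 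Since admissibility only tests the action of a maximal compact subgroup, I would only need to control the $\GG(\Z_p)$-action coming from right translation (which, as in the ${\bf GL}_2$ case, commutes with that of $\Gamma_i$).

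Next I would check that $M$ is itself admissible as a $\GG(\Z_p)$-module. Writing $\GG(\wZ)=\GG(\Z_p)\times\GG(\wZp)$ and using that $\GG(\wZp)/K^{]p[}$ is finite (because $K^{]p[}$ is open in the compact group $\GG(\wZp)$), the space $M$ is a finite direct sum of copies of ${\cal C}(\GG(\Z_p),\Lambda)$. The latter is admissible, since for every open $K_p\subset\GG(\Z_p)$ its $K_p$-invariants are ${\cal C}(\GG(\Z_p)/K_p,\Lambda)$, a finitely generated $\Lambda$-module (the quotient $\GG(\Z_p)/K_p$ being finite). This is exactly the ``trivialement admissible'' step of prop.~\ref{coco15}.

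The core of the argument would then exploit the finite complex~(\ref{rg}) computing $H^r(\Gamma_i,M)$. Its terms $M^{J_s}$ are finite sums of copies of $M$, hence admissible $\GG(\Z_p)$-modules; and because the finite sets $J_s$ and the connection maps depend only on the triangulation of $Y(\Gamma_i)$ and not on $M$, while the $\GG(\Z_p)$-action on $M$ commutes with that of $\Gamma_i$, the differentials $M^{J_s}\to M^{J_{s+1}}$ are $\GG(\Z_p)$-equivariant. Each $H^r(\Gamma_i,M)$ is therefore a subquotient of admissible $\GG(\Z_p)$-modules, and I would conclude exactly as in prop.~\ref{coco15} (via lemma~\ref{coco11} if one wants to stay with kernels and cokernels) by invoking the fact that admissible representations form an abelian category~\cite{ST1}.

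The main obstacle, and the only genuinely non-formal input, is the appeal to~\cite{ST1}: one needs that the category of admissible $\Lambda$-representations of $\GG(\Z_p)$ is abelian, so that the equivariant differentials of~(\ref{rg}) are \emph{strict} (closed image) and the naive cohomology of the complex coincides with the subquotient taken in that category, remaining admissible. For $\Lambda=L$ this strictness is precisely what could fail for a general complex of Banach representations, so it is the point where the structural theory of admissible representations is indispensable; everything else (the descent, the finite complex, the equivariance) is formal once~(\ref{rg}) is granted.
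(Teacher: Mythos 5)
Your proposal is correct and follows essentially the same route as the paper: reduce via cor.~\ref{gener2} to $H^r(\Gamma_i,M)$ with $M={\cal C}(\GG(\wZ)/K^{]p[},\Lambda)$, observe that $M$ is an admissible $\GG(\Z_p)$-representation, and conclude because the finite complex~(\ref{rg}) is then a complex of admissible representations whose cohomology stays admissible by the abelian-category property of~\cite{ST1}. The extra remarks on strictness of the differentials only make explicit what the appeal to~\cite{ST1} already provides.
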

\begin{proof}
Gr\^ace au cor.\,\ref{gener2}, on se ram\`ene \`a prouver le m\^eme \'enonc\'e
pour $H^r(\Gamma_i,{\cal C}(\GG(\wZ)/K^{]p[},\Lambda))$, et le r\'esultat est une
cons\'equence de ce que ${\cal C}(\GG(\wZ)/K^{]p[},\Lambda)$ est admissible comme
repr\'esentation de $\GG(\Z_p)$, et donc le complexe $\rg(\Gamma_i,M)$ est un complexe
de repr\'esentations admissibles de $\GG(\Z_p)$, et donc sa cohomologie est admissible comme
repr\'esentation de $\GG(\Z_p)$ et donc aussi, par d\'efinition, de $\GG(\Q_p)$.
\end{proof}

\subsubsection{Lien avec la cohomologie compl\'et\'ee d'Emerton}
Si $n\geq 1$, on a 
\begin{align*}
{\cal C}(\GG(\wZ),\O_L/p^n)&=\varinjlim\nolimits_K{\cal C}(\GG(\wZ)/K,\O_L/p^n)\\
{\cal C}(\GG(\wZ)/K^{]p[},\O_L/p^n)&=\varinjlim\nolimits_K{\cal C}(\GG(\wZ)/K^{]p[}K_p,\O_L/p^n)
\end{align*}
o\`u $K$ (resp.~$K_p$) d\'ecrit les sous-groupes ouverts de $\GG(\wZ)$ (resp.~$\GG(\Z_p)$).

On d\'efinit la cohomologie compl\'et\'ee d'Emerton par:
\begin{align*}
H^r_{\rm Em}&:=\oplus_i \varprojlim\nolimits_n  H^r(\Gamma_i,{\cal C}(\GG(\wZ),\O_L/p^n))\\
H^r_{\rm Em}(K^{]p[})&:=\oplus_i\varprojlim\nolimits_n H^r(\Gamma_i,{\cal C}(\GG(\wZ)/K^{]p[},\O_L/p^n))
\end{align*}
\begin{prop}\phantomsection\label{gener5}
On a un isomorphisme
$$H^r(\GG(\Q),{\cal C}(\GG(\A)/K^{]p[},\O_L))\overset{\sim}{\to} H^r_{\rm Em}(K^{]p[})$$
\end{prop}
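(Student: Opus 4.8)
The plan is to use the Shapiro decomposition of Cor.~\ref{gener2} to reduce to a single arithmetic group: it suffices to prove, for each $i$, that the natural map
$$H^r(\Gamma_i,{\cal C}(\GG(\wZ)/K^{]p[},\O_L))\longrightarrow \varprojlim_n H^r(\Gamma_i,{\cal C}(\GG(\wZ)/K^{]p[},\O_L/p^n))$$
is an isomorphism, and then to sum over $i$. Write $M:={\cal C}(\GG(\wZ)/K^{]p[},\O_L)$ and $M_n:=M/p^nM={\cal C}(\GG(\wZ)/K^{]p[},\O_L/p^n)$ (the last identification using compactness of $\GG(\wZ)/K^{]p[}$); thus $M$ is $p$-torsion free and $p$-adically complete with $M=\varprojlim_n M_n$. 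I would compute with the explicit finite complex $\rg(\Gamma_i,-)$ of \S\ref{COCO16}: it has the shape $M^{J_0}\to\cdots\to M^{J_d}$ with the $J_k$ finite and is functorial in the coefficients. Hence $C^\bullet:=\rg(\Gamma_i,M)$ is a finite complex of $p$-torsion free, $p$-adically complete $\O_L$-modules, with $C^\bullet/p^n=\rg(\Gamma_i,M_n)$ and surjective transition maps $C^\bullet/p^{n+1}\to C^\bullet/p^n$.

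Since each term of $C^\bullet$ is a finite power of $M$, one has $C^\bullet=\varprojlim_n C^\bullet/p^n$ with degreewise surjective transitions, so the Milnor sequence reads
$$0\to {\varprojlim}^{1}_{n}\, H^{r-1}(\Gamma_i,M_n)\to H^r(\Gamma_i,M)\to \varprojlim_n H^r(\Gamma_i,M_n)\to 0,$$
and the whole statement reduces to the vanishing of the $\varprojlim^1$-term. To identify it I would feed in the universal coefficient sequence attached to the flat complex $C^\bullet$,
$$0\to H^{r-1}(\Gamma_i,M)/p^n\to H^{r-1}(\Gamma_i,M_n)\to H^r(\Gamma_i,M)[p^n]\to 0.$$
This realises $\{H^{r-1}(\Gamma_i,M_n)\}_n$ as an extension of $\{H^r(\Gamma_i,M)[p^n]\}_n$ (transition maps being multiplication by $p$) by $\{H^{r-1}(\Gamma_i,M)/p^n\}_n$ (surjective transitions, hence with vanishing $\varprojlim^1$); by the long exact sequence of derived limits it is therefore enough to prove ${\varprojlim}^{1}_{n}\,H^r(\Gamma_i,M)[p^n]=0$.

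The crux is thus the boundedness of the $p$-power torsion of $N:=H^r(\Gamma_i,M)$: if $N[p^\infty]=N[p^{M_0}]$ for some $M_0$, then the system $\{N[p^n]\}_n$ with its multiplication-by-$p$ transitions is Mittag--Leffler (its images stabilising to $0$) and the derived limit vanishes. This is where the global input enters, and I expect it to be the main obstacle. First, $N[p^\infty]$ is a \emph{smooth} representation of $\GG(\Z_p)$: indeed $N[p^n]$ is the image of the connecting map $H^{r-1}(\Gamma_i,M_n)\to N$, hence a quotient of $H^{r-1}(\Gamma_i,M_n)$, which is a smooth admissible $\O_L/p^n$-representation by Prop.~\ref{gener4} (via Cor.~\ref{gener2}); since $N$ is itself admissible over $\O_L$, the module $N[p^\infty]$ is admissible smooth, so its Pontryagin dual $N[p^\infty]^\dual$ is finitely generated over the Noetherian Iwasawa algebra $\O_L[[\GG(\Z_p)]]$. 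Second, I would show that $N$ is $p$-adically separated, using that $C^\bullet$ is a finite complex of $p$-adically separated modules; then $N[p^\infty]$ is separated, which dually forces $N[p^\infty]^\dual$ to be $p$-power torsion, and therefore — being finitely generated over $\O_L[[\GG(\Z_p)]]$ — killed by a fixed power $p^{M_0}$. Dualising back gives $N[p^\infty]=N[p^{M_0}]$, hence the vanishing of $\varprojlim^1$ and the proposition. The delicate point deserving the most care is precisely this $p$-adic separatedness (equivalently the bounded torsion) of $N$, which is where the admissibility of the completed cohomology, rather than any formal property of the complex $C^\bullet$, is indispensable.
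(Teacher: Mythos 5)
Your reduction is sound through the Milnor sequence and the universal-coefficient splitting of the tower $\{H^{r-1}(\Gamma_i,M_n)\}_n$, but the step you yourself identify as the crux is where the argument breaks. You reduce everything to the bounded $p$-torsion of $N=H^r(\Gamma_i,M)$ and propose to obtain it from the $p$-adic separatedness of $N$, ``using that $C^\bullet$ is a finite complex of $p$-adically separated modules''. That justification is not valid: the cohomology of a finite complex of $p$-adically complete, separated, torsion-free $\O_L$-modules need not be separated, because the module of coboundaries need not be closed (for the map $(x_n)_n\mapsto(p^nx_n)_n$ on the $p$-adic completion of $\bigoplus_n\O_L$, the element $(p^n)_n$ lies in the closure of the image but not in the image); and even separatedness would not by itself bound the torsion ($\bigoplus_n\O_L/p^n$ is separated with unbounded torsion), so everything rests on the finite generation of the dual over $\O_L[[\GG(\Z_p)]]$ applied to the \emph{complex}, not just to its terms. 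In other words, the one statement your proof actually needs is asserted and flagged as delicate, but not proved; carrying it out amounts to redoing Emerton's duality argument with finitely generated Iwasawa modules.

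The detour is also unnecessary, and the ingredient you already cite closes the argument in one line. The paper stops at the Milnor sequence and proves ${\rm R}^1\varprojlim_n H^{r-1}(\Gamma_i,M_n)=0$ directly: by Prop.\,\ref{gener4} each $H^{r-1}(\Gamma_i,M_n)$ is an admissible $\GG(\Z_p)$-representation over $\O_L/p^n$, hence satisfies the descending chain condition on $\GG(\Z_p)$-subrepresentations; the images of the ($\GG(\Z_p)$-equivariant) transition maps therefore stabilize, the tower is Mittag--Leffler, and the ${\rm R}^1\varprojlim$ vanishes. The same observation would repair your route without any separatedness statement: $N[p^n]$ is, as you note, a quotient of $H^{r-1}(\Gamma_i,M_n)$, hence admissible over $\O_L/p^n$, and the decreasing chain of images $p^{m-n}N[p^m]\subset N[p^n]$ stabilizes for the same reason, which already gives Mittag--Leffler for the torsion tower. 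I would therefore drop the universal-coefficient step and the bounded-torsion claim entirely and apply admissibility where it bites, namely to the mod $p^n$ cohomology groups themselves.
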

\begin{proof}
On a une suite exacte
\begin{align*}
\xymatrix@R=4mm@C=-10mm{
0\to\oplus_i{\rm R}^1\varprojlim\nolimits_n  H^{r-1}(\Gamma_i,{\cal C}(\GG(\wZ)/K^{]p[},\O_L/p^n))\ar[d]\\
H^r(\GG(\Q),{\cal C}(\GG(\A)/K^{]p[},\O_L))\ar[r]& H^r_{\rm Em}(K^{]p[})\to 0}
\end{align*}
et il suffit de prouver que
 ${\rm R}^1\varprojlim\nolimits_n  H^{r-1}(\Gamma_i,{\cal C}(\GG(\wZ)/K^{]p[},\O_L/p^n))=0$.

Or
$H^{r-1}(\Gamma_i,{\cal C}(\GG(\wZ)/K^{]p[},\O_L/p^n))$ est une repr\'esentation
admissible de $\GG(\Z_p)$, et donc toute suite d\'ecroissante de sous-repr\'esentations
de $\GG(\Z_p)$ est stationnaire. Il s'ensuit que le syst\`eme projectif ci-dessus v\'erifie
le crit\`ere de Mittag-Leffler, ce qui permet de conclure.
\end{proof}

\begin{ques}\phantomsection\label{gener6}
A-t-on $H^r(\GG(\Q),{\cal C}(\GG(\A),\O_L))\overset{\sim}{\to} H^r_{\rm Em}$?
Comme ci-dessus, on a une suite exacte
$$
0\to\oplus_i{\rm R}^1\varprojlim\nolimits_n  H^{r-1}(\Gamma_i,{\cal C}(\GG(\wZ),\O_L/p^n))
\to H^r(\GG(\Q),{\cal C}(\GG(\A),\O_L))\to H^r_{\rm Em}\to 0$$
mais
l'argument d'admissibilit\'e ci-dessus 
fait d\'efaut pour prouver que ${\rm R}^i\varprojlim=0$.
\end{ques}

\Subsection{Vecteurs $\GG(\wZp)$-lisses}\label{COCO5}

On a un diagramme
$$\xymatrix@R=4mm@C=5mm{
H^r(\GG(\Q),{\cal C}^{(p)}(\GG(\A)))\ar[d]^-{\wr}
& H^r(\GG(\Q),{\cal C}(\GG(\A)))^{\GG(\wZp)-{\rm lisses}}\ar[d]^-{\wr} \\
\oplus_i H^r(\Gamma_i,{\cal C}^{(p)}(\GG(\wZ)))\ar[r]
& \oplus_i H^r(\Gamma_i,{\cal C}(\GG(\wZ)))^{\GG(\wZp)-{\rm lisses}}
}$$
qui montre que la fl\`eche naturelle
$H^r(\GG(\Q),{\cal C}^{(p)}(\GG(\A)))\to H^r(\GG(\Q),{\cal C}(\GG(\A)))$ atterrit
dans les vecteurs $\GG(\wZp){\text{-lisses}}$.
Une question naturelle est de savoir si on obtient de la sorte un isomorphisme:
c'est le cas si $\GG={\bf GL}_2$ d'apr\`es la prop.\,\ref{coco14} (cf.~rem.\,\ref{cp1}).
Le cas o\`u $\GG$ est la restriction
des scalaires de $F$ \`a $\Q$ de ${\mathbb G}_m$ montre que cette question innocente
est plus subtile qu'elle n'en a l'air: la fl\`eche ci-dessus est un isomorphisme si
et seulement si la conjecture de Leopoldt est vraie pour $F$ (rem.\,\ref{coco39}). 
Dans le cas g\'en\'eral,
on a juste une suite spectrale reliant les deux termes (prop.\,\ref{gener7}).

\subsubsection{Le cas de ${\bf GL}_1$ sur un corps de nombres}\label{COCO13}
Dans ce ${\rm n}^{\rm o}$, on part d'un corps de nombres $F$, et on prend pour $\GG$ le
groupe alg\'ebrique ${\rm Res}_{F/\Q}{\bf GL}_1$.
On a donc $\GG(\Lambda)=(F\otimes_\Q\Lambda)^\dual$ pour toute $\Q$-alg\`ebre $\Lambda$.
En particulier,
\begin{align*}
\GG(\Q)=F^\dual,\quad 
\GG(\R)=(F\otimes \R)^\dual=(\R\dual)^{r_1}\times(\C^\dual)^{r_2},\\
\GG(\Q_p)=(F\otimes\Q_p)^\dual=\prod\nolimits_{v\mid p}F_v^\dual,
\quad \GG(\A)=(F\otimes \A)^\dual
\end{align*}
Notons que $F^\dual\backslash(F\otimes\A)^\dual/(F\otimes\R)^\dual_+\cong G_F^{\rm ab}$
d'apr\`es la th\'eorie du corps de classes.

\begin{prop}\phantomsection\label{coco35}
On a
$$H^i(\GG(\Q),{\cal C}(\GG(\A)))=\begin{cases}
{\cal C}(F^\dual\backslash(F\otimes\A)^\dual/(F\otimes\R)^\dual_+)&{\text{si $i=0$}}\\
0, &{\text{si $i\geq 1$.}}\end{cases}$$
\end{prop}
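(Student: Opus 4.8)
The plan is to descend to the group of (totally positive) units of $F$ and then to reduce the vanishing to a computation modulo ${\goth m}_L$, the one substantial input being a local--global principle for $p$-th powers of units. Since $\GG={\rm Res}_{F/\Q}{\bf G}_m$ is commutative, conjugation by the $g_i$ is trivial, so all the arithmetic groups $\Gamma_i$ of Cor.\,\ref{gener2} coincide with $\Gamma:=\GG(\Q)\cap\GG(\wZ)\GG(\R)_+$, which is the group of totally positive units of $\O_F$; it is finitely generated abelian of rank $r_1+r_2-1$, with torsion $\mu(F)$ (trivial as soon as $F$ has a real place), and the index set is the narrow class group. Cor.\,\ref{gener2} then gives $H^i(\GG(\Q),{\cal C}(\GG(\A)))\cong\bigoplus_i H^i(\Gamma,{\cal C}(\GG(\wZ)))$, where $\Gamma$ acts on $\GG(\wZ)=\prod_v\O_{F_v}^\dual$ by translation through its image. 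For $i=0$ I would argue directly, as for ${\bf G}_m$ in Prop.\,\ref{gagm6}: a continuous $F^\dual$-invariant function is constant on the connected component $\GG(\R)_+$, and continuity turns $\Gamma$-invariance into invariance under the closure $\overline{\Gamma}$ of $\Gamma$ in $\GG(\wZ)$; reassembling over the narrow class group identifies $H^0$ with ${\cal C}(F^\dual\backslash(F\otimes\A)^\dual/(F\otimes\R)^\dual_+)$.

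For $i\ge1$ it suffices to prove that ${\cal C}(\GG(\wZ),L)$ is $\Gamma$-acyclic, and I would do this by reduction modulo ${\goth m}_L$, exactly as in Prop.\,\ref{gagm3}. Because $\GG(\wZ)$ is profinite and $k_L$ discrete, ${\cal C}(\GG(\wZ),k_L)=\varinjlim_K{\cal C}(\GG(\wZ)/K,k_L)$ over open $K\le\GG(\wZ)$. Decomposing each finite level into $\Gamma$-orbits and applying Shapiro as in Prop.\,\ref{gener1}, every orbit contributes ${\rm Coind}_{\Gamma\cap K}^\Gamma k_L$, so that $H^i(\Gamma,{\cal C}(\GG(\wZ),k_L))\cong\varinjlim_K\bigoplus_{\rm orbits}\bigwedge^i{\rm Hom}(\Gamma\cap K,k_L)$ with transition maps given by restriction. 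Here the torsion causes no trouble: for $K$ small enough $\Gamma\cap K$ is torsion free, isomorphic to $\Z^{r_1+r_2-1}$, so cofinally the cohomology is this exterior algebra. A class $\bigwedge_j f_j$ is killed in the colimit as soon as each $f_j$ has open kernel, i.e.\ is continuous for the topology induced by $\GG(\wZ)$; hence the colimit vanishes in degrees $\ge1$ provided every homomorphism $\Gamma'\to k_L$ (for $\Gamma'=\Gamma\cap K$) is continuous. Granting this, dévissage along $0\to k_L\to\O_L/{\goth m}_L^k\to\O_L/{\goth m}_L^{k-1}\to0$ gives $H^i(\Gamma,{\cal C}(\GG(\wZ),\O_L/{\goth m}_L^k))=0$ for all $k$ and $i\ge1$; passing to $\varprojlim_k$ (the relevant $\varprojlim^1$ vanishes, being either zero or the limit of a surjective system of $H^0$'s) yields $H^i(\Gamma,{\cal C}(\GG(\wZ),\O_L))=0$, and inverting ${\goth m}_L$ gives the desired $H^i(\Gamma,{\cal C}(\GG(\wZ),L))=0$.

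The crux is therefore the continuity of every homomorphism $\Gamma'\to k_L$, which is equivalent to the injectivity of $\Gamma'/(\Gamma')^p\to\prod_v\O_{F_v}^\dual/(\O_{F_v}^\dual)^p$. I would deduce this from the local--global principle for $p$-th powers: if a unit $\eta$ is a $p$-th power in every completion $F_v$, then every place splits in $F(\eta^{1/p})$, so $F(\eta^{1/p})=F$ by Chebotarev and $\eta$ is a global $p$-th power, hence the $p$-th power of a unit. This is the step I expect to be the real obstacle, and it is exactly the point at which ${\cal C}$ parts company with ${\cal C}^{(p)}$: for ${\cal C}$ all finite places $v$ enter, so Chebotarev applies unconditionally, whereas for ${\cal C}^{(p)}$ the away-from-$p$ directions are smoothed out, only the places above $p$ are seen, and the corresponding injectivity into $\prod_{v\mid p}\O_{F_v}^\dual/(\O_{F_v}^\dual)^p$ is precisely Leopoldt's conjecture for $F$ (in accordance with Hill's theorem). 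The remaining points—the vanishing of the $\varprojlim^1$ term and the disappearance of the torsion subgroup $\mu(F)$ in the colimit—I expect to be routine.
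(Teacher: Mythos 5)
Your overall strategy is sound and quite close to the paper's: the descent via Cor.\,\ref{gener2} to $\bigoplus_i H^i(U_F,{\cal C}(\GG(\wZ)))$, the treatment of $i=0$, and the reduction of the vanishing for $i\geq 1$ to the statement that every homomorphism $\Gamma'\to k_L$, for $\Gamma'=\Gamma\cap K$, is continuous for the topology induced by $\GG(\wZ)$, are all correct. In the paper this last statement appears as the isomorphism $U_F\otimes\wZ\overset{\sim}{\to}\widehat U_F$ of prop.\,\ref{cheval1} (the congruence topology on $U_F$ is the full profinite topology), after which one writes ${\cal C}(\GG(\wZ))\cong{\cal C}(\widehat U_F)\wotimes{\cal C}(X)$ and quotes $H^i(\Z^d,{\cal C}(\wZ^d))=0$. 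The gap is in your proof of that continuity.

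Continuity of every homomorphism $\Gamma'\to k_L$ amounts to $(\Gamma')^p$ containing some $\Gamma'\cap K'$ with $K'$ open in $\prod_v\O_{F_v}^\dual$. The injectivity of $\Gamma'/(\Gamma')^p\to\prod_v\O_{F_v}^\dual/(\O_{F_v}^\dual)^p$ would indeed imply this (take $K'=\prod_{v\in S}(\O_{F_v}^\dual)^p\times\prod_{v\notin S}\O_{F_v}^\dual$ for a suitable finite $S$), but it is not equivalent to it, and, more importantly, it is false in general. For $F=\Q(\sqrt{2})$, $p=2$ and $\Gamma'=U_F$: the fundamental unit $\epsilon=1+\sqrt{2}$ has norm $-1$, so $U_F=\epsilon^{2\Z}$; its generator $\eta=\epsilon^2=3+2\sqrt{2}$ is a square in every $\O_{F_v}^\dual$ (it is the square of the global unit $\epsilon$), yet it is not a square in $U_F$ since $(U_F)^2=\epsilon^{4\Z}$; hence the nonzero class of $\eta$ in $U_F/(U_F)^2\cong\Z/2$ maps to $0$. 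Analogous examples exist for every $p$ as soon as $p$ divides $[\O_F^\dual:\Gamma']$. The flaw is already visible in your last clause: Grunwald--Wang gives $\eta\in(F^\dual)^p$, hence $\eta\in(\O_F^\dual)^p$, but what you need is $\eta\in(\Gamma')^p$, and the quotient $\big((\O_F^\dual)^p\cap\Gamma'\big)/(\Gamma')^p$ is precisely the (generally nonzero) kernel of your map. The statement actually required is that the finite-index subgroup $(\Gamma')^p$ of $\O_F^\dual$ contains a congruence subgroup $U_F(N)$: this is Chevalley's theorem~\cite{cheval}, which is what the paper invokes, and it does not follow from the local--global principle for $p$-th powers --- an element of $U_F(N)$ is a local $p$-th power at the places dividing $N$, but there is no reason for it to be one at the infinitely many other places whose residue field has order $\equiv 1\pmod p$, so your criterion cannot even be tested on congruence subgroups. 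With Chevalley's theorem substituted for this step, the rest of your argument (orbit decomposition, d\'evissage, $\varprojlim$, torsion) goes through.
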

\begin{proof}
Le cas $i=0$ est imm\'ediat sur les d\'efinitions, compte-tenu du fait qu'une fonction
continue sur $(F\otimes\R)^\dual_+$, \`a valeurs dans $L$, est constante.

Posons $\GG(\wZ)=(\O_F\otimes\wZ)^\dual$.
Soient $\alpha_1,\dots,\alpha_h$ des repr\'esentants du groupe des
classes $\GG(\Q)\backslash\GG(\A)/\GG(\wZ)\GG(\R)_+$.
Alors $\Gamma_i:=\GG(\Q)\cap \alpha_i\GG(\wZ)\GG(\R)_+\alpha_i^{-1}$ est ind\'ependant de
$i$ car on est dans un groupe commutatif, et est \'egal au groupe $U_F$ des
unit\'es totalement positives de $\O_F$.
On d\'eduit du cor.\,\ref{gener2} et de la commutativit\'e de $\GG$, que
$$H^i(\GG(\Q),{\cal C}(\GG(\A)))\cong H^i(U_F,{\cal C}(\GG(\wZ)))^{\oplus h}$$
Maintenant, si $\widehat U_F$ d\'esigne l'adh\'erence de l'image
de $U_F$ dans le groupe profini $\GG(\wZ)$,
on peut d\'ecomposer $\GG(\wZ)$, en tant que $U_F$-ensemble,
sous la forme $\widehat U_F\times X$, o\`u $X$ est un ensemble profini
muni d'une action triviale de $U_F$ (prendre une section ensembliste continue
de la projection de $\GG(\wZ)$ sur $\GG(\wZ)/\widehat U_F$).
On a alors ${\cal C}(\GG(\wZ))\cong{\cal C}(\widehat U_F)\widehat\otimes
{\cal C}(X)$, et on se ram\`ene \`a prouver que
$H^i(U_F,{\cal C}(\widehat U_F))=0$ si $i\geq 1$.

La  prop.~\ref{cheval1} ci-dessous fournit un isomorphisme
naturel $(U_F\otimes \wZ)\overset{\sim}{\to} \widehat U_F$ 
de groupes topologiques.
On conclut en utilisant le fait que
$H^i(\Z^d,{\cal C}(\wZ^d))=0$, si $i\geq 1$ (cf.~preuve de la prop.\,\ref{gagm3}).
\end{proof}

\begin{prop}\phantomsection\label{cheval1}
L'application naturelle
$(U_F\otimes \wZ)\to \widehat U_F$ est un isomorphisme
de groupes topologiques.
\end{prop}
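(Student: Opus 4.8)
The plan is to combine Dirichlet's unit theorem with the congruence subgroup property for $U_F$, the latter being where all the content lies. First I would record the setup: by Dirichlet's theorem $U_F$ is a finitely generated abelian group, so $U_F\otimes\wZ$ is canonically its profinite completion $\varprojlim_m U_F/mU_F$. Writing $\GG(\wZ)=(\O_F\otimes\wZ)^\dual=\varprojlim_N(\O_F/N\O_F)^\dual$, the closure $\widehat U_F$ of $U_F$ in this inverse limit of finite groups is $\varprojlim_N \overline U_N$, where $\overline U_N$ is the image of $U_F$ in $(\O_F/N\O_F)^\dual$. Since $\widehat U_F$ is profinite and $U_F\hookrightarrow\widehat U_F$, the universal property of the profinite completion produces the natural continuous homomorphism $\iota\colon U_F\otimes\wZ\to\widehat U_F$, which is the map to be studied.

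The easy half is surjectivity and the reduction to injectivity: $U_F\otimes\wZ$ is compact, so $\iota(U_F\otimes\wZ)$ is a closed subgroup of $\widehat U_F$ containing the dense subgroup $U_F$, whence $\iota$ is onto; and a continuous bijection between profinite groups is automatically a homeomorphism, so it remains only to prove injectivity. Comparing finite quotients on the two sides, injectivity is equivalent to the assertion that the congruence subgroups $U_F(N):=\{u\in U_F:\ u\equiv 1 \bmod N\}$ are cofinal among the finite index subgroups of $U_F$, i.e. to the congruence subgroup property for the unit group. As every finite index subgroup of $U_F$ contains $mU_F$ for some $m$, it suffices to show that for each $m\geq 1$ there is an $N$ with $U_F(N)\subseteq mU_F$, equivalently that the reduction $U_F/mU_F\to(\O_F/N\O_F)^\dual/\big((\O_F/N\O_F)^\dual\big)^m$ is injective for a suitable $N$.

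The hard part, and the crux of the whole proposition, is this last cofinality statement, which I would establish by Kummer theory plus Chebotarev. For fixed $m$, each nontrivial $\chi\in\mathrm{Hom}(U_F/mU_F,\mu_m)$ corresponds to an element of $\mathrm{Gal}\big(F(\zeta_m,U_F^{1/m})/F(\zeta_m)\big)$; the Chebotarev density theorem applied to the finite Kummer extension $F(\zeta_m,U_F^{1/m})/F$ produces a prime $\ell$ (unramified, $\ell\nmid m$) whose Frobenius realizes $\chi$, so that the reduction modulo $\ell$ detects $\chi$ via the $m$-th power residue symbol. Taking $N$ to be the product of finitely many such primes, one for a generating set of characters, forces $U_F(N)\subseteq mU_F$. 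The essential point is that this is unconditional precisely because one is free to use congruences modulo arbitrary integers $N$: if one instead restricted $N$ to powers of the fixed prime $p$, Chebotarev would be replaced by the nonvanishing of the $p$-adic regulator, which is exactly Leopoldt's conjecture — this is why the analogous statement with ${\cal C}^{(p)}$ in place of ${\cal C}$ is only conditional.

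Finally, the torsion of $U_F$ (the roots of unity $\mu_F$, present when $F$ is totally complex) causes no trouble: $\mu_F\otimes\wZ=\mu_F$ maps isomorphically onto the corresponding finite subgroup of $\widehat U_F$, so one may run the argument on the free quotient. I expect the only real obstacle to be the Chebotarev step of the third paragraph, namely that congruences modulo general $N$ genuinely suffice to cut out every subgroup $mU_F$.
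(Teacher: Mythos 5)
Votre réduction est exactement celle du texte : on identifie $U_F\otimes\wZ$ au complété profini de $U_F$ (Dirichlet) et $\widehat U_F$ au complété pour les sous-groupes de congruence, la surjectivité vient de la compacité et de la densité, et l'injectivité équivaut à la cofinalité des $U_F(N)$ parmi les sous-groupes d'indice fini — c'est-à-dire au théorème de Chevalley, que l'article se contente de citer (\cite{cheval}). La seule différence est que vous redémontrez ce théorème par Kummer et Chebotarev au lieu de l'invoquer; c'est l'argument standard moderne, et il est légitime, mais tel quel il comporte une lacune au point précis que vous signalez vous-même comme le n{\oe}ud de l'affaire.

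La lacune : par la théorie de Kummer, ${\rm Gal}(F(\zeta_m,U_F^{1/m})/F(\zeta_m))$ est dual non pas de $U_F/mU_F$ mais de $U_F/(U_F\cap F(\zeta_m)^{\dual m})$, et l'inclusion $U_F^m\subset U_F\cap F(\zeta_m)^{\dual m}$ peut être stricte (phénomène du type $16=(1+i)^8$). Un caractère non trivial de $U_F/mU_F$ qui tue $U_F\cap F(\zeta_m)^{\dual m}/U_F^m$ ne correspond à aucun élément du groupe de Galois, donc aucun Frobenius ne le réalise et votre choix de nombres premiers ne le détecte pas; l'argument classe par classe (pour $u\notin U_F^m$, trouver $\mathfrak{l}$ tel que $u$ ne soit pas une puissance $m$-ième modulo $\mathfrak{l}$) échoue précisément pour les $u\in(U_F\cap F(\zeta_m)^{\dual m})\setminus U_F^m$. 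Il faut donc un argument supplémentaire contrôlant ce défaut (il est lié aux racines de l'unité et se traite, par exemple, en remplaçant $m$ par un multiple convenable, ou en revenant à la démonstration originale de Chevalley, qui est plus élémentaire et n'utilise pas Chebotarev). Le reste de la proposition — y compris la remarque sur la torsion et le parallèle avec Leopoldt quand on se restreint aux modules $p^n$ — est correct.
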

\begin{proof}
Cela r\'esulte d'un th\'eor\`eme de Chevalley~\cite{cheval} selon lequel
tout sous-groupe d'indice fini de $U_F$ est de congruence
(i.e. contient $U_F(N)=U_F\cap {\rm Ker}(\O_F^\dual\to(\O_F/N)^\dual)$ pour un certain $N$).
En effet, $\widehat U_F$ est le compl\'et\'e de $U_F$ par rapport
aux sous-groupes de congruences, tandis que $U_F\otimes \wZ$ est le compl\'et\'e
profini de $U_F$ (i.e.~le compl\'et\'e par rapport \`a tous les sous-groupes
d'indice fini).  Le th\'eor\`eme de Chevalley implique que les deux syst\`emes
de sous-groupes consid\'er\'es sont cofinaux et donc que les deux compl\'et\'es
sont les m\^emes.
\end{proof}

\begin{rema}\phantomsection\label{coco38}
On conjecture (conjecture de Leopoldt) que l'application naturelle
$(U_F\otimes\Z_p)\to (\O_F\otimes\Z_p)^\dual$ est injective.
\end{rema}

\begin{rema}\phantomsection\label{coco39}
Hill~\cite{hill} a montr\'e que
$H^i(\GG(\Q),{\cal C}^{(p)}(\GG(\A)))=0$ pour tout $i\geq 1$ si et seulement si
la conjecture de Leopoldt est vraie pour $(F,p)$.
Il s'ensuit que
$H^i(\GG(\Q),{\cal C}^{(p)}(\GG(\A)))=H^i(\GG(\Q),{\cal C}(\GG(\A)))^{\GG(\wZp){\text{-lisses}}}$,
pour tout $i$,
si et seulement si la conjecture de Leopoldt est vraie pour $(F,p)$.
\end{rema}

\subsubsection{Le cas g\'en\'eral}\label{COCO15}
Si $H$ est un sous-groupe ferm\'e de $\GG(\A)$, on a deux suites spectrales
de Hochschild-Serre
convergeant vers $H^{i+j}(\GG(\Q)\times H,{\cal C}(\GG(\A)))$, \`a savoir
$H^i(\GG(\Q),H^j(H,{\cal C}(\GG(\A))))$ et
$H^i(H,H^j(\GG(\Q),{\cal C}(\GG(\A)))$.
Or, dans la premi\`ere, tous les termes sont nuls sauf ceux
correspondant \`a $j=0$ (repr\'esentation induite).
On a donc $H^{i+j}(\GG(\Q)\times H,{\cal C}(\GG(\A)))=H^{i+j}(\GG(\Q),{\cal C}(\GG(\A)/H))$,
et 
la suite spectrale de terme $E_2^{i,j}=H^i(H,H^j(\GG(\Q),{\cal C}(\GG(\A))))$
converge vers $H^{i+j}(\GG(\Q),{\cal C}(\GG(\A)/H))$.
On en d\'eduit le r\'esultat suivant:
\begin{prop}\phantomsection\label{gener7}
La suite spectrale de terme 
$$E_2^{i,j}:=\varinjlim\nolimits_{K^{]p[}}H^i(K^{]p[},H^j(\GG(\Q),{\cal C}(\GG(\A))))$$
converge vers $H^{i+j}(\GG(\Q),{\cal C}^{(p)}(\GG(\A)))$.
\end{prop}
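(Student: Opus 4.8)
The plan is to specialize the two--spectral--sequence comparison set up in the paragraph preceding the statement to the case $H=K^{]p[}$, and then pass to the filtered colimit over $K^{]p[}$. First I would fix a compact open subgroup $K^{]p[}$ of $\GG(\wZp)$ and invoke that discussion verbatim: the right action of $K^{]p[}$ on ${\cal C}(\GG(\A))$ realizes it as an induced, hence $K^{]p[}$-acyclic, module, so the first Hochschild--Serre spectral sequence degenerates onto its $j=0$ row and identifies the common abutment $H^{i+j}(\GG(\Q)\times K^{]p[},{\cal C}(\GG(\A)))$ with $H^{i+j}(\GG(\Q),{\cal C}(\GG(\A)/K^{]p[}))$. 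The second spectral sequence then reads $E_2^{i,j}(K^{]p[})=H^i(K^{]p[},H^j(\GG(\Q),{\cal C}(\GG(\A))))$ converging to $H^{i+j}(\GG(\Q),{\cal C}(\GG(\A)/K^{]p[}))$, the coefficient modules $H^j(\GG(\Q),{\cal C}(\GG(\A)))$ being \emph{independent} of $K^{]p[}$.

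Next I would let $K^{]p[}$ run through the open subgroups of $\GG(\wZp)$ and take the filtered colimit of this family of spectral sequences. Since filtered colimits are exact, they commute with the formation of the spectral sequence term by term: on the $E_2$ page one obtains exactly $\varinjlim_{K^{]p[}}H^i(K^{]p[},H^j(\GG(\Q),{\cal C}(\GG(\A))))$, the term appearing in the statement, while the abutment becomes $\varinjlim_{K^{]p[}}H^{i+j}(\GG(\Q),{\cal C}(\GG(\A)/K^{]p[}))$.

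It remains to identify this last colimit with $H^{i+j}(\GG(\Q),{\cal C}^{(p)}(\GG(\A)))$, and this is the step that carries the real content, for the naive colimit $\varinjlim_{K^{]p[}}{\cal C}(\GG(\A)/K^{]p[})$ is only the space of \emph{uniformly} $\GG(\wZp)$-smooth functions (the ${\cal C}_u^{(p)}$ of Remark~\ref{coco3}), which is strictly smaller than the locally smooth space ${\cal C}^{(p)}$; so one cannot simply commute the colimit past $H^{i+j}(\GG(\Q),-)$ at the level of coefficients. Instead I would descend via Corollary~\ref{gener2} to the arithmetic groups $\Gamma_i$, writing both $H^n(\GG(\Q),{\cal C}(\GG(\A)/K^{]p[}))\cong\oplus_i H^n(\Gamma_i,{\cal C}(\GG(\wZ)/K^{]p[}))$ and $H^n(\GG(\Q),{\cal C}^{(p)}(\GG(\A)))\cong\oplus_i H^n(\Gamma_i,{\cal C}^{(p)}(\GG(\wZ)))$. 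On the \emph{compact} group $\GG(\wZ)$ local and uniform $\GG(\wZp)$-smoothness coincide, whence ${\cal C}^{(p)}(\GG(\wZ))=\varinjlim_{K^{]p[}}{\cal C}(\GG(\wZ)/K^{]p[})$; commuting the now harmless colimit past the cohomology of the discrete groups $\Gamma_i$ (and past the finite sum $\oplus_i$) yields $\varinjlim_{K^{]p[}}H^n(\GG(\Q),{\cal C}(\GG(\A)/K^{]p[}))\cong H^n(\GG(\Q),{\cal C}^{(p)}(\GG(\A)))$, exactly as in Proposition~\ref{cp}. Feeding $n=i+j$ back into the colimit spectral sequence concludes the argument. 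The main obstacle is precisely this compactness reduction: the interchange of $\varinjlim_{K^{]p[}}$ with the cohomology of $\GG(\Q)$ is legitimate only after passing to $\GG(\wZ)$, where uniform and local smoothness agree, whereas the acyclicity of the induced module and the exactness of filtered colimits are purely formal.
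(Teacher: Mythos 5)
Votre démonstration est correcte et suit essentiellement la même voie que le texte: spécialiser au cas $H=K^{]p[}$ la comparaison des deux suites spectrales de Hochschild--Serre du paragraphe précédent, puis passer à la limite inductive filtrante. Le texte laisse implicite l'identification finale $\varinjlim_{K^{]p[}}H^{i+j}(\GG(\Q),{\cal C}(\GG(\A)/K^{]p[}))\cong H^{i+j}(\GG(\Q),{\cal C}^{(p)}(\GG(\A)))$; vous la justifiez exactement comme le fait l'article ailleurs (prop.\,\ref{cp} via le cor.\,\ref{gener2}), en relevant à juste titre que c'est la compacité de $\GG(\wZ)$ qui rend licite l'échange de la limite inductive avec la cohomologie.
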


\begin{rema}
Si $W$ est une repr\'esentation alg\'ebrique irr\'eductible de $\GG$,
on peut utiliser la technique ci-dessus pour ${\cal C}(\GG(\A))\otimes W$ et
$H$ un sous-groupe ouvert compact de $\GG(\wZ)$, pour retrouver la suite spectrale
d'Emerton~\cite[th.\,0.5]{Em06}.
\end{rema}

\Subsection{Vecteurs localement analytiques}
On a un diagramme
$$\xymatrix@R=4mm@C=5mm{
H^r(\GG(\Q),{\rm LA}(\GG(\A)))\ar[d]^-{\wr}
& H^r(\GG(\Q),{\cal C}^{(p)}(\GG(\A)))^{\rm an}\ar[d]^-{\wr} \\
\oplus_i H^r(\Gamma_i,{\rm LA}(\GG(\wZ)))\ar[r]
& \oplus_i H^r(\Gamma_i,{\cal C}^{(p)}(\GG(\wZ)))^{\rm an}
}$$
qui montre que la fl\`eche naturelle
$H^r(\GG(\Q),{\rm LA}(\GG(\A)))\to H^r(\GG(\Q),{\cal C}^{(p)}(\GG(\A)))$
atterrit
dans les vecteurs localement analytiques.

\begin{prop}\phantomsection\label{gener8}
La fl\`eche 
$H^r(\GG(\Q),{\rm LA}(\GG(\A)))\to H^r(\GG(\Q),{\cal C}^{(p)}(\GG(\A)))^{\rm an}$
est un isomorphisme.
\end{prop}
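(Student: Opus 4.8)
The plan is to reproduce the proof of Theorem~\ref{coco16} almost verbatim, the only change being that the explicit four-term resolution of Lemma~\ref{coco2}, special to $\Gamma={\bf SL}_2(\Z)$, is replaced by the finite complex $\rg(\Gamma_i,M)$ of~(\ref{rg}). First I would invoke the commutative square displayed just before the statement together with Corollary~\ref{gener2} to reduce the claim to showing, for each $i$ separately, that
$$H^r(\Gamma_i,{\rm LA}(\GG(\wZ)))\longrightarrow H^r(\Gamma_i,{\cal C}^{(p)}(\GG(\wZ)))^{\rm an}$$
is an isomorphism; this involves only the $\GG(\Z_p)$-action. Writing ${\cal C}^{(p)}(\GG(\wZ))=\varinjlim_{K^{]p[}}{\cal C}(\GG(\Z_p)\times Y)$ and ${\rm LA}(\GG(\wZ))=\varinjlim_{K^{]p[}}{\rm LA}(\GG(\Z_p)\times Y)$, with $Y=\GG(\wZp)/K^{]p[}$ finite, and using that the complex $\rg(\Gamma_i,-)$ of~(\ref{rg}) has finitely many terms, each a finite sum of copies of its argument, so that $H^r(\Gamma_i,-)$ commutes with these filtered colimits, I would be reduced to proving, for each fixed $Y$, the equality
$$H^r(\Gamma_i,{\cal C}(\GG(\Z_p)\times Y))^{\rm an}=H^r(\Gamma_i,{\rm LA}(\GG(\Z_p)\times Y)).$$

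For fixed $Y$ I would set $M={\cal C}(\GG(\Z_p)\times Y)$, which is an admissible representation of $\GG(\Z_p)$ (a finite sum of copies of ${\cal C}(\GG(\Z_p))$); hence every term $M^{J_k}$ of $\rg(\Gamma_i,M)$ is admissible, while the differentials of~(\ref{rg}) do not depend on $M$. The crucial input is that the functor $\Pi\mapsto\Pi^{\rm an}$ is exact on admissible representations~\cite{ST4}, so it commutes with the cohomology of this finite complex of admissibles, giving $H^r(\Gamma_i,M)^{\rm an}\cong H^r\big((M^{J_\bullet})^{\rm an}\big)$. Since analytic vectors commute with the finite products $M^{J_k}$ and the differentials are $M$-independent, the complex $(M^{J_\bullet})^{\rm an}$ is precisely $\rg(\Gamma_i,M^{\rm an})$; the elementary identity $M^{\rm an}={\cal C}(\GG(\Z_p)\times Y)^{\rm an}={\rm LA}(\GG(\Z_p)\times Y)$ then yields the required equality. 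Passing back to the colimit over $Y$ and summing over $i$ concludes the argument.

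The main obstacle is precisely the exactness of $(-)^{\rm an}$ on admissible representations, i.e. the Schneider--Teitelbaum input~\cite{ST4}, which is what lets one intertwine the analytic-vectors functor with the cohomology of the complex~(\ref{rg}). The secondary point needing care is the commutation of $(-)^{\rm an}$ with the filtered colimit over $Y$ (equivalently over $K^{]p[}$): this is not formal for a general inductive limit, but holds here because the transition maps are injective and all the spaces involved are admissible $\GG(\Z_p)$-representations, where admissibility again governs the formation of locally analytic vectors.
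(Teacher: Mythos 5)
Your proposal is correct and takes essentially the same route as the paper: reduction to a fixed level $K^{]p[}$ (equivalently your fixed $Y$), then the finite complex~(\ref{rg}) with admissible terms combined with the exactness of the locally analytic vectors functor on admissible representations~\cite{ST4}. The paper's proof is just these two bullet points; your write-up only adds the explicit justification of the commutation with the filtered colimit over $K^{]p[}$, which the paper handles by passing directly to $K^{]p[}$-fixed vectors.
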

\begin{proof}
Il suffit de le prouver pour les vecteurs fixes par $K^{]p[}$, o\`u $K^{]p[}$
est un sous-groupe ouvert de $\GG(\wZ^{]p[})$, et on est ramen\'e
\`a prouver que la fl\`eche naturelle
$$ H^r(\Gamma_i,{\rm LA}(\GG(\wZ)/K^{]p[}))\to
H^r(\Gamma_i,{\cal C}^{(p)}(\GG(\wZ)/K^{]p[}))^{\rm an}$$
est un isomorphisme. Cela r\'esulte de:

$\bullet$ l'existence du complexe~(\ref{rg}) calculant la cohomologie de $\Gamma_i$,

$\bullet$ l'exactitude du foncteur ``vecteurs localement analytiques'' pour les
repr\'esentations admissibles~\cite{ST4}.
\end{proof}

\Subsection{Cohomologie \`a support compact}\label{COCO11}

Si $\PP$ est un parabolique de $\GG$, 
la restriction fournit une application naturelle
du complexe ${\rm R}\Gamma(\GG(\Q),{\cal C}(\GG(\A)))$ des cochaines sur $\GG(\Q)$
dans le complexe ${\rm R}\Gamma(\PP(\Q),{\cal C}(\GG(\A)))$ des cochaines sur $\PP(\Q)$.
On d\'efinit
la {\it cohomologie \`a support compact}
$H^i_{\rm c}(\GG(\Q),{\cal C}(\GG(\A)))$
comme la cohomologie du c\^one
$$\Big[{\rm R}\Gamma(\GG(\Q),{\cal C}(\GG(\A)))\to 
\bigoplus_{\PP\,{\rm mod}\sim}{\rm R}\Gamma(\PP(\Q),{\cal C}(\GG(\A)))\Big].$$
On a une suite exacte longue
$$\xymatrix@R=4mm@C=4mm{
\cdots\ar[r]& H^i_{\rm c}(\GG(\Q),{\cal C}(\GG(\A)))\ar[r]& H^i(\GG(\Q),{\cal C}(\GG(\A)))\ar[d]\\
&& \bigoplus_{\PP\,{\rm mod}\sim}H^i(\PP(\Q),{\cal C}(\GG(\A)))\ar[r]&
H^{i+1}_{\rm c}(\GG(\Q),{\cal C}(\GG(\A)))\to\cdots}
$$

Soit $\PP=\UU\LL$ un parabolique de $\GG$, o\`u $\LL$ est le l\'evi de $\PP$
et $\UU$ est unipotent.  
\begin{prop}\phantomsection\label{coco30}
On a un isomorphisme de repr\'esentations de $\GG(\A)$.
$$H^i(\PP(\Q),{\cal C}(\GG(\A)))={\rm Ind}_{\PP(\A)}^{\GG(\A)}H^i(\LL(\Q),{\cal C}(\LL(\A))),$$
o\`u $\PP(\A)$ agit sur $H^i(\LL(\Q),{\cal C}(\LL(\A)))$ \`a travers la surjection
naturelle $\PP(\A)\to \LL(\A)$.
\end{prop}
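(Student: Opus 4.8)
The plan is to prove the statement in two independent descents: first from $\GG$ down to the parabolic $\PP$ by an induction-in-stages argument modelled on the proof of Proposition \ref{indu5}, and then from $\PP$ down to its Levi $\LL$ by the Hochschild--Serre spectral sequence attached to $1\to\UU(\Q)\to\PP(\Q)\to\LL(\Q)\to 1$, exactly as in Proposition \ref{gagm7} but for a general parabolic. Concatenating the two identifications will give the asserted formula.

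For the first descent I would exploit the adelic Iwasawa decomposition $\GG(\A)=\PP(\A)\cdot K$, with $K$ maximal compact and $\PP(\A)\backslash\GG(\A)\cong(\PP\backslash\GG)(\A)$ compact since $\PP\backslash\GG$ is projective, to write
$${\cal C}(\GG(\A))={\rm Ind}_{\PP(\A)}^{\GG(\A)}{\cal C}(\PP(\A))=H^0(\PP(\A),{\cal C}(\GG(\A)\times\PP(\A))),$$
where $b\in\PP(\A)$ acts by $b\cdot(x,y)=(b^{-1}x,yb)$ and $\beta\in\PP(\Q)$ acts by $\beta\cdot(x,y)=(x,\beta^{-1}y)$; these two actions commute. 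A continuous section of $\GG(\A)\to\PP(\A)\backslash\GG(\A)$ identifies ${\rm Ind}_{\PP(\A)}^{\GG(\A)}M$ with ${\cal C}(\PP(\A)\backslash\GG(\A))\wotimes M$ as $\PP(\Q)$-modules (with $\PP(\Q)$ acting only on $M$), so that the induction functor is exact and commutes with $H^\bullet(\PP(\Q),-)$. This yields, just as for $i=0$ in Proposition \ref{indu5} but now in every degree,
$$H^i(\PP(\Q),{\cal C}(\GG(\A)))\cong{\rm Ind}_{\PP(\A)}^{\GG(\A)}H^i(\PP(\Q),{\cal C}(\PP(\A))).$$

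For the second descent, the factorization $\PP=\UU\LL$ gives a $\PP(\Q)\times\PP(\A)$-equivariant decomposition ${\cal C}(\PP(\A))\cong{\cal C}(\UU(\A))\wotimes{\cal C}(\LL(\A))$, which I would feed into the Hochschild--Serre spectral sequence for $1\to\UU(\Q)\to\PP(\Q)\to\LL(\Q)\to1$. The decisive input is the vanishing
$$H^j(\UU(\Q),{\cal C}(\UU(\A)))\cong\begin{cases} L&\text{si }j=0,\\ 0&\text{si }j\geq 1,\end{cases}$$
with trivial $\LL(\Q)$-action on the $H^0$. This is a dévissage from the ${\bf G}_a$ case (Proposition \ref{gagm3}): any unipotent $\Q$-group carries a central filtration with successive quotients ${\bf G}_a$, the factorizations ${\cal C}(\UU(\A))\cong{\cal C}({\bf G}_a(\A))\wotimes{\cal C}(\UU'(\A))$ are compatible with Hochschild--Serre, and one induces on $\dim\UU$ using $H^j({\bf G}_a(\Q),{\cal C}({\bf G}_a(\A)))=L$ for $j=0$ and $0$ otherwise. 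The spectral sequence then degenerates onto the row $j=0$, giving $H^i(\PP(\Q),{\cal C}(\PP(\A)))\cong H^i(\LL(\Q),H^0(\UU(\Q),{\cal C}(\PP(\A))))\cong H^i(\LL(\Q),{\cal C}(\LL(\A)))$.

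Finally I would track the $\PP(\A)$-action. Since $H^0(\UU(\Q),{\cal C}(\UU(\A)))$ reduces to the constants $L\cdot{\bf 1}_{\UU(\A)}$, right translation by $\UU(\A)$ fixes it, so the residual $\PP(\A)$-action on $H^0(\UU(\Q),{\cal C}(\PP(\A)))\cong{\cal C}(\LL(\A))$ factors through $\PP(\A)\to\LL(\A)$, as claimed. The main obstacle is the topological bookkeeping around the completed tensor products: one must verify that the factorizations are genuinely $\PP(\Q)\times\PP(\A)$-equivariant, that $\UU(\Q)$-cohomology commutes with $\wotimes\,{\cal C}(\LL(\A))$ so that the Künneth-type identification $H^0(\UU(\Q),{\cal C}(\PP(\A)))\cong{\cal C}(\LL(\A))$ holds on the nose, and that the induction functor of the first descent is exact in the continuous setting. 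All of these rest on the compactness of $\PP(\A)\backslash\GG(\A)$ and of the relevant profinite spaces, together with the existence of continuous sections.
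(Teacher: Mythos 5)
Votre stratégie en deux temps — induction par étages de $\GG(\A)$ à $\PP(\A)$, puis Hochschild--Serre pour $1\to\UU(\Q)\to\PP(\Q)\to\LL(\Q)\to1$ avec la cohomologie de $\UU(\Q)$ tuée par dévissage à partir de ${\bf G}_a$ — est exactement celle de l'article, et la seconde étape en est une copie conforme. La seule divergence porte sur la justification du fait que l'induction de $\PP(\A)$ à $\GG(\A)$ commute à $H^\bullet(\PP(\Q),-)$: vous invoquez une section continue de $\GG(\A)\to\PP(\A)\backslash\GG(\A)$ pour trivialiser l'induite en ${\cal C}(\PP(\A)\backslash\GG(\A))\wotimes{\cal C}(\PP(\A))$, tandis que l'article fait agir $\PP(\Q)$ et $\PP(\A)$ de manière commutante sur ${\cal C}(\PP(\A)\times\GG(\A))$ et échange $H^\bullet(\PP(\Q),-)$ et $H^0(\PP(\A),-)$ en vérifiant que les deux suites spectrales convergeant vers $H^\bullet(\PP(\Q)\times\PP(\A),-)$ dégénèrent, la nullité des $H^j(\PP(\A),{\cal C}(\PP(\A)))$ pour $j\geq 1$ venant de Shapiro. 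Deux points de votre variante demandent réparation. D'abord, la section globale n'existe pas en général à la place archimédienne: déjà pour $\GG={\bf GL}_2$ et $\PP=\BB$, le fibré principal $\GG(\R)\to\BB(\R)\backslash\GG(\R)$ n'est pas trivial (obstruction sur les $\pi_0$ — c'est précisément la raison du facteur $(-)^{I_\infty=1}$ du lemme~\ref{scindage2}); la factorisation que vous écrivez ne vaut donc qu'après projection de ${\cal C}(\PP(\A))$ sur un facteur direct, ce qui ne coûte rien cohomologiquement puisque $L$ est totalement discontinu, mais doit être dit. Ensuite, le modèle en produit tensoriel complété perd de vue l'action de $\GG(\A)$ par translation à droite (la section n'est pas $\GG(\A)$-équivariante): pour identifier le résultat comme l'induite en tant que représentation de $\GG(\A)$, et pas seulement comme $\PP(\Q)$-module, il faut de toute façon revenir à la description $H^0(\PP(\A),H^i(\PP(\Q),{\cal C}(\PP(\A)\times\GG(\A))))$ qu'utilise l'article; celle-ci fournit les deux points d'un coup.
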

\begin{proof}
Pour all\'eger les notations, on note $\GG$, $\PP$, $\LL$, $\UU$,
les groupes $\GG(\A)$, $\PP(\A)$, $\LL(\A)$, $\UU(\A)$,
et ${\rm G}$, ${\rm P}$, ${\rm L}$, ${\rm U}$, les groupes
$\GG(\Q)$, $\PP(\Q)$, $\LL(\Q)$, $\UU(\Q)$.

On fait agir $b\in \PP$ sur $\PP\times \GG$ par
$b\cdot(x,y)=(xb,b^{-1}y)$, $\beta\in {\rm P}$ par $\beta\cdot(x,y)=(\beta^{-1}x,y)$
et $g\in\GG$ par $g\cdot (x,y)=(x,yg)$ (ces trois actions sont des actions \`a droite
et commutent deux \`a deux). Cela induit des actions \`a gauche sur ${\cal C}(\PP\times\GG)$.
On a 
$${\cal C}(\GG)={\rm Ind}_{\PP}^{\GG}{\cal C}(\PP)=H^0(\PP,{\cal C}(\PP\times \GG))$$
On en d\'eduit des isomorphismes
\begin{align*}
H^i\big({\rm P},{\cal C}(\GG)\big)
&\cong H^i\big({\rm P},H^0(\PP,{\cal C}(\PP\times \GG))\big)\\
&\cong H^i\big({\rm P}\times \PP,{\cal C}(\PP\times \GG)\big)\\ 
&\cong H^0\big({\PP},H^i({\rm P},{\cal C}({\PP}\times \GG))\big)\\ 
&\cong H^0\big(\PP,H^i({\rm P},{\cal C}(\PP))\wotimes {\cal C}(\GG)\big)\\
&\cong {\rm Ind}_{\PP}^{\GG}H^i({\rm P},{\cal C}(\PP)\big)
\end{align*}
En effet, le premier isomorphisme r\'esulte de ce qui pr\'ec\`ede, le dernier
est la d\'efinition de l'induite et l'avant-dernier r\'esulte de ce que
${\rm P}$ agit trivialement sur le facteur $\GG$ de $\PP\times\GG$. 
Pour \'etablir les deux isomorphismes restant,
il suffit de prouver que tous les autres termes des suites spectrales
convergeant vers $H^i({\rm P}\times \PP,{\cal C}(\PP\times \GG))$ sont nuls,
ce qui nous faisons apr\`es un petit interlude.
\begin{rema}
Soient $G$ un groupe topologique, $X$ une classe de fonctions\footnote{Pour la
prop.\,\ref{coco30}, telle qu'elle est \'enonc\'ee, 
on n'a besoin que de $X={\cal C}$, mais 
sa preuve s'adapterait pour d'autres $X$ (mais pas tous).}, et $M$ un $G$-module
tel que les orbites sont de classe $X$. On munit $X(G,M)$ de l'action
de $G$ donn\'ee par $(g\cdot\phi)(x)=g\cdot\phi(g^{-1}x)$.
Alors, $m\mapsto \alpha_m$, avec $\alpha_m(x)=x\cdot m$, est un isomorphisme
de $M$ sur $H^0(G,X(G,M))$.

Soit $\iota:X(G,M)\to X(G,H^0(G,X(G,M)))$ l'application d\'efinie par
$\iota(\phi)(z)=\alpha_{z^{-1}\cdot\phi(z)}$. 
Si on munit $X(G,H^0(G,X(G,M)))$ de l'action de $g\in G$ donn\'ee par 
$(g\cdot \Phi)(z)=\Phi(g^{-1}z)$, alors $\iota$ est un isomorphisme
$G$-\'equivariant, l'isomorphisme inverse s'obtenant
en composant les applications naturelles
$$X(G,H^0(G,X(G,M)))\hookrightarrow X(G,X(G,M))=X(G\times G,M)\to X(G,M)$$
o\`u la derni\`ere fl\`eche est $\phi^{(2)}\mapsto\phi$ avec $\phi(x)=\phi^{(2)}(x,x)$;
l'application compos\'ee $X(G,M)\to X(G\times G,M)$ est $\phi\mapsto\phi^{(2)}$,
avec $\phi^{(2)}(z,t)=tz^{-1}\cdot\phi(z)$.
Comme $X(G,H^0(G,X(G,M)))\cong X(G)\wotimes H^0(G,X(G,M))$,
cela fournit un isomorphisme $G$-\'equivariant
$$X(G,M)\cong X(G)\wotimes H^0(G,X(G,M))$$ 
o\`u $g\in G$ agit trivialement sur 
$H^0(G,X(G,M))$ et par $(g\cdot\phi)(x)=\phi(g^{-1}x)$ sur $X(G)$.
\end{rema}

Revenons \`a la preuve de la prop.\,\ref{coco30}.

$\bullet$ En tant que $\PP$-module,
${\cal C}(\PP\times\GG)\cong {\cal C}(\PP)\wotimes {\cal C}(\GG)$ (avec action
diagonale) est isomorphe \`a ${\cal C}(\PP)\wotimes {\cal C}(\GG)$ avec action triviale
sur ${\cal C}(\GG)$ et donc 
$$H^j(\PP,{\cal C}(\PP\times\GG))\cong
H^j(\PP,{\cal C}(\PP))\wotimes {\cal C}(\GG)$$
est nul si $j\geq 1$ puisque $H^j(\PP,{\cal C}(\PP))=0$ (lemme de Shapiro).
A fortiori, si $j\geq 1$,
$$H^{i-j}({\rm P},H^j(\PP,{\cal C}(\PP\times\GG)))=0$$

$\bullet$ De m\^eme, $H^{i-j}({\rm P},{\cal C}(\PP\times \GG))\cong
H^{i-j}({\rm P},{\cal C}(\GG)\wotimes {\cal C}(\PP))$, avec action diagonale de
$\PP$, est isomorphe \`a $H^{i-j}({\rm P},{\cal C}(\GG))\wotimes {\cal C}(\PP)$, avec action
triviale de $\PP$ sur $H^{i-j}({\rm P},{\cal C}(\GG))$. On en d\'eduit
que $$H^j(\PP, H^{i-j}({\rm P},{\cal C}(\GG))\wotimes {\cal C}(\PP))=0$$
 si $j\geq 1$
puisque $H^j(\PP,{\cal C}(\PP))=0$.

Pour conclure, il suffit donc de prouver que $H^i({\rm P},{\cal C}(\PP))\cong H^i({\rm L},{\cal C}(\LL))$.
Or un d\'evissage \`a partir du cas de ${\bf G}_a$ (prop.\,\ref{gagm3}) nous donne
$$H^i({\rm U},{\cal C}(\UU))=\begin{cases}L&{\text{si $i=0$,}}\\0&{\text{si $i\geq 1$.}}
\end{cases}$$
D'o\`u l'on d\'eduit que
$$H^i({\rm U},{\cal C}(\PP))=\begin{cases}{\cal C}(\LL)&{\text{si $i=0$,}}\\0&{\text{si $i\geq 1$.}}
\end{cases}$$
et on conclut en utilisant la suite exacte $1\mapsto {\rm U}\to {\rm P}\to {\rm L}\to 1$
et la suite spectrale associ\'ee \`a ce d\'evissage de ${\rm P}$.
\end{proof}

%
%

\end{document}